\def\[#1\]{\begin{equation}#1\end{equation}}
\def\beq{%
   \relax\ifmmode
      \@badmath
   \else
      \ifvmode
         \nointerlineskip
         \makebox[.6\linewidth]%
      \fi
      $$%%$$ BRACE MATCH HACK
   \fi
}
\def\eeq{%
   \relax\ifmmode
      \ifinner
         \@badmath
      \else
         $$%%$$ BRACE MATCH HACK
      \fi
   \else
      \@badmath
   \fi
   \ignorespaces
}
\def\enddisplaymath{\eeq\global\@ignoretrue}
\newtheorem{thm}{Theorem}
\newtheorem{cor}[thm]{Corollary}
\newtheorem{lem}[thm]{Lemma}
\newtheorem{prop}[thm]{Proposition}
\theoremstyle{remark}
\newtheorem*{rem}{Remark}
\newtheorem{rems}{Remark}[thm]
\newtheorem{eg}{Example}
\theoremstyle{definition}
\newtheorem{defn}{Definition}
\numberwithin{equation}{section}
\numberwithin{thm}{section}
\numberwithin{eg}{section}
\newcommand{\F}{\mathbb F}
\renewcommand{\P}{\mathbb P}
\newcommand{\Q}{\mathbb Q}
\newcommand{\Z}{\mathbb Z}
\newcommand{\G}{\mathbb G}
\newcommand{\C}{\mathbb C}
\newcommand{\N}{\mathbb N}
\newcommand{\A}{\mathbb A}
\DeclareMathOperator{\GL}{GL}
\DeclareMathOperator{\PGL}{PGL}
\DeclareMathOperator{\Pic}{Pic}
\DeclareMathOperator{\Hilb}{Hilb}
\DeclareMathOperator{\Ext}{Ext}
\DeclareMathOperator{\Sym}{Sym}
\DeclareMathOperator{\Aut}{Aut}
\DeclareMathOperator{\Isom}{Isom}
\DeclareMathOperator{\Hom}{Hom}
\DeclareMathOperator{\Mat}{Mat}
\DeclareMathOperator{\rank}{rnk}
\DeclareMathOperator{\coker}{coker}
\DeclareMathOperator{\supp}{supp}
\DeclareMathOperator{\ord}{ord}
\DeclareMathOperator{\Proj}{Proj}
\DeclareMathOperator{\Spec}{Spec}
\DeclareMathOperator{\im}{im}
\DeclareMathOperator{\ch}{char}
\let\div\relax
\DeclareMathOperator{\div}{div}
\newcommand{\sO}{\mathcal O}
\newcommand{\sExt}{\mathcal Ext}
\newcommand{\sHom}{\mathcal Hom}
\newcommand{\Irr}{\mathcal Irr}
\newcommand{\Tor}{\mathcal Tor}
\DeclareMathOperator{\Ai}{Ai}
\DeclareMathOperator{\Gal}{Gal}
\newcommand{\dR}{{\bf R}}
\newcommand{\dL}{{\bf L}}
\begin{document}

\title{Generalized Hitchin systems on rational surfaces}
\author{
Eric M. Rains\\Department of Mathematics, California
  Institute of Technology}

\date{July 25, 2019}
% that date is typo fixes; last major revision was on:
%\date{September 4, 2017}
\maketitle

\begin{abstract}
By analogy with work of Hitchin on integrable systems, we construct natural
relaxations of several kinds of moduli spaces of difference equations, with
special attention to a particular class of difference equations on an
elliptic curve (arising in the theory of elliptic special functions).  The
common feature of the relaxations is that they can be identified with
moduli spaces of sheaves on rational surfaces.  Not only does this make
various natural questions become purely geometric (rigid equations
correspond to $-2$-curves), it also establishes a number of nontrivial
correspondences between different moduli spaces, since a given moduli space
of sheaves is typically the relaxation of infinitely many moduli spaces of
equations.  In the process of understanding this, we also consider a number
of purely geometric questions about rational surfaces with anticanonical
curves; e.g., we give an essentially combinatorial algorithm for testing
whether a given divisor is the class of a $-2$-curve or is effective with
generically integral representative.

\end{abstract}

\tableofcontents

\section{Introduction}

One of the more striking properties of hypergeometric functions is that
they are the solutions of a {\em rigid} differential equation, i.e., one
which is determined by its order and its singularities.  For instance, it
was already observed by Riemann that any second-order Fuchsian differential
equation with exactly three singular points reduces by a simple change of
variables to the equation satisfied by a hypergeometric function of type
${}_2F_1$.  More generally, the equation satisfied by a hypergeometric
function of type ${}_rF_{r-1}$ is
rigid for any $r$; it is $r$-th order, with Fuchsian singularities at $0$,
$1$, and $\infty$, and is uniquely determined by the exponents at $0$ and
$1$, together with a more significant constraint on the singularity at
$\infty$.  This suggests that one should study rigid equations more
generally; for an exploration of this from the monodromy perspective, see
\cite{KatzNM:1996}.  In addition, the modern theory of Painlev\'e
transcendents leads us to consider what happens for non-rigid equations;
for instance, the Painlev\'e VI equation can be interpreted as a flow in a
2-dimensional moduli space of second order Fuchsian equations with four
singular points, with specified exponents at the singular points.

If we extend our focus to include $q$-hypergeometric functions, then we see
that we must consider more than just differential equations: Gauss'
differential equation becomes a $q$-difference equation when extended to
$q$-hypergeometric functions.  Even more generally, we could consider {\em
  elliptic} hypergeometric functions\footnote{Very roughly speaking, these are
series in which the usual hypergeometric constraint ``ratios of consecutive
terms are rational functions of the index'' is replaced by ``ratios of
consecutive terms are {\em elliptic} functions of the index''; we will not
need details, as we are considering these for motivation only.}, and thus
elliptic difference equations.

If we look carefully at the equations satisfied by most of the known
elliptic hypergeometric functions \cite{dets}, or the equations
satisfied by semiclassical elliptic biorthogonal functions
\cite{isomonodromy}, we find that there is an important additional
structure.  A general elliptic difference equation has the form
\[
v(z+q) = A(z) v(z)
\]
where $q$ is a point of an elliptic curve and $A(z)$ is a matrix of
elliptic functions (with $\det A(z)$ not identically 0); in the cases
of interest, though, we have the additional constraint
\[
A(-q-z) = A(z)^{-1}.
\]
This is more natural than it may appear.  We can view a difference equation
as a $1$-cocycle for the group $\Z$ acting on $\GL_n(k(E))$ via translation
by $q$, and cohomologous $1$-cocycles are simply related by
gauge transformations (called isomonodromy transformations in \cite{isomonodromy}, as they preserve a suitable notion of monodromy)
\[
A(z) \mapsto C(z+q) A(z) C(z)^{-1};
\]
i.e., the equation satisfied by $C(z)v(z)$ is cohomologous to the equation
satisfied by $v(z)$.  In the cases with symmetric equations, the solutions
of interest are symmetrical: they satisfy the additional constraint
$v(-z)=v(z)$.  Now, the {\em pair} of equations
\[
v(z+q) = A(z) v(z),\qquad v(-z) = v(z)
\]
can also be viewed as an object in nonabelian cohomology, namely a
$1$-cochain for the infinite dihedral group.  The constraint on $A$ simply
says that this cochain is a cocycle, basically a formal self-consistency
condition.  Indeed, a symmetrical solution of the difference equation
satisfies
\[
v(-q-z) = A(z) v(z)
\]
and thus
\[
v(z) = A(-q-z)v(-q-z) = A(-q-z)A(z) v(z).
\]
If $A(-q-z)\ne A(z)^{-1}$, then we we will have ``too few'' symmetric
solutions.  (Note that two cocycles for the infinite dihedral group are
cohomologous iff they are related by a gauge transformation with
$C(-z)=C(z)$.)

We are thus led to the following natural question: What are the {\em
  symmetric} elliptic difference equations which are rigid?  In
particular, we would expect, and will see, that the elliptic hypergeometric
equations are indeed rigid, and the equations related to the elliptic
Painlev\'e equation fit into a $2$-dimensional moduli space.  This, of
course, degenerates to corresponding questions about symmetric ordinary and
$q$-difference equations, and at those levels can also be degenerated to
questions about ordinary and $q$-difference equations without symmetry, not
to mention differential equations on $\P^1$.

In the case of differential equations, there is a useful relaxation of the
problem due to Hitchin \cite{HitchinN:1987}.  Rather than consider
differential equations themselves, i.e., connections on vector bundles over
$\P^1$, one considers $1$-form valued endomorphisms $V\to V\otimes \Omega$.
When $V\cong \sO_{\P^1}^n$ is a trivial bundle, these notions are
essentially the same: a $1$-form valued endomorphism $A$ corresponds to a
differential equation $dv = A v$.  While the notions
diverge for nontrivial bundles, we can still expect that a large open
subset of the two moduli spaces should coincide.  There is a corresponding
relaxation for difference equations \cite{HurtubiseJC/MarkmanE:2002a};
again, we classify matrices rather than difference equations, but the two
moduli problems are closely related.

A significant advantage of the relaxation over the original problem is that
it reduces to a moduli problem about sheaves on a smooth projective
surface.  The relaxed version of a differential equation on a smooth curve
$C$ corresponds to a sheaf on the ruled surface $\P(\sO_C\oplus \omega_C)$,
while the relaxed version of an elliptic difference equation (without
symmetry) corresponds to a sheaf on $E\times \P^1$ (in either case, this is
generically a line bundle on a certain spectral curve).  The information
about singularities translates to a specification of how said sheaf meets
a certain anticanonical curve on the surface (the zero locus of a Poisson
structure).

In the case of differential equations on $\P^1$, the surface in question is
the Hirzebruch surface $F_2\cong \P(\sO_{\P^1}\oplus \sO_{\P^1}(-2))$, and
the relevant anticanonical curve has the form $2S$ where $S$ is a section
disjoint from the $-2$ curve on $F_2$.  In contrast to the case of ruled
surfaces of higher genus, this anticanonical curve is extremely special,
and one might thus wonder whether there is a natural interpretation for the
moduli spaces associated to more general anticanonical curves on $F_2$.
The most general anticanonical curve on $F_2$ is in fact a smooth curve of
genus 1 (more precisely, a {\em hyperelliptic} curve of genus 1; specifying
an embedding in $F_2$ is equivalent to specifying a degree 2 map to
$\P^1$), and as we will see below, the corresponding moduli problem is a
relaxation of the moduli problem of symmetric elliptic difference
equations.

The fact that symmetric elliptic difference equations correspond to sheaves
on a {\em rational} surface appears to be at the core of why they appear in
special function theory.  Indeed, as we noted in \cite{poisson}, rigid
sheaves in the relaxation can only exist on a rational surface (even the
trivial equation $v(z+q)=v(z)$ fails to be rigid as a nonsymmetric elliptic
difference equation!).  In addition, the birational maps between irrational
ruled surfaces are extremely simple (between minimal surfaces, all
birational maps are compositions of elementary transformations), while
rational surfaces have a rich structure coming from birational maps.  As we
will see, this means that any given sheaf actually corresponds to a large
(in some cases infinite) set of inequivalent equations; in higher genus
cases, all we can do is multiply $v$ by the solution of a first-order
equation.

The purpose of the present note is to explore these relaxations, and in
particular the additional structure afforded by the fact that they live on
a rational surface.  We first show how to translate a symmetric elliptic
difference equation into a sheaf on a rational surface (including mild
generalizations where we twist by a line bundle), and discuss how this
degenerates to ordinary and $q$-difference cases.  Note that there is a
somewhat subtle issue here, in that what it means to be singular changes
slightly if we forget the symmetry of the equation (consider the equation
$v(z+q)=-v(z)$, which has no {\em symmetric} solutions which are
holomorphic and nonzero near $z=-q/2$).  This is one reason why we should
indeed think of symmetric elliptic equations as more than just a special
case of elliptic equations.  We also consider a few other moduli problems
that also reduce to questions about sheaves on rational surfaces.

Given this translation, considerations of \cite{poisson} reduce questions
of rigidity to much simpler questions in algebraic geometry.  Indeed, the
sheaves corresponding to relaxations of rigid difference/differential
equations are just direct images of line bundles on $-2$ curves on suitable
blowups of the original ambient surface (specifically, $-2$ curves which
are disjoint from the anticanonical curve).  Similarly, the $2$-dimensional
moduli spaces (there is an induced symplectic structure, so all moduli
spaces here are even-dimensional) are related to (quasi-)elliptic pencils.

In this way, our questions about moduli problems of difference equations
translate to structural questions about rational surfaces with an
anticanonical curve: what are the $-2$ curves, and which divisor classes
have integral representatives disjoint from the anticanonical curve?  And,
of course, what are the different ways of blowing a given rational surface
down to a Hirzebruch surface?  Earlier work on blowups of $\P^2$ leads us
to a certain family of Coxeter groups, which almost acts on the set of ways
of blowing down; each simple reflection acts unless a corresponding divisor
class is effective.  Using this action, we obtain algorithms for
determining (a) whether a given divisor class is the class of a $-2$ curve
(i.e., whether the corresponding sheaves represent rigid equations), and
(b) whether a given divisor class is effective (or nef, or integral).

The one major drawback of the relaxation is that the various natural
transformations of sheaves (changing the blowdown, twisting by a line
bundle on a blowup) will almost always act in the wrong way from the
difference equation perspective.  (E.g., twisting has the effect of
conjugating the matrix $A$ by a suitable rational matrix, and this needs to
be replaced by a suitable $q$-deformed conjugation.)  Since the relaxation
lives on a {\em Poisson} rational surface, it is natural to conjecture that
the original problem should correspond to sheaves on a {\em noncommutative}
rational surface.  This is bolstered by recent work \cite{P2Painleve}
showing that one can obtain the elliptic Painlev\'e equation as a
Hitchin-type system on a noncommutative $\P^2$.  In a future paper
\cite{noncomm1}, we will show how to use elliptic difference operators to
construct a suitable family of noncommutative rational surfaces, and extend
this to general ruled surfaces in \cite{noncomm2}; this will require some
additional facts about commutative rational surfaces which we establish
here.  In particular, our noncommutative rational surfaces will be
constructed via certain flat families of difference operators, and it is
already a nontrivial fact, established below, that the corresponding spaces
are flat in the commutative setting.

Related to this, we also consider some general questions about the moduli
space (stack) of anticanonical rational surfaces.  In particular, this
moduli stack naturally splits as a union of locally closed substacks based
on the structure of the anticanonical curve.  This leads to the question of
how this different pieces are related, specifically how their closures
intersect.  This appears to be a rather hard problem in general; we give an
easy necessary condition for one such substack to be contained in the
closure of another, as well as a much more subtle necessary condition,
which leads to some pathologies in small characteristic.  In particular, we
give a corrected version of the diagram of degenerations of surfaces with
$K^2=0$ (corresponding to Sakai's hierarchy \cite{SakaiH:2001} of discrete
and continuous Painlev\'e equations).  We also discuss in detail how the
structure of the anticanonical curve relates to the structure of the
corresponding difference/differential equations.

We will then conclude with a couple of sections discussing the implications
of these results for symmetric elliptic difference equations (including
what most of the natural operations do both in the relaxed and in the
nonrelaxed versions), as well as certain degenerate cases.  The latter
include natural birational maps between spaces of symmetric $q$-difference
equations and spaces of nonsymmetric $q$-difference equations, as well as
maps between such equations and solutions of the ``multiplicative
Deligne-Simpson problem''.  This includes settling a conjecture of
\cite{EOR}, as a special case of a theorem identifying the Jacobian of a
rational elliptic surface (Theorem \ref{thm:Picr} below).  We also briefly
consider some deformations generalizing certain Calogero-Moser spaces.

{\bf Acknowledgements}. The author would like to thank D. Arinkin,
A. Borodin, P. Etingof, T. Graber, A. Knutson, and A. Okounkov for helpful
conversations, as well as N. Joshi for some \TeX\ assistance.  This work was
partially supported by grants from the National Science Foundation,
DMS-1001645 and DMS-1500806.

\section{Sheaves from difference equations}

As we discussed in the introduction, the analogue of a differential equation
at the top (elliptic) level in the hierarchy of special functions is a {\em
  symmetric elliptic difference equation}, which we should think of as the
pair of equations
\[
v(z+q) = A(z) v(z),
\qquad
v(-z) = v(z),
\]
where $A$ is a matrix of elliptic functions subject to the consistency
condition $A(-q-z)A(z)=1$.  The natural relaxation of this problem is to
forget the difference equation, and simply classify matrices $A$ of
elliptic functions satisfying $A(-q-z)A(z)=1$.  (We will also want to take
into account singularities, but will table that question for the moment.)

Since we plan to relate this to an algebraic geometric object, it will be
helpful to rephrase this original problem in a somewhat more abstractly
geometric way.  Thus we suppose given a smooth genus 1 curve $C_\alpha$ over
an algebraically closed field $k$ (not necessarily of characteristic 0),
along with a translation $\tau_q:C_\alpha\to C_\alpha$ and a hyperelliptic
involution $\eta:C_\alpha\to C_\alpha$, i.e., such that the quotient of
$C_\alpha$ by the involution is isomorphic to $\P^1$.  In the analytic
setting, $C_\alpha$ is $\C/\Lambda$ for some lattice $\Lambda$, $\tau_q$ is
the map $z\mapsto z+q$, and $\eta$ is the map $z\mapsto -q-z$.  We take the
latter choice for $\eta$ so that the problem of classifying $A$ becomes the
following: Classify matrices $A\in \GL_n(k(C_\alpha))$ such that $\eta^*A =
A^{-1}$.

Just as the original problem can be rephrased in terms of $1$-cocycles of
the infinite dihedral group on $\GL_n(k(C_\alpha))$, this question is
itself related to nonabelian cohomology: a matrix $A$ such that
$\eta^*A=A^{-1}$ specifies a $1$-cocycle for the cyclic group
$\langle\eta\rangle$ (of order 2).  Now, the action of $\eta$ allows us to
think of $k(C_\alpha)$ as a Galois extension of the invariant subfield
$k(\P^1)$, and thus we find
\[
H^1(\langle \eta\rangle;\GL_n(k(C_\alpha))) = H^1(\Gal(k(C_\alpha)/k(\P^1)),\GL_n).
\]
It is a classical fact that the latter Galois cohomology set is trivial,
and this translates to the following fact (often referred to as Hilbert's
Theorem 90, though Hilbert only considered the case of a cyclic Galois
group acting on $\GL_1$).

\begin{prop}
Let $L/K$ be a quadratic field extension, and let $A\in \GL_n(L)$ be a
matrix such that $\bar{A}=A^{-1}$, where $\bar\cdot$ is the
conjugation of $L$ over $K$.  Then there exists a matrix $B\in \GL_n(L)$
such that $A = \bar{B} B^{-1}$, and $B$ is unique up to
right-multiplication by $\GL_n(K)$.
\end{prop}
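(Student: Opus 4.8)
The plan is to prove this via the standard argument for the vanishing of $H^1$ of a Galois group with coefficients in $\GL_n$ (i.e., the additive-to-multiplicative form of Hilbert 90), made explicit in the quadratic case. The key input is linear independence of characters: the identity and the conjugation $\bar\cdot$ are distinct automorphisms of $L$, hence linearly independent over $L$, so the $L$-linear combination $\id + A\cdot(\bar\cdot)$ — interpreted as the map $C\mapsto C + A\bar C$ on $\Mat_n(L)$ — is not identically zero. Concretely, I would pick a generic $C_0\in \Mat_n(L)$ and set $B = C_0 + A\,\bar{C_0}$, then argue that for suitable $C_0$ this $B$ is invertible.

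First I would verify the cocycle-twisted equivariance: applying conjugation, $\bar B = \bar{C_0} + \bar A\, C_0 = \bar{C_0} + A^{-1} C_0 = A^{-1}(A\bar{C_0} + C_0) = A^{-1} B$, using $\bar A = A^{-1}$. Hence $A = \bar B\, B^{-1}$ as soon as $B\in\GL_n(L)$, so the entire content is the existence of an invertible value of the map $C\mapsto C + A\bar C$. For this I would consider the polynomial function $C\mapsto \det(C + A\bar C)$ on $\Mat_n(L)$; writing $C = C_1 + \theta C_2$ in terms of a basis $1,\theta$ of $L/K$ with $C_1,C_2\in\Mat_n(K)$, this becomes a polynomial function on the $2n^2$-dimensional $K$-vector space $\Mat_n(K)^2$. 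It suffices to show this polynomial is not identically zero, since $K$ is infinite (any field admitting a quadratic extension that is not the trivial one is automatically infinite — actually one should be slightly careful, but over a finite field one can instead invoke surjectivity of the norm / a direct count, or simply note the paper's $K = k(\P^1)$ is infinite). Nonvanishing follows from the fact that, after base change to an algebraic closure where $L\otimes_K\bar K \cong \bar K\times\bar K$ and conjugation swaps the factors, the map $C\mapsto C + A\bar C$ becomes $(X,Y)\mapsto (X + A_1 Y,\, Y + A_2 X)$ for the two components of $A$, which visibly takes invertible values (e.g. take $Y$ invertible and $X = A_1 Y$ times something, or just observe the differential at a generic point is surjective); alternatively, specialize $C$ to a scalar multiple of $A^{-1}$ plus identity and compute directly.

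For uniqueness: if $\bar B B^{-1} = \bar B' B'^{-1}$ then $\bar B'^{-1}\bar B = B'^{-1} B$, i.e., the matrix $D := B'^{-1} B$ satisfies $\bar D = D$, so $D\in\GL_n(K)$ and $B = B'D$; conversely right-multiplying $B$ by any element of $\GL_n(K)$ does not change $\bar B B^{-1}$. I expect the main obstacle to be the nonvanishing of $\det(C + A\bar C)$ — the ``generic invertibility'' step — which is where linear independence of characters really does the work; everything else is a short formal manipulation. A clean way to package it is: the $L$-span of $\{\,C\mapsto C,\ C\mapsto A\bar C\,\}$ inside $\Hom_K(\Mat_n(L),\Mat_n(L))$ has the property that its generic member is invertible (a Zariski-open condition that is nonempty because it is nonempty after extending scalars to $\bar K$), and $C\mapsto C + A\bar C$ is one such member.
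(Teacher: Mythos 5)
Your proposal is correct in substance but takes a genuinely different route from the paper, which uses a direct averaging construction: for any $w\in L^n$ the vector $v=\bar{w}+A^{-1}w$ satisfies $\bar{v}=Av$, and applying this to a $K$-basis of $L^n$ yields at least $n$ such vectors that are linearly independent, which are then assembled into the columns of $B\in\GL_n(L)$ with $\bar{B}=AB$; uniqueness is the same short computation you give. The paper's construction is completely uniform --- no genericity, no Zariski density, no hypothesis on $K$ beyond the (Galois) quadratic extension, and it works verbatim over finite fields. Your route instead works at the matrix level with the map $C\mapsto C+A\bar{C}$ and argues that a generic value is invertible, and this is exactly where your two caveats sit: (i) the density argument really does require $K$ infinite (and $L/K$ separable, so that $L\otimes_K\bar{K}\cong\bar{K}\times\bar{K}$ with swapped factors), so the finite-field case of the proposition as stated is not covered by your main mechanism and must be patched by the norm-surjectivity/counting argument you only gesture at --- a case distinction the paper's proof avoids entirely; (ii) there is a small slip of sides: your verification gives $\bar{B}=A^{-1}B$, i.e.\ $A=B\bar{B}^{-1}$, rather than $A=\bar{B}B^{-1}$. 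This is harmless --- replace $B$ by $\bar{B}$, or start from $B=\bar{C_0}+A^{-1}C_0$, which is precisely the paper's columnwise formula --- but as written the displayed conclusion does not match the claimed cocycle identity. With those two repairs your argument is a valid, if less elementary, proof; what it buys is a reusable ``generic invertibility'' packaging, while the paper's argument buys uniformity over all base fields with no appeal to the geometry of $\Mat_n$.
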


\begin{proof}
  In this case, the argument is particularly simple.  Given any vector
  $w\in L^n$, the vector $v=\bar{w}+A^{-1}w$ satisfies
  $\bar{v}=Av$.  If we apply this to a basis of $L^n$ over $K$, we
  obtain in this way at least $n$ vectors satisfying $\bar{v}=Av$
  which are linearly independent over $K$.  It follows that there exists a
  matrix $B\in \GL_n(L)$ such that $\bar{B} = A B$, which is what we
  want.  If $B'$ is another such matrix, then
\[
\overline{B^{-1}B'} = B^{-1} A^{-1} A B' = B^{-1}B',
\]
and thus $B^{-1}B'\in \GL_n(K)$ as required.
\end{proof}

In our setting, it will turn out to be appropriate to make the
factorization have the form $A = \eta^* B^{-t} B^t$.  (In the
noncommutative setting, the most natural correspondence between difference
equations and sheaves is contravariant and holomorphic in $B$.)  The
nonuniqueness (we can still multiply $B$ on the right by any element of
$\GL_n$) is of course still an issue, but it turns out there is a slight
modification which can be made unique.  The first step is to make the
nonuniqueness problem worse by allowing $B$ to be a map between vector
bundles.  Let $\pi_\eta:C_\alpha\to \P^1$ be the morphism quotienting by
the action of $\eta$. Then for any vector bundle $V$ on $\P^1$, and any
meromorphic (and generically invertible) map
\[
B:\pi_\eta^*V\to \sO_{C_\alpha}^n,
\]
we obtain a well-defined matrix $\eta^* B^{-t}B^t$, and of course any
matrix with $\eta^*A = A^{-1}$ can be represented in this way (just take
$V$ to be $\sO_{\P^1}^n$\dots).  (Here, by the transpose $B^t$, we mean the
image of $B$ under the functor $\sHom_{C_\alpha}(-,\sO_{C_\alpha})$.)  The advantage of
allowing $V$ to be a more general vector bundle is that we can then insist
that $B$ be {\em holomorphic} (and thus injective), by absorbing any poles
into $V$.  This is still non-unique, since we could freely replace $V$ by
any vector bundle it contains, and still obtain an injective morphism
supporting a factorization of $A$.  However, we can make this unique by
imposing a maximality condition on $V$.

\begin{prop}
  Suppose $B:\pi_\eta^*V_0\to k(C_\alpha)^n$ is an injective map of sheaves, with
  $V_0$ a rank $n$ vector bundle on $\P^1$.  This induces an isomorphism
  $B:\pi_\eta^*(V_0\otimes_{\sO_{\P^1}} k(\P^1))\cong k(C_\alpha)^n$, and the set of
  bundles $V\subset V_0\otimes_{\sO_{\P^1}} k(\P^1)$ such that $BV\subset
  \sO_{C_\alpha}^n$ is nonempty, with a unique maximal element.
\end{prop}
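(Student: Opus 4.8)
The plan is to work inside the constant sheaf $\mathcal K := V_0\otimes_{\sO_{\P^1}}k(\P^1)$, whose stalk at every point is an $n$-dimensional $k(\P^1)$-vector space, and to let $\mathcal S$ be the set of all vector-bundle subsheaves $V\subseteq\mathcal K$ with $B(\pi_\eta^*V)\subseteq\sO_{C_\alpha}^n$. Throughout I will use that $\pi_\eta\colon C_\alpha\to\P^1$, being a finite morphism of smooth curves, is flat, so $\pi_\eta^*$ is exact and commutes with forming sums, intersections and images of subsheaves of a fixed $\sO_{\P^1}$-module. The first assertion is then immediate: injectivity of $B$ means it is injective on the stalk at the generic point of $C_\alpha$, and that stalk, for $\pi_\eta^*V_0$, is an $n$-dimensional $k(C_\alpha)$-space; an injective $k(C_\alpha)$-linear map of an $n$-dimensional space into $k(C_\alpha)^n$ is an isomorphism, and $\pi_\eta^*(V_0\otimes k(\P^1))$ is precisely the constant sheaf with that stalk.

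Next I would check $\mathcal S\ne\emptyset$. Over a finite affine cover of $C_\alpha$ trivialising $\pi_\eta^*V_0$, the map $B$ is given by matrices of rational functions whose poles are supported on a finite set $Z\subset C_\alpha$; if $D$ is a sufficiently large positive multiple of the reduced divisor on $\P^1$ supported on $\pi_\eta(Z)$, then $\pi_\eta^*D$ dominates the pole divisor of $B$, so $V_0(-D)\in\mathcal S$, and this element has rank $n$.

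The core of the argument is that $\mathcal S$ is closed under sums and that the degrees of its members are bounded above; together these force a unique maximal element. Closure: for $V,V'\in\mathcal S$ the sheaf $V+V'\subseteq\mathcal K$ is coherent (a quotient of $V\oplus V'$), hence torsion-free, hence locally free on the smooth curve $\P^1$, and $\pi_\eta^*(V+V')=\pi_\eta^*V+\pi_\eta^*V'$ again lands in $\sO_{C_\alpha}^n$. Degree bound: if $V\in\mathcal S$ has rank $r$, then $B$ embeds $\pi_\eta^*V$ into $\sO_{C_\alpha}^n$, giving a nonzero map $\det(\pi_\eta^*V)\to\textstyle\bigwedge^r\sO_{C_\alpha}^n$, so $2\deg V=\deg\pi_\eta^*V\le 0$, i.e. $\deg V\le 0$. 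Combining this with the standard exact sequence $0\to V\cap V'\to V\oplus V'\to V+V'\to 0$ for full-rank subsheaves of $\mathcal K$, which yields $\deg(V+V')\ge\max(\deg V,\deg V')$ since $V\cap V'\subseteq V$, I would take $V_{\max}\in\mathcal S$ of rank $n$ with $\deg V_{\max}$ maximal (possible, since $\mathcal S$ has a rank-$n$ member and all such degrees are integers $\le 0$). For any $V'\in\mathcal S$, the sheaf $V'+V_{\max}$ lies in $\mathcal S$, has rank $n$, contains $V_{\max}$, and has degree $\le\deg V_{\max}$ by maximality, hence equal; but an inclusion $V_{\max}\subseteq V'+V_{\max}$ of rank-$n$ bundles of equal degree is an equality, so $V'\subseteq V_{\max}$. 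Thus $V_{\max}$ is the unique maximal element.

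I expect the only genuine subtlety to be recognising that closure under sums by itself does not yield a maximal element — one really needs the a priori bound $\deg V\le 0$ extracted from $B(\pi_\eta^*V)\subseteq\sO_{C_\alpha}^n$. The remaining ingredients (torsion-free equals locally free on a smooth curve, flatness of finite maps of smooth curves, additivity of degree in short exact sequences, and $\deg\pi_\eta^*=2\deg$) are routine.
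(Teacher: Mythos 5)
Your argument is correct and follows essentially the same route as the paper's proof: nonemptiness by absorbing the poles of $B$, closure of the admissible set under sums (sums inside $V_0\otimes k(\P^1)$ are torsion-free, hence bundles), and the degree bound $\deg V\le 0$ forced by $B(\pi_\eta^*V)\subset\sO_{C_\alpha}^n$, which together yield the unique maximal element. You merely spell out the last step (maximal-degree rank-$n$ member absorbs everything) that the paper leaves implicit.
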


\begin{proof} That the induced map of vector spaces over $k(C_\alpha)$ is an
  isomorphism follows from the fact that it is an injective map of vector
  spaces of the same dimension.  That the set of bundles $V$ is nonempty is
  straightforward, as we have already mentioned (just absorb any poles of
  $B$ into $V$).  Finally, if $V_1$, $V_2$ are vector bundles contained in
  $V_0\otimes_{\sO_{\P^1}}k(\P^1)$ such that $BV_1, BV_2\subset \sO_{C_\alpha}^n$,
  then $V_1+V_2$ is still contained in $V_0\otimes_{\sO_{\P^1}}k(\P^1)$, so
  is torsion-free, thus a vector bundle; and $B(V_1+V_2)=BV_1+BV_2\subset
  \sO_{C_\alpha}^n$.  Since $BV\subset \sO_{C_\alpha}^n$ implies $\deg(\pi_\eta^*V)=\deg(V)\le
  0$, it follows that there is a unique maximal such bundle.
\end{proof}

To summarize the above considerations, given any matrix $A\in
\GL_n(k(C_\alpha)))$ such that $\eta^*A=A^{-1}$, there is a canonical
factorization $A=\eta^*B^{-t}B^t$ where $B$ is an injective morphism
\[
B:\pi_\eta^*V\to \sO_{C_\alpha}^n
\]
with $V$ a rank $n$ vector bundle on $\P^1$, maximal among those supporting
a map $B$.  This canonical factorization also clarifies issues regarding
singularities.  For instance, as we mentioned in the introduction, the
equation $v(z+q)=-v(z)$ is singular at $-q/2$ as a symmetric equation,
since there are no symmetric solutions which are nonzero and holomorphic at
$-q/2$.  This is nonobvious in terms of $A$, since $A=-1$ has no zeros or
poles here, but becomes clear in terms of $B$, as we find that $B$ must in
fact vanish at every point of the form $-q/2$ (more precisely, at every
fixed point of $\eta$).  We find in general that the points where
$\tau_q^*v=Av$ is singular as a symmetric equation are precisely those
points where $\det(B)=0$.  More generally, the right way to classify
singularities of (symmetric) elliptic difference equations is to consider
the induced cocycle over the ring of ad\`eles; one can show that the
classes of such cocycles are determined by the corresponding elementary
divisors of $B$.

\begin{rem}
  A similar factorization appeared in \cite{isomonodromy}, but the reader
  should be cautioned that they are not quite the same; indeed, the
  factorization of \cite{isomonodromy} involves a partition of the
  singularities in to two subsets, and depends significantly on that
  choice.  It turns out that those matrices (up to transpose) correspond to
  canonical factorizations of cohomologous equations, see Section
  \ref{sec:elldiff}.
\end{rem}

Since $B$ is a map from a pullback, we can use adjunction to relate it to
a map to a direct image: specifying $B$ is equivalent to specifying
\[
\pi_{\eta*}B:V\to \pi_{\eta*}\sO_{C_\alpha}^n.
\]
With this in mind, we can obtain a natural extension of $B$ to a surface
containing $C_\alpha$ (as an anticanonical curve).  Indeed, since $\pi_\eta$
has degree 2, the direct image $\pi_{\eta*}\sO_{C_\alpha}$ is a vector bundle
of degree $2$, and thus we can take the corresponding projective bundle to
obtain a Hirzebruch surface $X = \P(\pi_{\eta*}\sO_{C_\alpha})$.  Note that
$X\cong F_2$, since $\pi_{\eta*}\sO_{C_\alpha}\cong \sO_{\P^1}\oplus
\sO_{\P^1}(-2)$.  Moreover, $X$ contains $C_\alpha$ in a natural way, in such
a way that the induced map from $C_\alpha$ to $\P^1$ is just $\pi_\eta$, and
$C_\alpha$ is anticanonical.  If $\rho:X\to \P^1$ is the corresponding
ruling, and $s_{\min}$ denotes the section of the ruling with minimal
self-intersection ($s_{\min}^2=-2$), then we have a canonical isomorphism
\[
\pi_{\eta*} \sO_{C_\alpha} \cong \rho_*{\cal L} (s_{\min}),
\]
since $\sO_X(s_{\min})$ is the relative $\sO(1)$.  This is just the direct
image under $\rho$ of the restriction map
\[
\sO_X(s_{\min})\to \sO_X(s_{\min})|_{C_\alpha}\cong \sO_{C_\alpha},
\]
where we note that $C_\alpha$ and $s_{\min}$ are disjoint, and we make the
isomorphism canonical by taking the unique global section of
$\sO_X(s_{\min})$ to the unique global section of $\sO_{C_\alpha}$.

In other words, to specify $B:\pi_\eta^*V\to \sO_{C_\alpha}^n$, it is
equivalent to specify its direct image
\[
\rho_*B:V\to \rho_*\sO_X(s_{\min})^n,
\]
where we now think of $B$ as a morphism of sheaves on $C_\alpha\subset X$.
Again using the adjunction between $\rho_*$ and $\rho^*$ gives us a morphism
\[
B:\rho^*V\to \sO_X(s_{\min})^n,
\]
which when restricted to $C_\alpha\subset X$ recovers the original morphism.
Again, we modify this slightly to
\[
B:\rho^*V\otimes \sO_X(-s_{\min})\to \sO_X^n,
\]
which has no effect on the restriction to $C_\alpha$, but is more natural in the
noncommutative setting (and slightly more natural even in the commutative
setting).  In any event, we now have a morphism of vector bundles on the
Hirzebruch surface $X$.  This in turn translates to a questions about
sheaves, via the following result.

\begin{prop}
  Let $\rho:X\to C$ be a ruled surface, with relative $\sO(1)$ denoted by
  $\sO_\rho(1)$, and let $M$ be a coherent sheaf on $X$.  Then the
  following are equivalent.
\begin{itemize}
\item[1.] $M$ is the cokernel of an injective morphism
\[
B:\rho^*V\otimes \sO_\rho(-1)\to \rho^*W
\]
with $V$, $W$ vector bundles of the same rank on $C$.
\item[2.]  $M$ has $1$-dimensional support, $M\otimes \sO_\rho(-1)$ is
  $\rho_*$-acyclic, and $\rho_*M$ is torsion-free.
\end{itemize}
Moreover, if either condition holds, then $B$ is uniquely determined
up to isomorphism by $M$.
\end{prop}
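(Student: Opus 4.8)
The plan is to prove the two implications and then the uniqueness clause, with the real work in $(2)\Rightarrow(1)$. Write $X=\P(\mathcal E)$ for a rank $2$ bundle $\mathcal E$ on $C$ (a smooth curve), abbreviate $M(k):=M\otimes\sO_\rho(k)$, and record that since $\sO_\rho(-1),\sO_\rho(-2)$ restrict to $\sO_{\P^1}(-1),\sO_{\P^1}(-2)$ on fibres, $\rho_*\sO_\rho(-1)=R^1\rho_*\sO_\rho(-1)=\rho_*\sO_\rho(-2)=0$, $R^1\rho_*\sO_\rho(-2)$ is a line bundle, and $\rho_*\sO_X=\sO_C$, $R^1\rho_*\sO_X=0$. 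For $(1)\Rightarrow(2)$: given an exact sequence $0\to\rho^*V\otimes\sO_\rho(-1)\to\rho^*W\to M\to 0$ with $V,W$ of rank $n$, integrality of $X$ makes the first map $B$ generically an isomorphism, so $M$ is supported on the divisor $\{\det B=0\}$; pushing the sequence forward along $\rho$ and using the projection formula together with the vanishings above yields $\rho_*M\cong W$ (which is locally free, hence torsion-free) and $R^1\rho_*M=0$, while pushing forward its $\sO_\rho(-1)$-twist yields $R^1\rho_*(M(-1))=0$. This is $(2)$. (The same bookkeeping gives $\rho_*(M(-1))\cong V\otimes R^1\rho_*\sO_\rho(-2)$, which I reuse below.)

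For $(2)\Rightarrow(1)$, I first upgrade the hypothesis to $R^1\rho_*M=0$: as a top direct image, $R^1\rho_*(M(-1))=0$ says $H^1(X_c,(M(-1))|_{X_c})=0$ for every $c\in C$, and on $\P^1$ the vanishing of $H^1$ of a coherent sheaf is inherited under twisting up by $\sO(1)$ (reduce to the locally free quotient, hence to line bundles), so $H^1(X_c,M|_{X_c})=0$ for all $c$ and thus $R^1\rho_*M=0$. Now I resolve the relative diagonal. Since $\rho$ has relative dimension $1$, the diagonal $\Delta\subset X\times_C X$ is a divisor; concretely it is the zero locus of the section of $p_1^*\sO_\rho(1)\otimes p_2^*\sO_\rho(1)\otimes p_1^*\rho^*(\det\mathcal E)^{-1}$ obtained by composing the inclusion $p_1^*(\rho^*\det\mathcal E\otimes\sO_\rho(-1))\hookrightarrow p_1^*\rho^*\mathcal E=p_2^*\rho^*\mathcal E$ coming from the relative Euler sequence with the tautological quotient $p_2^*\rho^*\mathcal E\twoheadrightarrow p_2^*\sO_\rho(1)$, so $\sO(-\Delta)=p_1^*\sO_\rho(-1)\otimes p_2^*\sO_\rho(-1)\otimes p_1^*\rho^*\det\mathcal E$. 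Tensoring the Koszul resolution $0\to\sO(-\Delta)\to\sO_{X\times_C X}\to\sO_\Delta\to 0$ with $p_1^*M$ and applying $\dR p_{2*}$ computes $M$, because the Fourier--Mukai transform with kernel $\sO_\Delta$ is the identity. Flat base change along $p_2$ turns $\dR p_{2*}p_1^*(-)$ into $\rho^*\dR\rho_*(-)$, the projection formula strips off the $p_2^*\sO_\rho(-1)$ factor in $\sO(-\Delta)$, and the two vanishings $R^1\rho_*M=R^1\rho_*(M(-1))=0$ collapse both derived pushforwards to ordinary ones; the outcome is an exact sequence
\[
0 \longrightarrow \rho^*\bigl(\rho_*(M(-1))\otimes\det\mathcal E\bigr)\otimes\sO_\rho(-1)\longrightarrow \rho^*(\rho_*M)\longrightarrow M\longrightarrow 0,
\]
whose first map I call $B$.

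It remains to check that $W:=\rho_*M$ and $V:=\rho_*(M(-1))\otimes\det\mathcal E$ are vector bundles of the same rank. By hypothesis $W$ is torsion-free, hence locally free since $C$ is smooth. The subsheaf $\rho^*V\otimes\sO_\rho(-1)=\ker(\rho^*W\to M)$ of the locally free sheaf $\rho^*W$ is torsion-free, hence so is $\rho^*V$, hence so is $V$ — a nonzero torsion subsheaf of $V$ would pull back, along the surjective flat map $\rho$, to a nonzero torsion subsheaf of $\rho^*V$ — and therefore $V$ is locally free; and comparing generic ranks on $X$, where $M$ vanishes since its support is $1$-dimensional, shows $V$ and $W$ have the same rank. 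Thus the sequence above exhibits $(1)$. For uniqueness, let $B$ be any morphism as in $(1)$: the computation for $(1)\Rightarrow(2)$ shows that $\rho_*$ sends the surjection $\rho^*W\to M$ to an isomorphism $W\cong\rho_*M$, under which $\rho^*W\to M$ is identified with the adjunction counit $\rho^*\rho_*M\to M$; hence $\im B=\ker(\rho^*W\to M)$ is identified with the canonical subsheaf $\ker(\rho^*\rho_*M\to M)\subset\rho^*\rho_*M$. So any two presentations of $M$ are matched by the canonical isomorphisms of their copies of $\rho^*W$ with $\rho^*\rho_*M$, and the induced isomorphism of kernels descends under $\rho_*$ to an isomorphism $V\cong V'$ compatible with $B$ and $B'$.

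The step I expect to be the main obstacle is the construction in $(2)\Rightarrow(1)$: pinning down the relative-diagonal (equivalently, relative Beilinson) resolution with the correct twists, and then extracting local freeness and equality of ranks of the two coefficient sheaves from the stated hypotheses. The two small but load-bearing points are the implication $R^1\rho_*(M(-1))=0\Rightarrow R^1\rho_*M=0$, which is what makes the derived pushforwards degenerate, and the torsion-freeness of $\rho^*V\otimes\sO_\rho(-1)$ as a subsheaf of $\rho^*W$.
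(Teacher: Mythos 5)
Your proof is correct, but the route you take for the main implication $(2)\Rightarrow(1)$ is genuinely different from the paper's. The paper argues fiberwise: the hypothesis $R^1\rho_*(M\otimes\sO_\rho(-1))=0$ makes every fiber of $M$ Castelnuovo--Mumford $0$-regular, hence $M$ is relatively globally generated (citing Kleiman); purity of $M$ (no $0$-dimensional subsheaf, since $\rho_*M$ is torsion-free) gives homological dimension $1$, so the kernel of $\rho^*\rho_*M\to M$ is a vector bundle, and a fiberwise cohomology computation shows each of its fibers is a sum of $\sO_{\P^1}(-1)$'s, whence the kernel is $\rho^*V\otimes\sO_\rho(-1)$. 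You instead resolve the relative diagonal, using $\sO(-\Delta)=p_1^*\sO_\rho(-1)\otimes p_2^*\sO_\rho(-1)\otimes p_1^*\rho^*\det\mathcal E$ and the identity Fourier--Mukai kernel $\sO_\Delta$, and let the two acyclicity statements collapse the derived pushforwards; this produces the presentation in one stroke, gives surjectivity of the counit without invoking relative global generation, avoids the purity/homological-dimension step (local freeness of $V$ comes from torsion-freeness descent along the flat $\rho$), and identifies $V$ explicitly as $\rho_*(M\otimes\sO_\rho(-1))$ twisted by a line bundle --- a formula the paper only records afterwards in a remark. What the paper's route buys in exchange is elementarity (no fiber product or derived formalism) and the structural ``relative minimal resolution'' picture it wants to emphasize. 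Two small points you should make explicit if you write this up, though both are routine and the paper glosses over the analogous facts: the upgrade from $H^1$-vanishing on fibers of $M\otimes\sO_\rho(-1)$ to $R^1\rho_*M=0$ is not plain cohomology-and-base-change since $M$ need not be flat over $C$ (use right-exactness of $R^1\rho_*$ for one direction and the theorem on formal functions, with right-exactness of $H^1$ on the one-dimensional fibers, for the other); and the sequence obtained by tensoring $0\to\sO(-\Delta)\to\sO\to\sO_\Delta\to 0$ with $p_1^*M$ stays exact --- most cleanly seen by noting $\dL\Delta^*p_1^*M\cong M$ since $p_1\circ\Delta=\mathrm{id}$, so $\sO_\Delta\otimes^{\dL}p_1^*M\cong\Delta_*M$ with no higher Tor terms.
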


\begin{proof}
  $1\implies 2$: Since $B$ is an injective morphism of vector bundles of
  the same rank, it is an isomorphism on the generic fiber, and thus
  $\supp(M)$ does not contain the generic point of $X$.  It follows that
  $M$ has $\le 1$-dimensional support.  (In fact, $M$ is supported on the
  zero locus of $\det(B)$.)

  Now, since the sheaves $\sO_\rho(d)$ are isomorphic to $\sO_f(d)$ on
  every fiber $f$, we find that $\sO_\rho(d)$ is $\rho_*$-acyclic for $d\ge
  -1$, and has trivial direct image for $d\le -1$.  In particular, we can
  compute the higher direct image long exact sequence associated to the
  short exact sequence
\[
0\to \rho^*V\otimes \sO_\rho(-2)\to \rho^*W\otimes \sO_\rho(-1)\to M\otimes
\sO_\rho(-1)\to 0.
\]
Since $\rho$ has $1$-dimensional fibers, this long exact sequence
terminates after degree 1, and we conclude that $M\otimes \sO_\rho(-1)$ is
$\rho_*$-acyclic.  Similarly, from the untwisted short exact sequence, we
obtain
\[
W\cong \rho_*\rho^*W\cong \rho_*M.
\]
In particular, $\rho_*M$ is torsion-free, and we can recover $B$ as the
kernel of the natural map $\rho^*\rho_*M\to M$.

$2\implies 1$: The condition that $M\otimes \sO_\rho(-1)$ is
$\rho_*$-acyclic implies that if we view $M$ as a family of sheaves on
$\P^1$, then every fiber satisfies $H^1(M_f(-1))=0$.  In particular, every
fiber is $0$-regular in the sense of Castelnuovo and Mumford, and thus $M$
is relatively globally generated \cite{KleimanS:1971}. Since $\rho_*M$ is
torsion-free by assumption, so a vector bundle, it remains only to show
that the kernel of this natural map has the form $\rho^*V\otimes
\sO_\rho(-1)$.  Now, $M$ cannot have any $0$-dimensional subsheaf,
since that would produce a $0$-dimensional subsheaf of $\rho_*M$.  In other
words, $M$ is a pure $1$-dimensional sheaf, and thus has homological
dimension $1$.  In particular, the kernel is a vector bundle (of the same
rank as $W$, since the map is generically surjective), so we can view it as
a flat family of sheaves on $\P^1$.  Since $\rho^*\rho_*M$ and $M$ are
acyclic with isomorphic direct image, it follows that the kernel has
trivial direct image and higher direct image, and thus every fiber of the
kernel has trivial cohomology.  The only sheaves on $\P^1$ with trivial
cohomology are sums of $\sO_{\P^1}(-1)$, and thus the kernel has the form
\[
V'\otimes \sO_\rho(-1)
\]
where $V'$ is a flat family of sheaves on $\P^1$, each fiber of which is a
power of $\sO_{\P^1}$.  In other words, $V'\cong \rho^*V$ for some vector
bundle $V$.
\end{proof}

\begin{rems}
Just as we found $W\cong \rho_*M$, we can also compute $V$ from $M$, since
\[
\rho_*(M\otimes \sO_\rho(-1))\cong V\otimes R^1\rho_*\sO_\rho(-2),
\]
and $R^1\rho_*\sO_\rho(-2)$ is a line bundle on $C$.
\end{rems}

\begin{rems}
  This argument was inspired by the main construction of \cite{BeauvilleA:2000},
  which considered minimal resolutions of sheaves on $\P^n$ for $n>1$; in
  our case, we have a {\em relative} minimal resolution of a family of
  sheaves on $\P^1$.
\end{rems}

Of course, there remain two conditions to translate into conditions on the
sheaf $M$, namely the constraint on the singularities, and the constraint
that $V$ is maximal.  The former is straightforward: specifying the
elementary divisors of $B$ along $C_\alpha$ is equivalent to specifying the
cokernel of $B$ as a morphism of vector bundles on $C_\alpha$, and thus the
singularities are determined by the restriction $M|_{C_\alpha}$.  (In
particular, we have the overall constraint that $M$ must be transverse to
$C_\alpha$, so that $B$ is generically invertible on $C_\alpha$!)  The
latter is somewhat more subtle, but is not too difficult to deal with.

\begin{prop}
Let $\rho:X\to C$ be a ruled surface, and suppose the sheaf $M$ is given by
a presentation
\[
0\to \rho^*V\otimes \sO_\rho(-1)\xrightarrow{B} \rho^*W\to M\to 0,
\]
where $V$ and $W$ are vector bundles of the same rank.
The morphism $B$ extends to a supersheaf $V\subsetneq V'\subset
V\otimes_{\sO_C} k(C)$ iff $M$ has a subsheaf of the form $\sO_f(-1)$ for
some fiber $f$ of $\rho$.
\end{prop}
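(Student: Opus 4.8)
The plan is to match proper supersheaves $V\subsetneq V'$ with subsheaves of $M$ isomorphic to $\sO_f(-1)$ by passing through the preimage of such a subsheaf in $\rho^*W$ and its direct image under $\rho$, reducing in both directions to the case where $V'/V$ has length one. For the forward implication, suppose $B$ extends over some $V\subsetneq V'\subset V\otimes_{\sO_C}k(C)$. Since $V'/V$ is a nonzero torsion sheaf on the curve $C$, it has a length-one subsheaf $k(p)$; replacing $V'$ by the preimage of $k(p)$ (the extended $B$ still carries $\rho^*V'\otimes\sO_\rho(-1)$ into $\rho^*W$), I may assume $V'/V\cong k(p)$. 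Pulling back along the flat morphism $\rho$ and twisting by $\sO_\rho(-1)$ yields
\[
0\to\rho^*V\otimes\sO_\rho(-1)\to\rho^*V'\otimes\sO_\rho(-1)\to\sO_f(-1)\to0,
\]
with $f=\rho^{-1}(p)$, using $\rho^*k(p)=\sO_f$. The extended morphism $B'\colon\rho^*V'\otimes\sO_\rho(-1)\to\rho^*W$ restricts to $B$ and is still injective (it is generically an isomorphism of vector bundles of equal rank on the integral surface $X$), so $\im(B')/\im(B)\cong\sO_f(-1)$ is a subsheaf of $\rho^*W/\im(B)=M$.

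For the converse, let $N\subset M$ with $N\cong\sO_f(-1)$ and $f=\rho^{-1}(p)$, and let $\widetilde N\subset\rho^*W$ be its preimage. Then $\widetilde N$ is torsion-free, being a subsheaf of the vector bundle $\rho^*W$, and sits in an extension
\[
0\to\rho^*V\otimes\sO_\rho(-1)\xrightarrow{B}\widetilde N\to\sO_f(-1)\to0.
\]
Set $V':=\rho_*(\widetilde N\otimes\sO_\rho(1))$. Twisting this sequence by $\sO_\rho(1)$, applying $\rho_*$, and using $\rho_*\rho^*V=V$, $R^1\rho_*\rho^*V=0$, and $\rho_*(\sO_f(-1)\otimes\sO_\rho(1))=\rho_*\sO_f=k(p)$, we get a short exact sequence $0\to V\to V'\to k(p)\to0$; in particular $V\subsetneq V'$ and $V'/V\cong k(p)$.

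It remains to see that $V'$ is torsion-free and that the counit morphism extends $B$. If $T\subset V'$ were a nonzero torsion subsheaf it would be supported at finitely many points, so $\rho^*T\subset\rho^*V'$ would be a torsion subsheaf of $X$; its image under the counit $\rho^*V'=\rho^*\rho_*(\widetilde N\otimes\sO_\rho(1))\to\widetilde N\otimes\sO_\rho(1)$ would be a torsion subsheaf of the torsion-free sheaf $\widetilde N\otimes\sO_\rho(1)$, hence zero, and by the adjunction $\rho^*\dashv\rho_*$ this forces $T=0$. Thus $V'$ is a vector bundle with $V'\otimes k(C)=V\otimes k(C)$, i.e.\ an honest proper supersheaf of $V$ inside $V\otimes k(C)$. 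Finally, twisting the counit $\rho^*\rho_*(\widetilde N\otimes\sO_\rho(1))\to\widetilde N\otimes\sO_\rho(1)$ by $\sO_\rho(-1)$ gives a morphism $\rho^*V'\otimes\sO_\rho(-1)\to\widetilde N\subset\rho^*W$, and, since a morphism is recovered from its adjoint by precomposing the counit with the pullback of that adjoint, its restriction to $\rho^*V\otimes\sO_\rho(-1)$ is exactly $B$; this is the required extension.

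The crux of the argument is the converse, specifically guaranteeing that $V'$ is simultaneously torsion-free and strictly larger than $V$, so that it is a genuine proper supersheaf rather than an illusory enlargement. Both points reduce to the fact that $\widetilde N$ is a subsheaf of the torsion-free sheaf $\rho^*W$ — equivalently, to the non-splitting of the displayed extension, since the torsion sheaf $\sO_f(-1)$ cannot embed in $\widetilde N$. Once this is secured, the remaining work (identifying the various direct images and checking that the counit recovers $B$) is routine adjunction bookkeeping.
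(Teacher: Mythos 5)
Your proof is correct, and while the forward implication is essentially the paper's argument (reduce to a length-one quotient $V'/V$ and push the image of the extended map into $M$), your converse takes a genuinely different route. The paper forms the quotient $M'=M/\sO_f(-1)$, uses the long exact sequences of higher direct images to check that $M'$ again satisfies the hypotheses of the preceding characterization proposition, thereby obtains a presentation $0\to\rho^*V'\otimes\sO_\rho(-1)\to\rho^*W\to M'\to 0$, and then invokes functoriality of that construction to produce the inclusion $V\subset V'$ extending $B$. You instead work upstairs with the preimage $\widetilde N\subset\rho^*W$ of the subsheaf, define $V':=\rho_*(\widetilde N\otimes\sO_\rho(1))$ directly, and verify $V'/V\cong k(p)$, torsion-freeness of $V'$, and the fact that the twisted counit $\rho^*V'\otimes\sO_\rho(-1)\to\widetilde N\subset\rho^*W$ restricts to $B$, all by projection formula and adjunction. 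What your route buys is self-containedness: it does not re-invoke the earlier proposition (whose proof used relative Castelnuovo--Mumford regularity and the classification of acyclic sheaves on $\P^1$) nor its functoriality, and it exhibits the enlarged bundle $V'$ explicitly. What the paper's route buys is brevity given the machinery already in place, plus the stronger output that $\widetilde N$ itself is of the form $\rho^*V'\otimes\sO_\rho(-1)$, i.e.\ a full presentation of $M'$ --- a fact your argument neither needs nor establishes, since for the stated proposition a map $\rho^*V'\otimes\sO_\rho(-1)\to\rho^*W$ extending $B$ suffices. One small caveat: your closing aside that torsion-freeness of $V'$ is ``equivalent to the non-splitting of the displayed extension'' is looser than your actual argument (non-splitting alone is a priori weaker than the non-existence of any nonzero map $\sO_f(-1)\to\widetilde N$, which is what the adjunction step really uses); this does not affect the proof, since the argument you give rests on torsion-freeness of $\widetilde N$ as a subsheaf of $\rho^*W$, which is exactly the right input.
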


\begin{proof}
If $B$ extends to $V'$, then the image of $\rho^*V'\otimes \sO_\rho(-1)$
induces a subsheaf of $M$ isomorphic to
\[
\rho^*(V'/V)\otimes \sO_\rho(-1).
\]
Now, $V'/V$ is $0$-dimensional, so contains a subsheaf of the form $\sO_p$
for some closed point $p\in C$.  This $\sO_p$ itself induces a supersheaf
of $V$, and thus a subsheaf of $M$ of the form
\[
\rho^*(\sO_p)\otimes \sO_\rho(-1)\cong \sO_f(-1),
\]
where $f$ is the fiber over $p$.

Conversely, suppose we have an injective map $\sO_f(-1)\to M$, and let $M'$
be the cokernel.  The higher direct image long exact sequences tell us
\[
\rho_*M'\cong \rho_*M
\qquad
R^1\rho_*M'\cong R^1\rho_*M=0
\qquad
R^1\rho_*(M'\otimes \sO_\rho(-1))\cong R^1\rho_*(M'\otimes \sO_\rho(-1))=0,
\]
and thus $M'$ has a presentation of the form
\[
0\to \rho^*V'\otimes \sO_\rho(-1)\to \rho^*W\to M'\to 0.
\]
Since this construction is functorial, we obtain an injective morphism
\[
\rho^*V\otimes \sO_\rho(-1)\to \rho^*V'\otimes \sO_\rho(-1),
\]
thus an injective morphism $\rho^*V\to \rho^*V'$, and by adjunction,
$V\subset V'$ in such a way that $B$ extends.
\end{proof}

There is a dual condition related to relative global generation.

\begin{prop}
Let $\rho:X\to C$ be a ruled surface, and suppose that $M$ is a pure
$1$-dimensional sheaf on $X$.  If $M$ is $\rho_*$-acyclic, then $M$ is
relatively globally generated iff no quotient of $M$ has the form
$\sO_f(-1)$
for some fiber $f$ of $\rho$.
\end{prop}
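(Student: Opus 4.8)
The plan is to prove both directions by fiberwise analysis, using the fact that a $\rho_*$-acyclic pure $1$-dimensional sheaf restricts nicely to the fibers. First I would observe that since $M$ is pure $1$-dimensional and $\rho_*$-acyclic, for a general fiber $f$ the restriction $M|_f$ is a sheaf on $\P^1$ with $H^1=0$, hence a sum of $\sO_{\P^1}(d_i)$ with $d_i\ge -1$; the content is to control what happens on all fibers simultaneously, which is where relative global generation enters. Recall that $M$ is relatively globally generated iff the natural adjunction map $\rho^*\rho_*M\to M$ is surjective.

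For the ``only if'' direction I would argue contrapositively: if some quotient $M\twoheadrightarrow \sO_f(-1)$ exists, then applying $\rho_*$ gives $\rho_*M\to \rho_*\sO_f(-1)=0$, so the composite $\rho^*\rho_*M\to M\to \sO_f(-1)$ is zero; since $\sO_f(-1)$ is a nonzero quotient of $M$ through which the adjunction map does not surject, $\rho^*\rho_*M\to M$ cannot be surjective, i.e.\ $M$ is not relatively globally generated. For the ``if'' direction, suppose $M$ is not relatively globally generated, and let $N$ be the cokernel of $\rho^*\rho_*M\to M$, a nonzero sheaf. Applying $\rho_*$ to $\rho^*\rho_*M\to M\to N\to 0$ and using $\rho_*\rho^*\rho_*M\cong \rho_*M$ (projection formula, since $\rho_*\sO_X=\sO_C$) together with the fact that the composite $\rho_*M\to\rho_*M$ is the identity shows $\rho_*N=0$; so $N$ is a nonzero sheaf, a quotient of $M$ hence of dimension $\le 1$, with trivial direct image. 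I want to extract from $N$ a quotient isomorphic to some $\sO_f(-1)$. Since $N$ has no sections on fibers (as $\rho_*N=0$, and $N$ being a quotient of the pure sheaf $M$ forces $N$ to have no $0$-dimensional subsheaves after we pass to a suitable further quotient — more carefully, $M$ pure $1$-dimensional means any quotient's torsion is supported in dimension $\le 1$, and a $0$-dimensional piece of $N$ would lift to a $0$-dimensional quotient of $M$, impossible since $M$ is pure), I can reduce to a quotient $N'$ of $N$ that is pure $1$-dimensional with $\rho_*N'=0$.

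Now the key local computation: for $N'$ pure $1$-dimensional on the ruled surface with $\rho_*N'=0$, I claim $N'$ has a quotient of the form $\sO_f(-1)$. Restricting to a fiber $f$ in the support, $N'|_f$ is a nonzero sheaf on $\P^1\cong f$; if it had any summand $\sO_{\P^1}(d)$ with $d\ge 0$ that would propagate (by semicontinuity / the fact that $\rho_*$ commutes with base change once we know acyclicity, or by a direct argument on the flat-over-an-open-set locus) to a nonzero $\rho_*N'$, contradiction — so every fiber restriction, on the open dense part of $\supp N'$ where $N'$ is flat over $C$, is a sum of $\sO_{\P^1}(-1)$'s; in particular at such an $f$ there is a surjection $N'|_f\twoheadrightarrow\sO_f(-1)$, which composed with $N'\twoheadrightarrow N'|_f$ gives the desired quotient $N'\twoheadrightarrow\sO_f(-1)$, hence $M\twoheadrightarrow\sO_f(-1)$.

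The main obstacle is the last step's control over \emph{special} fibers: a priori $N'$ might be flat over $C$ only on an open set, and on the remaining fibers $N'|_f$ could contain $\sO_{\P^1}(d)$ with $d\ge 0$ yet still have vanishing direct image is impossible for a single fiber, but the subtlety is whether a $\sO_f(-1)$ quotient exists at a \emph{good} fiber without first knowing the generic fiber is a sum of $\sO(-1)$'s. I expect to handle this by the dual of the argument in the preceding proposition: apply Serre duality on fibers, or equivalently dualize — $M$ not relatively globally generated means $M^\vee$ (appropriately interpreted, e.g.\ $\sExt^1(M,\omega_\rho)$) has a section vanishing on a fiber, translating via the previous proposition's extension argument into the $\sO_f(-1)$ quotient; concretely, the cleanest route is to invoke the previous proposition applied to a Serre-dual sheaf, since ``$M$ has a subsheaf $\sO_f(-1)$'' for the dual corresponds exactly to ``$M$ has a quotient $\sO_f(-1)$'' for the original, with relative global generation being the Serre dual of the acyclicity-plus-no-bad-subsheaf condition. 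I would spell out this duality carefully, as it is the one genuinely nontrivial point; everything else is bookkeeping with direct images.
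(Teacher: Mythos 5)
Your first direction is fine and is essentially the paper's argument: adjunction gives $\Hom(\rho^*\rho_*M,\sO_f(-1))\cong\Hom(\rho_*M,\rho_*\sO_f(-1))=0$, so no nonzero map $M\to\sO_f(-1)$ can exist if the counit $\rho^*\rho_*M\to M$ is surjective. The converse, however, has a genuine gap at exactly the step you flag as "the key local computation." First, your derivation of $\rho_*N=0$ is incomplete as stated: left-exactness of $\rho_*$ together with the identity composite only gives $\rho_*N\hookrightarrow R^1\rho_*I$ (with $I$ the image of the counit), and killing that group uses the $\rho_*$-acyclicity of $M$ \emph{and} of $\rho^*\rho_*M$. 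The paper does this once and for all by pushing forward the two-term complex $\rho^*\rho_*M\to M$ and comparing the two spectral sequences; crucially, that argument yields not only $\rho_*N=0$ but also $R^1\rho_*N=0$, and the second vanishing is not a luxury. With only what you have established ($N\neq 0$, pure, $\rho_*N=0$) the desired conclusion is simply false: $N=\sO_f(-2)$ satisfies all of it and has no quotient of the form $\sO_f(-1)$. So some use of the full acyclicity of $N$ is unavoidable.

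Second, the fiberwise/flatness picture you use to extract the $\sO_f(-1)$ quotient does not match the actual geometry. Because $\rho_*N=0$, the support of $N$ cannot dominate $C$ (a horizontal component would give nonzero sections over the generic point), so $N$ is supported on finitely many whole fibers; there is no dense open of $C$ over which $N$ is a nonzero flat family, and "the generic fiber restriction is a sum of $\sO(-1)$'s" has nothing to bite on. (The purity remark is also off: purity forbids $0$-dimensional \emph{subsheaves}, not quotients, though this is harmless since $\rho_*N=0$ already excludes $0$-dimensional subsheaves.) The paper's route around all of this is: from $\rho_*N=R^1\rho_*N=0$ and the vertical support, one gets $N\cong\rho^*\rho_*(N\otimes\sO_\rho(1))\otimes\sO_\rho(-1)$, and since $\rho_*(N\otimes\sO_\rho(1))\neq 0$ is supported at finitely many points of $C$, it surjects onto some $\sO_p$, producing the surjection $N\to\sO_f(-1)$ and hence $M\to\sO_f(-1)$. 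Your fallback suggestion (Serre duality plus the preceding proposition) is exactly the nontrivial point left undone, so as it stands the converse direction is not proved.
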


\begin{proof}
If $M$ is relatively globally generated, then
\[
\Hom(M,\sO_f(-1))\subset \Hom(\rho^*\rho_*M,\sO_f(-1))\cong
\Hom(\rho_*M,\rho_*\sO_f(-1))=0.
\]

For the converse, consider the natural map $\rho^*\rho_*M\to M$, viewed as
a two-term complex.  The terms in the complex are $\rho_*$-acyclic, and
thus the derived direct image of the complex is
\[
\rho_*\rho^*\rho_*M\cong \rho_* M,
\]
so is exact.  On the other hand, there is a spectral sequence converging to
this result in which we first take the cohomology of the complex before
taking higher direct images.  Since $\rho$ has $1$-dimensional fibers, this
spectral sequence stabilizes at the $E_2$ page, and we thus conclude that
the cohomology sheaves of the complex have trivial direct image and higher
direct image.

We thus conclude that if $M$ is not globally generated, then $M$ has a
surjective morphism to a nonzero sheaf $M'$ with $\rho_*M'=R^1\rho_*M'=0$,
so
\[
M'\cong \rho^*\rho_*(M'\otimes \sO_\rho(1))\otimes \sO_\rho(-1)
\]
Now, $\rho_*(M'\otimes \sO_\rho(1))$ cannot be 0, since that would force
$M'=0$.  It thus admits a surjective map to some $\sO_p$, which induces a
surjective map from $M'$ to a sheaf of the form $\sO_f(-1)$.
\end{proof}

\begin{rem}
In fact, it follows from this that if $M$ is pure $1$-dimensional and
$\rho_*$-acyclic, then it is relatively globally generated iff $M\otimes
\sO_\rho(-1)$ is $\rho_*$-acyclic.  Indeed, a surjection $M\to \sO_f(-1)$
induces a surjection
\[
R^1\rho_*(M\otimes \sO_\rho(-1)) \to R^1\sO_f(-2)\cong \sO_{\pi(f)}
\]
making the former sheaf nontrivial.
\end{rem}

Similar conditions apply to $\rho_*$-acyclicity and torsion-freeness of
$\rho_*M$.

\begin{prop}
Let $\rho:X\to C$ be a ruled surface, and let $M$ be a pure $1$-dimensional
sheaf on $X$.  Then $\rho_*M$ is torsion-free iff $\Hom(\sO_f,M)=0$ for all
fibers $f$ of $\rho$, and $M$ is $\rho_*$-acyclic iff $\Hom(M,\sO_f(-2))=0$
for all $f$.
\end{prop}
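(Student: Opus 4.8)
The plan is to reduce both statements to the fiber-by-fiber analysis of a pure $1$-dimensional sheaf $M$ on a ruled surface, using the higher direct image long exact sequences together with the fact that $\rho$ has $1$-dimensional fibers, so all such sequences terminate in degree $1$. For the torsion-freeness criterion, I would argue as follows. A torsion subsheaf of $\rho_*M$ would be supported on finitely many points, hence would be a sum of sheaves $\sO_p$ with $p\in C$. Pulling back and using adjunction, $\Hom(\sO_p,\rho_*M)\cong \Hom(\rho^*\sO_p,M)=\Hom(\sO_f,M)$ where $f$ is the fiber over $p$; since $\rho^*\sO_p\cong\sO_f$, a nonzero such map is exactly a map $\sO_f\to M$. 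Conversely, if $\Hom(\sO_f,M)=0$ for all $f$, then $\rho_*M$ has no subsheaf supported at a point. Here I need to be slightly careful: an injection $\sO_f\to M$ need not land in the piece of $M$ lying over a single point scheme-theoretically, but since $M$ is pure $1$-dimensional and $\sO_f$ is pure on the fiber, the image is a pure $1$-dimensional quotient of $\sO_f$, which (because $\sO_f$ has no proper pure $1$-dimensional quotients other than $\sO_f$ itself on an integral fiber, or a twist on a reducible one) forces the image to contribute torsion to $\rho_*M$. The cleanest route is: $\rho_*M$ torsion-free $\iff$ $\Hom(\sO_p,\rho_*M)=0$ for all $p$ $\iff$ (adjunction, $\rho^*\sO_p=\sO_f$) $\Hom(\sO_f,M)=0$ for all $f$.

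For the $\rho_*$-acyclicity criterion, I would dualize. By relative duality for the ruled surface $\rho:X\to C$, with relative dualizing sheaf $\omega_\rho\cong\sO_\rho(-2)\otimes\rho^*(\text{line bundle})$, we have $R^1\rho_*M\cong (\rho_*(M^\vee\otimes\omega_\rho))^\vee$ up to a twist by a line bundle on $C$, where $M^\vee$ denotes the sheaf-Ext dual appropriate to a pure $1$-dimensional sheaf (homological dimension $1$ in $X$). So $M$ is $\rho_*$-acyclic iff $\rho_*(M^\vee\otimes\omega_\rho)=0$, i.e. iff $H^0$ of every fiber of $M^\vee\otimes\omega_\rho$ vanishes. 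Unwinding the duality fiberwise, $H^0(f,(M^\vee\otimes\omega_\rho)|_f)$ is dual to $H^1(f,M_f)$, and a nonzero class there corresponds, via the same adjunction as above, to a nonzero element of $\Hom(M,\sO_f(-2))$: indeed $\Hom(M,\sO_f(-2))\cong\Hom(M,\rho^!\sO_p[-1])$ which by relative Serre duality pairs against $R^1\rho_*M$ at $p$. The upshot is the stated equivalence: $M$ is $\rho_*$-acyclic $\iff$ $R^1\rho_*M=0$ $\iff$ $\Hom(M,\sO_f(-2))=0$ for all $f$.

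Alternatively, and perhaps more in the spirit of the preceding propositions, I would avoid invoking duality machinery explicitly and instead run the cohomology-and-base-change argument directly: $R^1\rho_*M=0$ iff $H^1(M_f)=0$ for every fiber $f$, and then compute $\dim H^1(M_f)$ via the fact that on $\P^1$ (or a degenerate fiber) $H^1(M_f)$ is dual to $\Hom(M_f,\sO_f(-2))$, lifting the latter to $\Hom(M,\sO_f(-2))$ using that $\sO_f(-2)$ is itself pure $1$-dimensional so a fiberwise map extends uniquely. Symmetrically, $H^0(M_f)$ dual considerations recover the torsion-freeness statement, since $\Hom(\sO_f,M)\ne0$ forces $\rho_*M$ to acquire torsion over $\rho(f)$ by base change.

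The main obstacle I expect is the care needed at \emph{reducible or non-reduced fibers} of $\rho$: there the fiber $f$ is not integral, so "the sheaf $\sO_f$" and the statements "$\sO_f$ has no pure $1$-dimensional proper quotient" or "$H^1(M_f)$ is dual to $\Hom(M_f,\sO_f(-2))$" require either a component-by-component argument or an appeal to Serre duality on $f$ with its (Gorenstein) dualizing sheaf $\sO_f(-2)$. Since a ruled surface over a smooth curve is smooth with smooth fibers, this subtlety actually does not arise here — every fiber is a smooth $\P^1$ — so the fiberwise duality $H^1(M_f)^\vee\cong\Hom(M_f,\sO_{\P^1}(-2))$ is literally Serre duality on $\P^1$, and the only real work is the adjunction/base-change bookkeeping to promote fiberwise Homs to Homs on $X$, which is routine given purity of $M$.
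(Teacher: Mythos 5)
Your proof is correct, and for the first half it coincides with the paper's: both reduce torsion-freeness of $\rho_*M$ to the absence of maps from point sheaves and then apply the adjunction $\Hom(\sO_f,M)\cong\Hom(\rho^*\sO_{\rho(f)},M)\cong\Hom(\sO_{\rho(f)},\rho_*M)$; the worry you raise midway about where an injection $\sO_f\to M$ "lands" is unnecessary once you commit to that clean route. For the acyclicity half your (second, preferred) argument is genuinely different from the paper's. You argue fiberwise: $R^1\rho_*M=0$ iff $H^1(M|_f)=0$ for every fiber (top-degree cohomology and base change, which indeed needs no flatness of $M$ over $C$ since $R^2\rho_*$ vanishes), then Serre duality on $f\cong\P^1$ identifies $H^1(M|_f)^*$ with $\Hom(M|_f,\sO_f(-2))$, and adjunction for the closed immersion $f\hookrightarrow X$ identifies this with $\Hom_X(M,\sO_f(-2))$; this gives both directions of the equivalence symmetrically and is self-contained. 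The paper instead reuses the spectral-sequence device from the preceding propositions: with $M'=\coker(\rho^*\rho_*M\to M)$ it shows $R^1\rho_*M\cong R^1\rho_*M'$ and $\rho_*M'=0$, so a nonzero $M'$ is supported on finitely many fibers and surjects onto some $\sO_f(-d)$ with $d>1$, hence maps nontrivially to $\sO_f(-2)$ (the easy converse being left implicit). The trade-off: the paper's route stays uniform with the relative-global-generation argument and avoids duality and base-change input altogether, while yours is more elementary in spirit and makes the "iff" manifest. Your first sketch via relative Grothendieck duality ($R^1\rho_*M$ against $\rho_*(M^\vee\otimes\omega_\rho)$) is the weak point as written — the "up to a twist" bookkeeping and the claim that vanishing of $\rho_*(M^\vee\otimes\omega_\rho)$ can be tested on fibers both need the same base-change care you only supply in the alternative argument — but since you pivot to that alternative, the proposal as a whole stands.
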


\begin{proof}
For the first condition, we have
\[
\Hom(\sO_f,M)
\cong
\Hom(\rho^*\sO_{\pi(f)},M)
\cong
\Hom(\sO_{\pi(f)},\rho_*M)
\]
Since $\rho_*M$ is torsion-free iff it has no maps from point sheaves, the
first claim follows.

For the second, the same spectral sequence argument based on the complex
$\rho^*\rho_*M\to M$ tells us that if $M'$ is the cokernel of this natural
map, then $R^1\rho_*M\cong R^1\rho_*M'$ and $\rho_*M'=0$.  Since $M'$ is
supported on finitely many fibers as before, it must have a quotient of the
form $\sO_f(-d)$ for some $d>1$, and thus has a nontrivial morphism to
$\sO_f(-2)$.
\end{proof}

If $\Hom(\sO_f,M)=0$ but $\Hom(\sO_f(-1),M)\ne 0$, then any such morphism
is necessarily injective; similarly, if $\Hom(M,\sO_f(-2))=0$ but
$\Hom(M,\sO_f(-1))\ne 0$, then any such morphism is surjective.  We thus
arrive at the final moduli problem: Classify pure $1$-dimensional sheaves
$M$ on $X$ with specified restriction to $C_\alpha$ and such that
$\Hom(M,\sO_f(-1))=\Hom(\sO_f(-1),M)=0$ for all fibers $f$ of the ruling.

We are also interested in understanding when $W$ is trivial, for which we
have the following numerical condition in the rational case.

\begin{lem}\label{lem:cohom_vanish}
Let $\rho:X\to \P^1$ be a Hirzebruch surface, and let $M$ be a sheaf on $X$
with $1$-dimensional support.  Then the following are equivalent:
\begin{itemize}
\item[(a)] $H^0(M)=H^1(M)=0$
\item[(b)] $M$ is $\rho_*$-acyclic and $\rho_*M\cong \sO_{\P^1}(-1)^n$ for
  some $n\ge 0$.
\end{itemize}
\end{lem}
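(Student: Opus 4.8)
The plan is to pass through the Leray spectral sequence of $\rho$. Because the fibers of $\rho$ are $1$-dimensional and $\P^1$ has cohomological dimension $1$, this spectral sequence collapses to the identity $H^0(X,M)\cong H^0(\P^1,\rho_*M)$ together with the exact sequence
\[ 0\to H^1(\P^1,\rho_*M)\to H^1(X,M)\to H^0(\P^1,R^1\rho_*M)\to 0. \]
The implication (b)$\implies$(a) is then immediate: if $M$ is $\rho_*$-acyclic, then $H^i(X,M)\cong H^i(\P^1,\rho_*M)\cong H^i(\P^1,\sO_{\P^1}(-1)^n)=0$ for $i=0,1$.

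For (a)$\implies$(b) the crux is to upgrade the conclusion $H^0(\P^1,R^1\rho_*M)=0$ coming from the exact sequence to the vanishing of $R^1\rho_*M$ itself; note $H^0(\P^1,R^1\rho_*M)=0$ alone is far weaker, as $\sO_{\P^1}(-1)$ shows. I would argue the contrapositive at the level of fibers. Since $R^1\rho_*$ is the top direct image, it commutes with arbitrary base change, so $R^1\rho_*M\neq 0$ would produce a fiber $f\cong\P^1$ with $H^1(f,M|_f)\neq 0$. By Serre duality on $f$ this means $\Hom_f(M|_f,\sO_f(-2))\neq 0$, and since $i_f^*$ is left adjoint to $i_{f*}$ this is the same as $\Hom_X(M,\sO_f(-2))\neq 0$. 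A nonzero map $M\to\sO_f(-2)$ has image a nonzero subsheaf of the line bundle $\sO_f(-2)$ on $\P^1$, hence of the form $\sO_f(c)$ with $c\le -2$; thus $M$ surjects onto $\sO_f(c)$, and as the kernel has support of dimension $\le 1$ (so vanishing $H^2$) we obtain a surjection $H^1(X,M)\twoheadrightarrow H^1(f,\sO_f(c))\neq 0$, contradicting $H^1(M)=0$. Hence $R^1\rho_*M=0$.

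Once $R^1\rho_*M=0$ is in hand, the Leray sequence gives $H^0(\P^1,\rho_*M)=H^1(\P^1,\rho_*M)=0$, so it only remains to identify the coherent sheaves on $\P^1$ with this property: writing $\rho_*M$ as a torsion sheaf plus $\bigoplus_i\sO_{\P^1}(a_i)$, vanishing of $H^0$ kills the torsion and forces every $a_i\le -1$, while vanishing of $H^1$ forces every $a_i\ge -1$; hence $\rho_*M\cong\sO_{\P^1}(-1)^n$, which is exactly (b).

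The one step I would be careful to justify is the base-change claim used above, namely $R^1\rho_*M\otimes k(p)\cong H^1(\rho^{-1}(p),M|_{\rho^{-1}(p)})$ for an $M$ that is not assumed flat over $\P^1$. This is the standard fact that the top higher direct image commutes with all base change: it falls out of applying $-\otimes^{\mathbf L}k(p)$ to the Grothendieck complex of $R\rho_*M$, which here sits in degrees $0$ and $1$, so that the degree-$1$ cohomology is computed by a plain cokernel and hence by right-exactness of tensor. I therefore expect no real difficulty there, and everything else is formal homological algebra on $\P^1$ and on the fibers.
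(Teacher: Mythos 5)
Your proof is correct, and it shares the paper's skeleton (Leray for $\rho$, then classifying sheaves on $\P^1$ with no cohomology), but it takes a genuinely different and heavier route at the crux, namely upgrading $H^0(R^1\rho_*M)=0$ to $R^1\rho_*M=0$. The paper does this in one line using the hypothesis on the support: since $\supp(M)$ is $1$-dimensional, its generic fiber over $\P^1$ is $0$-dimensional, so $R^1\rho_*M$ is a sheaf with $0$-dimensional support, and for such a sheaf vanishing of $H^0$ already forces vanishing of the sheaf; no base change or fiberwise duality is needed. Your fiberwise argument (base change in the top degree, Serre duality on $f$, adjunction, then a quotient $\sO_f(c)$ with $c\le -2$ contradicting $H^1(M)=0$ via Grothendieck vanishing for the kernel) is valid, and it has the side benefit of essentially re-deriving the later characterization $R^1\rho_*M=0\iff\Hom(M,\sO_f(-2))=0$ that the paper proves separately for pure sheaves. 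The one place where your justification is imprecise is the base-change step: the ``Grothendieck complex'' whose formation commutes with base change is constructed under the hypothesis that $M$ is flat over the base, which fails here (e.g.\ $M$ may contain fiber components such as $\sO_f(-2)$ itself). The statement you want is nevertheless true, but for a different reason: the base-change square is Tor-independent because $\rho$ itself is flat, so derived base change gives $R\rho_*M\otimes^{\dL}k(p)\cong \dR\Gamma(X_p, \dL i_p^*M)$, and taking $H^1$ (right-exactness on the left, top hypercohomology on the right) yields $R^1\rho_*M\otimes k(p)\cong H^1(f,M|_f)$. Alternatively, you can bypass the issue entirely by the paper's support observation, which also makes explicit where the $1$-dimensionality hypothesis enters; in your write-up it is used only for the vanishing $H^2(K)=0$.
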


\begin{proof}
  Since $\rho$ has $1$-dimensional fibers, $R^p\rho_*M=0$ for $p>1$; since
  the generic fiber of $\supp(M)$ over $\P^1$ is $0$-dimensional,
  $R^1\rho_*M$ has $0$-dimensional support.  The
  Leray-Serre spectral sequence
\[
H^p(R^q\rho_*M)\implies H^{p+q}(M)
\]
thus implies isomorphisms
\[
H^0(M)\cong H^0(\rho_*M)\qquad H^2(M)\cong H^1(R^1\rho_*M)=0
\]
and a short exact sequence
\[
0\to H^1(\rho_*M)\to H^1(M)\to H^0(R^1\rho_*M)\to 0.
\]
In particular, $H^0(M)=H^1(M)=0$ iff both $\rho_*M$ and $R^1\rho_*M$ have
vanishing cohomology.  In particular, both must be isomorphic to a direct
sum of line bundles $\sO_{\P^1}(-1)$, and since $R^1\rho_*M$ has
$0$-dimensional support, it must be $0$.
\end{proof}

Thus the only way a 1-dimensional sheaf with $H^*(M\otimes
\rho^*\sO_{\P^1}(-1))=0$ could fail to have a presentation of the standard
form is if $\Hom(M,\sO_f(-1))\ne 0$ for some fiber $f$.  (Even if
it fails this last condition, the image of $\rho^*\rho_*M\to M$ gives us a
subsheaf with standard presentation, and thus a subquotient with standard
presentation satisfying maximality.)

\medskip

When we considered elliptic difference equations above, this was in fact a
simplification: in fact, the difference equations that occur in the theory
of elliptic special functions have {\em theta function} coefficients in
general.  Algebraically speaking, $A$ is really a matrix with coefficients
in ${\cal L}_0\otimes_{\sO_{C_\alpha}}k(C_\alpha)$, where ${\cal L}_0$ is a
degree 0 line bundle equipped with an isomorphism
\[
\eta^*{\cal L}_0\cong {\cal L}_0^*
\]
such that the composition
\[
{\cal L}_0=\eta^*\eta^*{\cal L}_0\cong \eta^*{\cal L}_0^*\cong {\cal L}_0
\]
is the identity.  Again, Hilbert's Theorem 90 allows us to factor ${\cal L}_0$,
though now the nonuniqueness is more significant.  If ${\cal L}_0$ has the
above form, then it can (since we are over an algebraically closed field)
be factored as
\[
\psi:{\cal L}_0\cong \eta^*{\cal L}\otimes {\cal L}^*,
\]
for some line bundle ${\cal L}$.  This line bundle is nonunique in two
respects: the obvious one is that it can be twisted by any power of the
line bundle $\pi_\eta^*{\cal O}_{\P^1}(1)$, but even modulo this, there are
8 possibilities for ${\cal L}$, and for each such choice, 2 possibilities
for the isomorphism $\psi$.  Indeed, we can twist ${\cal L}$ by the degree
$1$ bundle corresponding to any ramification point of $\eta$; if we twist
by them all, this is the same as twisting by $\pi_\eta^*{\cal
  O}_{\P^1}(2)$, except that the isomorphism is multiplied by $-1$.  (In
characteristic 2, the issue of nonuniqueness is somewhat more complicated.
In general, modulo twisting $\pi_\eta^*{\cal O}_{\P^1}(1)$, the
factorizations form a torsor over an abelian group scheme with structure
$\mu_2.\Pic^0(C_\alpha)[2].\Z/2\Z$.)

This nonuniqueness is related to the question of singularities at the four
ramification points: we can no longer canonically distinguish between the
two possible local rank $1$ equations, in order to decide which one should
be viewed as regular.  Since we want to specify the singularity structure,
we should view the factorization of ${\cal L}_0$ as part of the
specification (and indeed, in all of the motivating cases, there is a
natural choice of factorization making the equation regular at the fixed
points of $\eta$ for generic parameters).  With this in mind, we again
obtain a canonical factorization, except now $B$ has the form
\[
B:\pi_\eta^*V\to \pi_\eta^*W\otimes {\cal L}.
\]
Again, two uses of adjunction allow us to extend this to a morphism of
vector bundles on the Hirzebruch surface $\P(\pi_{\eta *}{\cal L})$,
and thus further to the cokernel of that morphism.  The above
considerations extend immediately to the case of general ${\cal L}$.

Note that $\P(\pi_{\eta*}{\cal L})$ is isomorphic to the Hirzebruch surface
$F_1$ iff ${\cal L}$ has odd degree, and is otherwise isomorphic to $F_0$
or $F_2$, with the latter precisely when ${\cal L}$ is a power of
$\pi_\eta^*\sO_{\P^1}(1)$.

We should also note that using the freedom to twist ${\cal L}$ by powers of
$\pi_\eta^*\sO_{\P^1}(1)$, we can assume that ${\cal L}^*$ is represented
by an effective divisor disjoint from the ramification locus.  The
resulting isomorphism ${\cal L}\cong \sO_{C_\alpha}(-D)$ allows us to
translate the problem back to one on $F_2$, but with additional
``apparent'' singularities along $D$.  (These are points where the
obstructions to having symmetric solutions can be gauged away, possibly at
the expense of introducing apparent singularities along other orbits of the
infinite dihedral group.)

\medskip

Although the generic anticanonical curve on $F_2$ is a smooth genus 1 curve
disjoint from $s_{\min}$, there is significant scope for
degeneration.  We can view $F_2$ as the minimal desingularization of a
weighted projective space, and in this way anticanonical curves correspond
to equations of the form
\[
p_0(x,w) y^2 + p_2(x,w) y + p_4(x,w)=0,
\]
with $p_d(x,w)$ homogeneous of degree $d$.  (Here $w$,$x$,$y$ are
generators of degrees $1$, $1$, and $2$ of a graded algebra, and $F_2$ is
the minimal desingularization of $\Proj(k[w,x,y])$.)  The anticanonical
curve is disjoint from $s_{\min}$ iff $p_0\ne 0$, and thus we should consider
equations
\[
y^2 + p_2(x,w) y + p_4(x,w)=0.
\]
On any such curve, we have an involution $y\mapsto -p_2(x,w)-y$ which
exchanges the two points on any given fiber.  Given any such curve, the
translation between matrices $B$ on $C_\alpha$ and matrices on $F_2$ is quite
explicit in terms of coordinates: express $B$ in terms of the coordinates,
and use the equation of $C_\alpha$ to eliminate any term of degree $\ge 2$ in
$y$.  The resulting matrix, every coefficient of which is linear in $y$ and
weighted homogeneous, can now be viewed as a matrix on $F_2$, and is a
canonical extension of $B$.

There are several degenerate cases to consider.  In characteristic not 2,
we can complete the square to make $p_2=0$, and the degenerate cases are
classified by the multiplicities of the zeros of $p_4$ (with an additional
case when $p_2=p_4=0$).  These cases all extend to characteristic 2; there
is one extra case in characteristic 2 ($p_2=0$, $p_4$ is not a square) for
which we do not have a natural difference/differential equation
interpretation, so do not consider below.  Note that over a perfect field,
any such curve is equivalent to the curve $y^2=x w^3$.

\begin{itemize}
\item[211:] $C_\alpha$ is integral, with a single node.  Then $C_\alpha$ is
  isomorphic to $\P^1$ with $0$ and $\infty$ identified, and $\eta$ acts on
  this $\P^1$ as $z\mapsto \beta/z$ for some $\beta$.  Up to a change of
  coordinates on $F_2$, $C_\alpha$ has the equation
\[
y^2-xwy+\beta w^4=0,
\]
with $w(z)=1$, $x(z)=z+\beta/z$, $y(z)=z$.  If we choose an automorphism
$\tau_q:z\mapsto qz$, then the above construction applies to relate sheaves
on $F_2$ to symmetric $q$-difference equations
\[
v(qz) = A(z)v(z)
\]
such that $A\in \GL_n(k(z))$ satisfies $A(\beta/z)=A(z)^{-1}$.  Note that
this is singular at the node unless $A(0)=A(\infty)=1$.

\item[31:] $C_\alpha$ is integral, with a single cusp.  Then $C_\alpha$ is
  identified with $\P^1$ such that the cusp maps to $\infty$ and
  $\eta(z)=\beta-z$ for some $\beta$.  Up to a change of coordinates on
  $F_2$, $C_\alpha$ has the equation
\[
y^2-\beta w^2 y+x w^3= 0,
\]
with $w(z)=1$, $x(z)=z(\beta-z)$, $y(z)=z$.  These correspond to symmetric
ordinary difference equations:
\[
v(z+q)=A(z)v(z)
\]
with $A\in \GL_n(k(z))$ such that $A(\beta-z)A(z)=1$.  The equation is
singular at the cusp unless $A(z)=1+O(1/z^2)$ as $z\to\infty$. (The
symmetry then implies $A(z)=1+O(1/z^3)$.)

\item[22:] $C_\alpha$ is a union of two smooth components (isomorphic to
  $\P^1$) meeting in two distinct points, and $\eta$ swaps the components.
  In suitable coordinates, $C_\alpha$ has the equation
\[
y^2-xwy=0,
\]
with components $y=xw$, $y=0$; we may view $z=x/w$ as a common coordinate
on the two components.  Then any morphism $B$ on $F_2$ as above specifies a
{\em pair} of morphisms
\[
B_1,B_2:V\to \sO_{\P^1}^n,
\]
agreeing at $0$ and $\infty$.  The corresponding $A$ matrix on $C_\alpha$ is
really a pair of inverse matrices, but we may simply view it as a single
matrix $B_2^{-t}B_1^t$ on one component of $C_\alpha$.  In this way, we
obtain a $q$-difference equation on $\P^1$ without any symmetry condition,
and $B_1$, $B_2$ separate the zeros and poles of the equation.
Singularities at the two nodes of $C_\alpha$ arise when $A(0)\ne 1$ or
$A(\infty)\ne 1$ respectively.

\item[4:] $C_\alpha$ is a union of two smooth components which are tangent at
  a single point, and $\eta$ swaps the components; $C_\alpha$ has equation
  $y^2=w^2 y$, up to changes of coordinates.  Again $B$ specifies a pair of
  morphisms, which now agree to second order at $\infty$.  This corresponds
  to ordinary difference equations without symmetry, which are singular at
  $\infty$ unless $A(z)=1+O(1/z^2)$ as $z\to\infty$.

\item[0:] $C_\alpha$ is nonreduced, with equation $y^2=0$ after a change of
  coordinates.  In this case, the degree 2 morphism $C_\alpha\to \P^1$ is no
  longer generically \'etale, so is not the quotient by an involution.
  However, we can now identify $B$ with a pair of maps
\[
B_0:V\to \sO_{\P^1}^n,\qquad
B_\omega: V\to \omega_{\P^1}^n,
\]
giving a canonical factorization of a meromorphic matrix taking values in
$\omega_{\P^1}$.  Since there is a canonical connection on $\sO_{\P^1}$,
we may use this to interpret the meromorphic matrix with values in
$1$-forms as a meromorphic connection.  This, of course, is just the
standard translation between differential equations and sheaves on $F_2$
arising in the usual theory of Hitchin systems.
\end{itemize}

The construction in the nonsymmetric difference equation cases extends to
one for general elliptic difference equations.

\begin{eg}
Consider a general elliptic difference equation $\tau_q^*v=Av$ with $A\in
\GL_n(k(C))$ for some genus 1 curve $C$.  There is a natural factorization
\[
A = B_\infty^{-t} B_0^t
\]
where $B_0,B_\infty:V\to \sO_{C}^n$ and $V$ is a maximal vector bundle
supporting such a factorization.  (The existence of a meromorphic
factorization is trivial (take $B_\infty=1$, $B_0=A^t$), and implies the
existence of a unique maximal $V$ as above.)  The singularity structure of
$A$ then corresponds in a natural way to the cokernels of $B_0$ and
$B_\infty$ (giving zeros and poles respectively).  The pair
$(B_0,B_\infty)$ extends immediately to a morphism of vector bundles on the
ruled surface $E\times \P^1$: just take $zB_\infty+wB_0$, where $(z,w)$ are
homogeneous coordinates on $\P^1$.  We can then recover the pair as the
restriction of this morphism to the anticanonical curve $zw=0$, a union of
two disjoint copies of $C$.  This is essentially just the construction for
the Sklyanin integrable system (see \cite{HurtubiseJC/MarkmanE:2002a}), the
only difference being that the construction in the literature twists by a
line bundle in order to absorb all of the poles, making $B_\infty=1$, but
making the ruled surface more complicated.  (This corresponds to performing
a sequence of elementary transformations centered at the points where the
sheaf $M$ meets the component $w=0$ of the anticanonical curve.)  More
generally, we should allow difference equations on vector bundles, i.e.,
meromorphic (and meromorphically invertible) maps $A:V\to\tau_q^*V$.  If
there is a holomorphic isomorphism $A_0:V\cong \tau_q^*V$, then one can
divide by $A_0$ to again reduce to a sheaf on $E\times \P^1$.  By Atiyah's
classification of vector bundles on smooth genus 1 curves, we find that
$V\cong \tau_q^*V$ whenever $V$ is a sum of indecomposable bundles of
degree 0 (and this is necessary if $q$ has infinite order).  This is again
an open condition on $V$ (for degree 0 bundles, it is equivalent to
semistability), and the isomorphism $A_0$ can be at least partially
globalized (i.e., it exists on some open cover of the open subset); thus,
just as in the rational cases, we can identify large open subsets of the
moduli spaces of sheaves and of difference equations.  (The main
distinction is that the identification is no longer canonical, since $A_0$
is only determined up to scalars.)  Note also that $V$ is semistable of
degree 0 iff there exists some line bundle ${\cal L}$ of degree 0 such that
$H^0(V\otimes {\cal L})=H^1(V\otimes {\cal L})=0$; this gives an analogue
of Lemma \ref{lem:cohom_vanish} for the elliptic case.
\end{eg}

\begin{eg}
Similarly, the Hitchin system corresponds to the analogous factorization
for a meromorphic morphism
\[
A:k(C)\to \omega_C\otimes_{\sO_{C}} k(C),
\]
and gives a sheaf on the anticanonical surface $\P(\sO_C\oplus \omega_C)$.
As before, the construction in the literature essentially absorbs the poles
of $A$ into the structure of the anticanonical surface, but this is just a
sequence of elementary transformations.  Once again, the ``true'' moduli
space (of meromorphic connections on vector bundles) and the moduli space
of sheaves can be identified along large open subsets; in this case, the
requirement is that the vector bundle $V$ admit a holomorphic connection.
This is no longer an open condition (it is equivalent to every
indecomposable summand having degree a multiple of the characteristic
\cite{BiswasI/SubramanianS:2006}), but is implied by open conditions, e.g.,
that $V$ is semistable of degree 0 or stable of degree a multiple of the
characteristic.  It is also implied by the open condition that
$H^0(V\otimes {\cal L})=H^1(V\otimes {\cal L})=0$ for some line bundle
${\cal L}$ of degree $g-1$, though this is no longer equivalent to
semistability.
\end{eg}

Specifying an anticanonical curve on a smooth projective surface is
tantamount to specifying a Poisson structure; more precisely, the structure
is determined up to a scalar multiple, which can be fixed by a suitable
choice of nonzero holomorphic differential on the anticanonical curve.  The
above examples account, up to birational maps respecting the Poisson
structure, for almost every Poisson surface which is not symplectic, since
it was shown in \cite{poisson} that every such surface (apart from an
exotic family of examples in characteristic 2) has the form $\P(\sO_C\oplus
\omega_C)$ in such a way that the section corresponding to $\omega_C$ is
disjoint from the anticanonical curve.  That our surfaces have this
structure is significant since it follows from
\cite{HurtubiseJC/MarkmanE:2002b,poisson} that the moduli space of sheaves
with specified restriction to the anticanonical curve is symplectic (and
the closure inside the moduli space of stable sheaves is Poisson).

The only missing example (up to isomorphism) is the Poisson structure on
the characteristic 2 surface $F_2(\overline{\F_2})$ corresponding to the
anticanonical curve $y^2 = x w^3$.  (The exotic surfaces in characteristic
2 do not have Poisson moduli spaces, so we may feel free to ignore them.)
This is an irreducible cuspidal curve as in case $31$ above, but the degree
2 map to $\P^1$ is not \'etale, making the interpretation as a symmetric
difference equation problematical.  (Of course ordinary difference
equations in finite characteristic are already somewhat problematical,
since the infinite cyclic group acts via a finite quotient.)

\medskip

There are a few other moduli problems that translate to sheaves on
anticanonical rational surfaces that we want to consider.

\begin{eg}
  Let $C_\alpha$ be a smooth genus 1 curve, and let ${\cal L}$ be a line
  bundle on $C_\alpha$ with $\deg{\cal L}\ge 0$.  Consider the problem of
  classifying matrices $B\in \Mat_n(\Gamma({\cal L}))$ such that
  $\det(B)\ne 0$, up to left- and right- multiplication by constant
  matrices.  For any choice of hyperelliptic involution $\eta$, we can
  encode such matrices via sheaves on the Hirzebruch surface
  $\P(\pi_{\eta*}{\cal L})$, essentially as above.  The only additional
  condition we impose is that the bundle $V$ must also be trivial.
  In the case $\deg({\cal L})=3$, we can also interpret the matrix as one
  over $\Gamma(\sO_{\P^2}(1))$, and on $\P^2$, the matrix is the minimal
  resolution of its cokernel, as in \cite{BeauvilleA:2000}.  Note that
  since in this case we want both bundles to be trivial up to twist, we
  need to impose another numerical condition, which is straightforward to
  determine from the formula for $V$ in terms of $M$ that we gave above.
\end{eg}

\begin{eg}
  On the Hirzebruch surface $F_0=\P^1\times \P^1$, which we can view as the
  smooth quadric $xz=yw$ in $\P^3$, we can apply the previous construction
  to the degenerate anticanonical curve $xz=0$ (and the induced line bundle
  of degree $4$).  This has four components (two fibers and two sections),
  forming a quadrangle.  Any linear combination of the coordinates on
  $\P^3$ is determined by its values at the four points of intersection of
  the quadrangle, and thus to specify a linear matrix $B$, it is equivalent
  to specify a quadruple $(B_0,B_1,B_2,B_3)$ of scalar matrices.  If these
  matrices are invertible, then the restriction of $\coker(B)$ to $xz=0$ is
  determined by its restrictions to the four components, and thus by the
  conjugacy classes of the matrices
\[
B_0^{-1}B_1,B_1^{-1}B_2,B_2^{-1}B_3,B_3^{-1}B_0.
\]
In other words, the problem of classifying sheaves on $\P^1\times \P^1$
with the appropriate kind of presentation and restriction to the quadrangle
is equivalent to the problem of classifying quadruples in $\GL_n(k)$ with
specified conjugacy classes and product $1$.  This is the four-matrix case
of the ``multiplicative Deligne-Simpson problem'',
\cite{Crawley-BoeveyW/ShawP:2006}.
\end{eg}

\begin{eg}
Of course, we can obtain the three-matrix version of the multiplicative
Deligne-Simpson problem by insisting that $B_3=B_0$, or in other words that
the sheaf meet the relevant component in $\sO_p^n$ where $p$ is the point
representing $1$.  If we blow this point up and blow down both the fiber
and the section containing it, we obtain a moduli problem on $\P^2$
concerning sheaves with specified restriction to the triangle $xyz=0$.
\end{eg}

There does not appear to be any way to translate more general multiplicative
Deligne-Simpson problems into the Poisson surface framework.  For the
additive problem, the situation is nicer.

\begin{eg}
  The Hirzebruch surface $F_d$ for $d\ge 1$ has a unique section $s_{\min}$
  of negative self-intersection ($s_{\min}^2=-d$).  Given any $d+2$
  distinct fibers $f_0$,\dots,$f_{d+1}$, the divisor $2s_{\min}+\sum_i f_i$
  is anticanonical, so we may take it as our curve $C_\alpha$.  Given any
  $(d+2)$-tuple of matrices $C_0$,\dots,$C_{d+1}$ with $\sum_i C_i=0$, we
  have a natural corresponding matrix with coefficients in
  $\sO_X(s_{\min}+df)$.  Indeed, we can coordinatize $F_d$ in terms of a
  weighted projective space with a generator $y$ of degree $d$, and
  consider the matrix
\[
B = y + C(x,w)
\]
such that $C(x,w)$ is equal to $C_i$ on $f_i$.  Then the restriction of $B$
to $f_i$ is given by the conjugacy class of $C_i$.  In this way, we obtain
the $(d+2)$-matrix additive Deligne-Simpson problem (classifying $(d+2)$-tuples
of matrices with specified conjugacy classes and sum $0$).
\end{eg}

\begin{rem}
This, of course, is closely related to the problem of classifying Fuchsian
differential equations with specified singularity structure; if we blow up
a point of each fiber $f_i$ then blow down $f_i$, we can in this way
eliminate all components of $C_\alpha$ but the strict transform of $s_{\min}$.
If we choose the centers of the elementary transformations carefully, we
can arrange to end up at $F_2$, and case $0$ above.
\end{rem}

\medskip

If we forget the symmetry of a symmetric elliptic difference equation, we
obtain a subspace of the moduli space of all elliptic difference equations,
which we can understand in the following way.  On the surface $C\times
\P^1$, consider the involution $\eta\times (z\mapsto z^{-1})$.  This
preserves our standard choice of anticanonical curve, and more precisely
preserves the corresponding Poisson structure (in contrast to $\eta\times
1$, say, which {\em negates} the Poisson structure).  It follows that this
involution acts on the corresponding moduli space, again preserving the
Poisson structure, and thus the fixed locus inherits a Poisson structure
(at least in characteristic $\ne 2$).  There are some difficulties in
studying symmetric equations from this perspective, however.  One is that,
as we have seen, the notion of singularity should really take into account
the symmetry, but another is that when working with moduli spaces, a fixed
point merely indicates a sheaf which is {\em isomorphic} to its image under
the symmetry.  Since a sheaf only determines a matrix up to a choice of
basis, not every point of the fixed locus actually corresponds to a
symmetric equation.  (The situation is not too dire, though: symmetric
equations form a component of the fixed locus.)  Note that the quotient of
$E\times \P^1$ by the above involution is still an elliptic surface (with
constant $j$ invariant, and with two $I_0^*$ fibers in characteristic not
$2$), and thus must be blown down eight times to reach a Hirzebruch
surface.  (We can arrange to reach the usual $F_2$ constructed from
$(C,\eta)$, in which case the map from the elliptic surface blows up each
fixed point of $\eta$ twice.)  This reflects both the fact that the notion
of singularity changes and the fact that the fixed points of the moduli
space are the equations which are symmetric {\em up to isomorphism}.

Similar comments apply if we try to relate symmetric and nonsymmetric
difference equations in the $q$-difference and ordinary difference cases.
More generally, we could consider any Poisson involution on one of our
Poisson Hirzebruch surfaces.  We find that the most general Poisson
involution (again, in characteristic not 2) is again at the elliptic level,
and is simply given by translation by a $2$-torsion point $p$ of
$\Pic^0(C)$; any other Poisson involution on $F_2$ is a degeneration of
this (on some degenerate curve).  Given any symmetric elliptic difference
equation on the isogenous curve $C/\langle p\rangle$, we can interpret it
as an equation on $C$ (typically with twice as many singularities), and the
corresponding sheaf will be invariant under the Poisson involution.  Since
any Poisson involution degenerates this, it in particular follows that the
embedding of symmetric equations in the moduli space of nonsymmetric
equations is a degeneration of this ``quadratic transformation''.  (So
called because at the bottom, differential level, that is precisely what it
is: performing a quadratic change of variables in the differential
equation.)  Once again, the mismatch between the two notions of singularity
and the fact that equations can be symmetric up to isomorphism without
being symmetric is reflected in the fact that the quotient by the
involution is a (singular) del Pezzo surface of degree 4 with an
$A_1A_1A_3$ configuration of $-2$-curves.  (One of the $-2$-curves comes
from the original $-2$-curve, while the other four come from fixed points
of the involution, two of which are on the original $-2$-curve.)

There may also be some interesting phenomena related to anti-Poisson
involutions of rational surfaces (which can be identified by the fact that
they are hyperelliptic when restricted to the anticanonical curve).  Though
these remain anti-Poisson on the moduli space, they can be combined with a
natural duality operation on sheaves to again obtain a Poisson involution
on the moduli space.  One example of this is the adjoint operation
$A\mapsto A^{-t}$, see Section \ref{sec:elldiff} below.

\section{Blowdown structures on rational surfaces}

In \cite{poisson}, we gave a construction for lifting sheaves (of
homological dimension $\le 1$, so in particular sheaves of pure dimension
$1$) through birational morphisms.  (In the case of the direct image of a
line bundle on a smooth curve, this is the obvious lift to the strict
transform, but the construction applies more generally.)  Moreover, up to
``pseudo-twist'', we can lift any sheaf transverse to the anticanonical
curve to some blowup on which it is {\em disjoint} from the anticanonical
curve.  (We will see that pseudo-twists correspond to certain canonical
gauge transformations of difference equations, so we do not lose much
generality by assuming we have such a lift.)  As a result, we find that we
want to consider sheaves on more general rational surfaces.  

Once we have lifted to a blowup of our Hirzebruch surface, we encounter a
new phenomenon: rational surfaces can be blown down to Hirzebruch surfaces
in multiple ways.  Although this is true in a mild sense for ruled surfaces
of higher genus, in those cases we find that any two blowdowns to
geometrically ruled surfaces are related by a sequence of elementary
transformations (corresponding to a very mild transformation of the
difference/differential equation).  In contrast, rational surfaces no
longer have a canonical rational ruling, and as a result a given sheaf on a
rational surface will tend to have multiple qualitatively different
interpretations as difference equations.  (For instance, we will see that
sheaves can correspond to symmetric or nonsymmetric $q$-difference
equations depending on the choice of blowdown.)

With this in mind, we want to understand the set of possible ways to blow a
rational surface down to a Hirzebruch surface.  It turns out to be useful
to record slightly more information than just a birational morphism to a
Hirzebruch surface, and we thus make the following definition.

\begin{defn}
Let $X$ be a rational surface with $X\not\cong \P^2$.  A (Hirzebruch)
blowdown structure on $X$ is a chain $\Gamma$ of morphisms
\[
X=X_m\to X_{m-1}\to\cdots\to X_0\to \P^1,
\]
such that for $1\le i\le m$, the morphism $X_i\to X_{i-1}$ is the blowup in a
single point of $X_{i-1}$, while the morphism from $X_0\to \P^1$ is a geometric
ruling.  Two blowdown structures will be considered equivalent if they fit
into a commutative diagram
\[
\begin{CD}
X@>>> X_{m-1}@>>>\cdots @>>> X_0@>>> \P^1\\
 @|  @VVV \cdots @. @VVV @VVV\\
X@>>> X'_{m-1}@>>>\cdots @>>> X'_0@>>> \P^1
\end{CD}
\]
\end{defn}

\begin{rem}
  Related structures have been studied in the case that the rational
  surface can be blown down to $\P^2$, \cite{SakaiH:2001}; one also
  considers the related notion of an exceptional configuration (essentially
  the analogue for $\P^2$ of the notion of numerical blowdown structure
  below) \cite{LooijengaE:1981}.  Our considerations below are somewhat
  more general (since not every surface blows down to $\P^2$), but of
  course closely related; for instance \cite{LooijengaE:1981} already saw the
  appearance of the root system $E_{m+1}$.
\end{rem}

Note that in addition to keeping track of a factorization of the birational
morphism, we also keep track of the ruling at the end (but only up to
$\PGL_2$).  Most of the time, of course, the latter provides no
information, since most Hirzebruch surfaces have a unique geometric ruling
(and thus have a unique blowdown structure).  The lone exception is
$\P^1\times \P^1$, and we note that above whenever we obtained $\P^1\times
\P^1$ from a problem of twisted difference equations, this came with a
choice of ruling.

One reason for including the above information in the blowdown structure is
that it allows us to associate a basis of $\Pic(X)$ to any blowdown
structure.  For each monoidal transformation $X_i\to X_{i-1}$, we have an
exceptional curve $e_i$ on $X_i$, and the total transform of this curve
gives us a divisor on $X$, which we also denote $e_i$.  Since
\[
\Pic(X)\cong \Pic(X_0)\oplus \bigoplus_i \Z e_i,
\]
it remains only to give a basis of $\Pic(X_0)\cong \Z^2$.  One basis
element is obvious, namely the class $f$ of the fibers of the ruling.  For
the other, there is also an obvious choice, namely the class $s_{\min}$ of
a section with minimal self-intersection.  This turns out not to be the
best choice for our purposes, however, as it gives us a countable infinity
of different intersection forms to consider.  A slightly different basis
greatly reduces the number of cases.  Define a divisor class
\[
s:=s_{\min} + \lfloor -s_{\min}^2/2\rfloor f.
\]
Since $s_{\min}\cdot f=1$, $f^2=0$, we find that $s\cdot f=1$, and $s^2$ is
either 0 or $-1$, depending on whether $s_{\min}^2$ was even or odd.  With
this in mind, we call a blowdown structure even or odd depending on the
parity of $s_{\min}^2$.  (Note that if $X_0$ comes from a line bundle on a
hyperelliptic genus 1 curve as in the previous section, then this choice of
basis element essentially corresponds to choosing the bundle to have degree
$1$ or $2$, as then $\sO_X(s)$ agrees with the relative $\sO(1)$.)

The basis we obtain has one of two possible intersection forms, depending
on parity.  In the even case, we have
\[
s^2 = 0,\quad s\cdot f=1,\quad f^2=0,
\quad s\cdot e_i=f\cdot e_i=0,\quad
e_i\cdot e_j=-\delta_{ij},
\]
while in the odd case, we have the same, except $s^2=-1$.
The expansion of the canonical class in the basis again only depends on
parity:
\[
K_X = \begin{cases}
-2s-2f+\sum_{1\le i\le m} e_i & \text{$\Gamma$ even}\\
-2s-3f+\sum_{1\le i\le m} e_i & \text{$\Gamma$ odd}.
\end{cases}
\]
And of course in either case we find $K_X^2 = 8-m$.  When $X_0=F_1$, we can
blow it down to $\P^2$, suggesting an alternate basis in the odd case:
replace $f$ by $h=s+f$, the class of a line in $\P^2$.  This gives an
orthonormal basis for $\Pic(X)$, with $K_X=-3h+s+\sum_i e_i$, but makes
the effective cone look rather strange when $X_0=F_{2d+1}$ for $d>0$.

Note that we can recover the blowdown structure from the corresponding
basis for $\Pic(X)$: blow down $e_m$, then the image of $e_{m-1}$, etc.,
and construct a map $X_0\to \P^1$ using $f$.  Of course, not every basis
with the correct intersection form will correspond to a blowdown structure,
but we will eventually give an algorithm for determining when a given basis
(expressed in terms of some original blowdown structure) also corresponds
to a blowdown structure.  In any event, we will define a {\em numerical}
blowdown structure to be a basis of $\Pic(X)$ having the same intersection
form as an even or odd blowdown structure on $X$.

The surface $X_1$ was obtained by blowing up a point of $X_0$, and we find
that the fiber containing that point becomes a pair of $-1$ curves on
$X_1$, of divisor classes $e_1$, $f-e_1$.  We thus obtain an alternate
blowdown structure on $X_1$ by blowing down $f-e_1$, producing $X'_0$
differing from $X_0$ by an elementary transformation.  The basis elements
$f$ and $e_i$ for $i\ge 2$ are unchanged by this transformation, but $e_1$
and $s$ are transformed as follows.
\[
(s',e'_1)
=
\begin{cases}
(s-e_1,f-e_1) & \text{$\Gamma$ even}\\
(s+f-e_1,f-e_1) & \text{$\Gamma$ odd}
\end{cases}
\]
Note that this swaps the even and odd cases, and if we perform the
transformation twice, we end up back at the original blowdown structure.

Another natural way to transform a blowdown structure is to rearrange
blowups.  If the morphism $X_{i+2}\to X_i$ blows up two distinct points of
$X_i$, then we can perform the blowdown in the other order, thus swapping
the basis elements $e_i$ and $e_{i+1}$.  Unlike the elementary
transformation case, this operation is {\em not} always legal, as when
$X_{i+2}\to X_{i+1}$ blows up a point of $e_i$, there is no longer any
choice in how to reach $X_i$.  However, when it applies, it has a
particularly nice action on the basis: it is simply the reflection with
respect to the intersection form in the divisor class $e_i-e_{i+1}$.
Similarly, when $X_0\cong \P^1\times \P^1$, we obtain another blowdown
structure by changing to the other ruling.  This swaps the basis elements
$s$ and $f$, and is the reflection in the divisor class $s-f$.

In this way, we obtain a collection of $m$ reflections in the even case,
$m-1$ in the odd case; if we perform an elementary transformation, the two
sets mostly overlap, but we obtain a total of $m+1$ different reflections
in this way (assuming $m\ge 2$).  The corresponding vectors are linearly
independent, and are given by
\[
s-f,f-e_1-e_2,e_1-e_2,\dots e_{m-1}-e_m
\]
in the even case and
\[
s-e_1,f-e_1-e_2,e_1-e_2,\dots,e_{m-1}-e_m
\]
in the odd case.  Note that each one of these vectors is orthogonal to $K$;
we can see this either by direct calculation or by noting that the
expansion of $K$ depends only on the parity of the blowdown structure, so
had better be invariant under the above reflections.

\begin{lem}
Suppose $m\ge 2$.  Then the above sets of vectors give a basis for the
orthogonal complement of $K$ in $\Pic(X)$.  With respect to the negative of
the intersection form, they form the set of simple roots of a Coxeter group
of type $E_{m+1}$.
\end{lem}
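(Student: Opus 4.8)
The plan is to verify the claim in two stages: first the linear-algebra part (that the given $m+1$ vectors form a basis of $K^\perp$), then the Coxeter-theoretic part (that with respect to $-(\cdot,\cdot)$ they are the simple roots of an $E_{m+1}$ system). For the first part, note that $\Pic(X)$ has rank $m+2$ and $K^2\neq 0$ (indeed $K_X^2=8-m$ need not vanish, but even when it does one checks $K$ is not isotropic-orthogonal to itself in the relevant sense — more robustly, one argues directly), so $K^\perp$ has rank $m+1$. Each listed vector is orthogonal to $K$ — this was already observed in the paragraph preceding the lemma, either by the direct computation using the intersection form displayed earlier or by the invariance argument (the expansion of $K$ in the basis depends only on parity, hence is fixed by each reflection, forcing the reflection vector into $K^\perp$). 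So it remains to check the $m+1$ vectors are linearly independent; since there are exactly $m+1=\operatorname{rank}K^\perp$ of them, independence gives a basis. Independence is immediate: in the odd case the vectors $s-e_1,\ f-e_1-e_2,\ e_1-e_2,\dots,e_{m-1}-e_m$ are visibly triangular with respect to the ordered basis $(s,f,e_1,\dots,e_m)$ (each introduces a new basis vector not appearing in the previous ones, reading in a suitable order), and similarly in the even case; I would just exhibit the change-of-basis matrix and note it is unipotent up to sign, hence invertible.

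For the second part, I would compute the Gram matrix of these vectors under $-(\cdot,\cdot)$ and identify it with the Cartan matrix of $E_{m+1}$ by drawing the Coxeter/Dynkin diagram. Using the intersection form $s^2\in\{0,-1\}$, $s\cdot f=1$, $f^2=0$, $e_i\cdot e_j=-\delta_{ij}$, $s\cdot e_i=f\cdot e_i=0$, each listed vector $v$ satisfies $v^2=-2$, so $-(v,v)=2$ — these are norm-$2$ roots. Then I compute pairwise products: the chain $e_1-e_2,\ e_2-e_3,\dots,e_{m-1}-e_m$ gives the $A_{m-1}$ tail (consecutive ones have product $+1$, i.e. $-(\cdot,\cdot)$-value $-1$; non-consecutive are orthogonal); $f-e_1-e_2$ is orthogonal to $e_2-e_3,\dots$ but has product $-1$ with $e_1-e_2$ (value $+1$... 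I need to be careful with signs: $(f-e_1-e_2)\cdot(e_1-e_2) = -e_1\cdot e_1 - e_2\cdot(-e_2)\cdot(-1)$ — I'll carry this through carefully) giving the trivalent node; and the remaining vector ($s-f$ in the even case, $s-e_1$ in the odd case) attaches to produce the long arm. In the even case one checks $(s-f)\cdot(f-e_1-e_2)=s\cdot f - f^2 = 1$ and $(s-f)\cdot(e_i-e_j)=0$, so $s-f$ bonds only to the trivalent node through $f-e_1-e_2$; in the odd case $(s-e_1)\cdot(f-e_1-e_2)=s\cdot f + e_1^2 = 1-1 = 0$ while $(s-e_1)\cdot(e_1-e_2) = e_1\cdot e_1 = -1$, so here $s-e_1$ bonds to $e_1-e_2$ instead. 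Either way the resulting diagram is a $T$-shape with arms of the appropriate lengths; counting nodes ($m+1$ total) and checking the three arm-lengths are $(1,2,m-2)$ from the trivalent node identifies it as the $E_{m+1}$ diagram (in the standard convention $E_n$ has a trivalent node with arms of lengths $1,2,n-3$, so $n=m+1$).

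The main obstacle — really the only place requiring care — is bookkeeping the sign conventions consistently: the intersection form is negative definite on the $e_i$ and indefinite overall, while the Coxeter theory wants the positive-definite (or at least standard Kac–Moody) normalization, so one is constantly negating. A secondary subtlety is confirming that the even and odd diagrams genuinely agree: the elementary-transformation discussion preceding the lemma shows the even and odd vector sets are related by swapping which basis one uses, and since an elementary transformation is an isometry of $\Pic(X)$ fixing $K$, it carries one root system isomorphically to the other — so it suffices to verify the diagram in one parity and invoke this isometry for the other, which also serves as a useful consistency check on the sign computations. I would close by remarking that the hypothesis $m\ge 2$ is exactly what guarantees the $A_{m-1}$ tail is nonempty enough for the count of $m+1$ distinct reflections (when $m\ge 2$ the elementary transformation produces a genuinely new reflection $s-f$ or $s-e_1$ not already among the $e_i-e_{i+1}$), matching the rank of $K^\perp$.
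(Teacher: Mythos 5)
Your overall strategy is the same as the paper's (count the vectors, use triangularity for the basis claim, then read off the Dynkin diagram from pairwise intersections), but the one computation on which the identification of the type actually hinges is stated backwards. One has
\[
(f-e_1-e_2)\cdot(e_1-e_2) \;=\; -(e_1^2-e_2^2) \;=\; 0,
\qquad
(f-e_1-e_2)\cdot(e_2-e_3) \;=\; -e_2^2 \;=\; 1,
\]
so $f-e_1-e_2$ is \emph{orthogonal} to $e_1-e_2$ and bonds to $e_2-e_3$ — the opposite of what you assert. This is not a cosmetic sign issue: with your claimed adjacencies the even-case diagram is the path $s-f$, $f-e_1-e_2$, $e_1-e_2$, \dots, $e_{m-1}-e_m$, i.e.\ type $A_{m+1}$, and the odd-case diagram has its branch node at $e_1-e_2$ with two arms of length one, i.e.\ type $D_{m+1}$; neither is $E_{m+1}$. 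With the correct adjacency the branch node is $e_2-e_3$ in both parities, with arms of lengths $1$, $2$, $m-3$ (your stated arm lengths $(1,2,m-2)$ are inconsistent with having only $m+1$ nodes), and this is exactly the paper's slicker identification: the $e_i-e_{i+1}$ give $A_{m-1}$, adjoining $f-e_1-e_2$ gives $D_m$, and adjoining $s-f$ (even) or $s-e_1$ (odd) lengthens a short leg to give $E_{m+1}$. Note also that in the odd case $(s-e_1)\cdot(e_1-e_2)=-e_1^2=+1$, not $-1$ as you wrote (intersections of distinct simple roots must be nonnegative, which is what makes them simple roots of a Coxeter system at all); your conclusion about where $s-e_1$ attaches happens to be right.

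Two smaller points. First, for the basis claim the statement is about the lattice $K^\perp\subset\Pic(X)$, so linear independence plus the rank count only gives a finite-index sublattice; what closes the gap is exactly the triangular-with-unit-diagonal observation (saturation), which is how the paper argues — so phrase that as the reason, rather than deducing ``basis'' from the dimension count. Second, the parenthetical about $K^2\ne 0$ is false when $m=8$ (then $K_X^2=0$, $K$ is isotropic and lies in $K^\perp$, which is why $E_9=\tilde E_8$ is affine with null root proportional to $K$), and it is also unnecessary: $K^\perp$ has rank $m+1$ simply because the intersection form on $\Pic(X)$ is nondegenerate and $K\ne 0$.
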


\begin{proof}
  In either case, we have $m+1$ vectors, while $\Pic(X)$ has rank $m+2$, and
  thus we obtain bases of the orthogonal complement over $\Q$.  Since the
  bases are obviously saturated (they are essentially triangular with unit
  diagonal), the first claim follows.

  That the vectors are
  simple roots for a Coxeter system follows from the fact that their inner
  products are nonpositive (i.e., the intersections are nonnegative).  To
  identify the system, note that the $e_i-e_{i+1}$ roots are the simple
  roots of a Coxeter group of type $A_{m-1}$, adjoining $f-e_1-e_2$ extends
  this to $D_m$, and adjoining $s-f$ or $s-e_1$ as appropriate extends one
  of the short legs of the $D_m$ Dynkin diagram.
\end{proof}

\begin{rem}
Note the small $m$ cases
\begin{align}
\notag E_3&=A_1\times A_2\\
\notag E_4&=A_4\\
\notag E_5&=D_5\\
\notag E_9&=\tilde{E}_8,
\end{align}
with $E_6$, $E_7$, $E_8$ as expected.
When $m=1$, we have only the root $s-f$ or $s-e_1$ as appropriate, and when
$m=0$, we have only $s-f$ in the even case, and no roots in the odd case.
\end{rem}

With this in mind, we refer to the given vectors as the simple roots for
the (numerical) blowdown structure.  The corresponding simple reflections
clearly give an action of $W(E_{m+1})$ on the set of numerical blowdown
structures.

\begin{lem}
Suppose $\Gamma$ is a blowdown structure for the rational surface $X$, and
let $\sigma$ be a simple root for $\Gamma$, with corresponding reflection $r_\sigma$.
If $\sigma$ is ineffective, then $r_\sigma\Gamma$ is a blowdown structure.
\end{lem}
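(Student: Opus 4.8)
The plan is to argue by cases according to the type of the simple root $\sigma$, realizing $r_\sigma\Gamma$ in each case as the blowdown structure obtained from $\Gamma$ by a short composition of the elementary operations introduced above (reordering two consecutive blowups, an elementary transformation at the first blowup, or a change of ruling on $X_0$). Each such composition will involve exactly one operation carrying a legality condition, and the point is that this condition fails only if $\sigma$ is effective; so the hypothesis that $\sigma$ is ineffective is exactly what is needed. Throughout I would measure effectivity on $X$ by pushing forward along blowdowns, using that the total transforms $e_j$ are effective.

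\emph{The roots $e_i-e_{i+1}$ and $s-f$.} If $\sigma=e_i-e_{i+1}$, I would reorder the $i$-th and $(i+1)$-st blowups; this is legal unless the $(i+1)$-st center lies on the exceptional curve $E_i$ of $X_i\to X_{i-1}$, in which case the strict transform of $E_i$ on $X$ has class $e_i-e_{i+1}-\sum_{j>i+1}m_je_j$ with $m_j\ge 0$, so that $\sigma$ is effective after adding back the effective classes $e_j$. If $\Gamma$ is even and $\sigma=s-f$, I would pass to the other ruling of $X_0$, which is possible exactly when $X_0\cong\P^1\times\P^1$; if instead $X_0\cong F_{2d}$ with $d\ge 1$ then $s=s_{\min}+df$ and $\sigma=s_{\min}+(d-1)f$ is effective on $X_0$, hence on $X$ by pullback.

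\emph{The roots $f-e_1-e_2$ and $s-e_1$.} For $\sigma=f-e_1-e_2$ (either parity) or $\sigma=s-e_1$ (odd parity), I would pass to $\Gamma^{(1)}=T\Gamma$, the elementary transform at the first blowup, which is always a blowdown structure. From the transformation formulas one checks that $\sigma$ is again a simple root of $\Gamma^{(1)}$, now of one of the two preceding types: it equals $e_1^{(1)}-e_2^{(1)}$ when $\sigma=f-e_1-e_2$, and it equals $s^{(1)}-f^{(1)}$ (a ruling root of the now even structure) when $\sigma=s-e_1$. A direct computation with these formulas, tracking the parity of each intermediate structure, then gives the identity $r_\sigma\Gamma=T\bigl(r_\sigma\Gamma^{(1)}\bigr)$, so it suffices to show $r_\sigma\Gamma^{(1)}$ is a blowdown structure. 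Applying the preceding case to $\Gamma^{(1)}$, this holds whenever $\sigma$ is ineffective (the swap case) or $X_0^{(1)}\cong\P^1\times\P^1$ (the ruling case). In the ruling case $X_0^{(1)}$ is the elementary transform of $X_0\cong F_{2d+1}$ at the first center $p_1$, which is $\cong\P^1\times\P^1$ precisely when $d=0$ and $p_1\notin s_{\min}$; if this fails then $\sigma=s-e_1=s_{\min}+df-e_1$ is effective — when $d\ge 1$ take $s_{\min}$ together with the fiber through $p_1$ and $d-1$ further fibers, and when $d=0$ and $p_1\in s_{\min}$ take the strict transform of $s_{\min}$ — contradicting ineffectivity.

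The substantive point, and the step I expect to be the main obstacle, is this last reduction: the swaps and (for $X_0\cong\P^1\times\P^1$) the ruling change do not by themselves realize all the $r_\sigma$, so one must recognize that the elementary transformation $T$ converts the two ``extra'' simple roots into moves of the available type while conjugating the reflection action correctly, i.e.\ that $r_\sigma\Gamma=T(r_\sigma\Gamma^{(1)})$. The only genuinely computational ingredient is the effectivity of $s-e_1=s_{\min}+df$ on the twisted surfaces $F_{2d+1}$, which is where the normalization $s:=s_{\min}+\lfloor -s_{\min}^2/2\rfloor f$ does its work; the small-$m$ cases require no separate treatment, since the case analysis simply covers whichever simple roots exist.
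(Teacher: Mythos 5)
Your proposal is correct and follows essentially the same route as the paper: reduce the roots $f-e_1-e_2$ and $s-e_1$ to the cases $e_i-e_{i+1}$ and $s-f$ by an elementary transformation (using that it commutes with the linear reflection action), then observe that reordering blowups fails only when the strict transform of the exceptional curve makes $e_i-e_{i+1}$ effective, and that the ruling change fails only when $X_0\cong F_{2d}$ with $d>0$, making $s-f$ effective. Your version merely spells out the conjugation identity and the effectivity of $s-e_1$ on odd Hirzebruch surfaces, which the paper leaves implicit.
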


\begin{proof}
  To be precise, we mean here that if the numerical blowdown structure
  $\Gamma$ comes from a blowdown structure, then so does $r_\sigma\Gamma$, as long
  as the divisor class $\sigma$ is ineffective.

  Using elementary transformations as appropriate, we may reduce to the
  cases $\sigma=s-f$ and $\sigma=e_i-e_{i+1}$.  If $s-f$ is ineffective on $X$, it is
  certainly ineffective on $X_0$, but then $X_0$ must be $\P^1\times \P^1$
  (if $X_0\cong F_{2d}$ for $d>0$, then $s_{\min}=s-df$ is effective), and
  we have already seen that the reflection gives a blowdown structure.
  Similarly, if $e_i-e_{i+1}$ is ineffective on $X$, it is ineffective on
  $X_{i+1}$, which implies that $X_{i+1}\to X_{i-1}$ blows up two distinct
  points of $X_i$, so that we need merely blow up the points in the
  opposite order.
\end{proof}

\begin{rem}
The roots of the $A_{m-1}$ subsystem act without changing the Hirzebruch
surface $X_0$, by permuting the distinct points being blown up.  Similarly,
the simple reflections of the $D_m$ subsystem leave the rational ruling
invariant.  If we combine those reflections with the action of the
elementary transformation, we obtain a group of type $C_m$ acting on the
different ways to blow down to a Hirzebruch surface compatibly with the
given ruling.
\end{rem}

Given a blowdown structure $\Gamma$, call an element $w\in W(E_{m+1})$ {\em
  ineffective} if there exists a word $w = r_1 r_2\cdots r_l$ with each
$r_i$ a simple reflection such that the corresponding simple root is
ineffective for the relevant blowdown structure, $r_{i+1}r_{i+2}\cdots
r_l\Gamma$.  (That this numerical blowdown structure comes from an actual
blowdown structure follows by an easy induction.)  In particular, if $w$ is
ineffective, then $w\Gamma$ is a blowdown structure.

We thus need to understand the effective simple roots.  By a ``$-d$-curve''
on a rational surface, we mean a smooth rational curve of self-intersection
$-d$.

\begin{lem}\label{lem:fixed_of_simple}
  Let $X$ be a rational surface with blowdown structure $\Gamma$ and
  $K_X^2<8$.  Then any effective simple root $\sigma$ can be decomposed as a
  nonnegative linear combination of $-d$-curves with $d>0$.  There is at
  most one fixed component of $\sigma$ not orthogonal to $\sigma$, and that component
  has self-intersection $\le -2$, with equality only if $\sigma$ is a $-2$-curve.
\end{lem}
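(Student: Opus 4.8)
The plan is to analyze the structure of an effective simple root $\sigma$ by using the $W(E_{m+1})$-action together with elementary transformations to reduce to a small number of concrete cases, then to decompose $\sigma$ into negative curves in each case. First I would recall that, as in the proof of the preceding lemma (on ineffective roots giving blowdown structures), applying suitable roots of the $A_{m-1}$ and $D_m$ subsystems together with elementary transformations lets us reduce to the cases $\sigma = s-f$ and $\sigma = e_i - e_{i+1}$; but I must be careful here, because those reductions themselves require ineffectiveness of the roots being used, so the reduction must be carried out only "down to" a configuration where $\sigma$ is one of a short list of classes whose geometry on the relevant $X_i$ is transparent. Concretely, for $\sigma = e_i - e_{i+1}$ the relevant fact is that on $X_{i+1}$ the class $f - e_i$ (or more precisely the fiber of the ruling through the center of the $i$-th blowup, pulled back) is represented by a negative curve, and $e_i - e_{i+1}$ becomes effective exactly when the $(i+1)$-st center lies on the exceptional curve of the $i$-th blowup, in which case $e_i - e_{i+1}$ itself is the class of that exceptional curve, which is a $-2$-curve on $X_{i+1}$; its total transform on $X$ is then a nonnegative combination of $-d$-curves (the strict transform, plus further exceptional divisors from later blowups on it).

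Next I would handle $\sigma = s - f$: if this is effective, then $X_0 = \P^1 \times \P^1$ is excluded only when it is ineffective there, so $s-f$ must fail to be effective on $\P^1\times\P^1$ — wait, that is the ineffective case. In the effective case $X_0 \cong F_{2d}$ with $d \ge 1$, and then $s - f = s_{\min} + (d-1)f$, which is $s_{\min}$ plus a nonnegative multiple of the fiber class; on $X$ this pulls back to the (total transform of the) negative section $s_{\min}$, a $-d$-curve with $d = -s_{\min}^2 \ge 2$ if nothing is blown up on it, or a $-d$-curve with larger $d$ after accounting for exceptional divisors, plus nonnegative fiber classes, each of which is itself either a $-1$-curve pair or decomposes into negative curves and exceptional divisors. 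Either way we obtain the desired decomposition. In the odd case $\sigma = s - e_1$, effectiveness forces $X_0 \cong F_{2d+1}$ with the negative section $s_{\min} = s - (d+1)f$... and a parallel but slightly more involved bookkeeping with $s$, $f$, $e_1$ produces the same conclusion; the point is that $s$ is always represented (possibly after the reduction) by a section of the ruling meeting the anticanonical configuration in a controlled way.

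For the uniqueness-of-fixed-component statement and the self-intersection bound, I would argue as follows. Write the decomposition $\sigma = \sum_j a_j D_j$ with $D_j$ distinct irreducible negative curves and $a_j > 0$. Since $\sigma^2 = -2$ and $\sigma \cdot K_X = 0$, and each $D_j$ has $D_j^2 < 0$, one computes $-2 = \sigma^2 = \sum_j a_j (\sigma \cdot D_j)$, so at least one $D_j$ has $\sigma \cdot D_j < 0$; such a $D_j$ must appear in the decomposition (i.e. is a fixed component) and satisfies $\sigma \cdot D_j < 0$. If two distinct fixed components $D_1, D_2$ had $\sigma \cdot D_i < 0$, then $(\sigma - a_1 D_1 - a_2 D_2)$ plus the nonnegativity of the remaining terms and of all the intersection numbers $D_i \cdot D_j$ ($i \ne j$) would force $\sigma^2 \le a_1^2 D_1^2 + a_2^2 D_2^2 + (\text{nonpositive cross terms involving }\sigma) < -2$, a contradiction; this is the step I expect to require the most care, since it uses that $\sigma$ is a root (so $\sigma^2 = -2$ exactly) and that the coefficients $a_j$ are positive integers, and one has to rule out the borderline configurations cleanly. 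Finally, if $D$ is the unique fixed component with $\sigma \cdot D < 0$, then $\sigma \cdot D \le -1$ and writing $\sigma = aD + (\text{effective, with nonnegative intersection with }D)$ gives $-2 = \sigma^2 = a(\sigma \cdot D) + (\sigma \cdot (\text{rest})) \ge a(\sigma\cdot D)$, combined with the adjunction/genus bound $D^2 \ge -2 + 2g(D) \cdot(\cdots)$ — but $D$ is rational so $D^2 + D\cdot K_X = -2$ and $\sigma \cdot K_X = 0$ yield a linear relation forcing $D^2 \le -2$, with $D^2 = -2$ only when $a = 1$, $\sigma \cdot D = -2$, and the "rest" is orthogonal to $\sigma$, i.e. $\sigma = D$ is itself the $-2$-curve. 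The main obstacle is the casework in the reduction step: ensuring that the $W(E_{m+1})$-reduction to $\sigma \in \{s-f,\, s-e_1,\, e_i-e_{i+1}\}$ is actually valid when $\sigma$ is effective (so that one cannot freely apply the "ineffective root" lemma to $\sigma$ itself), and organizing the negative-curve decomposition uniformly across the even/odd and $F_{2d}$/$F_{2d+1}$ cases.
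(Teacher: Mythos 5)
Your treatment of the first claim (decomposing an effective simple root into $-d$-curves with $d>0$) is essentially the paper's: for $e_i-e_{i+1}$ one observes it is the total transform of a $-2$-curve on $X_{i+1}$ and tracks components through the later blowups, and for $s-f$ one uses $s-f=s_{\min}+(d-1)f$ on $F_{2d}$, choosing the fibers through the first blown-up point so that every component is a negative curve (the paper handles $f-e_1-e_2$ and the odd root $s-e_1$ by elementary transformation rather than your separate $F_{2d+1}$ bookkeeping, which as written trails off; note also that no reflections in $A_{m-1}$ or $D_m$ roots are needed for the reduction, only elementary transformations, which are unconditional).

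The genuine gap is in the second half. Your argument for ``at most one non-orthogonal fixed component'' is purely numerical, using only $\sigma^2=-2$, $a_j>0$, and intersection positivity, and it cannot work: for distinct irreducible curves $D_1\cdot D_2\ge 0$, so the cross terms in $\sigma^2=\sum_{i,j}a_ia_jD_i\cdot D_j$ are nonnegative, not nonpositive as you assert, and no contradiction arises. Indeed the statement you are trying to force numerically is false for general positive roots: if $p_2\in e_1$ and $p_3\in e_2$, then $e_1-e_3=(e_1-e_2)+(e_2-e_3)$ is effective with two fixed components, each a $-2$-curve meeting $e_1-e_3$ with intersection $-1$; all your numerical hypotheses hold there. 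So simplicity of $\sigma$ must enter geometrically, which is exactly what the paper does: since $\sigma$ is (after elementary transformation) the pullback of a $-2$-curve on $X_{i+1}$, respectively of $s_{\min}+(d-1)f$ on $X_0$, every component of the exceptional locus is orthogonal to $\sigma$ by the projection formula, leaving only the strict transform of the original $-2$-curve (resp.\ of $s_{\min}$) as a possibly non-orthogonal fixed component; this also disposes of fixed components with \emph{positive} intersection, which your argument never addresses. The self-intersection bound then falls out of the same picture: that strict transform starts at self-intersection $-2$ (resp.\ $-2d\le -2$) and only decreases under further blowups, with equality exactly when $\sigma$ is itself a $-2$-curve. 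Your attempted derivation of $D^2\le -2$ via adjunction is likewise incomplete: from $\sigma\cdot K_X=0$ one cannot conclude $D\cdot K_X\ge 0$, since the residual effective part may contain curves of negative and positive intersection with $K_X$, so the ``linear relation'' you invoke does not by itself rule out $D^2=-1$.
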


\begin{proof}
  If $\sigma=e_i-e_{i+1}$ is effective, then it is the total transform of a $-2$
  curve on $X_{i+1}$.  At each later step in the blowing up process, either
  we blow up a point not on the total transform, in which case the
  decomposition is unchanged, or we blow up a point on the total
  transform, in which case we acquire an additional component $e_j$, and
  the component(s) containing the center of the monoidal transform have
  their self-intersection decreased by $1$.  Thus by induction every
  component is a $-d$-curve for some $d>0$.  We also see that $\sigma$ is
  uniquely effective, so every component is fixed, but only the strict
  transform of the original $-2$ curve is not orthogonal to $\sigma$.  The case
  $\sigma=f-e_1-e_2$ follows by elementary transformation.
  
  For the remaining case, assume for convenience that $\Gamma$ is even, so
  the remaining root is $s-f$.  This is effective precisely when $X_0\cong
  F_{2d}$ with $d>0$, when we can write it in the form
\[
s-f = s_{\min} + (d-1)f.
\]
This same decomposition applies to $X_m$, so that the only fixed components
are those of the total transform of $s_{\min}$, to which the previous
calculation applies.  On the other hand, we could choose the fibers in this
decomposition to all pass through the point blown up on $X_1$, obtaining a
decomposition
\[
s-f = s_{\min} + (d-1)(f-e_1)+(d-1)e_1,
\]
or
\[
s-f = (s_{\min}-e_1) + (d-1)(f-e_1)+de_1
\]
on $X_1$, the latter when the point being blown up is on $s_{\min}$.  The
components of this decomposition are all $-e$-curves for varying $e>0$, and
as before this property is preserved on taking the total transform to $X_m=X$.
\end{proof}

Since reflections in ineffective simple roots take blowdown structures to
blowdown structures, we can define a groupoid (the {\em strict} groupoid of
blowdown structures on $X$) as follows: the objects are the blowdown
structures on $X$, while the morphisms are given by the actions of
ineffective elements of $W(E_{m+1})$.

\begin{thm}
If $X$ is a rational surface with $K_X^2<8$, then the strict groupoid of
blowdown structures on $X$ has precisely two isomorphism classes, one for
each parity of blowdown structure.  (If $K_X^2=8$, the groupoid has only
one isomorphism class.)
\end{thm}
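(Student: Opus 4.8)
The plan is to show that the strict groupoid is connected within each parity class, since the earlier lemmas already guarantee that reflections in ineffective simple roots move between blowdown structures of opposite parity (the simple root $s-f$ or $s-e_1$ swaps parity, the others preserve it). Thus it suffices to prove: (i) any two blowdown structures of the same parity are connected by a sequence of reflections in ineffective simple roots, and (ii) the two parity classes are genuinely distinct as isomorphism classes of the groupoid — which is immediate, since parity is an invariant of the intersection form data that reflections only toggle in a controlled way, so an isomorphism of the groupoid would have to respect the "$K_X^2 = 8$ vs.\ $<8$" count... actually the cleanest way to see the two classes are distinct is that there is no ineffective word taking an even structure to another even structure while passing through an odd one and back in a way that could be rerouted; but more simply, I will track parity as a $\Z/2$-grading on objects that every generating morphism either preserves or reverses, and observe that within the connected groupoid on all blowdown structures, the even and odd ones form the two "cosets."

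The heart of the argument is step (i), connectivity within a parity class, and I expect this to be the main obstacle. Here is the approach. Fix a target blowdown structure $\Gamma_0$ on $X$ (say the "standard" one coming from the construction of $X$ as an iterated blowup of a Hirzebruch surface), and let $\Gamma$ be arbitrary of the same parity. I want to produce an ineffective word $w\in W(E_{m+1})$ with $w\Gamma = \Gamma_0$. The natural strategy is a descent/minimization argument: attach to each blowdown structure a nonnegative integer "complexity" (for instance, the sum $\sum_i h^0(\text{something})$, or more concretely a measure built from how the $e_i$ and $s,f$ of $\Gamma$ expand in the fixed basis of $\Gamma_0$ — e.g.\ the total number of $(-1)$-curves among the $e_i$ that fail to be the correct exceptional classes, or the intersection-theoretic distance $\sum_i (e_i^\Gamma \cdot \text{stuff})$). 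Then I show that unless $\Gamma$ is already equivalent to $\Gamma_0$, some simple root $\sigma$ of $\Gamma$ is ineffective and $r_\sigma$ strictly decreases the complexity. Ineffectiveness is exactly what lets me apply the preceding lemma to stay inside the space of honest blowdown structures.

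To make the descent work I would use Lemma~\ref{lem:fixed_of_simple}: an \emph{effective} simple root decomposes as a nonnegative combination of $-d$-curves ($d>0$), which severely constrains the geometry — in particular, if \emph{all} $m+1$ simple roots of $\Gamma$ were effective, one gets a full $E_{m+1}$ worth of effective classes orthogonal to $K_X$, each built from negative curves, forcing $K_X$ to be (anti)nef-ish in a way incompatible with $K_X^2<8$ unless $\Gamma$ is extremal; I would argue that in that extremal situation $\Gamma$ must already be the standard structure (up to equivalence), giving the base case. When at least one simple root $\sigma$ is ineffective, reflecting in it is legal, and I choose $\sigma$ to be one that decreases complexity — concretely, if $\Gamma \ne \Gamma_0$ then the $e_i$'s of $\Gamma$ are not all "untouched" total transforms, so either two centers can be swapped (a root $e_i - e_{i+1}$) or the last Hirzebruch surface $X_0^\Gamma$ is not minimal-enough and the root $s-f$ (resp.\ $s-e_1$) applies, and in each case the elementary-transformation trick from the proof of the previous lemma reduces to those two cases. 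Finally, the $K_X^2 = 8$ remark is handled separately: there $m\le 0$, there are no simple roots (odd case) or a single one (even case, $F_0$ vs.\ $F_2$), and one checks by hand that $F_0$, $F_1$, $F_2$ all give equivalent blowdown structures in the appropriate sense, collapsing to one class. The main obstacle, then, is cleanly defining the complexity function and proving the strict-descent step with controlled ineffectiveness; everything else is bookkeeping built on the lemmas already established.
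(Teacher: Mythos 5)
Your proposal is a plan rather than a proof: the step you yourself flag as ``the main obstacle'' --- choosing a complexity function and proving a strict-descent step through ineffective reflections --- is precisely the mathematical content of the theorem, and it is left undone. The paper's argument supplies exactly this content, in a different shape than you sketch. Instead of comparing an arbitrary $\Gamma$ to a fixed $\Gamma_0$ by an ad hoc complexity, it inducts on $m$: the base case $K_X^2=7$ is handled by listing the classes $D$ with $D^2=D\cdot K_X=-1$ (only $e_1$, $s-e_1$, $f-e_1$, and only $e_1$ can blow down to an even Hirzebruch surface), and for $K_X^2<7$ the whole problem is reduced to showing that the class $E$ of any $-1$-curve (in particular the last exceptional curve of the other blowdown structure) can be moved to $e_m$ by reflections in ineffective simple roots; then both structures blow down a common curve and one inducts on $X_{m-1}$. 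The two ingredients you would need and do not have are: (i) whenever $E\cdot\sigma<0$ for a simple root $\sigma$, that $\sigma$ is \emph{automatically} ineffective --- this follows from Lemma \ref{lem:fixed_of_simple}, since otherwise $E$ would be a fixed component of $\sigma$ of self-intersection $-1$ not orthogonal to $\sigma$, which the lemma forbids; your intended use of that lemma (``if all simple roots of $\Gamma$ were effective\dots'') is aimed at the wrong statement, because what matters is effectivity of the particular roots meeting $E$ negatively, not of all of them simultaneously. (ii) A termination-and-identification argument: each such reflection makes $(n,d,-r_1,\dots,-r_{m-1})$ lexicographically smaller (the reflections fixing $n$ generate the finite group $W(D_m)$), so one reaches $E\cdot\sigma\ge0$ for every simple root; then writing $E-e_m$ explicitly as a nonnegative combination of simple roots and computing $E\cdot(E-e_m)=-1-r_m<0$ forces $E=e_m$. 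Without (i) you cannot guarantee your moves are legal, and without (ii) you have no proof that the process terminates at the desired structure.

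A smaller but genuine error: you assert that reflection in $s-f$ (or $s-e_1$) swaps parity. It does not. All simple reflections preserve the parity of a (numerical) blowdown structure, since they are isometries applied to the whole basis and the two parities have different Gram matrices; parity changes only under the elementary transformation, which is not a morphism of the strict groupoid. This observation is exactly why the two parity classes are distinct isomorphism classes, and it should replace the inconclusive $\Z/2$-grading discussion in your first paragraph; the $K_X^2=8$ remark is then the easy check that $\P^1\times\P^1$ has its two rulings exchanged by the (ineffective) reflection in $s-f$, while every other Hirzebruch surface has a unique blowdown structure.
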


In other words, any two blowdown structures on $X$ with the same parity are
related by a sequence of reflections in ineffective simple roots.  For
$K_X^2<8$, elementary transformations imply that we need only consider the
even parity case.

\begin{proof}
  If $K_X^2=8$, this is obvious, as either there is only one blowdown
  structure or $X\cong \P^1\times \P^1$, and there are two blowdown
  structures related by reflection in $s-f$.  Now, suppose $K_X^2=7$, and
  fix an even blowdown structure on $X$.  To blow $X$ down to a Hirzebruch
  surface, we must blow down a $-1$ curve, which in particular gives a
  divisor class $D$ such that $D^2=D\cdot K_X=-1$.  There are only three
  such divisor classes, namely $D\in \{e_1,s-e_1,f-e_1\}$.  Only the case
  $D=e_1$ could blow down to an even Hirzebruch surface, since the other
  two cases have classes of odd self-intersection in their orthogonal
  complements.  In other words, a surface with $K_X^2=7$ blows down to a
  unique even Hirzebruch surface, and thus the even blowdown structures on
  $X$ are bijective with the even blowdown structures on this Hirzebruch
  surface.  (Note also that $f-e_1$ is always a $-1$ curve, while $s-e_1$,
  related to it by a simple reflection, is either a $-1$ curve or
  decomposes as $s-e_1=(s-f)+(f-e_1)$, depending on whether $s-f$ is
  effective.)

  For $K_X^2<7$, we may induct on $m$, and thus reduce to the question of
  showing that every $-1$ curve on $X$ can be moved to $e_m$ by a sequence
  of reflections in ineffective simple roots.  Again, we may assume we are
  starting from an even blowdown structure, conjugating by elementary
  transformations as appropriate.  Let
\[
E = n s + d f - \sum_i r_i e_i
\]
be the class of the given $-1$ curve.  Note that $n\ge 0$ since $f$ is nef
(it always has a representative which is smooth of self-intersection $0$).

If $E\cdot \sigma<0$ for some simple root, then the root must be ineffective,
since otherwise $E$ would be a fixed component of self-intersection $-1$
not orthogonal to the simple root.  In particular, we can always perform
the corresponding reflection, and this makes the vector
$(n,d,-r_1,\dots,-r_{m-1})$ lexicographically smaller.  Since the
reflections preserving $n$ form a finite group (of type $D_m$), we conclude
that after finitely many reflections in ineffective simple roots, we will
obtain a divisor such that $E\cdot \sigma\ge 0$ for every simple root $\sigma$.
We may also assume $E\cdot e_m\ge 0$, since otherwise $E=e_m$.

For such a divisor, we have the inequalities
\[
d\ge n\ge r_1+r_2;\quad r_1\ge r_2\ge\cdots\ge r_m\ge 0.
\]
Since $(E-e_m)\cdot K_X = 0$, we may also express $E-e_m$ as a linear
combination of simple roots:
\[
E-e_m = n(s-f) + (n+d)(f-e_1-e_2)
+(n+d-r_1)(e_1-e_2)+\!
\sum_{2\le k\le m-1}\! (1+\sum_{k<i} r_i)(e_i-e_{i+1}),
\]
clearly a nonnegative linear combination.  Since
\[
E\cdot (E-e_m) = -1 - r_m <0,
\]
we obtain a contradiction.
\end{proof}

\begin{rems}
We can adapt this to an algorithm for testing whether a given class is a
$-1$-curve (and thus whether a given numerical blowdown structure comes
from an actual blowdown structure): reflect in simple roots with $E\cdot
\sigma<0$ until one of the roots is effective, $E\cdot f<0$, or $E=e_m$.  Then
$E$ is a $-1$-curve iff the last termination condition holds.
\end{rems}

\begin{rems}
Of course, we could obtain a groupoid with a single isomorphism class by
including morphisms of the form $w\epsilon$ where $\epsilon$ is the
elementary transformation, but this is somewhat inconvenient, since the
morphisms no longer correspond directly to elements of a group.
\end{rems}

Any $-2$-curve on $X$ is a (real) root of the root system $E_{m+1}$.  More
precisely, we have the following.

\begin{prop}
  Suppose $D$ is the class of a $-2$ curve.  Then there exists a blowdown
  structure on $X$ for which $D$ is a simple root.
\end{prop}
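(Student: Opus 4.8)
The plan is to show that any $-2$-curve class $D$ can be moved to one of the standard simple root positions by a sequence of reflections in ineffective simple roots, hence is itself a simple root for the resulting blowdown structure. The key tool is the same descent argument used in the previous theorem, but now applied to a $-2$-curve instead of a $-1$-curve, and exploiting that $D$ is fixed (uniquely effective) in a strong sense.

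First I would set up the descent. Fix a blowdown structure $\Gamma$, which by conjugating with elementary transformations we may assume is even, and write $D = ns + df - \sum_i r_i e_i$. As before, $n\ge 0$ since $f$ is numerically effective and $D$ is effective. The crucial observation is the following: if $D\cdot\sigma < 0$ for some simple root $\sigma$, then $\sigma$ must be ineffective. Indeed, if $\sigma$ were effective, then by Lemma~\ref{lem:fixed_of_simple} it decomposes as a nonnegative sum of $-d$-curves ($d>0$), with at most one component $Z$ not orthogonal to $\sigma$, and $Z^2\le -2$. But $D$ is an irreducible $-2$-curve with $D\cdot\sigma<0$, which forces $D$ to be a component of $\sigma$; since $D\cdot\sigma<0$, $D$ is not orthogonal to $\sigma$, so $D=Z$ and $Z^2=-2$ — and the lemma says this equality holds only if $\sigma$ itself is a $-2$-curve, so $\sigma$ is (up to sign) the class $D$; but then $D=\sigma$ is already a simple root and we are done. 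So in the remaining case we may always reflect when $D\cdot\sigma<0$, and such a reflection strictly decreases the tuple $(n,d,-r_1,\dots,-r_{m-1})$ lexicographically (since $r_\sigma D = D - (D\cdot\sigma)\sigma$ and $\sigma$ has nonnegative entries in the relevant positions). Because the subgroup of reflections fixing $n$ is finite (type $D_m$), after finitely many steps we reach a class, still called $D$, with $D\cdot\sigma\ge 0$ for every simple root $\sigma$.

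Next I would analyze this terminal position. The inequalities $D\cdot\sigma\ge 0$ for the simple roots $s-f$, $f-e_1-e_2$, $e_1-e_2,\dots,e_{m-1}-e_m$ translate into $d\ge n\ge r_1+r_2$ and $r_1\ge r_2\ge\cdots\ge r_m\ge 0$. I also need to handle $D\cdot e_m$. If $D\cdot e_m < 0$, irreducibility and $D^2=-2$ force $D=e_m-e_j$ for some $j$ or $D = e_m$-type — more carefully, $D\cdot e_m<0$ with $D$ irreducible of self-intersection $-2$ means $e_m$ appears with multiplicity, and the only such $-2$-class meeting the cone constraints is one of the $e_i - e_{i+1}$ after reordering, i.e.\ $D$ is already conjugate to a simple root. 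So assume $D\cdot e_m\ge 0$, i.e.\ $r_m\ge 0$, which we already have. Now I would derive a contradiction exactly as in the previous proof by pairing $D$ with a manifestly effective combination. Since $D^2=-2$ and $D\cdot K_X=0$, the class $D$ lies in $K_X^\perp$ and can be written as a nonnegative integer combination of simple roots; but the terminal inequalities show all coefficients in that expansion are nonnegative, so $D$ is a nonnegative sum $\sum c_j\sigma_j$ of simple roots. Then $-2 = D^2 = \sum_j c_j (D\cdot\sigma_j) \ge 0$ unless some $D\cdot\sigma_j<0$, contradicting our terminal assumption — \emph{unless} the expansion is supported on a single simple root, i.e.\ $D=\sigma_j$. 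That exception is precisely the desired conclusion: $D$ equals a simple root of the current blowdown structure.

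The main obstacle I anticipate is the bookkeeping in the ``terminal position'' step: making precise that the inequalities $D\cdot\sigma\ge 0$ together with $D^2=-2$ and $D\cdot K_X = 0$ really do force $D$ to be a single simple root (rather than merely some effective root in $K_X^\perp$), and handling the $e_m$ boundary case cleanly. One must be slightly careful that the nonnegative-combination-of-simple-roots representation is available — this uses that $D$ is effective and in $K_X^\perp$, so after moving into the fundamental chamber for $W(E_{m+1})$ its coordinates in the simple-root basis are nonnegative — and that irreducibility of the $-2$-curve rules out the cases where the combination has more than one term. Once the contradiction is in place, the proposition follows immediately: the sequence of reflections we performed are all in ineffective simple roots (by the observation above), hence by the earlier discussion $w\Gamma$ is an honest blowdown structure, and $D$ is one of its simple roots.
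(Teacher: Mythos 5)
Your proposal follows essentially the same route as the paper: expand $D\in K_X^\perp$ in the simple roots of the blowdown structure, observe via Lemma~\ref{lem:fixed_of_simple} that a simple root meeting $D$ negatively is either ineffective or equal to $D$, and run a lexicographic descent through reflections in ineffective simple roots. The only place the write-up goes astray is the endgame: if $D\cdot\sigma\ge 0$ for every simple root and the coefficients $c_j$ in $D=\sum_j c_j\sigma_j$ are nonnegative, then $D^2=\sum_j c_j(D\cdot\sigma_j)\ge 0$ with no exceptions; in particular ``the expansion is supported on a single simple root'' is not an exception, since a simple root $\sigma$ has $\sigma\cdot\sigma=-2<0$ and so is never in the fundamental chamber. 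The correct conclusion of your second paragraph is that the terminal (fundamental-chamber) position is simply impossible for a class with $D^2=-2$, so the descent can only stop through the effective case treated in your first paragraph, where $D=\sigma$ -- which is exactly how the paper concludes, so the repair is immediate. Also, the digression about $D\cdot e_m$ is unnecessary: $e_m$ is an irreducible $-1$-curve, hence distinct from the irreducible $-2$-curve $D$, so $D\cdot e_m\ge 0$ automatically.
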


\begin{proof}
Let $\Gamma$ be an even blowdown structure on $X$, and write
\[
D = ns + df - \sum_{1\le i\le m} r_i e_i
\]
as before.  Since $D\cdot K_X=0$, we can expand $D$ as a linear combination
of simple roots
\[
D = n(s-f) + (n+d)(f-e_1-e_2)
+(n+d-r_1)(e_1-e_2)+\!
\sum_{2\le k\le m-1}\! (\sum_{k<i} r_i)(e_i-e_{i+1}),
\]
and thus find as before that $D\cdot \sigma<0$ for some simple root $\sigma$.
Once more, $D$ would have to be a fixed component of $\sigma$ if $\sigma$ were
effective, and thus either $D=\sigma$ or $\sigma$ is ineffective.  As before, in the
latter case, reflecting makes $D$ lexicographically smaller, so this
process must terminate.
\end{proof}

\begin{rem}
  Again, this translates to an algorithm for testing whether a given
  divisor class is represented by a $-2$-curve, which is formally very
  similar to the algorithm of \cite{KatzNM:1996} for testing whether a
  local system is rigid.
\end{rem}

In the case of a surface with a chosen anticanonical curve, there is a
related groupoid with more morphisms and nontrivial stabilizers.  Call a
simple reflection $s\in S(E_{m+1})$ {\em admissible} for the blowdown
structure $\Gamma$ (and the anticanonical curve $C_\alpha$) if the
corresponding simple root is either ineffective or has intersection 0 with
every component of $C_\alpha$.  Although $s\Gamma$ is no longer a blowdown
structure when $s$ is effective but admissible, we define a modified action
as follows.  If $s$ is ineffective, then $s\cdot \Gamma:=s\Gamma$, while if
$s$ is effective but admissible, then $s\cdot \Gamma:=\Gamma$.

More generally, call an element $w\in W(E_{m+1})$ admissible for $\Gamma$
if there exists a word
\[
w = s_1 s_2 \cdots s_l
\]
for $w$ such that for each $1\le i\le l$, $s_i$ is admissible for
\[
s_{i+1}\cdot s_{i+2}\cdots s_{l-1}\cdot s_l\cdot \Gamma.
\]
In this case, we also call the given word admissible.

\begin{prop}
If $w$ is admissible for $\Gamma$, then every reduced word for $w$ is
admissible for $\Gamma$.  Moreover, if
\[
w = s_1\cdots s_l = s'_1\cdots s'_{l'}
\]
are two admissible words representing $w$, then
\[
s_1\cdot s_2\cdots s_{l-1}\cdot s_l\cdot \Gamma
=
s'_1\cdot s'_2\cdots s'_{l'-1}\cdot s'_{l'}\cdot \Gamma.
\]
\end{prop}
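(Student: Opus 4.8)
The plan is to reduce the statement to a word-combinatorics fact about the Coxeter group $W(E_{m+1})$ by observing that admissibility is governed entirely by the ``sign'' of simple roots relative to the anticanonical curve, and that this sign transforms in a controlled way under the (modified) action. First I would set up the key invariant: for a blowdown structure $\Gamma$, let $\Phi^+(\Gamma)$ denote the set of effective positive roots that have nonzero intersection with some component of $C_\alpha$ — call these the \emph{obstructed} roots. The modified action is designed precisely so that $w\cdot\Gamma$ only depends on which obstructed roots $w$ ``crosses''; a simple reflection $s_\sigma$ is admissible for $\Gamma$ exactly when $\sigma$ is \emph{not} obstructed, i.e. $\sigma\notin\Phi^+(\Gamma)$. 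So the first step is to prove the local compatibility: if $s$ is ineffective for $\Gamma$ then $\Phi^+(s\cdot\Gamma) = s(\Phi^+(\Gamma)\setminus\{\sigma_s\})$ — but since $s$ ineffective forces $\sigma_s$ ineffective hence $\sigma_s\notin\Phi^+(\Gamma)$, this is just $\Phi^+(s\cdot\Gamma)=s\,\Phi^+(\Gamma)$ in the ineffective case, and $\Phi^+(s\cdot\Gamma)=\Phi^+(\Gamma)$ in the effective-but-admissible case (since there $s\cdot\Gamma=\Gamma$ by fiat). In both admissible cases, the point is that $s$ maps $\Phi^+(s\cdot\Gamma)$ onto the positive roots in a way that is compatible with the standard length function — the effective-admissible case works because $\sigma_s$ is orthogonal to every component of $C_\alpha$, so conjugating the anticanonical class by $s$ doesn't change intersection numbers with $\Phi^+$, even though $s$ doesn't act on the blowdown structure itself.

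Next I would prove the exchange-condition analogue. Given an admissible word $w=s_1\cdots s_l$, I claim it is reduced: if not, by the standard Coxeter exchange condition some $s_i$ lies in the inversion set generated by $s_{i+1}\cdots s_l$, which by the transformation rule above would force the corresponding simple root $\sigma_{s_i}$ to be \emph{effective} at the stage $s_{i+1}\cdot\ldots\cdot s_l\cdot\Gamma$ and also obstructed — contradicting admissibility of $s_i$ at that stage (an effective admissible root is orthogonal to $C_\alpha$, hence not obstructed, but being an inversion it would have to be obstructed). Actually the cleaner route is: show that the number of obstructed roots crossed by $w$ equals the set-theoretic difference $\Phi^+(\Gamma)\triangle w^{-1}\Phi^+(w\cdot\Gamma)$ restricted to obstructed roots, and that this is a well-defined function of $w$ alone once one knows $w$ is expressible admissibly. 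The main obstacle — and where I expect to spend the most effort — is exactly this: proving that admissibility of \emph{one} word for $w$ forces admissibility of \emph{every reduced} word. The natural tool is Matsumoto/Tits: any two reduced words for $w$ are connected by braid moves, so it suffices to check that a single braid move $s_a s_b s_a\cdots = s_b s_a s_b\cdots$ (of length equal to the order $m_{ab}$ of $s_a s_b$) preserves admissibility and preserves the resulting blowdown structure $w\cdot\Gamma$. This reduces to a rank-2 computation inside the dihedral subgroup generated by $s_a,s_b$: one must check, for each of the finitely many finite-dihedral cases occurring in $E_{m+1}$ (orders $2$, $3$), that if one side of the braid relation is admissible for $\Gamma$ then so is the other, and both produce the same action — using Lemma \ref{lem:fixed_of_simple} to control the possible effective simple roots appearing, and the classification of $-2$-curves versus ineffective roots.

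The final step assembles these into the statement. Given that every reduced word is admissible, well-definedness of $w\cdot\Gamma$ follows: pick any reduced word and apply the braid-move invariance to connect it to any other reduced word, then handle non-reduced admissible words by the observation that an admissible word containing a cancelling pair $s\cdots s$ can be shortened without changing the action — here one uses that between the two occurrences of $s$, the reflection $s$ must remain ineffective (if it ever became effective-admissible, the blowdown structure would be stuck there and the later $s$ couldn't return it), so the two cancelling reflections genuinely cancel in the groupoid action as well. Then the second displayed equation of the proposition is immediate. The subtle point to get right throughout is bookkeeping: the modified action is a \emph{partial} action that becomes total on admissible elements, and one must be careful that ``admissible'' is about the \emph{existence} of a good word while the conclusion is about \emph{all} reduced words — the braid-connectivity argument is what bridges that gap, and it is the crux of the proof.
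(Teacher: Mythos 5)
Your top-level skeleton is the same as the paper's: delete adjacent repeated letters (using that an admissible $s$ remains admissible for $s\cdot\Gamma$ and $s\cdot s\cdot\Gamma=\Gamma$), then invoke Tits/Matsumoto to reduce everything to braid moves. But two of your supporting claims fail, and the crux is missing. First, your ``exchange-condition analogue'' --- that every admissible word is automatically reduced --- is false: if the simple root $\sigma_s$ is a $-2$-curve disjoint from $C_\alpha$, then $s$ is effective-but-admissible and the word $ss$ is admissible while representing the identity; indeed the whole point of the weak groupoid is that such reflections stabilize $\Gamma$, so admissible words for a given $w$ can be arbitrarily long (you implicitly concede this later when you shorten non-reduced admissible words). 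Second, your bookkeeping invariant is misformulated: effectiveness and intersection with components of $C_\alpha$ are intrinsic to a divisor class on $X$ and are \emph{not} transported by the linear action of $s$ on $\Pic(X)$. Since reflection in an ineffective simple root only exchanges $\sigma_s$ with $-\sigma_s$ among positive roots, one gets $\Phi^+(s\cdot\Gamma)=\Phi^+(\Gamma)$ up to that single root, not $s\,\Phi^+(\Gamma)$; and in any case $\Phi^+$ cannot determine $w\cdot\Gamma$, because the modified action deviates from the linear one precisely at effective roots \emph{orthogonal} to $C_\alpha$, which your set excludes.

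Most seriously, the braid-move verification --- which you yourself identify as the crux --- is only asserted, not performed, and it is not a routine finite check: it needs genuine geometric input. In the paper's argument for $sts=tst$ the analysis hinges on whether $r_s+r_t$ is effective. If $r_s+r_t$ and $r_t$ are both effective, the relation $(r_s+r_t)\cdot r_t=-1$ forces $r_t$ to be a fixed component of any representative of $r_s+r_t$, hence $r_s$ is effective too and both sides fix $\Gamma$. If $r_s+r_t$ is effective but $r_s,r_t$ are ineffective, then the simple root corresponding to $s$ in $t\cdot\Gamma$ is effective, the two sides act differently letter by letter, and one must check they nevertheless both equal $t\cdot s\cdot\Gamma$ (or are simultaneously undefined). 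If exactly one of $r_s,r_t$ is effective with $r_s+r_t$ ineffective, both sides again collapse to $t\cdot s\cdot\Gamma$. One also needs the preliminary observation that if $s$ is admissible and $t$ is inadmissible for $\Gamma$, then $t$ is inadmissible for $s\cdot\Gamma$, so that admissibility (or its failure) propagates across a braid move. None of this appears in your outline, so as written the proposal does not yet amount to a proof.
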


\begin{proof}
  We first note that if $s$ is admissible for $\Gamma$, then it is also
  admissible for $s\cdot \Gamma$; either $s$ is ineffective and remains so,
  or $s$ is a $-2$ curve, and $s\cdot\Gamma=\Gamma$.  Either way, we find
  $s\cdot s\cdot\Gamma = \Gamma$.  In other words, if a reflection occurs twice
  in a row in an admissible word, we can remove the pair without affecting
  admissibility or the final blowdown structure.

  Since any word can be transformed into a reduced word by a sequence of
  braid relations and removal of repeated reflections, and any two reduced
  words are related by a sequence of braid relations, it remains only to
  show that the claim holds for braid relations.  In other words, given a
  braid relation in $W(E_{m+1})$, we need to show that either both sides
  are inadmissible or both sides are admissible and produce the same
  blowdown structure.

  Let $s$, $t$ be simple reflections.  If both are inadmissible, there is
  nothing to prove, so suppose that $s$ is admissible for $\Gamma$.  Then we
  observe that if $t$ is inadmissible for $\Gamma$, then it is also
  inadmissible for $s\cdot\Gamma$.  Thus only the case that $s$ and $t$ are
  both admissible need be considered.  If the braid relation is $st=ts$,
  then we need merely check that the relation holds in each of the four
  cases ($s$ effective or not, $t$ effective or not).

  Thus suppose the braid relation is $sts=tst$.  Let $r_s$, $r_t$ be the
  corresponding simple roots.  If $r_s+r_t$ is ineffective, then
  either $r_s$, $r_t$ are both ineffective (and the braid relation follows
  from the fact that the action agrees with the linear action) or precisely
  one (say $r_t$) is effective.  But then we find that $r_s$ is effective
  in the blowdown structure $t\cdot s\cdot \Gamma$, and thus
\[
s\cdot t\cdot s\cdot \Gamma = t\cdot s\cdot t\cdot\Gamma = t\cdot s\cdot \Gamma.
\]
If $r_s+r_t$ is effective and $r_t$ is effective, then $r_s$ is also
effective.  Indeed, $(r_s+r_t)\cdot r_t=-1$, and thus any representative of
$r_s+r_t$ contains $r_t$ as a component, implying $r_s+r_t-r_t$ effective.
Thus in this case, we have
\[
s\cdot t\cdot s\cdot \Gamma = t\cdot s\cdot t\cdot\Gamma = \Gamma,
\]
since the blowdown structure never changes.

  Finally, we have the case $r_s+r_t$ effective but $r_s$, $r_t$ are
  ineffective.  Relative to the blowdown structure $t\cdot \Gamma$, $r_s$ is
  effective, and thus
\[
s\cdot t\cdot s\cdot t\cdot\Gamma
=
t\cdot s\cdot t\cdot t\cdot\Gamma
=
t\cdot s\cdot \Gamma
\]
(or both sides are undefined, if $r_s$ is inadmissible for $t\cdot \Gamma$).
If both sides are defined, then $s$ is admissible for both blowdown
structures, and thus
\[
t\cdot s\cdot t\cdot \Gamma
=
s\cdot s\cdot t\cdot s\cdot t\cdot\Gamma
=
s\cdot t\cdot s\cdot t\cdot t\cdot\Gamma
=
s\cdot t\cdot s\cdot \Gamma,
\]
and we are done.
\end{proof}

If we use admissible elements in place of effective elements in defining
the groupoid of blowdown structures, the resulting ``weak'' groupoid has
nontrivial stabilizers, a conjugacy class of reflection subgroups of
$W(E_{m+1})$.

\begin{prop}
  The stabilizer of $\Gamma$ in the weak groupoid of blowdown structures is
  the reflection subgroup of $W(E_{m+1})$ generated by reflections in
  $-2$-curves disjoint from the anticanonical curve.
\end{prop}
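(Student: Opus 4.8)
Write $R$ for the subgroup of $W(E_{m+1})$ generated by the reflections $r_D$ in $-2$-curves $D$ disjoint from $C_\alpha$, and $S_\Gamma$ for the stabilizer of $\Gamma$ in the weak groupoid, viewed as a subgroup of $W(E_{m+1})$ via its reflection action on $\Pic(X)$ in the basis attached to $\Gamma$. The plan is to prove the two inclusions separately. Two preliminary facts are used throughout. First, the honest linear action of $W(E_{m+1})$ on the set of numerical blowdown structures in the orbit of $\Gamma$ is simply transitive (the reflection representation is faithful); equivalently, the strict groupoid has trivial stabilizers. Second, an effective simple root (of any blowdown structure) that has intersection $0$ with every component of $C_\alpha$ decomposes as a nonnegative combination of $-2$-curves disjoint from $C_\alpha$, and is a positive root of the sub-root-system they span, so its reflection lies in $R$; this follows from Lemma~\ref{lem:fixed_of_simple} together with adjunction (such a class is orthogonal to $K_X$ and so has arithmetic genus $0$, which forces each component onto which it decomposes to be a $-2$-curve meeting $C_\alpha$ trivially). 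In particular every effective-but-admissible simple reflection acts on $\Pic(X)$ as an element of $R$.

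For $R\subseteq S_\Gamma$ it suffices, $S_\Gamma$ being a group, to show $r_D\in S_\Gamma$ for each $-2$-curve $D$ disjoint from $C_\alpha$, and I would induct on the height of $D$ in the simple-root basis of $\Gamma$ (the coefficients are nonnegative, since $D\cdot K_X=0$, exactly as in the proof that every $-2$-curve is a simple root of some blowdown structure). If $D$ is itself a simple root of $\Gamma$, the corresponding simple reflection is admissible (as $D$ is effective and disjoint from $C_\alpha$) and the modified action fixes $\Gamma$, so it is a nontrivial loop at $\Gamma$ representing $r_D$. Otherwise that same argument produces a simple root $\sigma$ of $\Gamma$ with $D\cdot\sigma<0$; since $D$ is an integral curve with $D\ne\sigma$, Lemma~\ref{lem:fixed_of_simple} forces $\sigma$ to be ineffective (were it effective, $D$ would be its unique fixed component not orthogonal to $\sigma$, of self-intersection $\le-2$ with equality forcing $\sigma=D$). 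Hence $r_\sigma$ is admissible and $\Gamma':=r_\sigma\cdot\Gamma$ is again a blowdown structure, in whose basis $D$ has strictly smaller height and remains a $-2$-curve disjoint from $C_\alpha$ (a geometric condition, independent of the basis). By induction the element of $W(E_{m+1})$ acting as $r_D$ relative to $\Gamma'$ lies in $S_{\Gamma'}$. I then transport along the groupoid isomorphism $S_{\Gamma'}\cong S_\Gamma$ induced by the morphism $r_\sigma\colon\Gamma\to\Gamma'$; the key observation is that changing the reference basis from $\Gamma'$ to $\Gamma$ conjugates "the element acting as $r_D$" by the reflection in $\sigma$, while the groupoid transport also conjugates by that reflection, so the two cancel and the image is precisely $r_D$ relative to $\Gamma$.

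For $S_\Gamma\subseteq R$, take $w\in S_\Gamma$ and any admissible word $w=s_1\cdots s_l$ at $\Gamma$, and colour each letter \emph{ineffective} or \emph{effective} according to whether, at the corresponding intermediate structure $\Gamma_i=(s_{i+1}\cdots s_l)\cdot\Gamma$, the simple root in slot $s_i$ is ineffective or (effective and orthogonal to every component of $C_\alpha$). An easy induction on $l$ shows the modified action $w\cdot\Gamma$ depends only on the ineffective letters and equals the honest action of their product, in reverse word order, on $\Gamma$; since $w\cdot\Gamma=\Gamma$ and the honest action is free, the ineffective letters multiply to $e$. Write $w=P_0 s_{j_1}P_1 s_{j_2}\cdots P_{m-1}s_{j_m}P_m$ with $s_{j_1},\dots,s_{j_m}$ the effective letters and $P_a$ the ordered product of the ineffective letters between them, and set $Q_{a-1}=P_0\cdots P_{a-1}$; because $Q_m$ (the product of all the ineffective letters) is $e$, one has the telescoping identity $w=\prod_{a=1}^m Q_{a-1}\,s_{j_a}\,Q_{a-1}^{-1}$. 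From $Q_m=e$ it also follows that $Q_{a-1}$ is the inverse of the product of the ineffective letters following $s_{j_a}$, hence acts on $\Pic(X)$ as the change of basis from $\Gamma$ to $\Gamma_{j_a}$; therefore $Q_{a-1}s_{j_a}Q_{a-1}^{-1}$ acts as the reflection in the effective admissible simple root of $\Gamma_{j_a}$ in slot $s_{j_a}$, which lies in $R$ by the second preliminary fact. Thus $w\in R$.

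The step I expect to be the main obstacle is the bookkeeping underlying the last two paragraphs: keeping straight the interplay between the abstract Coxeter group, its reflection representation (which depends on the chosen blowdown structure), the modified groupoid action, and genuine reflections in $-2$-curves — in particular making precise that the change of basis and the groupoid transport conjugate by the same reflection in the $R\subseteq S_\Gamma$ step, and verifying the structural fact that effective admissible simple roots are built from $-2$-curves disjoint from $C_\alpha$ (where Lemma~\ref{lem:fixed_of_simple} and adjunction must be combined with the hypothesis of orthogonality to every component of $C_\alpha$ to exclude components of self-intersection $\le-3$ or equal to $-1$). Once this scaffolding is in place, both inclusions are short.
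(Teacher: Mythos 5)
Your argument is essentially the paper's: the inclusion of the reflection subgroup in the stabilizer follows by moving each $-2$-curve to a simple root via ineffective reflections and conjugating the resulting loop back (you re-derive inline the paper's earlier proposition that every $-2$-curve is a simple root of some blowdown structure), and the reverse inclusion follows by factoring an admissible word stabilizing $\Gamma$ into conjugates of its effective admissible letters by ineffective elements, using that the ineffective letters act linearly and stabilize a basis, hence multiply to the identity --- your telescoping identity is just the unwound form of the paper's induction on word length. The one point to repair is your parenthetical justification of the auxiliary fact: an effective admissible simple root is an irreducible $-2$-curve disjoint from $C_\alpha$ not because orthogonality to $K_X$ and adjunction force each component of a decomposition to be a $-2$-curve (they do not --- effective roots can have $-1$- and $-3$-curve components), but because by Lemma~\ref{lem:fixed_of_simple} a reducible effective simple root has a fixed component of self-intersection $\le -3$ meeting it nontrivially, and such a component is necessarily a component of $C_\alpha$, contradicting admissibility.
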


\begin{proof}
Given a $-2$-curve $v$ disjoint from $C_\alpha$, let $w$ be an effective element
of $W(E_{m+1})$ such that $v$ is a simple root $\sigma$ in $w\Gamma$.  Then 
$r_\sigma$ stabilizes $w\Gamma$, so $r_v=w^{-1} r_\sigma w$ stabilizes
$\Gamma$.

Conversely, consider an admissible reduced word $w$ stabilizing $\Gamma$.
If every reflection in $w$ is ineffective, then $w$ acts linearly, and
since it stabilizes a basis, we have $w=1$.  Otherwise, we can write
\[
w = w_1 r w_2
\]
where $r$ is an effective but admissible simple reflection, $w_2$ is
ineffective, and $\ell(w)=\ell(w_1)+\ell(w_2)+1$.  Since $r$ is effective,
it stabilizes $w_2\cdot \Gamma=w_2\Gamma$.  We thus conclude that we can factor
\[
w = w_1 w_2 (w_2^{-1}r w_2)
\]
where both $w_1w_2$ and $w_2^{-1}r w_2$ are admissible elements stabilizing
$\Gamma$.  The second factor is a reflection in the $-2$-curve
corresponding to $r$ in $w_2\Gamma$ (which is admissible, so disjoint from
$C_\alpha$), while the first factor has length strictly smaller than
$\ell(w)$.  Thus by induction, $w$ can be written as a product of
reflections in $-2$-curves disjoint from $C_\alpha$.
\end{proof}

Note from \cite{HarbourneB:1997} that the $-2$-curves disjoint from
$C_\alpha$ can be determined in the following way: restriction to $C_\alpha$
gives a natural homomorphism $\Pic(X)\to \Pic(C_\alpha)$, and the $-2$-curves
are precisely the simple roots in the system of positive roots in the
kernel of this homomorphism.  This is easy to see from our perspective, as
it reduces to checking when a simple root of $E_{m+1}$ is a $-2$-curve
disjoint from $C_\alpha$.

Given an anticanonical rational surface $X$, there is a natural
combinatorial invariant of blowdown structures, namely how the components
of $C_\alpha$ (which we fix an ordering of) are expressed in terms of the
corresponding basis.  That is, if we fix an ordered decomposition
\[
C_\alpha = \sum_i c_i C_i
\]
where the $C_i$ are the distinct components of $C_\alpha$ (so each $c_i>0$),
then given any blowdown structure, we may associate the sequence of pairs
$(c_i,v_i)$ where $v_i\in \Z^{m+2}$ is the image of $C_i\in \Pic(X)$ under
the isomorphism $\Pic(X)\cong \Z^{m+2}$ corresponding to $\Gamma$.  (If
$C_\alpha$ is integral, this invariant simply distinguishes between even and
odd blowdown structures.)  If we fix $X$ and a decomposition of $C_\alpha$,
this invariant takes on only finitely many values as we vary $\Gamma$.  In
fact, something much stronger holds: if we take the union over all
anticanonical surfaces with a chosen decomposition $C_\alpha$, then there are
only finitely many possibilities for any given value of $\min_i(v_i^2)$.

Indeed, if we put a lower bound on the self-intersections of the components
of $C_\alpha$, then this implies a lower bound on the self-intersections of
any $-d$-curve on $X$ (since any $-d$-curve with $d>2$ has negative
intersection with $C_\alpha$, so is a component).  In particular, this gives
only finitely many possible Hirzebruch surfaces that $X$ can be blown down
to.  On a given Hirzebruch surface, there are only finitely many
combinatorially distinct decompositions of anticanonical curves, and as we
blow up points, the change in invariant only depends on the set of
components containing the point being blown up.

In particular, this combinatorial type splits the weak groupoid of blowdown
structures into finitely many groupoids.  We observe that each of these
groupoids is a quotient groupoid $G/H$ for some $G\subset W(E_{m+1})$,
where $H$ is the group generated by reflections in $-2$-curves disjoint
from $C_\alpha$.  Indeed, whether a simple root is admissible only depends on
the combinatorial type, and thus the admissible elements of $W(E_{m+1})$
preserving the combinatorial type form a group.  This group is certainly
contained in the stabilizer of the sequence of vectors corresponding to the
components of $C_\alpha$, and itself contains a reflection group.

\begin{prop}
  Suppose $\rho$ is a positive root which is orthogonal to every component of
  $C_\alpha$.  Then the corresponding reflection is admissible.
\end{prop}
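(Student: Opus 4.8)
The plan is to reduce to the case where $\rho$ is a simple root, using the $W(E_{m+1})$-action on blowdown structures, and then invoke the earlier structure theory. The key observation is that admissibility of a simple reflection depends only on the combinatorial type of the blowdown structure relative to $C_\alpha$, so it suffices to establish admissibility for a single well-chosen blowdown structure in the $W$-orbit.

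First I would write $\rho$ as a word in simple reflections. By the standard theory of reflection groups, there is some $w\in W(E_{m+1})$ with $\rho = w\sigma$ for a simple root $\sigma$ of the reference blowdown structure $\Gamma$. I want to choose $w$ so that the intermediate blowdown structures are all reached admissibly. Here is where the hypothesis enters crucially: since $\rho$ is orthogonal to every component of $C_\alpha$, and reflection in $\rho$ fixes $C_\alpha$ pointwise (as a divisor class decomposition), every root $\rho'$ in the orbit of $\sigma$ under the ``partial'' conjugation that builds up $w$ will again be orthogonal to every component of $C_\alpha$ — provided we are careful, since the $v_i$ change as we move between blowdown structures. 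The clean way to handle this: run the lexicographic-descent argument from the proof of the proposition on $-2$-curves (``Suppose $D$ is the class of a $-2$-curve''). That argument shows that for any $D$ with $D\cdot K_X = 0$, repeatedly reflecting in simple roots $\sigma$ with $D\cdot\sigma < 0$ either terminates with $D$ a simple root or forces $D = \sigma$; the only subtlety was that effective simple roots could obstruct, but an effective simple root $\sigma$ would have to be a fixed component of $D$, and here $D = \rho$ has $\rho\cdot\rho = -2$ while a root strictly smaller than it in the decomposition would contradict effectivity in the same way as before. Each reflection used is in a simple root $\sigma_i$ that is \emph{either ineffective or equal to a root orthogonal to every component of $C_\alpha$} — and a simple root orthogonal to every component of $C_\alpha$ is exactly the admissibility condition. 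So each step of the descent is an admissible move.

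The heart of the matter is therefore to verify two things along the descent. (1) Orthogonality to the components of $C_\alpha$ is preserved: if the current structure is $\Gamma'$ with components $v_i'$, and we reflect in a simple root $\sigma$ with $\rho\cdot\sigma<0$, then the new root $r_\sigma\rho$ satisfies $(r_\sigma\rho)\cdot v_i'' = 0$ where $v_i''$ are the components in $r_\sigma\cdot\Gamma'$ — this follows because $r_\sigma$ is an isometry and it carries $v_i'$ to $v_i''$, so $(r_\sigma\rho)\cdot v_i'' = \rho\cdot v_i' = 0$ when $\sigma$ itself is admissible (ineffective, in which case $r_\sigma$ genuinely acts on the structure; or $\sigma$ orthogonal to all $v_i'$, in which case $\Gamma'$ doesn't change but the computation is the same). (2) Each $\sigma$ appearing \emph{is} admissible: if $\sigma$ is ineffective we are fine; if $\sigma$ is effective, then by Lemma \ref{lem:fixed_of_simple} $\rho$ would be a fixed component of $\sigma$ of self-intersection $\le -2$ not orthogonal to $\sigma$ — but $\rho$ is a root, $\rho^2 = -2$, so $\sigma$ would have to be a $-2$-curve, hence $\rho = \sigma$ and we are done; and a simple root that is a $-2$-curve is orthogonal to every component of $C_\alpha$ (a $-2$-curve with $C_\alpha$ anticanonical has $\sigma\cdot C_\alpha = 0$, and positivity of the $c_i$ forces $\sigma\cdot C_i = 0$ for all $i$ unless $\sigma$ is itself a component, which a $-2$-curve disjoint from $C_\alpha$ is not), so it is admissible. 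Thus every step is an admissible move, and the descent terminates with $\rho$ realized as a simple root $\tau$ of some blowdown structure $w^{-1}\cdot\Gamma$ reached admissibly.

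Finally, I would conclude as follows. Once $\rho = \tau$ is a simple root of the admissibly-reached structure $\Gamma'' = w^{-1}\cdot\Gamma$, the reflection $r_\tau$ is admissible for $\Gamma''$: either $\tau$ is ineffective (then trivially admissible) or $\tau$ is effective, in which case, being a simple root with $\tau^2 = -2$ and $\tau\cdot K_X = 0$, the decomposition of Lemma \ref{lem:fixed_of_simple} combined with $\tau\cdot C_\alpha = 0$ and $c_i > 0$ shows $\tau$ is a $-2$-curve orthogonal to every component of $C_\alpha$, again admissible. Therefore $r_\rho = w\, r_\tau\, w^{-1}$ is a product of admissible reflections — reading off the conjugating word produced by the descent — hence $r_\rho$ is an admissible element of $W(E_{m+1})$, which is exactly the claim. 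The main obstacle I anticipate is bookkeeping the interplay between "the blowdown structure changes" (ineffective case) and "it doesn't change but the root system coordinates are reinterpreted" (effective-admissible case) so that orthogonality to the $v_i$ is genuinely preserved at every step; the resolution is that in both cases the relevant map on $\Pic(X)$ is the honest linear reflection $r_\sigma$, which is an isometry carrying component classes to component classes, so $\rho\perp\{v_i\}$ is maintained throughout.
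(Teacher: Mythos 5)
There is a genuine gap, and it sits exactly where you transplant the lexicographic-descent argument from the proposition on $-2$-curves. In that proposition the class $D$ is an \emph{integral curve}, so $D\cdot\sigma<0$ with $\sigma$ effective really does force $D$ to be a (fixed) component of $\sigma$, and Lemma \ref{lem:fixed_of_simple} then yields $D=\sigma$. Here $\rho$ is only a positive root orthogonal to the components of $C_\alpha$; it need not be effective, let alone integral (e.g.\ $\rho=f-e_2-e_3$ can pair negatively with an effective $\sigma=e_1-e_2$ without being a component of anything). So your claim ``if $\sigma$ is effective then $\rho$ is a fixed component of $\sigma$, hence $\rho=\sigma$'' is unjustified, and with it the whole dichotomy collapses: the descent can get stuck at an effective simple root $\sigma$ with $\rho\cdot\sigma<0$ and $\rho\ne\sigma$, and you have no argument that such a $\sigma$ is admissible (it may well meet components of $C_\alpha$), nor any way to continue the descent, since for an effective admissible $\sigma$ the modified action satisfies $\sigma\cdot\Gamma=\Gamma$ and the basis does not change.

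The paper's proof needs two ingredients you omit. First, it reduces to the case that $\rho$ is \emph{simple} within the root subsystem of positive roots orthogonal to every component of $C_\alpha$ (this is harmless, since reflections in non-simple roots of that subsystem are products of reflections in its simple roots, and the admissible elements for a fixed combinatorial type form a group). Second, in the stuck case it does not look at $\rho$ but at the integral components of $\sigma$: since $\rho\cdot\sigma<0$, some component $c$ of $\sigma$ has $c\cdot\rho<0$; by hypothesis $c$ is not a component of $C_\alpha$, and $c\ne f$ since $f\cdot\rho\ge0$ for positive roots, so by the proof of Lemma \ref{lem:fixed_of_simple} $c$ is a fixed component of $\sigma$. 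If $c^2=-2$, then $c$ lies in the orthogonal subsystem and $c\cdot\rho<0$ contradicts simplicity of $\rho$ there unless $\rho=c$ (the $-2$-curve case, already handled); otherwise the classification of fixed components forces $c=e_i$ with $e_j\cdot\sigma=0$ for $j\ge i$, and then $\rho$ must be of the form $e_i-e_j$, giving $\rho\cdot\sigma=0$, a contradiction. Your step (1) on preservation of orthogonality and your final conjugation argument are fine in spirit, but without the subsystem-simplicity reduction and this component analysis the key case is not closed, so the proof as written does not go through.
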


\begin{proof}
  It suffices to consider the case that $\rho$ is {\em simple} among the
  root system of positive roots orthogonal to every component of
  $C_\alpha$.  Then we claim that there is a blowdown structure in which
  $\rho$ is a simple root of $E_{m+1}$.  If $\rho$ is already simple, this
  is immediate.  Otherwise, let $\sigma$ be a simple root such that
  $\sigma\cdot \rho<0$.  If $\sigma$ is ineffective, we may reflect in
  $\sigma$ and proceed by induction.  Otherwise, $\sigma$ is effective, and
  some component $c$ of $\sigma$ which is not a component of $C_\alpha$
  satisfies $c\cdot \rho<0$.  Since $f\cdot \rho\ge 0$ for every positive
  root, we have $c\ne f$, and thus by the proof of Lemma
  \ref{lem:fixed_of_simple}, $c$ must be a fixed component of $\sigma$.  If
  $c^2=-2$, then it is orthogonal to every component of $C_\alpha$, but
  then the fact that $c\cdot \rho<0$ contradicts simplicity of $\rho$
  unless $\rho=c$, in which case reducing to a simple root of $E_{m+1}$ is
  straightforward.

  Otherwise, by the classification of fixed components of $-2$-curves, we
  find that $c=e_i$ for some $i$, and that $e_j\cdot \sigma=0$ for $j\ge
  i$.  Since the only positive roots satisfying $\rho\cdot e_i<0$ are those of
  the form $e_i-e_j$ for some $j>i$, we conclude that $\rho\cdot
  \sigma=(e_i-e_j)\cdot \sigma=0$, a contradiction.
\end{proof}

\begin{rem}
  In general, the full stabilizer need not be a reflection subgroup.  For
  instance, let $X=X_8$ be a rational elliptic surface with an
  anticanonical curve of Kodaira type $I_3^*$ (corresponding to the root
  system $\tilde{D}_7$).  Since a subsystem of type $D_7$ in $E_8$ has
  trivial stabilizer, we conclude that the stabilizer must be contained in
  the translation subgroup of $W(E_9)=W(\tilde{E}_8)$ in this case.  Any
  translation will add some multiple of $-K_X$ to the different components,
  preserving the property that the relevant linear combination is $-K_X$;
  it follows that the stabilizer contains a corank $7$ subgroup of the
  translation subgroup of $E_8$.  In other words, the stabilizer is
  isomorphic to $\Z$, so is certainly not a reflection subgroup!  (We can
  also directly verify in this case that the generator of the stabilizer is
  admissible.)  The stabilizer also fails to be a reflection subgroup of
  $W(E_9)$ when the anticanonical curve has Kodaira type $I_7$, and in one
  of the two ways it can have Kodaira type $I_8$.  In the $I_8$ case it is
  again isomorphic to $\Z$, while in the $I_7$ case it is isomorphic to
  $W(\tilde{A}_1)\times \Z$.
\end{rem}

\begin{cor}
  If $X$ is a rational surface with an integral anticanonical curve, then
  the weak groupoid of blowdown structures on $X$ is a union of two
  isomorphic quotient groupoids of the form $W(E_{m+1})/H$ where $H$ is the
  group generated by reflections in the $-2$ curves of $X$.
\end{cor}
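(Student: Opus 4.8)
The plan is to realize each parity class of blowdown structures as the transport groupoid of a transitive $W(E_{m+1})$-action, with point stabilizer the group $H$ generated by reflections in $-2$-curves; the integrality of $C_\alpha$ is exactly what forces the acting group to be all of $W(E_{m+1})$ and what makes $H$ contain \emph{all} the $-2$-curves. I assume $K_X^2<8$ (as in the structure theorem for the strict groupoid), so $m\ge 1$. The first reduction: since $C_\alpha$ is integral it has a single component, $-K_X$, whose coordinate vector in the basis of any blowdown structure is $(2,2,-1,\dots,-1)$ in the even case and $(2,3,-1,\dots,-1)$ in the odd case, so the combinatorial invariant of a blowdown structure simply records its parity. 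Hence the weak groupoid decomposes into (at most) two combinatorial-type pieces, an even one and an odd one; for $m\ge 1$ both are nonempty and are interchanged by an elementary transformation, so it suffices to analyze the even piece.

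The key point is that every simple root of $E_{m+1}$, for every blowdown structure, is orthogonal to $K_X$, hence orthogonal to the unique component $-K_X$ of $C_\alpha$. By the definition of admissibility this means that \emph{every} simple reflection is admissible for \emph{every} blowdown structure, so by induction every element of $W(E_{m+1})$ is admissible for every blowdown structure. Moreover each $w\in W(E_{m+1})$ fixes $K_X$ and carries the basis of $\Gamma$ to the basis of $w\Gamma$, so the coordinate vector of $-K_X$ is unchanged; in particular $w\Gamma$ has the same parity as $\Gamma$. Combining this with the braid/reduced-word proposition (admissible words may be concatenated and reduced without affecting the result) shows that $W(E_{m+1})$ acts, as a group, on the set of even blowdown structures on $X$. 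This action is transitive: by the structure theorem any two even blowdown structures are related by a sequence of reflections in ineffective simple roots, and ineffective reflections are in particular admissible.

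Finally, by the proposition computing stabilizers in the weak groupoid, the stabilizer of a fixed even blowdown structure $\Gamma$ is the reflection subgroup generated by reflections in those $-2$-curves disjoint from $C_\alpha$. When $C_\alpha$ is integral this is all of the $-2$-curves: for a $-2$-curve $D$, adjunction gives $D\cdot K_X=-2-D^2=0$, so $D\cdot C_\alpha=0$; and $D$ is not a component of $C_\alpha$, since a $-2$-curve has arithmetic genus $0$ while an integral anticanonical curve has arithmetic genus $1$; two distinct irreducible curves meeting with intersection number $0$ are disjoint. Hence the stabilizer $H$ is precisely the group generated by reflections in the $-2$-curves of $X$. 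The even piece of the weak groupoid is therefore the transport groupoid of a transitive $W(E_{m+1})$-action with point stabilizer $H$, i.e.\ the quotient groupoid $W(E_{m+1})/H$; the odd piece is isomorphic to it via an elementary transformation, which gives the corollary.

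The step I expect to require the most care is establishing that $W(E_{m+1})$ acts as a genuine group on the even blowdown structures — not merely as a source of partially-defined groupoid morphisms — and that the general stabilizer description specializes here to exactly the subgroup generated by reflections in $-2$-curves. Both rely on feeding the elementary observation that all simple roots are orthogonal to $K_X$ (hence admissible) into the earlier well-definedness proposition, so that arbitrary (not just reduced) admissible words can be composed and one gets $(w_1w_2)\cdot\Gamma = w_1\cdot(w_2\cdot\Gamma)$. The remaining ingredients — the collapse of the combinatorial invariant, the adjunction computation for $-2$-curves, and transitivity from the structure theorem — are routine.
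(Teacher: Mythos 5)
Your argument is correct and follows exactly the route the paper intends: the corollary is left as an immediate consequence of the preceding observations (the weak groupoid splits by combinatorial type into quotient groupoids $G/H$, every simple root is orthogonal to $K_X$ and hence admissible when $C_\alpha$ is integral, and the stabilizer proposition combined with adjunction identifies $H$ with the group generated by reflections in all $-2$-curves), which is precisely what you assemble. The extra care you take with well-definedness of the group action via the reduced-word proposition and with transitivity via the strict-groupoid theorem fills in the same details the paper treats as routine.
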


\begin{rem}
  The reader should be cautioned that a reflection subgroup of an infinite
  Coxeter group need not have finite rank.  Indeed, an example was given in
  \cite[Ex.~2.8]{HarbourneB:1985} of a rational surface with (nodal)
  integral anticanonical curve and infinitely many $-2$-curves, and thus
  the stabilizers in the corresponding groupoid have infinite rank.
\end{rem}

\section{Divisors on rational surfaces}

\subsection{Nef divisors}

Given a rational surface and blowdown structure, one natural question which
arises is whether a given vector corresponds to an effective divisor class,
or one with an integral representative.  For the latter, it will be helpful
to also have an answer to the question of which vectors correspond to
nef divisor classes.  This is complicated in general, but
in the anticanonical case, is quite tractable.  Note that as with the above
algorithms for recognizing $-2$- and $-1$-curves, the algorithms below only
depend on (a) the decomposition of $C_\alpha$ in some initial choice of
blowdown structure, and (b) the kernel of the natural homomorphism
$\Pic(X)\to \Pic(C_\alpha)$.  (The latter is not particularly tractable to
compute in general, but in fact all we really need is the ability to test
membership in the kernel.)

One answer to this question was given in \cite{LahyaneM/HarbourneB:2005}:
the monoid of effective divisors (assuming $K_X^2<8$) is generated by the
$-d$-curves with $d>0$ and $-K_X$.  In principle, this gives a way of
testing whether a vector is nef: simply check that it has nonnegative
intersection with $-K_X$ and every $-d$-curve.  Of course, this is not an
actual algorithm, for the simple reason that a rational surface can have
infinitely many smooth curves of negative self-intersection.  (In fact,
this is the {\em typical} behavior!)

We do, however, obtain the following.  Given an anticanonical rational
surface $X$ and a blowdown structure $\Gamma$ for $X$, define the {\em
  fundamental chamber} to be the monoid in $\Pic(X)$ consisting of classes
having nonnegative intersection with every simple root for $\Gamma$.
Note also that there are only finitely many $-d$-curves on $X$ with $d>2$,
since any such curve must be a component of $-K_X$.

\begin{prop}
  Suppose $X$ is an anticanonical rational surface with $K_X^2\le 6$, and
  let $\Gamma$ be a blowdown structure on $X$.  Let $D$ be a divisor class
  in the fundamental chamber of $\Gamma$.  Then $D$ is nef iff $D\cdot
  C_\alpha\ge 0$, $D\cdot e_m\ge 0$, and $D$ has nonnegative
  self-intersection with every $-d$-curve with $d>2$.  If $C_\alpha^2\ge
  0$, then we can omit the condition $D\cdot C_\alpha\ge 0$.
\end{prop}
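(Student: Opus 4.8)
The strategy is to reduce the nef condition to a finite check by combining the known description of the effective monoid from \cite{LahyaneM/HarbourneB:2005} with the $W(E_{m+1})$-action on blowdown structures. A divisor $D$ is numerically effective iff $D\cdot E\ge 0$ for every generator $E$ of the effective monoid, and by the cited result these generators are $-K_X$ together with the $-d$-curves for $d>0$. The $-d$-curves with $d>2$ are finite in number (each is a component of $-K_X$, as noted just before the proposition), and $-K_X$ contributes the condition $D\cdot C_\alpha\ge 0$. So the entire content of the proposition is that, for $D$ in the fundamental chamber, the infinitely many conditions coming from $-1$-curves and $-2$-curves are all implied by the single extra condition $D\cdot e_m\ge 0$ (plus membership in the fundamental chamber, which already encodes nonnegativity against the simple roots).

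\textbf{Handling the $-2$-curves.} First I would dispose of the $-2$-curves. By the proposition proved above, every $-2$-curve is a simple root for \emph{some} blowdown structure $\Gamma'$, and the algorithm in its proof moves any such class to a simple root of $\Gamma'$ by reflecting in ineffective simple roots whose reflections decrease a lexicographic statistic. The key point is that these reflections are exactly the generators of $W(E_{m+1})$, and the fundamental chamber is a fundamental domain for this action in the relevant sense: if $D$ lies in the fundamental chamber of $\Gamma$, then for any $-2$-curve $v$, writing $v = w\sigma$ with $\sigma$ a simple root of $\Gamma$ (via the inverse of that algorithm), one gets $D\cdot v = (w^{-1}D)\cdot\sigma$, and I want to argue $w^{-1}D$ still has nonnegative intersection with $\sigma$ — this follows because $D$ nef-against-all-simple-roots means $D$ lies in the dominant chamber, which is $W$-stable in the sense that $D\cdot(\text{positive root})\ge 0$ for \emph{every} positive root, and every $-2$-curve is a positive root of $E_{m+1}$ (stated explicitly in the text). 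So $D\cdot v\ge 0$ automatically. No extra hypothesis is needed for the $-2$-curves.

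\textbf{Handling the $-1$-curves — the main obstacle.} The real work is the $-1$-curves, and here the condition $D\cdot e_m\ge 0$ enters. The algorithm from the theorem on blowdown structures shows that any $-1$-curve class $E$ can be moved to $e_m$ by a sequence of reflections in ineffective simple roots $\sigma$ with $E\cdot\sigma<0$. I would run this algorithm and track the pairing with $D$: each step replaces $E$ by $r_\sigma E = E - (E\cdot\sigma)\sigma = E + |E\cdot\sigma|\sigma$ (using $\sigma^2=-2$ and $E\cdot\sigma<0$), so $D\cdot(r_\sigma E) = D\cdot E + |E\cdot\sigma|\,(D\cdot\sigma) \ge D\cdot E$ since $D\cdot\sigma\ge0$ in the fundamental chamber. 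Thus $D\cdot E$ only increases along the algorithm, and the algorithm terminates with the class $e_m$; hence $D\cdot E \le D\cdot e_m$ is the wrong direction — rather $D\cdot E \le$ (value at termination) is false; I need to recheck the monotonicity direction. Running the reflections \emph{forward} from $E$ toward $e_m$: if each step increases $D\cdot(\cdot)$ then $D\cdot E \le D\cdot e_m$, giving $D\cdot E \ge 0$ \emph{provided} $D\cdot e_m\ge 0$. That is exactly the needed implication, and it is why $D\cdot e_m\ge 0$ is precisely the extra hypothesis. The one subtlety is that the algorithm as stated reflects when $E\cdot\sigma<0$ \emph{or} terminates via $E\cdot f<0$ or $E=e_m$; I must check the case $E\cdot f < 0$ cannot arise here, which holds because $f$ is nef so $E\cdot f\ge0$ for the effective class $E$ — so the algorithm genuinely terminates at $e_m$, and monotonicity of $D\cdot(\cdot)$ closes the argument. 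Finally, the remark about omitting $D\cdot C_\alpha\ge0$ when $C_\alpha^2\ge0$: in that case $-K_X=C_\alpha$ is itself nef, so $D\cdot C_\alpha\ge0$ follows from the already-established fact that $D$ pairs nonnegatively with every $-d$-curve and $e_m$ (write $C_\alpha$ in terms of these) — or more directly, a nef class has $C_\alpha^2\ge0\Rightarrow C_\alpha$ nef, and a class dominant against simple roots and $e_m$ with the $d>2$ conditions already lies in the nef cone's relevant face. I expect the bookkeeping in this last monotonicity argument — making sure the algorithm's termination conditions are the only ones and that the lexicographic descent is compatible with $D$-monotonicity — to be the fiddly part, but no genuinely new idea beyond the theorem already proved is required.
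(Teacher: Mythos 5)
Your overall plan is the paper's: reduce to the generators of the effective monoid, dispose of $-2$-curves via ``positive root $=$ nonnegative combination of simple roots,'' and handle $-1$-curves by relating them to $e_m$ through the algorithm of the blowdown-structure theorem. But the $-1$-curve step, which is the whole content, has a sign error that you noticed and then resolved the wrong way. With the intersection form one has $\sigma^2=-2$, so $r_\sigma E = E - \tfrac{2(E\cdot\sigma)}{\sigma^2}\sigma = E + (E\cdot\sigma)\sigma$; when $E\cdot\sigma<0$ this \emph{subtracts} the positive multiple $|E\cdot\sigma|\,\sigma$, it does not add it as you wrote. Consequently, tracking the algorithm in the fixed basis of $\Gamma$, the quantity $D\cdot(\cdot)$ weakly \emph{decreases} at each step (since $D\cdot\sigma\ge0$), terminating at $D\cdot e_m$; this gives $D\cdot E\ge D\cdot e_m\ge 0$, which is the implication you need. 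Your version concludes $D\cdot E\le D\cdot e_m$ and then asserts that this, together with $D\cdot e_m\ge0$, yields $D\cdot E\ge0$ --- a non sequitur; as written the $-1$-curve case is not proved. Once the sign is fixed, unwinding the steps shows $E=e_m+\sum_k c_k\sigma_{j_k}$ with $c_k>0$ and $\sigma_{j_k}$ simple roots of $\Gamma$, which is exactly the decomposition the paper invokes (``any $-1$-curve can be written as $e_m$ plus a nonnegative linear combination of simple roots''), so your monotonicity idea is really a re-derivation of the paper's argument rather than a different route.

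The final claim (dropping $D\cdot C_\alpha\ge0$ when $C_\alpha^2\ge0$) is also mishandled. The assertion ``$C_\alpha^2\ge0\Rightarrow C_\alpha$ nef'' is false: if $C_\alpha$ has a component $C$ with $C^2=-d$, $d>2$, then $C_\alpha\cdot C=2-d<0$ even though $C_\alpha^2=8-m$ may be $\ge0$. And writing $C_\alpha$ as a nonnegative combination of $-d$-curves and $e_m$ is not available in general (e.g.\ when $m=8$ and $C_\alpha$ is integral, $C_\alpha^2=0$ forces all coefficients in such a decomposition to vanish). What works, and is what the paper does, is the purely numerical statement: $-K_X=2s+2f-\sum_i e_i$ expands in the basis consisting of the simple roots of $\Gamma$ together with $e_m$ with coefficients that one computes to be nonnegative exactly when $K_X^2\ge0$ (the coefficient of $e_m$ is $8-m$); since $D$ pairs nonnegatively with the simple roots and with $e_m$ by hypothesis, $D\cdot C_\alpha\ge0$ follows. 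So: correct the reflection sign, state the resulting decomposition $E=e_m+(\text{nonnegative combination of simple roots})$, and replace the last step by this explicit expansion of $-K_X$, and your argument becomes the paper's proof.
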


\begin{proof}
  That nef divisors satisfy the given conditions is
  obvious, so it remains to show that the given conditions imply that $D$
  is nef.  Since we have assumed $D\cdot C_\alpha\ge 0$, it remains to verify
  that it has nonnegative intersection with every $-d$-curve for $d>0$.  We
  have also assumed this for $d>2$, so only the cases of $-2$- and
  $-1$-curves remain.  Any $-2$-curve is a positive root of $E_{m+1}$, so
  is a nonnegative linear combination of simple roots.  By assumption, $D$
  has nonnegative intersection with every simple root, so nonnegative
  intersection with every positive root.  Similarly, we saw above that any
  $-1$-curve can be written as $e_m$ plus a nonnegative linear combination
  of simple roots, so again $D$ has nonnegative intersection with every
  $-1$-curve.

  If $K_X^2\ge 0$, then we can write $K_X$ as a nonnegative linear combination
  of simple roots and $e_m$, so can omit the corresponding condition.
\end{proof}

\begin{rem}
  Something similar holds when $K_X^2=7$, except that we must also assume
  $D\cdot (f-e_1)\ge 0$.  In any event, we can readily write down the
  effective and nef monoids when $K_X^2=7$.  Indeed, if $X_0\cong F_{2d}$ and
  $X_1\to X_0$ blows up a point of $s_{\min}$ (which we arrange to occur if
  $d=0$), we have
\begin{align}
\text{Eff}(X_1) &= \langle s-df-e_1,f-e_1,e_1\rangle\notag\\
\text{Nef}(X_1) &= \langle f,s+df,s+(d+1)f-e_1\rangle,\notag
\end{align}
while if $d>0$ and $X_1\to X_0$ blows a point not on $s_{\min}$, we have
\begin{align}
\text{Eff}(X_1) &= \langle s-df,f-e_1,e_1\rangle\notag\\
\text{Nef}(X_1) &= \langle f,s+df,s+df-e_1\rangle.\notag
\end{align}
Since $s-df=s_{\min}$, it is clear that the putative generators for
$\text{Eff}(X_1)$ are effective.  Similarly, we find that the putative
generators for $\text{Nef}(X_1)$ have nonnegative self-intersection, and
can be represented by integral divisors, so are nef.  Since in each case
the two bases are dual to each other, they must actually be the effective
and nef monoids.  (The corresponding bases for the monoids relative to an
odd blowdown structure can of course be obtained by an elementary
transformation.)  Note that in either case, $\text{Eff}(X_1)$ is a
simplicial cone generated by $-e$-curves with $e<0$.
\end{rem}

Since nef divisors are effective (\cite[Cor.~II.3]{HarbourneB:1997}), we
also conclude that any class $D$ satisfying the above hypotheses is
effective.  In fact, we can do better: we can give an explicit effective
divisor representing $D$.

\begin{prop}\label{prop:decomp_nef}
With hypotheses as above, $D$ can be written as a nonnegative linear
combination of $-d$-curves and $-K_X$.
\end{prop}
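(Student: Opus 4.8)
The plan is to observe that the existence is formal — by the previous proposition $D$ is nef, hence effective by \cite[Cor.~II.3]{HarbourneB:1997}, and the effective monoid is generated by the $-d$-curves with $d>0$ together with $-K_X$ by \cite{LahyaneM/HarbourneB:2005}, so $D$ is already a nonnegative integer combination of such classes — and then to make this \emph{constructive}, separating the two cases according to whether $-K_X$ is big, i.e.\ whether $K_X^2\ge 1$ or $K_X^2\le 0$.

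When $1\le K_X^2\le 6$, the generating set just named is finite and can be listed by the algorithms of the preceding section: a $-d$-curve $C$ with $d>2$ has $(-K_X)\cdot C=2-d<0$, so is one of the finitely many components of $C_\alpha$; the $-2$-curves are among the finitely many roots of the finite system $E_{m+1}$ (here $m\le 7$); and the $-1$-curves lie among the finitely many classes $E$ with $E^2=E\cdot K_X=-1$. Since $D$ is an effective integral class, it is therefore a nonnegative integer combination of these finitely many curves and $-K_X$, which is the assertion.

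When $K_X^2\le 0$, so $m\ge 8$, I would proceed by a double induction, on $m$ and --- for fixed $m$ --- on $D\cdot e_m$. If $D\cdot e_m=0$, then, $D$ being nef, it is a pullback $\pi^*\bar D$ under the contraction $\pi\colon X\to X_{m-1}$ of the $-1$-curve $e_m$; the truncated chain is a blowdown structure on $X_{m-1}$ in whose fundamental chamber $\bar D$ lies (its simple roots are those of $\Gamma$ other than $e_{m-1}-e_m$), and $K_{X_{m-1}}^2=K_X^2+1\le 1$, so a decomposition of $\bar D$ is supplied by the induction (or, if $K_{X_{m-1}}^2=1$, by the previous paragraph). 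Pulling it back yields one for $D$, since $\pi^*(-K_{X_{m-1}})=-K_X+e_m$ and the total transform of a $-d$-curve is its strict transform --- again a smooth rational curve, of self-intersection $\le-d$ --- plus at most one copy of $e_m$. If instead $D\cdot e_m\ge 1$, I peel off $-K_X$: each simple root is orthogonal to $K_X$, so $D+K_X$ is still in the fundamental chamber, and by adjunction $(D+K_X)\cdot N=D\cdot N+(d-2)\ge 0$ for every $-d$-curve $N$ with $d>2$, while $(D+K_X)\cdot e_m=D\cdot e_m-1\ge 0$ and $(D+K_X)\cdot C_\alpha=D\cdot(-K_X)-K_X^2\ge 0$ because $K_X^2\le 0\le D\cdot(-K_X)$; so $D+K_X$ is nef by the previous proposition. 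As $(D+K_X)\cdot e_m<D\cdot e_m$, the induction decomposes $D+K_X$, and $D=(-K_X)+(D+K_X)$.

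The delicate point is the first case: one needs not only that there are finitely many negative curves when $K_X^2\ge 1$, but also that the effective monoid is generated \emph{exactly} by them and $-K_X$ --- this is where the input of \cite{LahyaneM/HarbourneB:2005} (equivalently, that a rational surface with big anticanonical class is a Mori dream space) is genuinely used. By contrast, the manipulations of the $K_X^2\le 0$ case are purely formal and never produce a curve the earlier algorithms cannot already locate; there the only things to verify are the bookkeeping in the pullback step and that the previous proposition is being applied, legitimately, to the class $D+K_X$ in the fundamental chamber of a surface with $K_X^2\le 6$.
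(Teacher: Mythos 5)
Your argument is correct, and for $K_X^2\le 0$ it is essentially the paper's proof: the paper's recursion likewise peels off $C_\alpha$ whenever $D\cdot e_m>0$ (verifying exactly your inequalities, and handling the case $C_\alpha^2\ge 0$ by noting that the condition $D\cdot C_\alpha\ge 0$ is then redundant in the nef criterion, which is why it never needs to split on the sign of $K_X^2$), and descends to $X_{m-1}$ when $D\cdot e_m=0$. Where you genuinely diverge is the base of the recursion: the paper stays self-contained, descending all the way to $m=2$ and finishing with the explicitly computed simplicial effective cone of $X_1$ from the preceding Remark, whereas you stop at $K_X^2=1$ and invoke the generation theorem of \cite{LahyaneM/HarbourneB:2005}. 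That citation is legitimate, and in fact, as quoted at the start of this section it holds for all $K_X^2<8$, so taken at face value it makes the whole proposition a one-line consequence of nef $\Rightarrow$ effective (\cite[Cor. II.3]{HarbourneB:1997}); your case split at $K_X^2=1$ is therefore not forced by anything. The reason the paper argues constructively instead is stated just before the proposition: it wants an \emph{explicit} decomposition, which the later algorithm for effective divisors and the $h^0$ computation rely on (the latter cites this proof for the fact that $D-C_\alpha$ is again nef when $m\ge 8$ and $D\cdot C_\alpha=0$, a fact your peeling step does supply). So your proof trades some of that explicitness in the range $1\le K_X^2\le 6$ for brevity (recoverable in principle, as you note, since the generators there are finite in number). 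The only bookkeeping you leave implicit --- that after descending, $\bar D\cdot e_{m-1}\ge 0$ (from $D\cdot(e_{m-1}-e_m)\ge 0$), $\bar D\cdot(-K_{X_{m-1}})=D\cdot C_\alpha\ge 0$, and $\bar D$ meets every $-d$-curve with $d>2$ on $X_{m-1}$ nonnegatively --- is easily checked and is equally implicit in the paper's own descent step.
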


\begin{proof}
Suppose first that $D\cdot e_m=0$, so that $D$ is the total transform of a
divisor on $X_{m-1}$.  If $m>2$, then this divisor on $X_{m-1}$ is itself
in the fundamental chamber, and has nonnegative intersection with
$-K_{X_{m-1}}$.  We can thus decompose it into $-d$-curves and copies of
the anticanonical curve on $X_{m-1}$.  The total transform of a $-d$-curve
is either a $-d$-curve or the sum of a $-(d+1)$-curve and $e_m$, while the
total transform of the anticanonical curve on $X_{m-1}$ is $C_\alpha+e_m$,
and thus the decomposition on $X_{m-1}$ induces a decomposition on $X_m$
which again has the desired form.

Similarly, if $m=2$ and $D\cdot e_2=0$, then we still find that $D$ is the
total transform of a nef divisor on $X_1$, and thus obtain the desired
decomposition of $D$ by expanding it in the basis of the simplicial cone
$\text{Eff}(X_1)$.

Finally, suppose $D\cdot e_m>0$.  Then we claim that $D-C_\alpha$ satisfies the
original hypotheses.  Indeed, if $C$ is a $-d$-curve for $d>2$, then
\[
(D-C_\alpha)\cdot C = D\cdot C + (d-2)> D\cdot C,
\]
while if $\sigma$ is a simple root, then $(D-C_\alpha)\cdot \sigma = D\cdot
\sigma\ge 0$.  In addition, $(D-C_\alpha)\cdot e_m = D\cdot e_m-1\ge 0$.
Finally, we have
\[
(D-C_\alpha)\cdot C_\alpha = D\cdot C_\alpha - C_\alpha^2.
\]
Either $C_\alpha^2<0$, so the inequality becomes stronger, or $C_\alpha^2\ge
0$, and the inequality is redundant.  Either way, $D-C_\alpha$ satisfies all
of the hypotheses, and we obtain an explicit decomposition of the given form.
\end{proof}

\begin{lem}
If $D$ is a nef divisor on the rational surface $X$, then there exists a
blowdown structure (of either parity) such that $D$ is in the fundamental
chamber.  Moreover, the representation of $D$ in the basis corresponding to
such a blowdown structure depends only on the parity.  In addition, if $e$
is a $-1$-curve with $e\cdot D=0$, then the blowdown structure can be chosen
in such a way that $e_m=e$.
\end{lem}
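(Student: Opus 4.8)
\emph{Strategy.} The plan is to prove existence by a descent through blowdown structures, to deduce the ``depends only on the parity'' clause from the fact that a closed Weyl chamber is a fundamental domain, and finally to observe that the descent can be run so as to leave $e_m$ fixed.

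\emph{Existence.} When $K_X^2=8$ the surface $X$ is a Hirzebruch surface and the statement is checked directly (the only place a reflection is needed is $X\cong\P^1\times\P^1$), so assume $K_X^2<8$; then blowdown structures of both parities exist, interchanged by an elementary transformation. Fix one, $\Gamma$, of the desired parity, and write $D=ns+df-\sum_i r_ie_i$ in the associated basis. Since $f$, $s=s_{\min}+\lfloor-s_{\min}^2/2\rfloor f$, and each $e_i$ are effective while $D$ is nef, we have $n=D\cdot f\ge 0$, $D\cdot s\ge 0$ (so $d\ge 0$, indeed $d\ge n$ in the odd case), and $r_i=D\cdot e_i\ge 0$. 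If $D$ is not in the fundamental chamber of $\Gamma$, pick a simple root $\sigma$ with $D\cdot\sigma<0$. By Lemma~\ref{lem:fixed_of_simple} an effective simple root is a nonnegative combination of $-d$-curves ($d>0$), each effective, so nefness of $D$ would force $D\cdot\sigma\ge 0$; hence $\sigma$ is ineffective, $r_\sigma\Gamma$ is again a blowdown structure of the same parity, and we replace $\Gamma$ by it.

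\emph{Termination (the crux).} Running through the simple roots $s-f$ (or $s-e_1$), $f-e_1-e_2$, $e_1-e_2,\dots,e_{m-1}-e_m$, one checks that reflecting in a simple root $\sigma$ with $D\cdot\sigma<0$ strictly decreases the tuple $(n,d,-r_1,\dots,-r_{m-1})$ lexicographically: reflection in $s-f$ or $s-e_1$ strictly lowers $n$; reflection in $f-e_1-e_2$ fixes $n$ and strictly lowers $d$; reflection in $e_i-e_{i+1}$ fixes $n,d,r_1,\dots,r_{i-1}$ and strictly lowers $-r_i$. Since $n\ge 0$ it drops only finitely often, and once $n$ has stabilized every later reflection lies in the finite subgroup $W(D_m)$ generated by $f-e_1-e_2$ and the $e_i-e_{i+1}$; these keep $D$ inside a single finite $W(D_m)$-orbit while strictly decreasing the lexicographic order, so only finitely many remain. (This is the same bookkeeping as in the proof of the structure theorem for blowdown structures.) Hence the descent halts with $D$ in the fundamental chamber of a blowdown structure of the prescribed parity.

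\emph{Dependence only on the parity.} Let $\Gamma_1,\Gamma_2$ be blowdown structures of the same parity with $D$ in each fundamental chamber. By the structure theorem for the strict groupoid there is $g\in W(E_{m+1})$ with $\Gamma_2=g\Gamma_1$, so the basis of $\Gamma_2$ and its tuple of simple roots are obtained by applying $g$ to those of $\Gamma_1$, and the coordinate vector of $D$ relative to $\Gamma_2$ equals that of $g^{-1}D$ relative to $\Gamma_1$. The hypotheses say that $D$ and $g^{-1}D$ both lie in the closed fundamental chamber of $\Gamma_1$; by the standard fact that a closed fundamental chamber meets each $W(E_{m+1})$-orbit (inside the Tits cone) in at most one point, we get $g^{-1}D=D$, so the coordinate vectors agree.

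\emph{Arranging $e_m=e$.} If $e$ is a $-1$-curve with $e\cdot D=0$, then, by the argument proving the structure theorem, some sequence of reflections in ineffective simple roots carries the class $e$ to the position $e_m$; start the descent above from such a blowdown structure $\Gamma_0$. It suffices to see that the descent never reflects in a simple root in which $e_m$ occurs. Such a root is $e_{m-1}-e_m$ when $m\ge 2$, together with $f-e_1-e_2$ and $e_1-e_2$ when $m=2$ and with $s-e_1$ when $m=1$ (odd case). Using $D\cdot e_m=D\cdot e=0$ together with $D\cdot e_{m-1}\ge 0$, $D\cdot(f-e_1)\ge 0$ (the class $f-e_1$ being effective), $D\cdot e_1\ge 0$, and, in the odd $m=1$ case, $d\ge n$, one finds $D\cdot\sigma\ge 0$ for every such $\sigma$, so it is never the root reflected in. Hence each reflection in the descent fixes the class $e_m$, and the process ends at a blowdown structure with $D$ in its fundamental chamber and $e_m=e$.
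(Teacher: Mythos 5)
Your existence argument, termination bookkeeping, and the treatment of the clause $e_m=e$ are essentially the paper's own: ineffectivity of any simple root $\sigma$ with $D\cdot\sigma<0$ comes from nefness together with Lemma \ref{lem:fixed_of_simple}, termination is the same lexicographic-plus-finite-$W(D_m)$ argument used in the structure theorem for blowdown structures, and for the last clause the paper likewise observes that the simple roots involving $e_m$ always meet $D$ nonnegatively (you are a bit more careful about the $m\le 2$ edge cases, which is a small improvement). Where you genuinely diverge is the ``depends only on the parity'' clause: the paper peels simple reflections off a reduced word for the ineffective element $w$ with $\Gamma'=w\Gamma$, checking $D\cdot\sigma=0$ at each step so the expansion never changes, whereas you invoke the structure theorem to write $\Gamma_2=g\Gamma_1$ and then appeal to the strict fundamental-domain property of the closed chamber to conclude $g^{-1}D=D$. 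This is a clean shortcut, but it needs one more line, for exactly the reason the paper flags in the remark following the lemma: the textbook statement concerns vectors in (the dual of) the span of the simple roots, i.e.\ $K_X^\perp$, while $D$ lives in $\Pic(X)\otimes\R$. When $K_X^2\ne 0$ this is harmless, since $\Pic(X)\otimes\R=\R K_X\oplus (K_X^\perp\otimes\R)$ with $W(E_{m+1})$ acting trivially on the first factor; but when $m=8$ (so $K_X^2=0$ and $K_X\in K_X^\perp$) the splitting fails, and the chamber condition a priori pins down $g^{-1}D$ only modulo $\R K_X$ — indeed the translations in $W(E_9)$ send $x\mapsto x-(x\cdot\lambda)K_X$ on $\{x:x\cdot K_X=0\}$, and the chamber is invariant under adding multiples of $K_X$. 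The repair is short: if $g^{-1}D=D+cK_X$, comparing $(g^{-1}D)^2$ with $D^2$ gives $c\,(D\cdot K_X)=0$, so $c=0$ unless $D\cdot K_X=0$; and in that case writing $-K_X$ as a strictly positive combination of the nine simple roots shows that a chamber vector orthogonal to $K_X$ is orthogonal to every simple root, hence $W$-fixed, so again the expansions agree. With that addendum your argument is complete; the paper's reduced-word induction is a little longer but avoids the issue entirely by never leaving the level of the expansion in the standard basis.
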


\begin{proof}
  Choose a blowdown structure of the desired parity on $X$.  If $D$ is not
  already in the fundamental chamber, then there exists a simple root
  $\sigma$ such that $D\cdot \sigma<0$.  Since $D$ is nef, $\sigma$ cannot
  be effective, and thus we can apply the corresponding reflection.  Either
  $\sigma$ is in the subsystem of type $D_m$ (which can only occur finitely
  many times in a row, since that subgroup is finite), or it decreases
  $D\cdot f$.  The latter is nonnegative since $f$ is effective, and thus
  the process will terminate after a finite number of steps.

For uniqueness, suppose $D$ is in the fundamental chamber of both $\Gamma$
and $\Gamma'$, two blowdown structures of the same parity.  Then there
exists an ineffective element $w\in W(E_{m+1})$ such that
$\Gamma'=w\Gamma$.  If $w=1$, then we are done; otherwise, there is an
ineffective simple root $\sigma$ of $\Gamma$ such that $w\sigma$ is
negative (the last root in some reduced word for $w$).  Since $D$ is in the
fundamental chamber for both $\Gamma$ and $\Gamma'$, it has nonnegative
intersection with every positive root of either blowdown structure.  Thus
$D\cdot \sigma\ge 0$ since $\sigma$ is positive for $\Gamma$, and $D\cdot
(-\sigma)\ge 0$ since $-\sigma$ is positive for $\Gamma'$.  In other words,
$D\cdot \sigma=0$, and thus the reflection in $\sigma$ does not change the
expansion of $D$ in the standard basis.  The claim follows by induction on
the length of the reduced word for $w$.

Finally, if $\Gamma$ is any blowdown structure with $e_m=e$, then $D\cdot
(e_{m-1}-e_m)\ge 0$, and thus the algorithm for putting $D$ in the
fundamental chamber will never try to apply the corresponding reflection,
so will never change $e_m$.
\end{proof}

\begin{rem}
  Uniqueness is of course a standard fact from Coxeter theory when we
  restrict to $D$ in the root lattice, and the above argument is adapted
  from the standard one.
\end{rem}

This then gives us the desired algorithm for testing whether a divisor is
nef: First check that it has nonnegative intersection with every $-d$-curve
with $d>2$ and with $C_\alpha$, then repeatedly attempt to reflect in simple
roots with $D\cdot \sigma<0$.  If at any step we have $\sigma$ effective, $D\cdot
f<0$ or $D\cdot e_m<0$, then $D$ is not nef; otherwise, we terminate in the
fundamental chamber, and conclude that $D$ is nef.

\subsection{Effective divisors}

A similar algorithm works for testing whether a divisor $D$ is effective.
We assume $K_X^2<7$, since otherwise the effective cone is simplicial, so
testing whether $D$ is effective is just linear algebra.

Again, we start by choosing any blowdown structure for $X$, and if at any
step in the process we obtain a divisor with $D\cdot f<0$, we halt with the
conclusion that our divisor was ineffective.  We perform the following
steps, as specified.
\begin{itemize}
\item[1.] If there exists a component $C$ of $C_\alpha$ such that
  $C^2,D\cdot C<0$, then replace $D$ by $D-C$, and repeat step 1.
\item[2.] If $D\cdot e_m<0$, then replace $D$ by $D+(D\cdot e_m)e_m$
  and go back to step 1.
\item[3.] If $D$ is in the fundamental chamber, conclude that the
  original divisor was effective.  Otherwise, choose the lexicographically
  smallest simple root such that $D\cdot \sigma<0$.  If $\sigma$ is effective,
  replace $D$ by $D-\sigma$ and go back to step 1; otherwise, replace $\Gamma$
  by $r_\sigma\Gamma$ and go back to step 2.
\end{itemize}

To see that this algorithm works, we note as before that $f$ is nef, so any
divisor with $D\cdot f$ is not effective.  Whenever we replace $D$ by $D-C$
in step 1, $C$ is an integral curve of negative self-intersection
intersecting $D$ negatively.  But then $D$ is effective iff $D-C$ is
effective; one direction is obvious, while if $D$ is effective, then $C$ is
a fixed component of $D$.  The same argument applies in step 2, while in
step 3, either $\sigma$ is irreducible (so again the argument applies) or
we have $D\cdot c<0$ for some fixed component $c$ of $\sigma$.  We must
have $c^2\ge -2$, else $c$ would have been removed in step $1$; and
similarly $c\ne e_m$.  But then the classification of fixed components of
effective simple roots lets us find a lexicographically smaller simple root
having negative intersection with $D$.

Since we terminate at a nef divisor in the fundamental chamber, this
algorithm also gives us an explicit decomposition of $D$ as a nonnegative
linear combination of $C_\alpha$ and $-d$-curves.  In this context, we note
the following.

\begin{prop}
  Let $X$ be an anticanonical rational surface with $K_X^2<8$.  Either
  every representative of $-K_X$ is integral, or some representative is a
  nonnegative linear combination of $-d$-curves with $d\ge 1$.
\end{prop}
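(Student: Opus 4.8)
The plan is to run the effective-divisor algorithm described just above on the class $D = -K_X$, and to read off the conclusion from where it terminates. Since $-K_X \cdot f = 2 > 0$, the algorithm never halts in the ``ineffective'' branch, so it must terminate in the fundamental chamber of some blowdown structure, producing an explicit decomposition of $-K_X$ as a nonnegative combination of $C_\alpha$ and $-d$-curves with $d \ge 1$. The only way this decomposition can fail to be of the desired second form is if the coefficient of $C_\alpha$ in it is positive, i.e., if at some stage the algorithm subtracts a copy of $C_\alpha$ — but that only happens (in step 1) when some component $C$ of $C_\alpha$ has $C^2 < 0$ and $D \cdot C < 0$ for the current value of $D$, or (implicitly, via the terminal decomposition in the fundamental chamber) when $-K_X$ itself forces an anticanonical summand by Proposition \ref{prop:decomp_nef}.

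So the key step is to analyze what happens on the very first pass, with $D = -K_X$. First I would check: is there a component $C$ of $C_\alpha$ with $C^2 < 0$ and $(-K_X)\cdot C < 0$? Since $C$ is a curve with $-K_X \cdot C = (C_\alpha \cdot C) = C^2 + (C_\alpha - C)\cdot C$, and all the cross terms $(C_\alpha - C)\cdot C$ are nonnegative (distinct components meet nonnegatively), we get $-K_X \cdot C < 0$ only if $C^2$ is sufficiently negative relative to those cross terms. If no such $C$ exists, then step 1 does nothing to $-K_X$, step 2 does nothing since $-K_X \cdot e_m = 1 > 0$, and we then either already lie in the fundamental chamber or reflect in ineffective simple roots (those reflections fix $-K_X$ since every simple root is orthogonal to $K_X$); eventually Proposition \ref{prop:decomp_nef} applies to $-K_X$ in the fundamental chamber. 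But that proposition's proof is exactly a recursion that either peels off copies of $C_\alpha$ or terminates via the simplicial cone $\mathrm{Eff}(X_1)$ — and when $D = -K_X$ and $D \cdot e_m = 1 > 0$, the recursive step replaces $D$ by $D - C_\alpha = 0$, which has a trivial (empty) decomposition. Tracing this back, the resulting decomposition of $-K_X$ is then $C_\alpha$ plus the decomposition of $0$, i.e., just $C_\alpha$ itself — meaning $-K_X = C_\alpha$ has an integral representative, the first alternative.

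Hence the dichotomy is genuinely this: either the algorithm does nothing in step 1 (no component of $C_\alpha$ is ``too negative''), in which case one shows $-K_X$ is linearly equivalent to the integral curve $C_\alpha$ — more carefully, one must argue $C_\alpha$ is reduced and irreducible in this case, which follows because if $C_\alpha$ were reducible, one of its components $C$ would satisfy $(C_\alpha - C)\cdot C > 0$, forcing $C^2 = -K_X\cdot C - (C_\alpha - C)\cdot C$; combined with the fact that a component with $C^2 \ge 0$ cannot be subtracted and the arithmetic genus constraint, one checks that some component necessarily has $C^2 < 0$ and $-K_X\cdot C \le 0$, triggering step 1 — or step 1 fires, and then the algorithm subtracts curves of negative self-intersection and ultimately expresses $-K_X$ as a nonnegative combination of $-d$-curves with $d \ge 1$ together with (possibly) some copies of $C_\alpha$; but if a copy of $C_\alpha$ still survived, we could peel it off and repeat, and since $K_X^2 < 8$ guarantees $-K_X$ is not ``too positive'' (one shows $-2K_X$ is never effective when $K_X^2 < 0$, and the boundary cases $0 \le K_X^2 < 8$ are handled by the explicit small-$m$ analysis), the peeling must stop with no $C_\alpha$ left.

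The main obstacle I expect is the bookkeeping in the mixed case — showing that once step 1 is invoked, the final decomposition can always be rearranged to remove every surviving copy of $C_\alpha$ at the cost of introducing only $-d$-curves, which amounts to showing $-K_X - C_\alpha = 0$ in $\Pic(X)$ (trivial once $C_\alpha \in |-K_X|$) or else that the ``non-integral'' representative literally contains no anticanonical piece. I would handle this by observing that if step 1 ever fires, then $C_\alpha$ is reducible (a reduced irreducible anticanonical curve has every component — itself — with $(C_\alpha - C)\cdot C = 0$, so $C^2 = -K_X \cdot C = C_\alpha^2$, and for that to be negative and trigger step 1 one needs $C_\alpha^2 < 0$, which is the case $K_X^2 < 0$; but then a $-K_X$ with $K_X^2 < 0$ and $C_\alpha$ integral is precisely the ``every representative is integral'' alternative, so step 1 does \emph{not} fire). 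Thus step 1 fires only when $C_\alpha$ is non-integral, and in that case, peeling off components of $C_\alpha$ one at a time in step 1, we never create a new copy of the full class $C_\alpha$; the final fundamental-chamber decomposition via Proposition \ref{prop:decomp_nef} applied to the reduced divisor has $D \cdot e_m$ possibly zero, but a careful look shows the anticanonical summands produced there can only appear when $D \cdot e_m > 0$ and $C_\alpha^2 \ge 0$, which contradicts $C_\alpha$ being non-integral with a component of negative self-intersection — completing the argument.
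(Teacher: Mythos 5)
There is a genuine gap, and it sits at the heart of the statement. Your algorithmic approach is anchored to the particular chosen curve $C_\alpha$, and your case analysis ends up proving only that \emph{some} representative of $-K_X$ is integral (namely $C_\alpha$ itself, when step 1 never fires) — but the first alternative in the proposition is that \emph{every} representative is integral. The difficult case is precisely when $C_\alpha$ is integral and nef and yet the linear system $|-K_X|$ contains non-integral members. Concretely, take $X$ the blowup of $\P^2$ at two general points ($K_X^2=7$) with $C_\alpha$ a smooth cubic through them: step 1 never fires, the effective-divisor algorithm (via Proposition \ref{prop:decomp_nef}, since $D\cdot e_m>0$) terminates with the decomposition $-K_X\sim C_\alpha$, which is \emph{not} of the required form, and your argument then declares the first alternative — yet $|-K_X|$ visibly contains reducible curves, so the first alternative fails, and the proposition is only saved by the decomposition $-K_X\sim 3(h-e_1-e_2)+2e_1+2e_2$ into $-1$-curves, which your procedure never produces. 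In short, whenever the algorithm's output retains a positive coefficient of $C_\alpha$, you cannot conclude anything about \emph{all} representatives, and your attempts to "peel off" the surviving anticanonical summand are not substantiated (indeed the auxiliary claim that $-2K_X$ is never effective when $K_X^2<0$ is false: $2C_\alpha$ is an effective representative).

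The paper's proof avoids this entirely by starting from an arbitrary non-integral representative: write $-K_X=D_1+D_2$ with $D_1,D_2$ nonzero effective, expand $D_1\sim m(-K_X)+\sum_j c_jC_j$ using the Lahyane--Harbourne generation of the effective monoid by $-d$-curves and $-K_X$ (valid since $K_X^2<8$), and observe that $m\ge 1$ would force $-D_2=D_1+K_X=(D_1+mK_X)+(m-1)C_\alpha$ to be effective, which is absurd; hence $m=0$ and both $D_1$ and $D_2$, and therefore $-K_X$, are nonnegative combinations of $-d$-curves. If you want to salvage an algorithmic proof, you would have to run your machinery on the pieces $D_1$, $D_2$ of a non-integral representative (and rule out anticanonical summands in their decompositions by exactly this kind of argument), not on the class $-K_X$ with the fixed curve $C_\alpha$.
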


\begin{proof}
If some representative of $-K_X$ is reducible, then we can write
\[
-K_X=D_1+D_2
\]
for nonzero effective divisors $D_1$, $D_2$, and it suffices to show that
each $D_i$ is linearly equivalent to a nonnegative linear combination of
$-d$-curves.  Since $D_1$ is effective by assumption, we can write
\[
D_1 \sim m(-K_X) + \sum_j c_j C_j
\]
where each $C_j$ is a $-d$-curve for some $d\ge 1$ and all coefficients are
nonnegative.  This implies $D_1+mK_X$ is effective, and thus $-D_2=D_1+K_X =
D_1+mK_X+(m-1)C_\alpha$ is effective, unless $m=0$.  In other words, $D_1$
has a decomposition as required.
\end{proof}

\begin{cor}
  Let $X$ be an anticanonical rational surface, and suppose $K_X^2\notin
  \{0,1,8,9\}$.  Then the effective monoid of $X$ is generated by the
  integral curves of negative self-intersection.
\end{cor}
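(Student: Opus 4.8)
By Proposition \ref{prop:decomp_nef} and the effective-divisor algorithm above, for $K_X^2\le 6$ every effective class $D$ on $X$ can be written as a nonnegative integer combination of $C_\alpha=-K_X$ and $-d$-curves ($d\ge 1$); and for $K_X^2=7$ the Remark above already displays $\mathrm{Eff}(X)$ as a simplicial monoid generated by three integral curves of negative self-intersection, so that case is done. Hence, assuming $K_X^2\le 6$, it remains only to delete the generator $C_\alpha$ from the list, i.e.\ to check that $-K_X$ is either a nonnegative combination of $-d$-curves or is itself an integral curve of negative self-intersection.

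For this I would invoke the Proposition immediately preceding the Corollary: either some representative of $-K_X$ is a nonnegative combination of $-d$-curves --- in which case $C_\alpha$ can be dropped and we are done --- or every representative of $-K_X$ is integral, so $C_\alpha$ is an integral curve with $C_\alpha^2=K_X^2$. If $K_X^2<0$, this is exactly an integral curve of negative self-intersection, and again we are done, since the generating set $\{C_\alpha\}\cup\{-d\text{-curves}\}$ then consists of integral curves of negative self-intersection.

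The remaining case, and the only one I expect to require real work, is $2\le K_X^2\le 6$ with every representative of $-K_X$ integral; here the point is to produce a contradiction by exhibiting a reducible anticanonical divisor. Fix a blowdown structure on $X$ (possible as $X\not\cong\P^2$); its last exceptional class $e_m$ is represented by an irreducible $-1$-curve, and adjunction gives $e_m\cdot K_X=-1$. Riemann--Roch yields $\chi(-K_X-e_m)=1+\tfrac12(-K_X-e_m)\cdot(-2K_X-e_m)=K_X^2-1\ge 1$, while $h^2(-K_X-e_m)=h^0(2K_X+e_m)=0$: if $2K_X+e_m$ were effective then, since $C_\alpha=-K_X$ is effective and the $-1$-curve $e_m$ is the unique effective divisor in its class, we would obtain $e_m=(2K_X+e_m)+2C_\alpha$ as divisors, forcing the reduced irreducible curve $e_m$ to contain $2C_\alpha$ --- absurd. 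Thus $h^0(-K_X-e_m)\ge K_X^2-1>0$, so $-K_X-e_m$ is effective, and nonzero since its class differs from $e_m$ (as $(-K_X)^2\ge 2\ne-1$); hence $-K_X\sim e_m+(-K_X-e_m)$ has a reducible representative, contradicting the integrality assumption and finishing the proof.

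I would add a closing remark that the excluded values $K_X^2\in\{0,1\}$ enter exactly at this last step: there the estimate $\chi(-K_X-e_m)=K_X^2-1$ is no longer positive, and $C_\alpha$, now of nonnegative self-intersection, can be a genuinely necessary generator of $\mathrm{Eff}(X)$ which is not a negative curve, so the hypothesis $K_X^2\notin\{0,1,8,9\}$ cannot be weakened.
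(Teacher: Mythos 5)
Your proof is correct, but the decisive step is handled by a genuinely different argument from the paper's. Both proofs reduce to showing that $-K_X$ is a redundant generator, i.e.\ (via the dichotomy of the Proposition immediately preceding the Corollary) that some anticanonical divisor is non-integral when $2\le K_X^2\le 7$; the $K_X^2<0$ case is treated identically in both. The paper gets non-integrality by a parameter count on the Hirzebruch surface $X_0$: the reducible members of the $8$-dimensional anticanonical system on $X_0$ form a codimension-$2$ locus (or the whole system on $F_d$, $d>2$), and passing to $X$ imposes only $m\le 6$ linear conditions, so a reducible anticanonical curve survives on $X$. You instead argue cohomologically on $X$ itself: with $e_m$ an exceptional $-1$-curve, Riemann--Roch and Serre duality give $h^0(-K_X-e_m)\ge\chi(-K_X-e_m)=K_X^2-1>0$ once $h^0(2K_X+e_m)=0$, so $-K_X$ is linearly equivalent to $e_m$ plus a nonzero effective divisor, hence has a non-integral representative. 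Your vanishing argument via unique effectiveness of $e_m$ is valid (indeed $h^0(\mathcal L(e_m))=1$), though it can be seen even more quickly from $(2K_X+e_m)\cdot f=-4<0$ with $f$ nef. Your route is more self-contained and avoids the paper's somewhat informal dimension count, at the cost of splitting off $K_X^2=7$ separately via the explicit simplicial description of $\mathrm{Eff}(X_1)$ from the earlier Remark (necessary since Proposition~\ref{prop:decomp_nef} and the effectiveness algorithm assume $K_X^2\le 6$), whereas the paper treats $K_X^2=7$ uniformly inside its $1<K_X^2<8$ count; both handlings are fine.
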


\begin{proof}
  If $K_X^2<0$, then $C_\alpha$ has negative self-intersection, and is either
  integral or redundant.  For $1<K_X^2<8$, we note that $\Gamma(-K_X)$
  corresponds to a codimension $m$ subspace of $\Gamma(-K_{X_0})$.  On a
  Hirzebruch surface, either every anticanonical curve is reducible (i.e.,
  on $F_d$ for $d>2$), or the reducible anticanonical curves are
  codimension $2$ subvariety of the $8$-dimensional projective space of all
  anticanonical curves.  We are imposing $m\le 6$ linear conditions on this
  projective variety, and thus obtain a nonempty set of anticanonical
  curves on $X$ which are reducible on $X_0$ and thus reducible on $X$.
\end{proof}

\begin{rem}
  Similarly, if $K_X^2=1$ but $X$ has a $-2$-curve, then some anticanonical
  curve is reducible.  Also, in any case the anticanonical divisor is not
  needed to generate the {\em rational} effective cone when $K_X^2=1$,
  since then $-2K_X = (-2K_X-e_7)+e_7$ is a sum of effective divisors.
\end{rem}

We can also adapt the algorithm to compute $h^0(\sO_X(D))$ for an
effective divisor.  Indeed, every step of the algorithm removes a fixed
component of $D$, and thus the resulting nef divisor $D'$ has a natural
isomorphism 
\[
H^0(\sO_X(D'))\cong H^0(\sO_X(D)).
\]
Thus to compute the dimensions of effective linear systems, it remains only
to compute the dimensions of linear systems corresponding to nef divisors
in the fundamental chamber.  So let $D$ be such a divisor class and suppose
$m=0$ or $D\cdot e_m>0$, since otherwise we may as well consider $D$ as a
divisor on $X_{m-1}$.

If $D\cdot C_\alpha>0$, then it follows from
\cite[Thm.~III.1(ab)]{HarbourneB:1997} that $h^1(\sO_X(D))=0$, and thus
we can use Hirzebruch-Riemann-Roch to compute
\[
h^0(\sO_X(D)) = \chi(\sO_X(D)) = \frac{D\cdot (D+C_\alpha)}{2}+1.
\]
This in particular holds whenever $m<8$, since then either $D\cdot
C_\alpha>0$ or $D=0$.

If $m\ge 8$ and $D\cdot C_\alpha=0$, then from the proof of Proposition
\ref{prop:decomp_nef}, we find that $D-C_\alpha$ is also nef.  Now consider
the short exact sequence
\[
0\to \sO_X(D-C_\alpha)\to \sO_X(D)\to \sO_X(D)|_{C_\alpha}\to 0.
\]
From \cite[Thm.~III.1(d)]{HarbourneB:1997}, we find that the natural inclusion
\[
H^0(\sO_X(D-C_\alpha))\subset H^0(\sO_X(D))
\]
is an isomorphism iff the line bundle $\sO_X(D)|_{C_\alpha}$ is
nontrivial.  Since $h^0(\sO_{C_\alpha})=1$, we conclude that
\[
h^0(\sO_X(D))
=
\begin{cases}
h^0(\sO_X(D-C_\alpha))+1 & \sO_X(D)|_{C_\alpha}\cong \sO_{C_\alpha}\\
h^0(\sO_X(D-C_\alpha)) & \text{otherwise.}
\end{cases}
\]
If $m>8$, then $D-C_\alpha$ is a nef divisor with $(D-C_\alpha)\cdot C_\alpha>0$,
so we reduce to the previous case.  If $m=8$, then $D=r C_\alpha$ for some
$r\ge 1$, and thus $C_\alpha$ must be nef.  We deduce that either $C_\alpha$ is
integral or every component of $C_\alpha$ is a $-2$-curve.  Moreover, the
above recurrence tells us that in this case,
\[
h^0(\sO_X(rC_\alpha)) = \lfloor r/r'\rfloor +1,
\]
where $r'$ is the order of the bundle $\sO_X(C_\alpha)|_{C_\alpha}$ in
the group $\Pic(C_\alpha)$.  (In particular, $h^0(\sO_X(rC_\alpha))=1$ if this
bundle is not torsion.)

\begin{rem}
If
\[
D = ns+df-\sum_i r_i e_i
\]
relative to some even blowdown structure, then
\[
\chi(\sO_X(D))
=
(n+1)(d+1)-\sum_i \frac{r_i(r_i+1)}{2}.
\]
This of course corresponds to the fact that $H^0(\sO_X(D))$ is a
subspace of $H^0(\sO_X(ns+df))$ cut out by the appropriate number of
linear conditions.  (If $X$ blows up $m$ distinct points of $X_0$, the
conditions are simply that the curve have multiplicity $r_i$ at the $i$-th
point.)  In principle, one could determine $h^0(\sO_X(D))$ (and in
particular test whether $D$ is effective) using linear algebra, but the
above approach scales better, and largely separates out the combinatorial
influences from the algebraic influences.
\end{rem}

\subsection{Integral divisors}

By Lemma II.6 and Theorem III.1 of \cite{HarbourneB:1997}, there is a
relatively short list of possible ways that a nef class can fail to be
generically integral.  (The integral classes which are not nef are
precisely the $-d$-curves for $d\ge 1$ and the anticanonical divisor, when
this is integral and has negative self-intersection, and we already know
how to recognize those.)  Although the description given there is purely
geometric, it turns out to be easy enough to recognize the different cases
in terms of the representation of the divisor in a fundamental chamber.
Since this representation is unique, we can (and will) figure out how each
case is represented by placing various geometrically motivated constraints
on the blowdown structure, and checking that the result is in the
fundamental chamber.

\begin{rem}
  Note that in characteristic 0, ``generically integral'' and ``generically
  smooth'' are the same on an anticanonical rational surface: a generically
  integral divisor class on a rational surface has at most one base point,
  and if it does, meets $C_\alpha$ at that point with
  multiplicity 1.  Bertini's theorem implies that the generic
  representative is smooth away from the base point, and the intersection
  with $C_\alpha$ implies smoothness there.
\end{rem}

\begin{lem}
  Let $D$ be a divisor on $X$, and suppose $\Gamma$ is an even blowdown
  structure such that $D$ is in the fundamental chamber.  Then $D$ is a
  pencil iff one of the following three cases occurs.
\begin{itemize}
\item[(a)] $D=f$.
\item[(b)] $D=2s+2f-e_1-\cdots-e_7$, $D$ is nef, and
  $\sO_X(2s+2f-e_1-\cdots-e_7-e_k)|_{C_\alpha}\not\cong \sO_{C_\alpha}$ for
  $8\le k\le m$.
\item[(c)] $D=r(2s+2f-e_1-\cdots-e_8)$, where
$\sO_X(2s+2f-e_1-\cdots-e_8)|_{C_\alpha}$ is a line bundle of exact order
$r$ in $\Pic(C_\alpha)$.
\end{itemize}
\end{lem}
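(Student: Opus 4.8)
The plan is to deduce the statement from the computations of $h^0({\cal L}(D))$ for nef classes in the fundamental chamber carried out earlier in this subsection, together with the recursion $h^0({\cal L}(rC_\alpha))=\lfloor r/r'\rfloor+1$ for multiples of $C_\alpha$. ``$D$ is a pencil'' is taken to mean $h^0({\cal L}(D))=2$ with $|D|$ having no fixed component; since then $D$ coincides with its own moving part, $D$ is in particular nef, so throughout we may assume $D$ nef, and --- replacing $X_m$ by $X_{m-1}$ whenever $D\cdot e_m=0$ --- that $m=0$ or $D\cdot e_m>0$. The argument then splits on the sign of $D\cdot C_\alpha$.

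\emph{Case $D\cdot C_\alpha>0$.} Here $h^1({\cal L}(D))=0$ by \cite[Thm.\ III.1(ab)]{HarbourneB:1997}, so $h^0({\cal L}(D))=\chi({\cal L}(D))=\tfrac12D\cdot(D+C_\alpha)+1$, and $D$ is a pencil precisely when $D^2+D\cdot C_\alpha=2$ and $|D|$ has no fixed component. Since $D$ is nef, $D^2\ge 0$; writing $D=ns+df-\sum_i r_ie_i$, the fundamental-chamber inequalities $d\ge n\ge r_1+r_2$ and $r_1\ge\cdots\ge r_m\ge 0$ turn $D^2+D\cdot C_\alpha=2$ into a finite Diophantine problem whose only solutions are $D=f$ ($D^2=0$) and $D=2s+2f-e_1-\cdots-e_7$ ($D^2=1$, forcing $m\ge 7$). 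The first is the ruling pencil, case~(a). For the second I would check that $D$ is automatically in the fundamental chamber, that nefness is a genuine additional hypothesis (it can fail exactly when $D$ meets a $-d$-curve with $d>2$ negatively, which then becomes a fixed component), and that the only curves that can appear as fixed components of $|D|$ are the $-1$-curves $e_k$ with $8\le k\le m$ and $D\cdot e_k=0$; since $(D-e_k)^2=(D-e_k)\cdot C_\alpha=0$, such an $e_k$ is a fixed component (equivalently $h^0({\cal L}(D-e_k))=h^0({\cal L}(D))$, governed by the same recursion as in the next case) precisely when ${\cal L}(D-e_k)|_{C_\alpha}\cong\sO_{C_\alpha}$. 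Excluding this recovers case~(b).

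\emph{Case $D\cdot C_\alpha=0$, $D\ne 0$.} The Hodge index theorem applied to $C_\alpha=-K_X$ rules this out when $K_X^2>0$, so $K_X^2\le 0$. If $D^2>0$ a short cohomology comparison with ${\cal L}(D-C_\alpha)$ (which has the same Euler characteristic $\tfrac12D^2+1$) shows $|D|$ then either has more than two sections or has $C_\alpha$ as a fixed component; so a pencil here must have $D^2=0$. The fundamental-chamber inequalities together with $D^2=D\cdot C_\alpha=0$ then force $D=r(2s+2f-e_1-\cdots-e_8)$ for some $r\ge 1$ --- the fiber class of the genus-$1$ pencil, equal to $rC_\alpha$ when $K_X^2=0$ --- and this class lies in the fundamental chamber for every $r$. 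By the recursion $h^0({\cal L}(D))=\lfloor r/r'\rfloor+1$, where $r'$ is the order of ${\cal L}(2s+2f-e_1-\cdots-e_8)|_{C_\alpha}$ in $\Pic(C_\alpha)$; this equals $2$ for $r'\le r\le 2r'-1$, and the fixed part of $|D|$ is then $(r-r')(2s+2f-e_1-\cdots-e_8)$, which is zero iff $r=r'$. This is case~(c). Conversely, in each of (a)--(c) the claimed value $h^0=2$ follows from the computations above and the side conditions exactly remove the fixed part, and (a)--(c) are pairwise disjoint, being distinguished by $(D^2,D\cdot C_\alpha)$ and by $r$.

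The step I expect to be the main obstacle is the $D\cdot C_\alpha>0$ case, in two parts: the finite Diophantine argument --- making sure $f$ and $2s+2f-e_1-\cdots-e_7$ really are the \emph{only} chamber-reduced nef solutions of $D^2+D\cdot C_\alpha=2$; and, more delicately, matching the ``no fixed component'' clause against the Harbourne classification of the nef classes that fail to be generically integral --- i.e.\ showing that the candidate fixed components of such a $|D|$ are exactly the $-1$-curves $e_k$ with $D\cdot e_k=0$, so that the conditions ${\cal L}(D-e_k)|_{C_\alpha}\not\cong\sO_{C_\alpha}$ in (b), and the order condition in (c), carve out precisely the boundary between cases (b) and (c).
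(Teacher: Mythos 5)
Your route is genuinely different from the paper's: you compute $h^0$ directly (Riemann--Roch plus Harbourne vanishing when $D\cdot C_\alpha>0$, the $\lfloor r/r'\rfloor+1$ recursion when $D\cdot C_\alpha=0$) and then try to pin down the class by lattice inequalities inside the \emph{given} fundamental chamber, whereas the paper quotes \cite[Lem.~II.6]{HarbourneB:1997} for the trichotomy $(D^2,D\cdot C_\alpha)\in\{(0,2),(1,1),(0,0)\}$ and then identifies $D$ \emph{geometrically}: a $(0,2)$ class is a rational ruling, hence equals $f$ in some blowdown structure; a $(0,0)$ class is pulled back from a relatively minimal $X_8$, hence is $r(2s+2f-e_1-\cdots-e_8)$; in the $(1,1)$ case the residual $D-C_\alpha$ is shown to decompose into $-1$-curves which one blows down to reach $X_7$; in each case the earlier uniqueness lemma (nef classes have a parity-unique chamber representation) transfers the answer to the given $\Gamma$. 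The two steps you leave open are exactly what that geometric argument supplies, so they are genuine gaps rather than routine verifications. First, your ``finite Diophantine problem'' is not finite a priori ($D\cdot f$, $D\cdot s$ and $m$ are unbounded), and no argument is given that $f$, $2s+2f-e_1-\cdots-e_7$ and $r(2s+2f-e_1-\cdots-e_8)$ are the \emph{only} chamber-reduced nef solutions of the relevant numerics; this uniqueness is the heart of the lemma. Second, in case (b) you assert that the only possible fixed components are the $e_k$ with $k\ge 8$ and $D\cdot e_k=0$; a priori any curve orthogonal to the nef class $D$ (for instance a $-2$-curve) could be a fixed component, and excluding these requires either Harbourne's Thm.~III.1(b)(c)-type analysis or the paper's study of $D-C_\alpha$, neither of which you carry out.

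There is also an internal inconsistency in the setup: the opening reduction ``replace $X_m$ by $X_{m-1}$ whenever $D\cdot e_m=0$'' is legitimate for computing $h^0$ (which is invariant under total transform) but not for the pencil property, since pulling back along the blowup of a base point preserves $h^0$, nefness and chamber membership while creating the fixed component $e_m$. Taken literally, the reduction would carry case (b) all the way down to $X_7$ and erase precisely the conditions on $e_k$, $8\le k\le m$, that distinguish (b); you reinstate them later, but the argument as written does not cohere. If you want to salvage your approach, you should drop the reduction, prove the chamber-uniqueness of the numerical solutions as a separate lemma (or simply reduce to the paper's geometric identification plus the uniqueness lemma), and justify the fixed-component classification in case (b) via the Harbourne results already cited in this section.
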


\begin{proof}
  A pencil is certainly generically integral (lest $X$ be reducible), so
  nef.  Per \cite[Lem.~II.6]{HarbourneB:1997}, there are three possibilities:
$D^2=0$, $D\cdot C_\alpha=2$; $D^2=D\cdot C_\alpha=1$; or $D^2=D\cdot
C_\alpha=0$.

In the first case, the generic fiber of $D$ has arithmetic genus 0, so $D$
is the class of a rational ruling.  It follows that there exists a blowdown
structure such that $D=f$, and we readily verify that $f$ is in the
fundamental chamber.  (If the blowdown structure we end at is odd, simply
perform an elementary transformation, and note that this preserves the
meaning of $f$.)

For the case $D^2=D\cdot C_\alpha=0$, $D$ gives a quasi-elliptic fibration of
$X$, and we can choose a blowdown structure in which we first blow down any
$-1$-curves contained in fibers.  After doing so, we end up at a relatively
minimal quasi-elliptic surface, which must be $X_8$ for the blowdown
structure.  The only isotropic vectors in $\Pic(X_8)$ are the multiples of
the canonical class, and thus $D=r(2s+2f-e_1-\cdots-e_8)$ for some $r$;
again, this is in the fundamental chamber.  For this to be a pencil, it
must not have any fixed component, so $\sO_X(D)|_{C_\alpha}\cong
\sO_{C_\alpha}$ and $\sO_X(-K_{X_8})|_{C_\alpha}$ has order dividing $r$.
If the order strictly divides $r$, then $\sO_X(D)$ will have more than
$2$ global sections.

For the case $D^2=D\cdot C_\alpha=1$, the linear system is again
quasi-elliptic, now with a base point.  The base point is on the
anticanonical curve, namely the unique point such that
\[
\sO_X(D)|_{C_\alpha}\cong {\cal L}_{C_\alpha}(p).
\]
The fibers of $D$ transverse to $C_\alpha$ are either integral or contain a
single $-1$-curve, while the fiber not transverse to $C_\alpha$ contains
$C_\alpha$.  The residual divisor $D-C_\alpha$ has arithmetic genus $1-r$,
where $r=C_\alpha^2-1$, and thus has at least $r$ connected components, each
of which has negative self-intersection (since it is orthogonal to a class
of positive self-intersection).  It follows that every component has
self-intersection $-1$ and arithmetic genus 0, and thus contains a
$-1$-curve.  We may thus choose a blowdown structure in which we first blow
down those $-1$-curves until eventually reaching $X_7$ and
$D=2s+2f-e_1-\cdots-e_7$.  This is a pencil on $X_7$ precisely when it is
nef, and remains a pencil on $X$ as long as we never blow up the base
point.
\end{proof}

\begin{prop}
  Suppose $D$ is a nef divisor class with $D\cdot C_\alpha\ge 2$,
  and let $\Gamma$ be an even blowdown structure for which $D$ is in the
  fundamental chamber.  Then $D$ is generically integral unless $D=rf$ for
  some $r>1$.
\end{prop}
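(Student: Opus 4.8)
The plan is to reduce the statement to the classification of pencils in the preceding lemma, using the fact that a nef divisor $D$ with $D\cdot C_\alpha\ge 2$ that fails to be generically integral must split off a fixed component or decompose nontrivially into effective pieces, and to show that any such decomposition forces $D$ to be a multiple of a pencil. First I would recall from \cite[Lem. II.6, Thm. III.1]{HarbourneB:1997} the structural input: a nef class $D$ is generically integral unless it is composite with a pencil, i.e.\ $D=rP$ for some pencil $P$ and some $r\ge 1$ (the relevant reducible/nonintegral cases for nef classes on an anticanonical rational surface are exactly these). So the task becomes: if $D$ is nef, in the fundamental chamber of an even blowdown structure $\Gamma$, with $D\cdot C_\alpha\ge 2$, and $D=rP$ with $r>1$ and $P$ a pencil, then $P=f$ (and hence $D=rf$).

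Next I would invoke the pencil lemma above: $P$ is one of (a) $P=f$; (b) $P=2s+2f-e_1-\cdots-e_7$ (with $P$ nef and the stated nontriviality conditions); or (c) $P=r'(2s+2f-e_1-\cdots-e_8)$ where ${\cal L}(2s+2f-e_1-\cdots-e_8)|_{C_\alpha}$ has exact order $r'$. The key point is to rule out cases (b) and (c) under the hypothesis $D\cdot C_\alpha\ge 2$ together with $D=rP$, $r>1$. In case (b), $P\cdot C_\alpha = (2s+2f-e_1-\cdots-e_7)\cdot C_\alpha$: since $C_\alpha=-K_X = 2s+2f-\sum_{1\le i\le m} e_i$ in the even case, a direct intersection computation gives $P\cdot C_\alpha = 2\cdot 0 + 2\cdot(\text{stuff})\cdots$ — concretely $P\cdot(-K_X)$ works out to $1$ (this is the $D^2=D\cdot C_\alpha=1$ case of the pencil lemma, where $P$ has a base point on $C_\alpha$). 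Hence $D\cdot C_\alpha = r\cdot 1 = r$. That alone does not contradict $D\cdot C_\alpha\ge 2$, so here I need the stronger structural fact: a class composite with a pencil of the type in (b) cannot be nef once $r>1$, because the base point of $P$ on $C_\alpha$ becomes a base point of multiplicity $r$, and then intersecting with a suitable $-1$- or $-2$-curve through that point (the one contracted in the blowdown to $X_7$) yields a negative intersection with $D$, contradicting nefness. A parallel argument disposes of case (c): there $P\cdot C_\alpha = 0$, so $D\cdot C_\alpha = 0 < 2$, an outright contradiction. So only case (a) survives, giving $D = rf$, and since we assumed $D$ is not generically integral we have $r>1$, which is exactly the exceptional case in the statement. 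Conversely $rf$ for $r>1$ is visibly non-integral (it is $r$ times the ruling class), completing the equivalence.

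The main obstacle I anticipate is case (b): showing that $2s+2f-e_1-\cdots-e_7$ (extended to $X$) cannot occur with multiplicity $r>1$ in a nef class without violating nefness. The clean way to handle this is to work on the blowdown $X_7$ guaranteed by the pencil lemma, where $P=2s+2f-e_1-\cdots-e_7$ is a genuine pencil with a single base point $p\in C_\alpha$; then $D=rP$ would require all $r$ members of a pencil through $p$, but on $X$ (which blows up further points, possibly including infinitely near points of $p$) one produces an effective curve $C$ with $C\cdot D<0$ — either $C=e_m$ type exceptional class or a $-2$-curve in a degenerate fiber — contradicting $D$ nef. If instead no such blowup of $p$ occurs, then $D\cdot C_\alpha = r$ while the self-intersection bookkeeping $D^2 = r^2 P^2 = r^2$ combined with $D$ nef and in the fundamental chamber forces, via Riemann–Roch and the pencil lemma's uniqueness of fundamental-chamber representatives, that $D$'s representation equals $r(2s+2f-e_1-\cdots-e_7)$, which is not in the fundamental chamber when $r>1$ (it has negative intersection with $e_7-e_8$ once $m\ge 8$), a contradiction; and when $m=7$ the class $2s+2f-e_1-\cdots-e_7 = -K_{X_7}$ has $P\cdot C_\alpha = -K_{X_7}^2 = 1 \ne$ the required value unless we are already in case (c)-type behavior. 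Assembling these subcases carefully is the real work; everything else is routine intersection-number verification that $f$ lies in the fundamental chamber and that $rf$ is non-integral.
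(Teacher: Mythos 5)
Your overall strategy (Harbourne's Theorem III.1 plus the pencil classification) is the same as the paper's, but the step you yourself identify as ``the real work'' --- ruling out case (b), $P=2s+2f-e_1-\cdots-e_7$ --- rests on false claims. Nefness is a numerical condition and is preserved by positive multiples, so $D=rP$ with $P$ nef is automatically nef for every $r>1$; no $-1$- or $-2$-curve through the base point can meet $D$ negatively. Likewise $r(2s+2f-e_1-\cdots-e_7)$ \emph{is} in the fundamental chamber: it pairs to $0$ with $s-f$, $f-e_1-e_2$ and $e_i-e_{i+1}$ for $i\le 6$ or $i\ge 8$, and to $+r$ (not a negative number) with $e_7-e_8$. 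So a class of the form $rP$ with $P$ the pencil of case (b) genuinely satisfies all the hypotheses of the proposition, with $D\cdot C_\alpha=r\ge 2$, and cannot be excluded; the correct resolution is that such a class \emph{is} generically integral, not that it cannot occur.

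What your argument is missing is the role of base point freeness, which is exactly what the paper's proof uses. Since $D\cdot C_\alpha\ge 2$, Harbourne's Theorem III.1(a) gives that $D$ is base point free; then if the generic member of $|D|$ is not integral, the morphism defined by $|D|$ must have one-dimensional image, and after Stein factorization $D$ is a strict multiple of the class of a fiber, i.e.\ of a \emph{base point free} pencil. This eliminates case (b) outright (that pencil has a base point on $C_\alpha$; equivalently, one can check $h^0({\cal L}(rP))=\tfrac{r^2+r}{2}+1>r+1$, so the generic member of $|rP|$ is not a sum of members of the pencil), and case (c) is excluded by $P\cdot C_\alpha=0<2$ exactly as you say. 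Only $P=f$ remains, giving the stated exception $D=rf$, $r>1$. So the proof can be repaired, but only by replacing your case (b) analysis with the base-point-freeness/Bertini argument; as written, that case analysis would fail.
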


\begin{proof}
  By \cite[Thm.~III.1(a)]{HarbourneB:1997}, $D$ is base point free, so is
  generically integral unless it is a strict multiple of a pencil.
\end{proof}

The case $D\cdot C_\alpha=0$, which is the most interesting for us in any
event, is the next easiest case to handle.  If $\sO_X(D)|_{C_\alpha}\not\cong
\sO_{C_\alpha}$, then $D$ can only be integral if $D=C_\alpha$ and
$C_\alpha$ is integral (\cite[Thm.~III.1(d)]{HarbourneB:1997}).

\begin{thm}\label{thm:non_integral}
  Let $X$ be an anticanonical rational surface, let $D$ be a nef divisor
  class on $X$ such that $\sO_X(D)|_{C_\alpha}\cong \sO_{C_\alpha}$, and let
  $\Gamma$ be an even blowdown structure such that $D$ is in the
  fundamental chamber.  Then $D$ is generically integral unless one of the
  following two possibilities occurs.
\begin{itemize}
\item[(a)] $D=r(2s+2f-e_1-\cdots-e_8)$, and
  $\sO_X(2s+2f-e_1-\cdots-e_8)|_{C_\alpha}$ is a line bundle of order $r'$
  strictly dividing $r$.  Then the generic representative of $D$ is a
  disjoint union of $r/r'$ curves of genus 1, of divisor class
  $r'(2s+2f-e_1-\cdots-e_8)$.
\item[(b)] $D=r(2s+2f-e_1-\cdots-e_8)+e_8-e_9$ with $r>1$, and
  $\sO_X(2s+2f-e_1-\cdots-e_8)|_{C_\alpha}\cong \sO_{C_\alpha}$.  Then the
  generic representative of $D$ is the union of $r$ divisors of class
  $2s+2-e_1-\cdots-e_8$ (all of genus 1) and a $-2$-curve of class
  $e_8-e_9$.
\end{itemize}
\end{thm}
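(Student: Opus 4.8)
Since ${\cal L}(D)|_{C_\alpha}\cong\sO_{C_\alpha}$ we have $D\cdot C_\alpha=0$, hence $D\cdot K_X=0$, and $D^2\ge 0$ because $D$ is nef; the case $D=0$ is trivial, so assume $D\ne 0$. The proof follows the recipe indicated above: extract from Theorem III.1 and Lemma II.6 of \cite{HarbourneB:1997} the short geometric list of ways a nef $D$ with $D\cdot C_\alpha=0$ and ${\cal L}(D)|_{C_\alpha}\cong\sO_{C_\alpha}$ can fail to be generically integral, then for each configuration impose the corresponding geometric constraint on the blowdown structure, read off the resulting expansion of $D$, verify it lies in the fundamental chamber, and conclude by uniqueness of the fundamental-chamber representative (the Lemma above). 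Adjunction gives $p_a(D)=1+\tfrac12 D^2$, so I split on whether $D^2=0$ or $D^2>0$.

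Suppose $D^2=0$, so $p_a(D)=1$. Using the Lemma above to keep $D$ in the fundamental chamber while choosing $e_m$ to be a $-1$-curve orthogonal to $D$, I repeatedly contract such curves. A $-1$-curve meets $C_\alpha=-K_X$ transversally at a single smooth point, so each contraction preserves the isomorphism class of ${\cal L}(D)|_{C_\alpha}$; and since $|D|$ is base-point-free here the centre lies off the base locus, so $D$ is generically integral iff its image is (the total transform of the generic member equals its strict transform). The process halts exactly at $X_8$: for $N\le 7$ one has $K_{X_N}^2=8-N>0$, so by the Hodge index theorem $K_{X_N}^\perp$ is negative definite and a nef class orthogonal to $C_\alpha$ vanishes; for $N>8$ the (quasi-)elliptic fibration defined by the mobile part of $|D|$ is not relatively minimal and hence has a $-1$-curve inside some fibre, orthogonal to $D$. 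Thus $D=r(2s+2f-e_1-\cdots-e_8)$ in an even blowdown structure with $D$ in the fundamental chamber, so — this class being visibly in the fundamental chamber — the same holds in $\Gamma$. Writing $r'$ for the order of ${\cal L}(2s+2f-e_1-\cdots-e_8)|_{C_\alpha}$ in $\Pic(C_\alpha)$, the hypothesis forces $r'\mid r$. If $r=r'$, the pencil $|r'(2s+2f-e_1-\cdots-e_8)|$ is base-point-free with connected (hence integral, of arithmetic genus $1$) generic member, the formula $h^0(j(2s+2f-e_1-\cdots-e_8))=\lfloor j/r'\rfloor+1$ ruling out a disconnected one; so $D$ is generically integral. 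If $r'<r$, that formula shows $|r(2s+2f-e_1-\cdots-e_8)|$ is the pullback of $|\sO_{\P^1}(r/r')|$ along the (quasi-)elliptic fibration defined by $|r'(2s+2f-e_1-\cdots-e_8)|$, so its generic member is a disjoint union of $r/r'$ generic fibres of class $r'(2s+2f-e_1-\cdots-e_8)$ — case (a).

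Suppose $D^2>0$. Then $D^\perp$ is negative definite (Hodge index) and contains $K_X\ne 0$, so $K_X^2<0$ and $m\ge 9$. By Theorem III.1 of \cite{HarbourneB:1997}, when such a $D$ fails to be generically integral its fixed part $F$ is a nonempty sum of $-2$-curves disjoint from $C_\alpha$ and the mobile part $M=D-F$ is base-point-free with $M\cdot C_\alpha=0$ and ${\cal L}(M)|_{C_\alpha}\cong\sO_{C_\alpha}$; moreover $M^2=0$ (if $M^2>0$ then $M$ is big and base-point-free, hence its generic member is an irreducible curve by Bertini, contradicting the failure of generic integrality when $F=0$, while $F\ne 0$ with $M^2>0$ is incompatible with $D=M+F$ being nef via Harbourne's description of the fixed locus). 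Applying the $D^2=0$ analysis to $M$ yields $M=r(2s+2f-e_1-\cdots-e_8)$ with ${\cal L}(2s+2f-e_1-\cdots-e_8)|_{C_\alpha}$ of order dividing $r$; base-point-freeness of $M$ together with ${\cal L}(D)|_{C_\alpha}\cong\sO_{C_\alpha}$ and the disjointness of $F$ from $C_\alpha$ forces that order to be $1$, so $|2s+2f-e_1-\cdots-e_8|$ is an honest elliptic pencil and $h^0(r(2s+2f-e_1-\cdots-e_8))=r+1$. Finally, a nonzero union $F$ of $-2$-curves disjoint from $C_\alpha$ with $D=M+F$ nef and $D^2=2(r-1)$ must, in a suitable even blowdown structure, be the single $-2$-curve $e_8-e_9$ with $r>1$, giving $D=r(2s+2f-e_1-\cdots-e_8)+e_8-e_9$; this class lies in the fundamental chamber (direct check), so it is the $\Gamma$-expansion, and its generic member is $r$ generic fibres of the elliptic pencil $|2s+2f-e_1-\cdots-e_8|$ together with the $-2$-curve $e_8-e_9$ — case (b).

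The delicate step is the last one in the $D^2>0$ case: extracting from Harbourne's geometric classification that a nontrivial fixed part forces precisely the rigid shape $M=r(2s+2f-e_1-\cdots-e_8)$, $F=e_8-e_9$, $r>1$ — in particular that no configuration with a larger (genus $\ge 2$) mobile part plus fixed $-2$-curves survives the nefness of $D$ — and checking that the resulting class is exactly the one in the fundamental chamber. The $D^2=0$ case is essentially bookkeeping once the descent to $X_8$ and the formula for $h^0(j(-K_{X_8}))$ are in hand.
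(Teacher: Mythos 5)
Your overall strategy is the paper's: reduce everything to Harbourne's Theorem III.1, the classification of genus-one pencils, and uniqueness of the fundamental-chamber representative. But there is a genuine gap exactly at the step you yourself flag as ``delicate'', and that step is where the content of case (b) lies. What you extract from Harbourne is only that the fixed part $F$ is some nonempty sum of $-2$-curves disjoint from $C_\alpha$ and that the mobile part $M$ is base point free with $M^2=0$. That is strictly weaker than what is needed and than what \cite[Thm. III.1(c)]{HarbourneB:1997} actually supplies (and what the paper's proof uses): for a nef class with $D\cdot C_\alpha=0$ that is not base point free, the fixed part is a \emph{single} $-2$-curve $N$ and $D-N$ is a \emph{strict multiple} of a pencil $P$ with $P\cdot N=1$. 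With only your weak form, nothing in your argument excludes, say, $D=rA+N_1+N_2$ with $A=2s+2f-e_1-\cdots-e_8$ and $N_1,N_2$ disjoint $-2$-curves each meeting $A$ once (e.g.\ $e_8-e_9$ and $e_7-e_{10}$ on a suitable $10$-fold blowup); such a class is nef for $r\ge 2$, restricts trivially to $C_\alpha$, and is not of shape (a) or (b), so ruling out that its fixed part is $N_1+N_2$ is precisely the uniqueness statement you never establish. Similarly, your justification of $M^2=0$ when $F\ne 0$ (``via Harbourne's description of the fixed locus'') and the final assertion that the data force $F=e_8-e_9$ are claims, not arguments: the paper's proof of that identification is the geometric chain you omit --- $N$ cannot be contracted by $X\to X_8$ since $P$ remains base point free there, $N\cdot P=1$ then forces $N$ to be a $-1$-curve on $X_8$, one arranges $X_8\to X_7$ to contract $N$ and the first blowup after $X_8$ to be the unique point blown up on $N$, whence $N=e_8-e_9$.

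A second, smaller error: your reason for the order being $1$ in case (b) (``base-point-freeness of $M$ \dots forces that order to be $1$'') does not work, since $M=rA$ is base point free whenever $r'\mid r$. The correct argument again comes from $P\cdot N=1$: the pencil is $P=r'A$, so $1=P\cdot N=r'(A\cdot N)$ forces $r'=1$. Relatedly, in your $D^2=0$ half you assert that $|D|$ is base point free; this too is only available once one knows the fixed-component structure of Harbourne's Thm. III.1(c) forces $D^2>0$, so it again rests on the strong form you did not invoke. The rest of the $D^2=0$ bookkeeping (descent to $X_8$, the $h^0(r C_\alpha)$ formula, and the disjoint-union description in case (a)) is in line with the paper's pencil classification lemma.
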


\begin{proof}
  $D$ is generically integral unless it factors through a pencil or has a
  fixed component.  The first case is precisely (a) above, by the
  classification of pencils.  The second case is described in
  \cite[Thm.~III.1(c)]{HarbourneB:1997}: $D$ has a unique fixed component
  $N$ which is a $-2$-curve, and $D-N$ is a strict multiple of a pencil $P$
  with $P\cdot N=1$.  In particular, there exists a blowdown structure such
  that $P$ is the total transform of some antipluricanonical pencil on
  $X_8$, and ``pluri'' can be ruled out by the fact that $P\cdot N=1$.
  Now, $N$ cannot be contracted by the map $X\to X_8$, since $P$ is still
  base point free on $X_8$; thus $N$ is a rational curve, and since $N\cdot
  P=1$, must be a $-1$-curve.  We can thus further insist that the map
  $X_8\to X_7$ blows down $N$.  Since $N$ is a $-2$-curve on $X$, the map
  $X\to X_8$ blows up a point of $N$ exactly once, and we may insist that
  this is the first point blown up after reaching $X_8$; i.e., that $N$ is
  already a $-2$-curve on $X_9$.  But then $N=e_8-e_9$, and $D$ has the
  claimed form, which we verify is in the fundamental chamber.
\end{proof}

\begin{rems}
For multiplicative Deligne-Simpson problems, a rather more complicated
irreducibility condition was given in \cite{Crawley-BoeveyW/ShawP:2006}.  In
particular, the above result gives a much stronger statement in the case of
$3$- and $4$-matrix multiplicative Deligne-Simpson problems, and it is
natural to wonder if a similarly strong result holds in general.
\end{rems}

The remaining case $C_\alpha\cdot D=1$ can be dealt with in one of two ways.
The easiest is to blow up the intersection with $C_\alpha$, and consider the
strict transform $D'$ of $D$ on $X_{m+1}=:X'$, a divisor class which is
generically disjoint from the new anticanonical curve.  The above
algorithms tell us how to determine the generic decomposition of such a
divisor class: first use the algorithm for testing effectiveness to write
it as a sum of (fixed) $-2$-curves and a nef class in some fundamental
chamber, then use the above result to decompose the latter class.  The
generic decomposition of $D'$ on $X'$ corresponds directly to the generic
decomposition of $D$ on $X$, since $D'$ is a strict transform, and thus
this procedure computes the generic decomposition of $D$.

We can also work out what the nonintegral cases look like in the
fundamental chamber, again using a result of Harbourne,
\cite[Thm.~III.1(b)]{HarbourneB:1997}.  We omit the details.

\begin{prop}\label{prop:non_integral}
  Suppose $D$ is a nef divisor class on $X$ such that $D\cdot C_\alpha=1$,
  and let $\Gamma$ be a blowdown structure for which $D$ is in the
  fundamental chamber.  Then $D$ is generically integral except in the
  following two cases.
\begin{itemize}
\item[(a)] For some $1\le i\le m$, $D\cdot e_i=0$ and
$\sO_X(D-e_i)|_{C_\alpha}\cong \sO_{C_\alpha}$.  The fixed part of $D$ is
the total transform of the minimal such $e_i$.
\item[(b)] $D=r(2s+2f-e_1-\cdots-e_8)+e_8$ and
  $\sO_X(2s+2f-e_1-\cdots-e_8)|_{C_\alpha}\cong \sO_{C_\alpha}$.  The fixed
  part of $D$ is the total transform of $e_8$.
\end{itemize}
\end{prop}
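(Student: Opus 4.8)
The plan is to reduce to the already-treated case $D\cdot C_\alpha=0$ by blowing up the base point of $|D|$ on $C_\alpha$, proceeding by induction on $m$; by an elementary transformation I may assume $\Gamma$ is even. The key preliminary observation is that ${\cal L}(D)|_{C_\alpha}$ is a degree $1$ line bundle on the arithmetic genus $1$ curve $C_\alpha$, so it has a one-dimensional space of sections, vanishing at a single point $q$ which moreover lies on the smooth locus of $C_\alpha$ (true even in the reducible and non-reduced degenerations of $C_\alpha$, since a degree $1$ line bundle on a curve of arithmetic genus $1$ has its unique section supported at a smooth point). Since $D$ is effective (being nef) and the restriction $H^0({\cal L}(D))\to H^0({\cal L}(D)|_{C_\alpha})$ is nonzero (else $D-rC_\alpha$ would be effective for all $r$, contradicting $C_\alpha\cdot f>0$), the generic member of $|D|$ meets $C_\alpha$ transversally at $q$. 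Note also that $D^2\ge 1$: $D^2\ge 0$ as $D$ is nef, and $D^2=0$ would make $D$ a multiple of a primitive isotropic class, which an adjunction computation shows cannot have intersection $1$ with $-K_X$.

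First I would dispose of the case $D\cdot e_m=0$. Then $D=\pi^*\bar D$ is the total transform of a nef class $\bar D$ on $X_{m-1}$ with $\bar D\cdot C_\alpha=1$, again lying in the (truncated) fundamental chamber. If the center $p_m$ of $\pi$ is the base point of $|\bar D|$---necessarily $p_m=q$, since a nef class with $\bar D\cdot C_\alpha=1$ has base locus inside $\{q\}$---then $e_m$ is a fixed component of $D$, $D\cdot e_m=0$, and ${\cal L}(D-e_m)|_{C_\alpha}\cong\sO_{C_\alpha}$ (subtracting $[q]$ trivializes $\bar D|_{C_\alpha}$); combined with the inductive description of $\bar D$, this produces case (a), with $i$ the least index at which a blown-up point was the relevant base point. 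Otherwise $D$ is generically integral iff $\bar D$ is, and induction applies. The base cases $K_X^2\in\{7,8\}$ are vacuous, since a parity computation shows no nef class on a Hirzebruch surface, or on a one-point blowup of one, has odd intersection with $-K_X$; hence the reduction necessarily terminates at a surface on which $D\cdot e_m>0$.

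With $D\cdot e_m\ge 1$ in hand, I would blow up $q$ to get $\pi\colon X'=X_{m+1}\to X$, append this blowup to $\Gamma$ to obtain an (even) blowdown structure $\Gamma'$, and study the strict transform $D':=\pi^*D-e_{m+1}$. The role of the reduction $D\cdot e_m\ge 1$ is that then every $-1$-curve $e$ on $X$ has $D\cdot e\ge 1$ (it is $e_m$ plus a nonnegative combination of simple roots), so no $-1$-curve through $q$ is $D$-trivial, and together with $D^2\ge 1$, the Hodge index theorem, and the smoothness of $q$ on $C_\alpha$, one checks that $D'$ is again nef. It lies in the fundamental chamber of $\Gamma'$, has $D'\cdot C'_\alpha=0$ (where $C'_\alpha$, the strict transform of $C_\alpha$, is anticanonical on $X'$), $D'\cdot e_{m+1}=1$, and ${\cal L}(D')|_{C'_\alpha}\cong\sO_{C'_\alpha}$; and $D$ is generically integral iff $D'$ is, with generic decompositions corresponding under $\pi$. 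Applying the theorem on nef classes with trivial restriction to the anticanonical curve to $D'$, I get that $D'$ is generically integral unless $D'=r(2s+2f-e_1-\cdots-e_8)$ (the relevant bundle having order strictly dividing $r$) or $D'=r(2s+2f-e_1-\cdots-e_8)+e_8-e_9$ with $r>1$, in $\Gamma'$-coordinates. Here $D'\cdot e_{m+1}=1$ is decisive: in the first case it forces $m+1=8$ and $r=1$, whence $D'=-K_{X_8}$ and the torsion condition is vacuous, so this case is empty; in the second it forces $m+1=9$, so $X=X_8$ and, under $\Pic(X)\cong e_9^\perp\subset\Pic(X')$, $D=r(2s+2f-e_1-\cdots-e_8)+e_8$ with ${\cal L}(2s+2f-e_1-\cdots-e_8)|_{C_\alpha}\cong\sO_{C_\alpha}$ and fixed part the total transform of $e_8$ (the $-2$-curve $e_8-e_9$ being the strict transform of $e_8$). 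This is case (b); in all remaining cases $D'$, hence $D$, is generically integral.

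The step I expect to be the main obstacle is the coordinate bookkeeping in the last paragraph: confirming that the constraint $D'\cdot e_{m+1}=1$ really does collapse the two exceptional families of the $D\cdot C_\alpha=0$ theorem down to the single listed form (b), and that fixed components and the blowdown structure transport transparently under $\pi^*$ and $\pi_*$---the cleanest way to ensure the latter being to verify at each stage that $q$, and each other relevant center of blowup, lies on the smooth locus of the pertinent anticanonical curve, so that ``strict transform'' and ``total transform minus one exceptional'' can be identified. The remaining delicacy is the interplay with the degenerate curves $C_\alpha$, where one must check that the genus-$1$, degree-$1$ statement about sections and the base-locus bound $\mathrm{Bs}\,|D|\subseteq\{q\}$ survive in the reducible and non-reduced cases.
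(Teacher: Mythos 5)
Your route is not the one the paper actually records for this statement --- the paper simply invokes Harbourne's Theorem III.1(b) ``and omits the details'' --- but it is essentially the alternative the paper sketches in the preceding paragraph: blow up the distinguished point of $C_\alpha$ and feed the strict transform into the already-proved classification for nef classes with trivial restriction to the anticanonical curve. That reduction, together with your descent through the $e_i$ with $D\cdot e_i=0$ (note that in the fundamental chamber $D\cdot e_1\ge\cdots\ge D\cdot e_m$, so this is exactly the dichotomy between case (a) and the terminal step), is a perfectly good and more self-contained way to obtain the proposition, and your bookkeeping in the terminal step (the constraint $D'\cdot e_{m+1}=1$ killing case (a$'$) and pinning case (b$'$) to $m=8$) is correct. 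Two of the facts you lean on do need the arguments you only gesture at: that the unique section of ${\cal L}(D)|_{C_\alpha}$ vanishes at a single \emph{smooth} point requires ruling out vanishing along a subcurve, which follows from Lemma \ref{lem:Ca_is_num_conn} (any decomposition $C_\alpha=A+B$ has $A\cdot B\ge2$, so a section vanishing on $B$ would give an effective divisor of negative degree on $A$); and your justification that the restriction $H^0({\cal L}(D))\to H^0({\cal L}(D)|_{C_\alpha})$ is nonzero (``else $D-rC_\alpha$ is effective for all $r$'') does not iterate as stated --- one pass only gives $D-C_\alpha$ effective --- though this step is in fact dispensable for your argument.

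The one step that genuinely fails as written is the termination of the descent. Your parity claim is false for one-point blowups: on the blowup of $F_0$ at a point of $C_\alpha$, the class $s+f-e_1$ is nef, lies in the fundamental chamber, and has $(s+f-e_1)\cdot C_\alpha=3$; more to the point, classes with $D\cdot C_\alpha=1$ (e.g.\ $s+f-e_1$ itself) do occur at $K_X^2=7$, so the terminal step can land at $m=1$. There the fact you rely on --- every $-1$-curve is $e_m$ plus a nonnegative combination of simple roots --- is only available in the paper for $K_X^2<7$, and it really can fail: if $q$ happens to lie on the $-1$-curve $f-e_1$, then $D'=s+f-e_1-e_2$ has $D'\cdot(f-e_1-e_2)=-1$, so $D'$ is neither nef nor in the fundamental chamber of $\Gamma'$ (note also that passing from $m=1$ to $m=2$ introduces \emph{two} new simple roots, not just $e_m-e_{m+1}$). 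So ``$D'$ is again nef'' is not automatic at the bottom of the induction. The fix is easy and does not change your conclusion: either dispose of $K_X^2\ge7$ directly (no exceptional shapes exist there, and the explicit effective/nef cones show $|D|$ has no fixed part), or follow the paper's fuller recipe after blowing up --- first split off fixed $-2$-curves and reflect $D'$ into a fundamental chamber, and only then apply the $D\cdot C_\alpha=0$ theorem. For $m\ge2$ your Hodge-index/adjunction argument for nefness of $D'$ is sound, since $-2$-curves avoid $C_\alpha$, components of $C_\alpha$ other than the degree-one one avoid the smooth point $q$, and $D$-trivial $-1$-curves are excluded by $D\cdot e_m\ge1$.
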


We also mention a necessary condition for a divisor to be integral, related
to the theory of Coxeter groups of Kac-Moody type.  The convention there is
to consider both ``real'' roots (i.e., roots in the usual sense) and
``imaginary'' roots.  The latter are defined as integral vectors whose
orbit intersects the fundamental chamber in a nonnegative linear
combination of simple roots.

\begin{prop}
  Any integral divisor $D$ such that $D\cdot K_X=0$ is a positive root
  (real or imaginary).
\end{prop}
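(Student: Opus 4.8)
The plan is as follows. Since $D$ is an integral divisor class it is effective and nonzero, and the hypothesis $D\cdot K_X=0$ places $D$ in the orthogonal complement $K_X^\perp$, which by the lemma identifying the simple roots with a basis of $K_X^\perp$ is precisely the root lattice $Q$ of $E_{m+1}$ (for any blowdown structure). Adjunction, together with $D\cdot K_X=0$, gives $D^2=2p_a(D)-2$, so $D^2$ is an even integer $\ge -2$. This splits the argument into two cases according to whether $D^2=-2$ or $D^2\ge 0$. (The degenerate cases $K_X^2\ge 8$ are vacuous: there the hypotheses force $D=0$; and $K_X^2=7$ can be checked directly, as the effective cone is then simplicial.)

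If $D^2=-2$, then $p_a(D)=0$, so the generic representative of $D$ is an irreducible curve of arithmetic genus $0$, hence a smooth rational curve; thus $D$ is the class of a $-2$-curve, and by the proposition above there is a blowdown structure in which $D$ is a simple root of $E_{m+1}$, so $D$ is a positive real root. If instead $D^2\ge 0$, then $D$ is nef: two distinct integral curves meet nonnegatively, and $D^2\ge 0$ handles the self-intersection. By the lemma on nef divisors and fundamental chambers we may choose a blowdown structure $\Gamma$ in which $D$ lies in the fundamental chamber; since $D$, $K_X$ and the intersection form are unchanged this keeps $D\cdot K_X=0$. It then suffices to show $D$ is a nonnegative combination of the simple roots of $\Gamma$, for then $D$ is by definition a positive imaginary root. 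For this I would use that an effective class with $K_X^2<8$ is a nonnegative combination of $-d$-curves and $-K_X$ (equivalently Proposition~\ref{prop:decomp_nef} applied to the nef class $D$), write $D=\sum_j a_j C_j+b(-K_X)$ with $a_j,b\ge 0$ and each $C_j$ a $-d_j$-curve, and exploit $D\cdot K_X=0$: intersecting gives $\sum_j a_j(d_j-2)=bK_X^2$, while $D\cdot C_\alpha=0$ forces $D$ to be orthogonal to every component of the anticanonical curve, in particular to every $-d$-curve with $d>2$ (these are exactly the components of $-K_X$) and to the $-1$-curve components of $C_\alpha$. Together with nef-ness this should eliminate every contribution except $-2$-curves, each a positive root, and — when $K_X^2=0$ — a multiple of $-K_X$, which is then the null root $\delta$ of the affine subsystem $\tilde E_8$ and again lies in $Q_+$.

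The main obstacle is exactly this last step: showing that the fundamental-chamber representative of $D$ lies in $Q_+$. The cleanest route when $D^2=0$ is probably to invoke the classification of (quasi-)elliptic pencils: $D$ is a nonnegative multiple of the fibre class $-K_{X_8}=\delta$ of a suitable blowdown to $X_8$, which is the null root, hence in $Q_+$; when $D^2>0$ one must work harder to rule out $-1$-curve and $-K_X$ contributions in the effective decomposition, using $D\cdot C_\alpha=0$ (so $D$ is orthogonal to the whole anticanonical curve) and the Hodge index theorem. Alternatively one can bypass the geometry entirely and quote the standard fact from the theory of Coxeter groups of Kac–Moody type that a nonzero, effective element of the root lattice lying in the closed fundamental chamber must be a nonnegative combination of the simple roots; combined with the two cases above this yields the proposition.
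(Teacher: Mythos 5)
Your reduction to two cases is exactly right and matches the paper: an integral curve with $D\cdot K_X=0$ has $D^2=2p_a(D)-2\ge -2$, the case $D^2=-2$ is a $-2$-curve and hence a simple root for some blowdown structure (positive real root), and the case $D^2\ge 0$ makes $D$ nef, after which one wants to exhibit $D$ as a nonnegative integer combination of simple roots. The gap is entirely in that last step, and none of the three routes you sketch closes it. The decomposition of an effective class into $-d$-curves and $-K_X$ cannot be purged down to $-2$-curves plus (when $K_X^2=0$) a multiple of $-K_X$: for $m>8$ there are nef classes orthogonal to $K_X$ with $D^2>0$, e.g.\ $D=2s+3f-e_1-\cdots-e_{10}$ on a generic $10$-fold blowup, whose only available decomposition is $D=-K_X+(f-e_1)+e_1$ — it genuinely involves $-1$-curves and $-K_X$ with $K_X^2=-2$, and the surface need contain no $-2$-curves at all (the class is nonetheless in the positive root cone, as the proposition asserts). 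Orthogonality to $K_X$ only gives the single relation $\sum_j a_j(2-d_j)+bK_X^2=0$ among the coefficients, not term-by-term vanishing, so ``ruling out'' those contributions is not the right goal. Your Kac--Moody fallback also does not work as a quotation: ``effective'' is not a Coxeter-theoretic notion, and the purely root-theoretic surrogate (antidominant in the root lattice implies all coefficients of one sign) is false for general indecomposable generalized Cartan matrices and is only available via hyperbolicity for $E_{10}$, whereas the statement here is needed for all $E_{m+1}$, including the Lorentzian non-hyperbolic range $m\ge 10$. (Minor slip along the way: the $-d$-curves with $d>2$ are components of $C_\alpha$, but the components of $C_\alpha$ are by no means all of that form.)

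The missing idea is the proposition proved immediately after this one in the paper: for \emph{any} blowdown structure, a nef class $D$ (with $D\cdot K_X\le 0$) is a nonnegative combination of the simple roots and $e_m$, with explicit coefficient inequalities. Its proof is a short pairing argument: write $D=a(s-f)+b(f-e_1-e_2)+\sum_i c_i(e_i-e_{i+1})+c_m e_m$ and pair $D$ against the manifestly effective classes $e_i$, $s$, $f$, $s+f-e_1-e_2$, $s+f-e_1$, $s+f-e_2$, and $C_\alpha$ (these are essentially the dual basis), using nefness to get nonnegativity; this is the genuinely geometric input that your Coxeter-theoretic shortcut tries to avoid. Under the present hypothesis the coefficient of $e_m$ is $c_m=D\cdot C_\alpha=-D\cdot K_X=0$, so $D$ is a nonnegative combination of simple roots, and since (by the lemma you do cite) $D$ can be placed in the fundamental chamber, it is a positive imaginary root in the sense defined here. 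Your $D^2=0$ sub-argument via the pencil classification is fine but unnecessary once one has this; the $D^2>0$ case is where your proposal, as written, has no proof.
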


We have already shown this for $-2$-curves (i.e., that $-2$-curves are
positive real roots), while for nef curves it is a consequence of the
following more general fact.  Note that since $e_m\cdot C_\alpha=1$, the fact
that the simple roots are a basis of $C_\alpha^\perp$ implies that together
with $e_m$, they form a basis of $\Pic(X)$.

\begin{prop}
Let $X$ be an anticanonical rational surface with $K_X^2<7$, and
$D$ a nef divisor class on $X$.  Then for any blowdown structure
on $X$, $D$ is a nonnegative linear combination of the simple roots
and $e_m$.  In fact, if we write (for an even blowdown structure)
\[
D = a(s-f)+b(f-e_1-e_2)+\sum_{1\le i<m}c_i(e_i-e_{i+1})+c_m e_m,
\]
then we have the inequalities
\begin{align}
c_2&\ge c_3\ge\cdots\ge c_m\ge 0\notag\\
c_2&\ge b\ge a\ge 0.\notag\\
c_2&\ge c_1\ge 0\notag
\end{align}
\end{prop}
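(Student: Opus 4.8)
The plan is to identify each coefficient $a,b,c_i$ with the intersection of $D$ against an explicit effective divisor class, so that nefness of $D$ yields all the inequalities at once (and, as a byproduct, the assertion that $D$ is a nonnegative combination of the simple roots and $e_m$). First I would fix the even blowdown structure and recall, as noted above, that the simple roots $s-f,\ f-e_1-e_2,\ e_1-e_2,\dots,e_{m-1}-e_m$ together with $e_m$ form a basis of $\Pic(X)$. Expanding $D=ns+df-\sum_i r_i e_i$ in this basis is routine linear algebra with the intersection form $s^2=f^2=0$, $s\cdot f=1$, $s\cdot e_i=f\cdot e_i=0$, $e_i\cdot e_j=-\delta_{ij}$: one gets $a=n$, $b=n+d$, $c_1=n+d-r_1$, and $c_j=2(n+d)-r_1-\cdots-r_j$ for $2\le j\le m$. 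Pairing against the dual basis, this says $a=D\cdot f$, $b=D\cdot(s+f)$, $c_1=D\cdot(s+f-e_1)$, $c_j=D\cdot(2s+2f-e_1-\cdots-e_j)$ for $2\le j\le m-1$, and $c_m=D\cdot(2s+2f-e_1-\cdots-e_m)=D\cdot(-K_X)=D\cdot C_\alpha$; each identity is checked by pairing the asserted dual vector against the basis.

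Given these formulas, every inequality in the statement is of the form $D\cdot(\text{effective class})\ge 0$: namely $a=D\cdot f$; $b-a=D\cdot s$; $c_1=D\cdot\bigl(s+(f-e_1)\bigr)$; $c_2-c_1=D\cdot\bigl(s+(f-e_2)\bigr)$; $c_2-b=D\cdot\bigl((s-e_1)+(f-e_2)\bigr)$; $c_j-c_{j+1}=D\cdot e_{j+1}$ for $2\le j\le m-1$; and $c_m=D\cdot C_\alpha$. So it remains only to exhibit each of these classes as effective, after which nefness of $D$ finishes the proof; the nonnegativity of all coefficients is then visible from the same list.

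The effectivity inputs are: $f$ and $s$ are pullbacks of effective classes on $X_0$ (a fibre, respectively $s_{\min}$ plus $\lfloor -s_{\min}^2/2\rfloor\ge 0$ fibres); each $e_i$ is the total transform of an exceptional curve, hence effective; $C_\alpha=-K_X$ is effective by the standing hypothesis that $X$ carries an anticanonical curve; $f-e_i$ is effective because the fibre of $X_0\to\P^1$ through the image of the centre of the $i$th blowup passes through that centre, so its total transform on $X_i$ contains the exceptional curve, yielding an effective divisor of class $f-e_i$ on $X_i$ which pulls back to $X$; and $s-e_i$ is effective by the identical argument, using $h^0\bigl(X_0,\sO_{X_0}(s)\bigr)\ge 2$ to find a member of $|s|$ through the image of the $i$th centre. (Alternatively $s+f-e_1-e_2$, i.e.\ the class governing $c_2\ge b$, is effective directly by Riemann--Roch on $X_2$: there $\chi=2$ while $h^2=0$ since $K_{X_2}-(s+f-e_1-e_2)$ meets the nef class $f$ negatively.) Finally, for an odd blowdown structure the "nonnegative combination" assertion follows from the even case via an elementary transformation.

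The argument is essentially bookkeeping, and the main obstacle is minor: the one class whose effectivity is not simply a sum of manifestly effective divisors is $s+f-e_1-e_2$ --- equivalently, the inequality $c_2\ge b$. The Riemann--Roch estimate on $X_2$ (or, what amounts to the same thing, the bound $h^0(X_0,\sO_{X_0}(s))\ge 2$, which is exactly the point of taking the basis element $s$ rather than $s_{\min}$) is what handles it.
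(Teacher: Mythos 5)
Your argument is correct and is essentially the paper's own proof: both derive every inequality by pairing the nef class $D$ against effective classes, namely $f$, $s$, the $e_i$, $C_\alpha=-K_X$ (giving $c_m\ge 0$), and $s+f-e_1$, $s+f-e_2$, $s+f-e_1-e_2$, the only difference being that the paper simply asserts these last three are effective on $X_2$ while you exhibit them as sums of the manifestly effective classes $s$, $s-e_1$, $f-e_1$, $f-e_2$ (or via the Riemann--Roch estimate). One small caveat: your aside that $s-e_i$ is effective for \emph{every} $i$ by the same argument is not right for infinitely near centres (a member of $|s|$ through the image point need not pass through the infinitely near point, and $s-e_i$ can indeed fail to be effective for $i\ge 3$), but since only $s-e_1$ enters your list of pairings this does not affect the proof.
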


\begin{proof}
The divisor class $e_i$ is effective for all $i$ (since it is the
total transform of a $-1$-curve on $X_i$), and thus $D\cdot e_i\ge
0$.  Taking $i\ge 3$, we conclude that
\[
c_2\ge c_3\ge \cdots \ge c_m.
\]
To see that $c_m\ge 0$, we note that
\[
c_m = (c_m e_m)\cdot C_\alpha = D\cdot C_\alpha\ge 0.
\]
Similarly, the classes $f+s-e_1-e_2$, $s$, and $f$ are effective on
$X_2$, thus on $X$, so taking inner products with $D$ shows
\[
c_2\ge b\ge a\ge 0.
\]
Finally, the effective
classes $f+s-e_2$, $f+s-e_1$ tell us that
\[
c_2\ge c_1\ge 0,
\]
finishing the proof.
\end{proof}

\begin{rems}
Of course, we can weaken the hypothesis ``$X$ anticanonical'' to ``$D\cdot
K_X\le 0$'', since the latter fact was the only way in which we used the
anticanonical curve.
\end{rems}

\begin{rems}
If $D$ is in the fundamental chamber, the same sequences of coefficients
will be convex.
\end{rems}

\section{Moduli of surfaces}

\subsection{General surfaces}

One benefit of considering blowdown structures is that it makes the moduli
problem of rational surfaces much better behaved.  Of course, the standard
approach of choosing an ample bundle also works, but obscures the symmetry
of the situation; in contrast, as we have seen, working with blowdown
structures gives us a (rational) action of the Coxeter group $W(E_{m+1})$.

To construct the moduli stack (an Artin stack) of rational surfaces with
blowdown structures, we first need to construct the moduli stack of
Hirzebruch surfaces.  This is of course essentially just the moduli stack
of rank 2 vector bundles on $\P^1$, so is a standard construction, but it
will be useful to keep in mind the details.  (In particular, the
construction we use is not the usual construction for the moduli stack of
vector bundles; the extra structure we use to rigidify the moduli problem
has a simpler interpretation as a structure on $\P(V)$.)

For any integer $d\ge 0$, we have
\[
\Ext^1(\sO_{\P^1}(d+2),\sO_{\P^1})\cong 
H^1(\sO_{\P^1}(-d-2))\cong 
k^{d+1},
\]
and thus the non-split extensions of $\sO_{\P^1}(d+2)$ by $\sO_{\P^1}$ are
classified up to automorphisms of the two bundles by points of the
corresponding $\P^d$.  By a standard construction, this gives rise to a
canonical extension
\[
0\to \sO_{\P^1}\boxtimes \sO_{\P^d}(1) \to V \to \sO_{\P^1}(d+2)\boxtimes
  \sO_{\P^d}\to 0
\]
of sheaves on $\P^1\times \P^d$, each fiber of which is the corresponding
non-split extension of $\sO_{\P^1}(d+2)$ by $\sO_{\P^1}$.

Let $S_k$ denote the locally closed subspace of $\P^d$ on which the fiber
is isomorphic to $V_{d,k}:=\sO_{\P^1}(k+1)\oplus \sO_{\P^1}(d+1-k)$; this
gives a stratification of $\P^d$ by $S_k$ for $0\le k\le d/2$.  Each
stratum can itself be identified as a moduli space of global sections of
the $V_{d,k}$, namely the moduli space of {\em saturated} global sections
(i.e., generating a subbundle), modulo the action of $\Aut(V_{d,k})$.
Since the generic global section is saturated, we have
\[
\dim(S_k) = \dim(\Gamma(V_{d,k}))-\dim(\Aut(V_{d,k}))
          = d-\max(d-2k-1,0);
\]
in other words, $\dim(S_k)=2k+1$ except that $\dim(S_{d/2})=d$.

Since $\dim(\Gamma(V_{d,k}))=d+2$ is independent of $k$, this gives a flat
map to the moduli problem of vector bundles of the form $V_{d,k}$ for $0\le
k\le d/2$.  Taking the relative $\P$ of the bundle gives a flat map to the
moduli problem of Hirzebruch surfaces; since every Hirzebruch surface
arises in this way for sufficiently large $d$, we find that the moduli
problem of Hirzebruch surfaces is represented by an algebraic stack.  (Note
that if $V$, $V'$ are nonsplit extensions, then $\Hom(V,V')$ is given by a
locally closed subset of global sections of $V'$, so $\Isom(V,V')$ is
indeed a scheme as required, and admits a quotient by $\G_m$ to give
$\Isom_{\P^1}(\P(V),\P(V'))$.)   Note that the stabilizers have the form
$\P\Aut(V_{d,k})\rtimes \Aut(\P^1)$.

This stack has two components (even and odd Hirzebruch surfaces), with
generic fibers isomorphic to $F_0$ and $F_1$ respectively.  In general,
$F_d$ has codimension $d-1$ in the corresponding stack (simply compare
automorphism group dimensions).  Note that the smooth cover corresponding
to $V_d$ classifies pairs $(X,\sigma)$, where $X$ is a Hirzebruch surface
and $\sigma:\P^1\to X$ is an embedding with $\im(\sigma)\cdot f=1$,
$\im(\sigma)^2=d+2$; the map to the moduli stack simply forgets $\sigma$.

To blow up, we proceed as in \cite{HarbourneB:1988}, based on an idea of
Artin.  (There, Harbourne constructed the moduli stack of blowups of
$\P^2$; the extension to Hirzebruch surfaces is straightforward.)  Now, let
${\cal X}_0$ denote the moduli stack of Hirzebruch surfaces, and let ${\cal
  X}_1$ denote the corresponding universal family of rational surfaces,
with structure maps $\pi_0:{\cal X}_1\to {\cal X}_0$, $\rho:{\cal X}_1\to
{\cal X}_0\times \P^1$.  (This last is something of an abuse of notation;
what we really mean is the $\P^1$-bundle over ${\cal X}_0$ over which the
vector bundles were constructed.  Though this was a product over $\P^d$, we
are quotienting by $\Aut(\P^1)$.) Consider now the problem of classifying
surfaces with $K_X^2=7$.  Such a surface is uniquely determined by a pair
$(X_0,p)$ where $X_0$ is a Hirzebruch surface and $p\in X_0$ is a closed
point.  But points on a surface are classified by the universal surface, so
that the moduli space of rational surfaces with blowdown structure such
that $K_X^2=7$ is precisely ${\cal X}_1$.

Now, extend this to a sequence of stacks ${\cal X}_i$ and morphisms
$\pi_i:{\cal X}_{i+1}\to {\cal X}_i$ for all $i\ge 0$ in the following way.
Using the morphism $\pi_{i-1}$, we may construct the fiber product ${\cal
  X}_i\times_{{\cal X}_{i-1}} {\cal X}_i$ and then blow it up along the
diagonal.  Call the resulting blowup ${\cal X}_{i+1}$, and let $\pi_i$ be
the morphism induced by the first projection from the fiber product.

\begin{prop}
  The stack ${\cal X}_i$ represents the moduli problem of rational surfaces
  with blowdown structure and $K_X^2=8-i$.  The universal surface over this
  stack is $\pi_i:{\cal X}_{i+1}\to {\cal X}_i$, and the blowdown structure
  is induced by the maps
\[
{\cal X}_{i+1}\to
{\cal X}_i\times_{{\cal X}_{i-1}}{\cal X}_i
\to
{\cal X}_i\times_{{\cal X}_{i-2}}{\cal X}_{i-1}
\to
\cdots
\to
{\cal X}_i\times_{{\cal X}_0} {\cal X}_1
\to
{\cal X}_i\times \P^1
\]
In addition, for $m\ge 1$, each divisor class $f$, $e_1$,\dots,$e_m$
is represented by a divisor on the universal surface, and there exists a
line bundle of first Chern class $2s$.
\end{prop}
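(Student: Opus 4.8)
The plan is to argue by induction on $i$. The base case $i=0$ is the construction of the moduli stack ${\cal X}_0$ of Hirzebruch surfaces carried out above: a surface with $K_X^2=8$ carrying a blowdown structure of length $m=0$ is precisely a Hirzebruch surface together with a ruling recorded up to $\PGL_2$, which is exactly what ${\cal X}_0$ parametrizes, and ${\cal X}_1$ was defined as its universal surface. So suppose the statement holds for $i-1$: ${\cal X}_{i-1}$ represents the moduli problem of (surfaces with blowdown structure, $K^2=9-i$) and $\pi_{i-1}\colon{\cal X}_i\to{\cal X}_{i-1}$ is its universal surface.

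First I would record the elementary observation that a blowdown structure of length $i$ on a surface $X$ with $K_X^2=8-i$ is the same data as the surface $X_{i-1}$ obtained by contracting the first exceptional curve $e_i$, with its induced blowdown structure of length $i-1$ (so $K^2=9-i$), together with the point $p\in X_{i-1}$ being blown up; inversely, one reconstructs $(X,\Gamma)$ by blowing up $p$ and prepending the blowdown. In families this identifies the moduli problem of (surfaces with blowdown structure, $K^2=8-i$) with that of (surfaces $Y$ with blowdown structure, $K^2=9-i$, equipped with a section of $Y$). Since a section of such a $Y$ over a base $B$ is, by the inductive hypothesis, the same as a lift of the classifying map $B\to{\cal X}_{i-1}$ to the universal surface ${\cal X}_i$, this last moduli problem is represented by ${\cal X}_i$, giving the first assertion. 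To see that $\pi_i\colon{\cal X}_{i+1}\to{\cal X}_i$ is the universal surface and carries the stated blowdown structure: over the point $(Y,p)$ of ${\cal X}_i$ the surface being parametrized is $\mathrm{Bl}_pY$; the universal ``$Y$'' over ${\cal X}_i$ is the pullback of $\pi_{i-1}$ along itself, namely ${\cal X}_i\times_{{\cal X}_{i-1}}{\cal X}_i$, and the universal ``$p$'' is the tautological section, i.e.\ the relative diagonal $\Delta\colon{\cal X}_i\hookrightarrow{\cal X}_i\times_{{\cal X}_{i-1}}{\cal X}_i$. Because $\pi_{i-1}$ is smooth of relative dimension $2$, $\Delta$ is a regular embedding of codimension $2$ and its blowup commutes with the relevant base change, so ${\cal X}_{i+1}=\mathrm{Bl}_\Delta({\cal X}_i\times_{{\cal X}_{i-1}}{\cal X}_i)$, with first projection to ${\cal X}_i$, has fiber $\mathrm{Bl}_pY$ over $(Y,p)$ and is smooth of relative dimension $2$; it is the universal surface. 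The blowdown morphism ${\cal X}_{i+1}\to{\cal X}_i\times_{{\cal X}_{i-1}}{\cal X}_i$ realizes fiberwise the first step $X\to X_{i-1}$, the next maps of the displayed chain are (on the second fibered factor) the universal-surface projections $\pi_{i-2},\pi_{i-3},\dots$, each realizing the next blowdown step of $X_{i-1}$ by induction, and the last map applies the universal ruling built into ${\cal X}_0$; so the chain realizes the blowdown structure on the universal surface.

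For the divisor classes, assume $m=i\ge1$. The exceptional divisor of the blowup ${\cal X}_{i+1}\to{\cal X}_i\times_{{\cal X}_{i-1}}{\cal X}_i$ restricts on each fiber to the exceptional curve, hence represents $e_i$; pulling back to ${\cal X}_{i+1}$ the exceptional divisors of the blowups $\pi_{i-2},\dots,\pi_0$ that occur as later maps of the chain gives effective divisors representing the total transforms $e_{i-1},\dots,e_1$. For $f$, write $\gamma\colon{\cal X}_{i+1}\to{\cal X}_i\times_{{\cal X}_0}{\cal P}$ for the composite of the whole chain, where ${\cal P}\to{\cal X}_0$ is the universal base $\P^1$ of the ruling; then $\sO(f)=\gamma^*\sO(1)$ provided the $\P^1$-bundle ${\cal X}_i\times_{{\cal X}_0}{\cal P}\to{\cal X}_i$ admits a relative $\sO(1)$, and this is exactly where $m\ge1$ is used: applying the ruling of $X_0$ to the first blown-up point furnishes a section of this $\P^1$-bundle (compose the contraction ${\cal X}_i\to{\cal X}_1$ with the universal ruling ${\cal X}_1\to{\cal P}$), and a $\P^1$-bundle with a section is a projectivized rank-$2$ bundle, so has a relative $\sO(1)$; that bundle is relatively globally generated, so $\sO(f)$ has sections whose zero loci represent $f$. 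Finally $\omega_{{\cal X}_{i+1}/{\cal X}_i}$ is a line bundle restricting to $\sO(K_X)$ on each fiber, so from the expansion of $K_X$ in the blowdown basis recalled above we get, on the even component, $\sO(2s)\cong\omega_{{\cal X}_{i+1}/{\cal X}_i}^{-1}\otimes\sO(f)^{\otimes(-2)}\otimes\bigotimes_{j=1}^{i}\sO(e_j)$, and the same with $\sO(f)^{\otimes(-3)}$ on the odd component, the required line bundle of class $2s$.

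I expect the bulk of the work to be the families-level bookkeeping rather than any single hard idea: making the two equivalences of moduli problems precise as equivalences of fibered categories, and checking that ``blow up a point'' in a family is represented by blowing up the relative diagonal of the fiber product and that this is well-behaved (flatness over the base, compatibility with base change, smoothness of ${\cal X}_{i+1}$ over ${\cal X}_i$). The one genuinely delicate point of principle is the construction of the divisor $f$: one cannot simply pull back $\sO_{\P^1}(1)$ from the universal base $\P^1$, because there is a Brauer-type obstruction living on ${\cal X}_0$ itself, and the hypothesis $m\ge1$ is precisely what kills it, via the section of the relevant $\P^1$-bundle coming from the first blown-up point.
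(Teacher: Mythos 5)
Your argument is correct and is essentially the paper's proof: the same induction identifying ${\cal X}_i$ with pairs $(X_{i-1},p)$ and the universal surface with the blowup of the relative diagonal in ${\cal X}_i\times_{{\cal X}_{i-1}}{\cal X}_i$, exceptional divisors giving the $e_j$, the first blown-up point curing the twisting of the universal $\P^1$ to handle $f$ (exactly the role of $m\ge 1$ noted in the paper's remark), and the relative canonical combined with $f$ and the $e_j$ to produce a bundle of class $2s$. One small tightening: relative global generation of $\sO(f)$ does not by itself yield a global section over the stack, but you do not need it — the preimage under $\gamma$ of your section $\sigma$ is already an effective divisor restricting to class $f$ on every fiber, namely the total transform $(f-e_1)+e_1$ of the fiber through the first blown-up point, which is precisely the divisor the paper uses.
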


\begin{proof}
This is a simple induction: ${\cal X}_i$ is the universal surface over
${\cal X}_{i-1}$, so also the moduli space of pairs $(X_{i-1},p)$.  To
obtain the universal surface over ${\cal X}_i$, we need to blow up $p$ on
the corresponding fiber, and this is precisely what blowing up the diagonal
does for us.

The claim about divisors is clear for $e_1$,\dots,$e_m$, since these are
just the total transforms of the corresponding exceptional curves.
Similarly, $f-e_1$ is always a $-1$-curve on $X_1$, so gives rise to a
divisor on the universal surface.  To obtain a line bundle of class $2s$,
take the bundle $\rho^!\sO_{\P^1}^{-1}$.
\end{proof}

\begin{rem}
  For $m=0$, there is a small difficulty having to do with the fact that
  the $\P^1$ could be twisted; once $m=1$, we have guaranteed that the
  universal $\P^1$ in the construction has a point.  (Of course, the
  anticanonical bundle on the base $\P^1$ is always defined, so lifts to a
  bundle of class $2f$.)  Similarly, although the original construction on
  $\P^d$ comes with a section of the Hirzebruch surface, ${\cal X}_0$
  forgets that section, and the induced automorphisms can act nontrivially
  on the relative $\sO(1)$.
\end{rem}

\begin{cor}
  The moduli stack of rational surfaces with blowdown structure has two
  irreducible components for each value of $K_X^2\le 8$, and each component
  is smooth of dimension $10-2K_X^2$.
\end{cor}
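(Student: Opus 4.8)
The plan is to read off both assertions — number of components and their dimension — from the inductive tower $\mathcal X_0,\mathcal X_1,\dots$ constructed above. First I would recall that the base of the tower, $\mathcal X_0$, the moduli stack of Hirzebruch surfaces, has exactly two irreducible components, one ``even'' (generic point $F_0$) and one ``odd'' (generic point $F_1$); this is already established in the construction of $\mathcal X_0$, and each of these two components is smooth (being, up to the twist/section subtleties noted in the remarks, a quotient of an open subset of affine space by a reductive group). For the dimension at the base, I would observe that $\mathcal X_0$ has dimension equal to the dimension of the stack of rank-$2$ bundles on $\P^1$, i.e. $\dim(S_k)-\dim(\operatorname{Aut}(V_{d,k}))$ does not vary in the sense that the generic stratum (generic point $F_0$ or $F_1$) is a single point stack-theoretically once we divide by automorphisms: concretely, for the even component the generic bundle $\sO_{\P^1}(1)^{\oplus 2}$ (up to twist) has a $4$-dimensional automorphism projectivization, and together with $\operatorname{Aut}(\P^1)$ one computes $\dim \mathcal X_0 = -\dim\operatorname{Aut}_{\P^1}(F_0) = -6$, matching $10-2\cdot 8 = -6$ (recall $K_X^2=8$ at the base). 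The same count for $F_1$ gives $\dim = -6$ as well. So the corollary holds for $i=0$ with the two components in hand.

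Next I would run the induction. By the Proposition, $\mathcal X_{i+1}$ is the universal surface $\pi_i:\mathcal X_{i+1}\to\mathcal X_i$, equivalently the blowup of $\mathcal X_i\times_{\mathcal X_{i-1}}\mathcal X_i$ along the diagonal. Passing from $\mathcal X_i$ to $\mathcal X_{i+1}$ thus has relative dimension $2$ (a family of surfaces), so $\dim\mathcal X_{i+1} = \dim\mathcal X_i + 2$. Since $K_X^2 = 8-i$ on $\mathcal X_i$ and $8-(i+1)$ on $\mathcal X_{i+1}$, the formula $10-2K_X^2$ increases by $2$ at each step, matching; combined with the base case this gives the dimension claim for all $i$. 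For the component count, the key point is that $\mathcal X_{i+1}\to\mathcal X_i$ has irreducible (in fact geometrically connected, two-dimensional) fibers — each fiber is a smooth rational surface — so taking the universal surface does not change the number of irreducible components: $\mathcal X_{i+1}$ has exactly as many components as $\mathcal X_i$, namely two, one over each component of $\mathcal X_i$. Smoothness propagates the same way: $\pi_i$ is a smooth morphism of relative dimension $2$ (it is the structure map of a family of smooth surfaces), so $\mathcal X_{i+1}$ is smooth over the smooth stack $\mathcal X_i$, hence smooth.

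The one genuinely delicate point — and the step I expect to be the main obstacle — is the base of the induction, i.e. verifying that $\mathcal X_0$ really has exactly two components and computing $\dim\mathcal X_0 = -6$ cleanly in the stacky sense, given the twist/section subtleties flagged in the Remark after the Proposition (the $\P^1$ may be twisted when $m=0$, and $\mathcal X_0$ forgets the section used in the $\P^d$-construction). I would handle this by noting that the stratification of $\P^d$ by the $S_k$ exhibits $\mathcal X_0$ (for each fixed parity of $d$) as having a dense open stratum — the generic extension is $F_0$ (resp. $F_1$) — so the even and odd pieces are each irreducible; their union is all of $\mathcal X_0$; and they are distinct since $F_0\not\cong F_1$ and the parity of $s_{\min}^2$ is a discrete invariant. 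The dimension is then most safely computed from any one smooth chart, e.g. the stratum $S_0$ (generic extension) of $\P^d$ for small $d$ of the right parity: $\dim S_0 - \dim\operatorname{Aut}(V_{d,0}) = 1 - 4 = -3$ before quotienting by $\operatorname{Aut}(\P^1)$... actually one must be careful that $\operatorname{Aut}(\P^1)$ of dimension $3$ is already partly absorbed; the honest statement is that the automorphism group of the object $F_0$ (as a surface, no section) is $\P\operatorname{Aut}(\sO^{\oplus 2})\rtimes\operatorname{PGL}_2$ of dimension $3+3=6$, while its versal deformation space is a point, so $\dim\mathcal X_0 = 0 - 6 = -6$; likewise $\operatorname{Aut}(F_1)$ has dimension $6$ (the parabolic $\operatorname{Aut}_{\P^1}(F_1)$). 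Once $\dim\mathcal X_0=-6$ and the two-component, smoothness facts are nailed down at the base, everything else is the mechanical induction above, and the corollary follows.
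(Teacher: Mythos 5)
Your proposal is correct and follows essentially the same route as the paper: the base case is handled by noting that the generic Hirzebruch surface of each parity has a $6$-dimensional automorphism group (so each of the two components of ${\cal X}_0$ is smooth of dimension $-6$, covered by the $\P^d$-charts), and the induction uses that each $\pi_i$ is smooth of relative dimension $2$ with irreducible fibers. Your intermediate chart computation ($1-4=-3$, ``$4$-dimensional projectivization'') is garbled, but you correctly discard it in favor of the honest automorphism-group count, so the argument stands.
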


\begin{proof}
  This is clearly true for ${\cal X}_0$ (since the generic Hirzebruch
  surface has a $6$-dimensional automorphism group and the stack is covered
  by open substacks for which some $\P^k\times \PGL_2$-bundle is isomorphic
  to $\P^d$), and each map $\pi_i$ is smooth of relative dimension 2 (being
  a family of smooth projective surfaces).
\end{proof}

\begin{rem}
  Note that the two components are naturally isomorphic for $K_X^2\le 7$:
  just apply the standard elementary transformation.  Also, the formula for
  the dimension holds for $\P^2$ as well, since
  $\dim\Aut(\P^2)=8=2K_{\P^2}^2-10$.
\end{rem}

The action of simple reflections on blowdown structures clearly extends to
give birational automorphisms of these stacks (since each simple root is
clearly generically ineffective).  The action is undefined when the root is
effective, leading us to wonder what those substacks look like.  It turns
out that any given positive root (simple or not) is effective on a closed
substack of codimension 1.  This is a special case of a more general fact
about flat families of sheaves, which we will have occasion to use again.

\begin{lem}\label{lem:tau_func}
  Let $\pi:X\to S$ be a projective morphism of schemes, and suppose $M$ is a
  coherent sheaf on $X$, flat over $S$.  Suppose moreover that
  $R^p\pi_*M=0$ for $p>1$, and the fibers of $M$ have Euler characteristic
  $0$.  Then the locus $T\subset S$ parametrizing fibers with global
  sections has codimension $\le 1$ everywhere.  Moreover, where
  $T\subsetneq S$, it is a Cartier divisor.
\end{lem}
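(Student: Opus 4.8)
The plan is to reduce to a universal determinant construction. Since $M$ is flat over $S$ with $R^p\pi_*M = 0$ for $p > 1$, the theory of cohomology and base change (in the form of Mumford's description via a bounded complex of vector bundles) tells us that $R\pi_*M$ is locally represented by a two-term complex $[E^0 \xrightarrow{\phi} E^1]$ of vector bundles on $S$, whose formation commutes with arbitrary base change. Concretely, $\ker(\phi_s) = H^0(M_s)$ and $\operatorname{coker}(\phi_s) = H^1(M_s)$ for each $s \in S$. The hypothesis that every fiber has Euler characteristic $0$ forces $\operatorname{rank}(E^0) = \operatorname{rank}(E^1) =: r$, so $\det(\phi)$ is a section of the line bundle $\det(E^1) \otimes \det(E^0)^{-1}$ on $S$.

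First I would observe that the locus $T = \{s : H^0(M_s) \neq 0\}$ is exactly the locus where $\phi_s$ fails to be injective; since $\phi_s$ is a map between vector spaces of the same dimension $r$, this is precisely $\{s : \det(\phi_s) = 0\}$, the zero locus of the section $\det(\phi)$. A zero locus of a section of a line bundle is either all of $S$ (when the section vanishes identically) or a Cartier divisor; either way it has codimension $\le 1$ everywhere, and where $T \subsetneq S$ it is cut out locally by one equation, hence Cartier. This gives both assertions at once. The only subtlety is that the two-term complex $[E^0 \to E^1]$ exists only locally on $S$; but the locus $T$ and its Cartier-divisor structure are manifestly local on $S$, so this causes no difficulty — one checks that on overlaps the transition data multiplies $\det(\phi)$ by a nowhere-vanishing function, so the vanishing locus is well-defined globally (and in fact $\det(E^1)\otimes\det(E^0)^{-1}$ glues to a global line bundle, the determinant of cohomology, whose section $\det R\pi_*M$ cuts out $T$).

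**The main obstacle**, such as it is, is simply citing the right form of the base-change machinery: one needs that $R\pi_*M$ is \emph{universally} computed by a perfect complex concentrated in degrees $0$ and $1$, which requires both $\pi$ projective and $M$ flat over $S$ (both given), together with the vanishing $R^p\pi_*M = 0$ for $p > 1$ (also given, and automatic here in applications since $\pi$ has one-dimensional fibers). Given that, the argument is entirely formal. I would also remark, as the paper does implicitly in naming this "\texttt{lem:tau\_func}", that the section $\det R\pi_*M$ is the natural "$\tau$-function" of the family, so that $T$ is its divisor of zeros; this is the structural content worth recording, beyond the bare codimension statement.
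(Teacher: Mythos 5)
Your argument is correct and is essentially the paper's own proof: both represent $\dR\pi_*M$ by a two-term perfect complex of vector bundles (the paper cites Knudsen--Mumford for this and for the gluing of the determinant of cohomology, which is exactly the globalization point you raise), use $\chi=0$ to equate the ranks, and identify $T$ with the vanishing locus of the canonical determinant section, giving codimension $\le 1$ and the Cartier property at once. No changes needed.
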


\begin{proof}
  By \cite{KnudsenFF/MumfordD:1976}, the derived direct image of $M$ can be
  represented by a perfect complex on $S$ starting in degree 0.  Since the
  higher direct images vanish, we can replace the degree $1$ term by the
  kernel of the map to the degree $2$ term to obtain a two-term perfect
  complex.  The Euler characteristic condition implies that the two terms
  have the same rank everywhere, and $R^1\pi_*M$ is supported on the zero
  locus of the determinant of the appropriate map, so has codimension
  $\le 1$.  Semicontinuity implies that the fibers of $M$ have global
  sections precisely along the support of $R^1\pi_*M$.  That this is a
  Cartier divisor follows from the construction of
  \cite{KnudsenFF/MumfordD:1976}: the determinant is the canonical global
  section of the bundle $\det \dR\Gamma(M)^{-1}$.  (Note that the reference
  only shows that the fibers have global sections on the zero locus of the
  canonical global section; the argument above gives the converse as well.)
\end{proof}

\begin{cor}
  For any positive (real) root of $W(E_{m+1})$, the corresponding divisor
  class is effective on a codimension $1$ substack of ${\cal X}_m$.
\end{cor}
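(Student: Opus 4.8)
The plan is to exhibit the effective locus of the root as the zero scheme of a $\tau$-function and then apply Lemma~\ref{lem:tau_func}. Write $\pi_m\colon{\cal X}_{m+1}\to{\cal X}_m$ for the universal surface and let $D$ denote the given positive real root, so that $D^2=-2$ and $D\cdot K_X=0$ on every fibre $X$. First I would produce a line bundle $\mathcal{L}_D$ on ${\cal X}_{m+1}$ restricting to $\sO_X(D)$ on each fibre. Writing $D$ in the standard basis attached to a blowdown structure, this would follow immediately from the line bundles of classes $f$, $e_1,\dots,e_m$, and $2s$ constructed above — except that the coefficient of $s$ in $D$ may be odd (already for the highest root $D\cdot f=1$), so only a line bundle of class $2s$ is available globally. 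However, $\sO(s)$, and hence $\mathcal{L}_D$, does exist after a faithfully flat base change, for instance after pulling back to the $\P^d$-atlas used to build ${\cal X}_0$; since the conclusion of Lemma~\ref{lem:tau_func} (codimension at most $1$, Cartier where proper) is fppf-local on the base, having $\mathcal{L}_D$ fppf-locally is enough, and passing to a smooth atlas also disposes of the fact that ${\cal X}_m$ is a stack rather than a scheme.

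Next I would verify the hypotheses of Lemma~\ref{lem:tau_func} for the pair $(\pi_m,\mathcal{L}_D)$. Flatness is automatic. Since the fibres are surfaces, $R^p\pi_{m*}\mathcal{L}_D=0$ for $p\ge 3$, and for $p=2$ we have on each fibre $h^2(\sO_X(D))=h^0(\sO_X(K_X-D))$ by Serre duality; this vanishes because $(K_X-D)\cdot f=-2-D\cdot f<0$, using $K_X\cdot f=-2$ together with the fact that a positive root, being a nonnegative combination of simple roots, has nonnegative intersection with the nef class $f$. Finally $\chi(\sO_X(D))=\chi(\sO_X)+\tfrac12 D\cdot(D-K_X)=1+\tfrac12(-2)=0$ on every fibre. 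Hence Lemma~\ref{lem:tau_func} applies: the locus $T\subset{\cal X}_m$ over which the fibres of $\mathcal{L}_D$ acquire a global section has codimension at most $1$ everywhere and is a Cartier divisor wherever it is a proper closed substack, and by semicontinuity $T$ is precisely the locus over which $D$ is effective.

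It then remains to check $T\ne{\cal X}_m$ on each of the two components, so that $T$ is a genuine codimension-$1$ substack. Since $D$ is a real root it lies in the $W(E_{m+1})$-orbit of a simple root, and the birational action of $W(E_{m+1})$ on ${\cal X}_m$ carries the effective locus of that simple root onto $T$ over a dense open set; so it suffices to note that each simple root is ineffective on the generic rational surface with blowdown structure. This is routine: $e_i-e_{i+1}$ is ineffective unless the $(i{+}1)$st point blown up is infinitely near the $i$th, $f-e_1-e_2$ is ineffective unless the first two points lie on a common fibre of the ruling, and $s-f$ (respectively $s-e_1$) is ineffective unless $X_0\not\cong F_0$ — none of which occurs generically. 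The only genuinely delicate point I anticipate is the one in the first paragraph, namely having to work fppf-locally because only $\sO(2s)$ is available on the universal surface; the rest is a direct application of Lemma~\ref{lem:tau_func} together with Riemann--Roch.
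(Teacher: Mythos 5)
Your main line coincides with the paper's: a positive root has $D^2=-2$ and $D\cdot K_X=0$, so every fibre has $\chi({\cal L}(D))=0$, and Lemma~\ref{lem:tau_func} bounds the codimension of the locus where ${\cal L}(D)$ acquires sections by $1$. Your extra verifications (passing to an atlas where a bundle of class $s$ exists, killing $R^2$ via $(K_X-D)\cdot f<0$ and Serre duality) are sound and in fact more careful than the paper's one-line application of the lemma, and your reduction of "the locus is not all of ${\cal X}_m$" to generic ineffectiveness of the simple roots via the birational $W(E_{m+1})$-action is a legitimate, if more roundabout, substitute for the paper's direct appeal to the existence of surfaces on which no positive root is effective.

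The gap is that you never show the effective locus is \emph{nonempty}, and without that "codimension $1$" does not follow: Lemma~\ref{lem:tau_func} only asserts codimension $\le 1$ at points of the locus, which is vacuous if the locus is empty, and the numerology alone cannot exclude this (negative roots also satisfy $\chi({\cal L}(D))=0$ yet are never effective). The paper closes this by observing that there exist surfaces on which \emph{every} positive root is effective; concretely, a positive root is a nonnegative integer combination of the simple roots, and one can choose $(X,\Gamma)$ with all $m+1$ simple roots simultaneously effective (e.g.\ $X_0\cong F_2$, so $s-f=s_{\min}$ is effective, and then a chain of infinitely near points starting on a fibre, making $f-e_1-e_2$ and all $e_i-e_{i+1}$ effective), whence every positive root is effective there and the locus is nonempty. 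Note that extracting nonemptiness from your $W$-action argument instead (pushing forward the simple root's effective divisor) would be delicate, since the birational action is undefined exactly where roots become effective, so one would have to check that this divisor meets the domain of definition of the relevant word; the direct construction above avoids the issue.
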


\begin{proof}
  Indeed, a positive root has $D^2=-2$, $D\cdot C_\alpha=0$, and thus
  $\chi(\sO_X(D))=0$.  Since there exist surfaces for which no positive
  root is effective and surfaces for which every positive root is
  effective, the substack is nonempty, and not all of ${\cal X}_m$, so has
  codimension $1$.
\end{proof}

\begin{rem}
Similarly, that a surface in ${\cal X}_9$ admits {\em some} anticanonical
curve is a codimension $1$ condition.
\end{rem}

\begin{cor}
  The substack of rational surfaces containing a
  $-e$-curve for some $e\ge 3$ is a countable union of closed substacks of
  codimension $\ge 2$.
\end{cor}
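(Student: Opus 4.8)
The plan is to bound, for each fixed $e$ and each fixed divisor class, the substack on which that class is a $-e$-curve, and then to assemble these over the countably many possibilities. So fix $e\ge 3$; by adjunction a smooth rational curve $C$ with $C^2=-e$ has $C\cdot K_X=e-2$. For a class $D$ in the lattice attached to the universal blowdown structure with $D^2=-e$, let $Z_D\subset{\cal X}_m$ be the (locally closed) substack over which $D$ is the class of a $-e$-curve, and $\overline{Z_D}$ its closure. Every point of ${\cal X}_m$ whose surface contains a $-e$-curve with $e\ge3$ lies in some such $Z_D$, and there are only countably many pairs $(e,D)$; so it suffices to prove $\operatorname{codim}_{{\cal X}_m}\overline{Z_D}\ge 2$ whenever $Z_D\ne\emptyset$.

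I would prove this by induction on $m$. For $m=0$ the only irreducible curve of negative self-intersection on a Hirzebruch surface is $s_{\min}$, of square $-n$ on $F_n$, so a class of square $-e\le-3$ is a $-e$-curve only on $F_e$, and the $F_e$-stratum of ${\cal X}_0$ has codimension $e-1\ge2$. For $m\ge1$ use the smooth morphism $\pi=\pi_{m-1}\colon{\cal X}_m\to{\cal X}_{m-1}$ (the universal surface, with $2$-dimensional fibres), which forgets the last blowup $e_m$. Put $k=D\cdot e_m$ and $\delta=\pi_*D\in\Pic(X_{m-1})$, so $D=\pi^*\delta-ke_m$ and $\delta^2=-e+k^2$; if $k<0$ then $Z_D=\emptyset$. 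If $X\in Z_D$ carries a $-e$-curve $C$, then $\pi(C)$ is an irreducible curve of class $\delta$ through the blown-up point $p_m$ with multiplicity $k$, and $C$ is its strict transform; hence $\pi(C)$ is a smooth $-e$-curve when $k=0$, a smooth $-(e-1)$-curve when $k=1$, and a singular rational curve of arithmetic genus $\binom k2$ when $k\ge2$. Thus for $k\le1$ the image of $Z_D$ lands in the analogous substack $Z_\delta\subset{\cal X}_{m-1}$, and the fibre of $\pi|_{Z_D}$ over a surface $X_{m-1}$ has dimension at most $\max(0,2-k)$ (the whole fibre when $k=0$; the curve $\{p_m\in\pi(C)\}$ when $k=1$; finitely many points, the multiple points of a curve of class $\delta$, when $k\ge2$). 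This gives $\operatorname{codim}_{{\cal X}_m}\overline{Z_D}\ge\operatorname{codim}_{{\cal X}_{m-1}}\overline{Z_\delta}+\min(2,k)$ when $k\le1$, and $\ge2$ when $k\ge2$. For $k=0$, $\delta$ is a $-e$-class, so this is $\ge2$ by the inductive hypothesis; for $k=1$, $\delta$ is a $-(e-1)$-class, and if $e-1\ge3$ the inductive hypothesis gives $\ge 3$, while if $e-1=2$ the preceding corollary shows a $-2$-class is effective only on a proper closed substack, so $\operatorname{codim}\overline{Z_\delta}\ge1$ and we again get $\ge2$. Finally $\overline{Z_D}$ is closed, and the substack of the corollary is the union of the $Z_D\subset\overline{Z_D}$.

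The step I expect to require real care is the case $k\ge2$, where I have claimed the admissible centres $p_m$ over a fixed $X_{m-1}$ form a finite set. When $|{\cal L}(\delta)|$ is a pencil — the generic behaviour, since $\chi({\cal L}(\delta))=2-e+\binom{k+1}2$ is small — its singular members are finite in number and this is immediate; in general one must argue that wherever this fibre acquires positive dimension $j$, the surface $X_{m-1}$ already carries a $j$-dimensional family of singular rational curves of class $\delta$ and so lies in a sublocus of ${\cal X}_{m-1}$ of codimension $\ge j$, keeping the contribution to $\dim Z_D$ below $\dim{\cal X}_m-2$. An alternative that sidesteps the $k\ge2$ bookkeeping is to show first that every $-e$-curve with $e\ge3$ is disjoint from some $-1$-curve on $X$; using the groupoid of blowdown structures one could then always reduce to $k\in\{0,1\}$, but proving that disjointness cleanly is itself the crux.
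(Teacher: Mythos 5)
Your reduction to classes $D=\pi^*\delta-ke_m$ with $k=D\cdot e_m$, and your treatment of the cases $k\le 1$, are correct, and there they reproduce the paper's mechanism: the base case is the codimension $e-1\ge 2$ of the $F_e$-stratum of ${\cal X}_0$, and the main inductive step is that a $-3$-curve obtained with $k=1$ costs one codimension for the $-2$-class $\delta$ to be effective (the preceding corollary) plus one for the centre to lie on that curve. Where you differ from the paper is instructive: the paper dismisses everything else with the assertion that a $-d$-curve, $d>2$, is either the strict transform of a $-d$-curve on $X_0$ or arises from a $-(d-1)$-curve on some intermediate $X_k$, while you correctly isolate the remaining possibility, namely centres of multiplicity $k\ge 2$, i.e.\ strict transforms of singular integral rational curves. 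That case genuinely occurs and is not covered by the paper's dichotomy: take $\delta=-K_{X_8}$ on an $8$-point blowup and $k=2$; every such surface carries nodal rational anticanonical members, and blowing up a node yields a $-3$-curve of class $-K_{X_8}-2e_9$ whose image was never a $-2$-curve at any stage. Note that in this example the image of $Z_D$ is \emph{all} of ${\cal X}_8$, so the required codimension $2$ comes entirely from finiteness of the fibre (the finitely many nodes of members of the anticanonical pencil); any correct argument must therefore combine fibre dimension and image codimension, exactly as you indicate.

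The gap is that you do not actually close that case. Your pencil remark handles the generic situation (and the example above), but when $h^0({\cal L}(\delta))$ jumps the fibre can a priori be positive-dimensional, and the statement you propose to fall back on --- that a surface carrying a $j$-dimensional family of $k$-fold points of integral rational curves of class $\delta$ lies in a sublocus of ${\cal X}_{m-1}$ of codimension $\ge j$ --- is not self-evident and is precisely the content that needs proving; nothing established so far supplies it (Lemma \ref{lem:tau_func} only gives codimension-$1$ control of cohomology jumping, not of loci of points of multiplicity $k$). What is needed is a uniform bound $\dim(\text{fibre})+\operatorname{codim}(\text{image})\ge 2$, e.g.\ via a dimension count on the incidence stack of triples (surface, integral rational curve of class $\delta$, point of multiplicity $\ge k$ on it): since $\delta\cdot K_{X_{m-1}}=e-2-k$ and $p_a(\delta)=\binom{k}{2}$, rational curves of class $\delta$ move in expected dimension $k+1-e\le k-2$, and forcing the entire $\delta$-invariant to concentrate at one point of multiplicity $k$ cuts this down further; making such an estimate rigorous over the moduli stack (or proving your alternative claim that every $-e$-curve with $e\ge 3$ is disjoint from some $-1$-curve, together with control of codimension under the only partially defined change of blowdown structure) is the missing step. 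As written, your proposal proves the corollary only for the loci with $k\le 1$; the multiple-point case is correctly identified --- indeed more honestly than in the paper's own proof --- but left open.
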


\begin{proof}
  A $-d$-curve on $X_m$ for $d>2$ is either the strict transform of a
  $-d$-curve on $X_0$ or arises from a $-(d-1)$ curve on some $X_k$ for
  $k<m$.  We have already noted that $X_0$ contains $-3$-curves (or worse)
  in codimension $\ge 2$.  For the other case, we observe by induction
  (using the $d=2$ case above) that $X_k$ containing a $-2$-curve is a
  countable union of codimension $1$ conditions (since any effective root
  is a sum of $-2$-curves, and in the absence of $-3$-curves or worse, is
  uniquely effective).  If we blow up a point of an effective root, the
  result will necessarily contain a $-3$-curve, andy any $-3$-curve not
  already present must arise in this way.
\end{proof}

\begin{rem}
  Similarly, for $d>3$, we have a $-d$-curve or worse on a countable union
  of locally closed substacks of codimension $\ge d-1$.  This fails to be
  closed since the $-2$-curve we are turning into a $-d$-curve could
  degenerate into a reducible effective root.  So, for instance, the closed
  substacks corresponding to $-4$-curves also contain configurations with
  two $-3$-curves connected by a chain of $-2$-curves.
\end{rem}

\medskip

If we try to extend the action of simple roots to the entire moduli space,
we encounter a problem along these codimension $\ge 2$ substacks.  For
simplicity in exhibiting the problem, we consider blowups of $\P^2$.
Define maps $p_1$, $p_2$, $p_3:\A^1\to \P^2$ by
\[
p_1(u) = (0,0),\qquad
p_2(u) = (u,0),\qquad
p_3(u) = (0,u^2),
\]
and define two family of surfaces parametrized by $\A^1$: $Y_u$ is the blowup
of $\P^2$ in $p_1$, $p_2$, then $p_3$, while $Z_u$ is the blowup of $\P^2$
in $p_3$, $p_2$, then $p_1$.  (At each step, the maps not already used
extend to the blowup, so give a well-defined point at which to blow up.)
For $u\ne 0$, we have $Y_u\cong Z_u$, since the points are distinct, so the
different blowups commute.  On the other hand $Y_0$ is the blowup of $F_1$
in two {\em distinct} points of the $-1$-curve, while $Z_0$ blows up the
same point of the $-1$-curve twice.  But then $Y_0\not\cong Z_0$, since
$Z_0$ contains a $-2$-curve, and $Y_0$ does not.

Since every rational surface locally looks like $\P^2$, we see that there
is no way to extend the action of $S_3$ on a three-fold blowup to include
the surfaces where the three-fold blowup introduces a $-3$-curve.
This is essentially the only difficulty, however.

\begin{thm}
  Let ${\cal X}^{{\ge}{-}2}_m$, $m\ge 2$, denote the stack parametrizing
  pairs $(X,\Gamma)$ where $X$ is a rational surface with $K_X^2=8-m$ not
  containing any $-d$-curves for $d>2$ and $\Gamma$ is a blowdown structure
  on $X$.  There is a natural action of the Coxeter group $W(E_{m+1})$ on
  this stack, which for every simple root is given by the usual action on
  blowdown structures, where the root is ineffective.
\end{thm}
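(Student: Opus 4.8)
The plan is to construct, for each simple reflection $s\in W(E_{m+1})$, an involution $\phi_s$ of ${\cal X}^{{\ge}{-}2}_m$ which on the open substack where the corresponding simple root $\sigma$ is ineffective is the usual map $(X,\Gamma)\mapsto(X,s\Gamma)$, and then to check that the $\phi_s$ satisfy the Coxeter relations of $W(E_{m+1})$. The point to keep in mind throughout is that $\phi_s$ changes only the blowdown structure $\Gamma$, never the surface $X$, so it automatically preserves the condition ``no $-d$-curve with $d>2$'': the real content is extending $\phi_s$ across the walls where $\sigma$ becomes effective.

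First I would realize $\phi_s$ as a birational self-map of ${\cal X}_m$: for a reordering root ($e_i-e_{i+1}$ or $f-e_1-e_2$) it transposes two of the monoidal transformations appearing in the blowdown structure, and for $\sigma=s-f$ (resp.\ $s-e_1$) it interchanges the two rulings on the $\P^1\times\P^1$ base; each is manifestly a morphism over the open locus where the two centres in question are independent (resp.\ the base Hirzebruch surface is $\P^1\times\P^1$), and a short computation shows $\phi_s\phi_s=\mathrm{id}$ there. To see that $\phi_s$ extends to a morphism ${\cal X}^{{\ge}{-}2}_m\to{\cal X}_m$ I would use the valuative criterion: given a trait $\operatorname{Spec}R\to{\cal X}^{{\ge}{-}2}_m$ whose generic point lies in the domain, the image of the generic fibre is $(X_\eta,s\Gamma_\eta)$, and since reordering independent blowups (or swapping rulings) leaves the surface unchanged, the underlying family of surfaces is just $X_R$, whose special fibre $X_0$ lies in ${\cal X}^{{\ge}{-}2}_m$ by hypothesis. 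So it remains only to see that the family of blowdown structures $s\Gamma_\eta$ on $X_R$ extends over the closed point. This is where the exclusion of $-3$-curves enters.

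The key observations are: (a) the base Hirzebruch surface of any object of ${\cal X}^{{\ge}{-}2}_m$ is forced to be $F_0$, $F_1$, or $F_2$, since the strict transform of the negative section of $F_d$ with $d\ge3$ would be a $-d$-curve with $d\ge3$ on $X$; and (b) if $\sigma$ is effective on $X_0$ then, by Lemma~\ref{lem:fixed_of_simple} together with the absence of $-3$-curves, $\sigma$ is represented by an irreducible $-2$-curve $C$, and since $\sigma^2=-2$, $C$ is its unique effective representative. Granting this, the only way the family $s\Gamma_\eta$ could fail to extend is at the wall $\{\sigma\text{ effective}\}$, where the two blowups being transposed collide in the limit; but the centre of that colliding pair is a length-$2$ subscheme, i.e.\ a point of $\mathrm{Hilb}^2$, which carries no ordering, and the tangent direction ``from $P$ to $Q$'' equals the one ``from $Q$ to $P$'', so the two factorizations $\Gamma_\eta$ and $s\Gamma_\eta$ of the relevant partial blowup have the \emph{same} flat limit, namely $\Gamma_0$. (The analogous statement for $s-f$ is that a family of $\P^1\times\P^1$'s degenerating, within ${\cal X}^{{\ge}{-}2}_m$, to $F_2$ forces the two rulings to merge into the unique ruling of $F_2$.) Hence $\phi_s$ extends, acting as the identity on $\{\sigma\text{ effective}\}$, and lands back in ${\cal X}^{{\ge}{-}2}_m$.

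It then remains to check the relations $\phi_s^2=\mathrm{id}$ and the braid relations; these hold on the dense open substack where every root involved is ineffective, since there the $\phi_s$ are given by the honest $W(E_{m+1})$-action on blowdown structures, which is $\Z$-linear on $\Pic(X)$. Because ${\cal X}^{{\ge}{-}2}_m$ is smooth, hence reduced, and because each map in sight fixes the underlying surface — so that the two sides of a relation are compared fibrewise over the moduli of surfaces, where the blowdown structures form a discrete set — these identities propagate to the whole stack by specialization, giving the desired action. I expect the extension step to be the main obstacle: one must do a genuine local analysis at the collision walls, and be careful that, although ${\cal X}_m$ itself is non-separated (cf.\ the example of $Y_u$ versus $Z_u$), the flat limit one extracts inside ${\cal X}^{{\ge}{-}2}_m$ is the ``good'' one and not a surface acquiring a $-3$-curve.
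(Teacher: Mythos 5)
Your overall strategy is the same as the paper's (extend each simple reflection across the wall where its root becomes effective, letting it act as the identity there, and deduce the Coxeter relations from the dense open locus where everything is ineffective), but the step you yourself flag as the main obstacle is precisely the step that carries the content of the theorem, and your sketch of it has a genuine gap. First, your reduction is off for one of the roots: the reflection in $f-e_1-e_2$ does \emph{not} transpose two monoidal transformations of the given chain $X_m\to\cdots\to X_0\to\P^1$; it only becomes a transposition after an elementary transformation to the other parity. The paper exploits exactly this: using both the odd presentation (blowdown to $\P^2$, where the roots $s-e_1,e_1-e_2,\dots,e_{m-1}-e_m$ generate an $S_{m+1}$ permuting $m+1$ centres) and the even one (where $e_1-e_2,\dots,e_{m-1}-e_m$ generate an $S_m$), every simple root — including $s-f$ and $f-e_1-e_2$ — is realized as a transposition of two blown-up points, so a single mechanism handles all of them; your separate ``two rulings merge on $F_2$'' treatment of $s-f$ is thereby avoided. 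Second, and more seriously, your justification for the extension (``the centre of the colliding pair is a point of $\mathrm{Hilb}^2$, which carries no ordering, so the two factorizations have the same flat limit'') only addresses the simplest wall-crossing, where two consecutive centres collide transversally along an arc; a limit/valuative argument into a non-separated Artin stack must handle arbitrary traits hitting arbitrarily deep strata (several simultaneous or higher-order collisions, centres already infinitely near earlier ones, and inseparability issues in characteristic $p$), and it must also produce an actual morphism of stacks, not merely a pointwise limit. Incidentally, your claim that an effective simple root is an \emph{irreducible} $-2$-curve is also not quite right even without $-3$-curves (it can be a chain of $-2$-curves, e.g.\ $e_i-e_{i+1}=(e_i-e_j)+(e_j-e_{i+1})$), so the local structure of the wall is richer than your sketch assumes.

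What the paper does to close this gap, and what is missing from your proposal, is a modular reformulation valid on all of ${\cal X}^{{\ge}{-}2}_m$: the absence of $-d$-curves with $d>2$ forces every cluster of infinitely near points to be free of satellite points, so a blowdown structure is equivalent to a union of \emph{jets} on $\P^2$ (resp.\ on $F_0$ or $F_2$, away from the $-2$-section) together with a complete filtration of its structure sheaf by subsheaves with point quotients. In these terms the reflection in a transposition root is the interchange of the two choices of the intermediate filtration step $F_i$ between $F_{i-1}$ and $F_{i+1}$, and the jet condition guarantees that the degree-$2$ scheme parametrizing such choices is separable, so the swap is defined everywhere at once — including over the wall, where it degenerates to the identity — with no case analysis, no valuative criterion, and no appeal to limits in a non-separated stack. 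If you want to salvage your approach, you would need either to supply this kind of uniform local model at all strata of the wall, or to reproduce the paper's jets-and-filtration description; as written, the proposal names the right statement but does not prove it.
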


\begin{proof}
  Since $m\ge 1$, we may use an elementary transformation to identify the
  two components of ${\cal X}^{{\ge}{-}2}_m$, and thus have both an odd and an
  even blowdown structure on $X$.

  Since $X$ has no $-d$-curves for $d>2$, an odd blowdown structure maps it
  to $F_1$, so that we can proceed on to $\P^2$.  Moreover, every
  infinitely near point that gets blown up as we proceed to $X$ is a jet.
  More precisely, the blowdown structure on $X$ is determined by (a) a
  union $F_{m+1}$ of jets on $\P^2$ and (b) a filtration $F_i$ of the
  structure sheaf of this union such that each quotient is the structure
  sheaf of a point.  When a given simple root $e_i-e_{i+1}$ is ineffective,
  the corresponding quotient $F_{i+1}/F_{i-1}$ is supported on two distinct
  points, and the reflection makes the other choice of $F_i$.  This extends
  immediately to the locus where the two points agree; the jet condition
  ensures that the degree 2 scheme parametrizing choices of $F_i$ is
  separable.  In particular, we find that the action of $S_{m+1}$ extends
  to the full stack.

  Similarly, an even blowdown structure corresponds to (a) a choice of
  $F_0$ or $F_2$, (b) a union of jets on this surface (disjoint from the
  $-2$-curve), and (c) a corresponding filtration.  The same argument tells
  us that the corresponding $S_m$ acts.  Since the two subgroups cover the
  full set of simple roots, we obtain
  the desired action of $W(E_{m+1})$.  (The braid relations hold because
  they hold generically.)
\end{proof}

\begin{rems}
  The stack ${\cal X}^{{\ge}{-}2}_m$ is not quite a substack of ${\cal
    X}_m$, in general, as it may be necessary to remove a countable
  infinity of closed substacks.  For instance, we can obtain a $-3$-curve
  in ${\cal X}_9$ by blowing up a point of any $-2$-curve, and thus each of
  the infinitely many positive roots of $E_{8+1}$ produces a different
  component we must remove.
\end{rems}

\begin{rems}
  Note that the simple reflections act trivially on the locus where the
  corresponding simple root is effective (thus a $-2$-curve).  Since that
  locus has codimension $1$, we see that the simple reflections act {\em as
    reflections} on ${\cal X}^{{\ge}{-}2}_m$.
\end{rems}

\begin{rems}
  Note that although the group acts on ${\cal X}^{{\ge}{-}2}_m$, this action
  {\em cannot} extend to the universal surface.  That is, the action
  preserves the isomorphism class of the surface, but the isomorphisms on
  generic fibers degenerate as we approach the bad fibers.  The problem
  here is that the universal surface is itself a moduli space of surfaces,
  but those surfaces could contain $-3$-curves; in other words, the generic
  isomorphisms degenerate precisely on the corresponding $-2$-curves on the
  fiber.
\end{rems}

\begin{rems}
  This also helps quantify the sense in which the moduli stack of surfaces
  is badly behaved when we do not introduce the blowdown structure: even if
  we exclude $-d$-curves for $d>2$, it is the quotient of an Artin stack by
  a discrete group which is infinite when $K_X^2\le 0$.
\end{rems}

\begin{rems}
  Finally, the reader should be cautioned that although we have divisors
  corresponding to the standard bases of $\Pic(X)$ (including $s$, since
  $s$ is represented by a canonical divisor on $X_0$ in the odd case),
  these choices of divisor are not compatible with the action of the
  Coxeter group.  (For instance, the reflection in $e_1-e_2$ changes the
  representation of $f$ from $(f-e_1)+e_1$ to $(f-e_2)+e_2$.)  In
  particular, although the various line bundles are taken to isomorphic
  bundles under the group action, those isomorphisms are not canonical.
\end{rems}

\subsection{Anticanonical surfaces}

When trying to extend the above construction to anticanonical surfaces, we
encounter the difficulty that the dimension of the anticanonical linear
system varies with the rational surface, and this variation depends in
subtle ways on the configuration of $-d$-curves on the surface with $d>2$.
It turns out that this is not too serious an obstruction to constructing
the linear system, however.

\begin{prop}\label{prop:linsys}
  Let ${\cal L}$ be a line bundle on a family $X$ of smooth projective
  varieties over a $d$-dimensional locally Noetherian integral base $S$,
  such that on every fiber, $H^p({\cal L})=0$ for $p\ge 2$.  Then the
  moduli functor $|{\cal L}|$ defined by taking $|{\cal L}|(T)$ to be the
  set of effective divisors on $X|_T$ with $\sO_X(D)\cong {\cal L}$ is
  represented by a locally projective scheme $|{\cal L}|$ over $S$, of
  dimension at least $d+\chi({\cal L})-1$ everywhere locally.  If the base
  is smooth and the dimension is equal to $d+\chi({\cal L})-1$, then
  $|{\cal L}|$ is a local complete intersection.
\end{prop}

\begin{proof}
  This certainly holds (and with equality for the dimension) when ${\cal L}$ is
  acyclic, since then Grauert's theorem tells us that the direct image of
  ${\cal L}$ is a vector bundle, and $|{\cal L}|$ is just the corresponding projective
  bundle.

  More generally, we may as well assume $S$ is Noetherian and affine.  In
  particular, there is an effective divisor $D$ such that ${\cal L}(D)$ is
  acyclic on every fiber, so that we may directly construct $|{\cal
    L}(D)|$.  The long exact sequence of cohomology associated
  to
  \[
  0\to {\cal L}\to {\cal L}(D)\to {\cal L}\otimes \sO_D(D)\to 0
  \]
  then tells us that $H^p({\cal L}\otimes \sO_D(D))\cong H^{p+1}({\cal L})$
  on every fiber, and thus that ${\cal L}\otimes \sO_D(D)$ is acyclic.  The
  dual vector bundle to $\Gamma({\cal L}\otimes \sO_D(D))$ then defines a
  system of equations on $|{\cal L}(D)|$, cutting out those sections
  containing $D$.  Call the resulting closed subscheme $Y$.  Since $Y$ is
  an intersection of $h^0({\cal L}\otimes \sO_D(D))$ hypersurfaces, it has
  dimension at least
  \[
  \dim |{\cal L}(D)| - h^0({\cal L}\otimes \sO_D(D))
  =
  d-1+\chi({\cal L}(D))-\chi({\cal L}\otimes \sO_D(D))
  =
  d-1+\chi({\cal L})
  \]
  everywhere locally.  The restriction to $Y$ of the universal divisor on
  $|{\cal L}(D)|$ has $D$ as a fixed locus, which we may subtract to obtain
  a universal divisor for $|{\cal L}|$ as required.

  The local complete intersection property follows from the fact that $Y$
  is obtained from a smooth scheme by intersecting the same number of
  hypersurfaces as its codimension.
\end{proof}

\begin{rem}
  Since automorphisms of ${\cal L}$ act trivially on $|{\cal L}|$, we can apply this even
  when ${\cal L}$ is merely an isomorphism class of line bundles, with the one
  caveat being that this will make $|{\cal L}|$ a family of Brauer-Severi
  varieties.
\end{rem}

In particular, we may define the stack ${\cal X}^\alpha_m$ of
anticanonical surfaces to be the linear system $|-K|$ on ${\cal
X}_m$.  Of course, we would like to know that this is irreducible,
which will require some more work, as the Proposition only gives
lower bounds on the dimension.

For Hirzebruch surfaces, things are not too difficult to control, as in
each case we can write $-K=D_0+D_1$ where $D_0$ is the divisor of fixed
components and $D_1$ is acyclic:
\begin{align}
F_0&: -K = 0+(2s+2f)\notag\\
F_1&: -K = 0+(2s+3f)\notag\\
F_2&: -K = 0+(2s+2f)\notag\\
F_{2d+1}&: -K = (s-df)+(s+(d+3)f),\quad d\ge 1\notag\\
F_{2d} &: -K = (s-df)+(s+(d+2)f),\quad d\ge 2\notag
\end{align}
In particular, we find in each case that $|-K|$ is flat over the given
locally closed substack, of relative dimension $\chi(D_1)$.  This has the
following curious effect: for all $k\ge 3$, the dimension of the stratum of
${\cal X}^\alpha_0$ corresponding to $F_k$ is $0$, while for $F_0$, $F_1$,
$F_2$ the dimension is $2$, $2$, $1$ respectively.  (Recall that ${\cal
  X}_0$ itself has dimension $-6$, since $F_0$ and $F_1$ have 6-dimensional
automorphism groups.)  Since ${\cal X}^\alpha_0$ has dimension at least $2$
everywhere, we find that it has precisely two irreducible components, as we
would have expected.

\begin{rem}
  This is already a distinct departure from the general case, since now all
  $-d$-curves for $d>2$ are a codimension $2$ phenomenon, not just the
  $-3$-curves.  It is clearer why this should be so for blowups: to obtain
  a $-d$ curve for $d>2$, we need simply blow up a point of a $-(d-1)$
  curve.  Since we already need to blow up a point of the anticanonical
  curve, this is a codimension 0 condition on the blowup!  Thus really the
  question is when the anticanonical curve is reducible, and this is
  codimension 2 (either the curve is reducible on $X_0$, or we must blow up
  a singular point).
\end{rem}

\begin{rem}
  Note that this moduli problem is not formally smooth.  For instance, if
  $X=F_4$ and $C_\alpha$ contains $s_{\min}$ with multiplicity $2$, then the
  anticanonical section $\alpha$ extends to an anticanonical section on an
  open subset of ${\cal X}_0$.  This exhibits a subspace of the tangent
  space to $(X,C_\alpha)$ as a direct sum of the tangent space to $X$ in
  ${\cal X}_0$ and the tangent space to $\alpha$ in
  $\P(H^0(\omega_X^{-1}))$.  But this subspace is larger than the generic
  tangent space!
\end{rem}

To understand ${\cal X}^\alpha_m$ in general, we may proceed by induction in $m$.

\begin{thm}
  The moduli problem of classifying triples $(X,C_\alpha,\Gamma$), where
  $X$ is a rational surface with $K_X^2=8-m$, $C_\alpha\subset X$ is an
  anticanonical curve, and $\Gamma$ is a blowdown structure, is represented
  by an Artin stack ${\cal X}^\alpha_m$.  This stack is a local complete
  intersection of dimension $m+2$, with one irreducible component for each
  parity of blowdown structure, with both components integral.  If $m\ge
  1$, the two components are canonically isomorphic.
\end{thm}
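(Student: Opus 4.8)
The strategy is an induction on $m$, resting on the observation that for $m\ge 1$ the stack ${\cal X}^\alpha_m$ is canonically the universal anticanonical curve over ${\cal X}^\alpha_{m-1}$. The base case $m=0$ is the preceding proposition; the only discrepancy is that a blowdown structure on an $F_0$ records a choice of ruling, which replaces the $F_0$-locus by an \'etale double cover, still connected because the factor-swap of $\P^1\times\P^1$ supplies the monodromy, so neither the dimension nor the number of components is affected. Granting the inductive identification, representability is immediate and non-circular: the universal anticanonical curve over ${\cal X}^\alpha_{m-1}$ is a closed substack of the universal surface ${\cal X}^\alpha_{m-1}\times_{{\cal X}_{m-1}}{\cal X}_m$, hence an Artin stack, and ${\cal X}^\alpha_0$ is one by the prior proposition. (Representability can also be seen directly, as sketched before the theorem: the relative $\sO(2)$ of the ruling pulls back to a relatively acyclic bundle of constant rank on the universal surface over ${\cal X}_m$, so after passing to a smooth cover of ${\cal X}_m$ on which the requisite fibres of the ruling have smooth total transforms, the anticanonical system embeds as a linear subspace of a constant-rank space of sections, cut out by the incidence conditions of containing those fibres and $\sum_i e_i$; projectivising and gluing exhibits ${\cal X}^\alpha_m$ as a closed substack of a projective bundle. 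What this second route does not readily yield is the dimension, since the codimension of the "$\sum_i e_i$" conditions is not uniform over ${\cal X}_m$.)

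For the inductive step, consider the morphism $\phi\colon {\cal X}^\alpha_m\to {\cal X}^\alpha_{m-1}$ that forgets the last blowup $X_m\to X_{m-1}$ (of centre $p$) and replaces $C_\alpha$ by $\pi_*C_\alpha$; since $\pi_*C_\alpha\equiv -K_{X_{m-1}}$ this is well defined, and writing $C_\alpha=a e_m+D$ with $D$ having no $e_m$-component one computes $\mathrm{mult}_p \pi_*C_\alpha=a+1\ge 1$, so $\pi_*C_\alpha$ passes through $p$. Conversely, for $(X_{m-1},C_\alpha,\Gamma,p)$ with $p\in C_\alpha$, the divisor $\pi^*C_\alpha-e_m$ is effective and is the unique anticanonical divisor on $\mathrm{Bl}_p X_{m-1}$ with pushforward $C_\alpha$. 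Thus $\phi$ identifies ${\cal X}^\alpha_m$ with the universal anticanonical curve ${\cal C}^\alpha_{m-1}\to {\cal X}^\alpha_{m-1}$, a flat proper family whose geometric fibres are connected (because $h^0(\sO_{C_\alpha})=1$) of arithmetic genus $1$, hence one-dimensional. Since $\phi$ is flat it is open, so every irreducible component of ${\cal X}^\alpha_m\cong {\cal C}^\alpha_{m-1}$ dominates a component of ${\cal X}^\alpha_{m-1}$; a generic anticanonical curve on a generic even (respectively odd) surface is integral --- indeed smooth of genus $1$ in characteristic $0$ --- so there is exactly one component over each of the two components of ${\cal X}^\alpha_{m-1}$, of dimension one larger. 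Inductively this produces two components of dimension $(m-1+2)+1=m+2$. For reducedness, ${\cal X}^\alpha_0$ is a local complete intersection, hence Cohen--Macaulay, and then ${\cal X}^\alpha_m$ is inductively Cohen--Macaulay (flat over a Cohen--Macaulay base with Cohen--Macaulay fibres, the fibres being Cartier divisors in smooth surfaces), hence has no embedded components; generic reducedness is clear from the dense smooth substack parametrising $m$ distinct points on a smooth anticanonical curve of $F_0$ or $F_1$. So both components are integral.

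Finally, for $m\ge 1$ the standard elementary transformation --- performed at the fibre through the first blown-up point of $X_1$, exchanging $e_1$ with $f-e_1$ --- changes the blowdown structure but leaves the underlying pair $(X,C_\alpha)$ untouched, and swaps even with odd exactly as it does on ${\cal X}_m$; it therefore restricts to a canonical isomorphism between the two components of ${\cal X}^\alpha_m$.

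The step I expect to be the real obstacle is making the identification ${\cal X}^\alpha_m\cong {\cal C}^\alpha_{m-1}$ rigorous at the level of stacks rather than merely on geometric points: strict transforms do not commute with base change in general, so one must check that the inverse assignment $(X_{m-1},C_\alpha,p)\mapsto(\mathrm{Bl}_p X_{m-1},\ \pi^*C_\alpha-e_m)$ is functorial in families, in particular over the loci where $p$, or an infinitely near point, lies at a singular point of the anticanonical curve. It is presumably these same degenerations that obstruct the local complete intersection property left open before the theorem; but since we do not claim that property here, it poses no difficulty for the statement as given.
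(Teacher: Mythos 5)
Your argument is essentially the paper's own: induct on $m$, identify ${\cal X}^\alpha_m$ with the universal anticanonical curve over ${\cal X}^\alpha_{m-1}$ (a point of the fiber being the point blown up), deduce the dimension $m+2$ and the integrality of the two parity components from the generic smoothness/integrality of the anticanonical curve on each component, and use the standard elementary transformation to identify the two components for $m\ge 1$. Your extra care about representability, Cohen--Macaulayness, and functoriality of the blowup correspondence in families only elaborates steps the paper treats tersely, so the proof is correct and follows the same route.
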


\begin{proof}
  We have already shown this for $m=0$, and we also note that the
  anticanonical curve is generically smooth in that case.  Now, the fibers
  of the natural forgetful map ${\cal X}^\alpha_m\to {\cal X}^\alpha_{m-1}$ are
  straightforward to determine: a point in the fiber just indicates which
  point of the anticanonical curve was blown up.  In other words, ${\cal
    X}^\alpha_m$ is the universal anticanonical curve over ${\cal
    X}^{\alpha}_{m-1}$.  By induction, the latter has two irreducible
  components, both integral, and the generic fiber of either component has
  smooth anticanonical curve.  In particular, we find that the fibers of
  ${\cal X}^\alpha_m$ over ${\cal X}^\alpha_{m-1}$ are all $1$-dimensional, and
  generically integral, so that each component of ${\cal X}^\alpha_{m-1}$
  has integral preimage.  Moreover, we immediately find that $\dim({\cal
    X}^\alpha_m)=m+\dim({\cal X}^\alpha_0)=m+2$ as required.

  That the components are isomorphic for $m>0$ follows by elementary
  transformation as before, and the local complete intersection property
  follows from the fact that
  $
  2m-6 + \chi(-K_X)-1 = m+2.
  $
\end{proof}

The above construction showing that $W(E_{m+1})$ cannot in general act in
the presence of $-3$-curves works equally well in the anticanonical case
(as long as the surface we start with has $K_X^2\ge 3$, so that there is an
anticanonical curve containing any three points).  Thus we introduce the
substack ${\cal X}^{\alpha,{\ge}{-}2}_m\subset {\cal X}^{\alpha}_m$ as
before, by excluding all $-d$-curves for $d>2$.  This is actually a
substack in this case, since as we noted above, once we bound the minimal
section of the Hirzebruch surface, there are only finitely many possible
configurations of components of the anticanonical curve.  Of course, not
having any $-d$-curves for $d>2$ is a very strong condition to impose on an
anticanonical surface: in particular, for $K_X^2<0$, it forces $C_\alpha$
to be integral.  (Indeed, otherwise the components of $C_\alpha$ are smooth
rational curves, at least one of which has negative intersection with
$C_\alpha$, so is a $-d$-curve for $d>2$.)

\begin{prop}
  For $0\le m<8$, ${\cal X}^{\alpha,{\ge}{-}2}_m$ is a $\P^{8-m}$-bundle
  over ${\cal X}^{{\ge}{-}2}_m$, and thus has two smooth components,
  isomorphic if $m>0$.  For $m>8$, ${\cal X}^{\alpha,{\ge}{-}2}_m$ can be
  identified with a closed substack of ${\cal X}^{{\ge}{-}2}_m$.
\end{prop}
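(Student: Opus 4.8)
The plan is to treat the two ranges $0\le m<8$ and $m>8$ separately, in both cases realizing ${\cal X}^{\alpha,{\ge}{-}2}_m$ through the relative anticanonical linear system of the universal surface $\pi:{\cal Y}\to{\cal X}^{{\ge}{-}2}_m$ (the restriction of $\pi_m:{\cal X}_{m+1}\to{\cal X}_m$). Since $\pi$ is smooth projective of relative dimension $2$, the relative anticanonical bundle $\omega_\pi^{-1}$ is a genuine line bundle — it has first Chern class $-K_X=2s+2f-\sum_i e_i$, which we already know is represented by an actual bundle on the universal surface — so everything comes down to controlling $\pi_*\omega_\pi^{-1}$ and $R^1\pi_*\omega_\pi^{-1}$, which by cohomology and base change is governed by the fibrewise quantities $h^0(X,\omega_X^{-1})$ and $h^1(X,\omega_X^{-1})$.

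For $0\le m<8$ I would first prove the fibrewise statement: for every $X$ in ${\cal X}^{{\ge}{-}2}_m$ one has $h^2(\omega_X^{-1})=h^1(\omega_X^{-1})=0$ and $h^0(\omega_X^{-1})=9-m$. The vanishing $h^2(\omega_X^{-1})=h^0(2K_X)=0$ is immediate from rationality, and Riemann--Roch then gives $h^0-h^1=\chi(\omega_X^{-1})=K_X^2+1=9-m$, so the content is $h^1(\omega_X^{-1})=0$. Since $\chi(\omega_X^{-1})\ge1$ there is an effective anticanonical curve $C_\alpha$; tensoring $0\to\sO_X(K_X)\to\sO_X\to\sO_{C_\alpha}\to0$ by $\omega_X^{-1}$ and using $h^1(\sO_X)=h^2(\sO_X)=0$ identifies $H^1(\omega_X^{-1})$ with $H^1(\omega_X^{-1}|_{C_\alpha})$, a line bundle of degree $(-K_X)\cdot C_\alpha=K_X^2\ge1$ on the connected Gorenstein genus-$1$ curve $C_\alpha$. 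By Serre duality on $C_\alpha$ this is $H^0$ of the dual bundle, which has negative total degree on $C_\alpha$ and — because ${\cal X}^{{\ge}{-}2}_m$ excludes all $-d$-curves with $d>2$, so that every reduced component of $C_\alpha$ is a smooth rational curve of self-intersection $\ge-2$ — nonpositive degree on each component; connectedness of $C_\alpha$ then forces the vanishing, just as in the analysis of line bundles on anticanonical curves in Section 2 (equivalently, $-K_X$ is nef on such an $X$, so one may instead cite \cite[Thm. III.1(ab)]{HarbourneB:1997}). Granting this, cohomology and base change shows $R^1\pi_*\omega_\pi^{-1}=0$ and $\pi_*\omega_\pi^{-1}$ is locally free of rank $9-m$ with formation commuting with base change; an effective anticanonical divisor on a fibre is precisely a line in the corresponding fibre of $\pi_*\omega_\pi^{-1}$, so ${\cal X}^{\alpha,{\ge}{-}2}_m$ is the $\P^{8-m}$-bundle $\P(\pi_*\omega_\pi^{-1})$ over ${\cal X}^{{\ge}{-}2}_m$. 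As ${\cal X}^{{\ge}{-}2}_m$ has two smooth components, interchanged by the elementary transformation when $m>0$ (which changes neither $X$ nor $\omega_X^{-1}$), the same holds for this bundle.

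For $m>8$ (so $K_X^2<0$) I would instead show that the forgetful morphism $\Phi:{\cal X}^{\alpha,{\ge}{-}2}_m\to{\cal X}^{{\ge}{-}2}_m$ is a closed immersion. The crucial observation is that $h^0(\omega_X^{-1})\le1$ for every $X$ in ${\cal X}^{{\ge}{-}2}_m$: if $h^0\ge2$ then, since $(-K_X)^2=K_X^2<0$, the anticanonical system has a nonzero fixed part and a nonzero moving part, so every anticanonical curve on $X$ is a sum of two nonzero effective divisors, hence reducible — but a reducible anticanonical curve on a surface with no $-d$-curves for $d>2$ cannot occur (some component would be a smooth rational curve of negative intersection with $C_\alpha$, hence a $-d$-curve with $d>2$), as recalled in Section 2. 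Thus the anticanonical curve, when it exists, is canonically determined by $X$, every fibre of $\Phi$ is at most a reduced point, and the diagonal ${\cal X}^{\alpha,{\ge}{-}2}_m\to{\cal X}^{\alpha,{\ge}{-}2}_m\times_{{\cal X}^{{\ge}{-}2}_m}{\cal X}^{\alpha,{\ge}{-}2}_m$ is an isomorphism, i.e. $\Phi$ is a monomorphism. On the other hand $\Phi$ is proper: the relative anticanonical linear system always embeds, after choosing enough fibres with smooth total transforms, into the relative linear system of the locally free acyclic bundle relative $\sO(2)$, a projective bundle over the base, so ${\cal X}^\alpha_m\to{\cal X}_m$ and hence $\Phi$ is representable and proper. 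A proper monomorphism is a closed immersion, which is the assertion.

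The main obstacle is the fibrewise cohomology input behind the first case — equivalently, that $-K_X$ is nef with $h^1(\omega_X^{-1})=0$ on every surface in ${\cal X}^{{\ge}{-}2}_m$. One has to check this uniformly across all degenerations of $C_\alpha$ (smooth, irreducible nodal or cuspidal, cycles of lines, and the non-reduced configurations), since it is exactly this uniformity — together with the constancy of $h^0$ — that feeds cohomology and base change and lets the $\P^{8-m}$-bundle be assembled over the whole stack rather than just its generic point; the $m>8$ case needs only the far cheaper fact that $(-K_X)^2<0$ there, which is what makes the anticanonical curve unique.
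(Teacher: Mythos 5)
Your proposal is correct, and for $0\le m<8$ it is essentially the paper's argument: the paper simply notes that on a surface with no $-d$-curves for $d>2$ the anticanonical class is nef, and that $C_\alpha^2=8-m>0$ then forces $\omega_X^{-1}$ to be acyclic (via the same \cite[Thm. III.1]{HarbourneB:1997} statement you offer as a fallback), after which the relative linear system is the asserted $\P^{8-m}$-bundle; your hand-made proof of $h^1(\omega_X^{-1})=0$ by restricting to $C_\alpha$ and using Serre duality is a fine substitute, though as written the componentwise degree argument is only airtight for reduced $C_\alpha$ (for the non-reduced configurations one needs the $1$-connectedness of $C_\alpha$ from Lemma \ref{lem:Ca_is_num_conn}, or just the citation you give). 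Where you genuinely diverge is the case $m>8$: the paper proves uniqueness of the anticanonical curve by noting it must be integral and of negative self-intersection, hence rigid, and then identifies the image directly and inductively as a closed substack (for $m=9$ the Cartier-divisor condition of Lemma \ref{lem:tau_func}, and for $m>9$ the closed conditions that the image in ${\cal X}^{\alpha,{\ge}{-}2}_{m-1}$ is anticanonical and that the blown-up point lies on the unique anticanonical curve), which has the advantage of describing the closed substack explicitly; you instead deduce $h^0(\omega_X^{-1})\le 1$ from the fixed/moving-part dichotomy plus integrality, and then argue abstractly that the forgetful map is a representable proper monomorphism, hence a closed immersion. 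Your route is shorter and avoids the induction, but it leans on properness of the relative anticanonical system over ${\cal X}_m$, which requires the local embedding into the projective bundle of the relative $\sO(2)$ that the paper only sketched (this is harmless, since properness is local on the target, but it is an ingredient the paper's proof of this proposition does not need).
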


\begin{proof}
  For $m>8$, the anticanonical curve is integral if it exists, and since it
  has negative self-intersection is rigid.  Thus there is at most one
  anticanonical curve on a rational surface without $-d$-curves for $d>2$.
  We have seen that this is a closed codimension 1 condition for $m=9$,
  while for $m>9$, it combines the closed conditions that the image in
  ${\cal X}^{{\ge}{-}2}_{m-1}$ is in ${\cal X}^{\alpha,{\ge}{-}2}_{m-1}$ and
  that the point being blown up lies on the anticanonical curve.

For $m<8$, the anticanonical divisor is nef on a surface without
$-d$-curves for $d>2$, and since $C_\alpha^2=8-m>0$, the corresponding line
bundle is acyclic.  But then the linear system is a $\P^{8-m}$-bundle as
required.
\end{proof}

\begin{rem}
The case $m=8$ is more subtle, as the surface could have a unique
anticanonical curve, or could have a $1$-parameter family of anticanonical
curves (making it an elliptic surface with no multiple fibers).
\end{rem}

In any case, since the choice of $C_\alpha$ is independent of the blowdown
structure, the action of $W(E_{m+1})$ extends immediately to ${\cal
  X}^{\alpha,{\ge}{-}2}_m$.  As before, the action does not extend to the
universal surface (it must act linearly on $\Pic(X)$, so does not respect
the effective cone), but it turns out that there is a strong sense in which
it {\em does} act on the {\em line bundles} on the universal surface.

Given any vector $v\in \Z s+\Z f+\sum_i \Z e_i$, we have a corresponding
line bundle ${\cal L}_v$ on the universal surface over ${\cal
  X}^{\alpha,{\ge}{-}2}_m$.  (In general, we only knew this when the
coefficient of $s$ was even, but the assumptions imply that an odd blowdown
structure reaches $F_1$, where $s$ is canonically a divisor, and the claim
for even blowdown structures follows by elementary transformation.)  Of
course, the space of global sections of this bundle can vary wildly with
the surface, and can similarly vary if we replace $v$ by $wv$ for any
element $w\in W(E_{m+1})$.  These are essentially the same phenomenon,
however.  The main problem with the global sections of ${\cal L}_v$ is that
we can have sections of ${\cal L}_v$ on a given surface that do not extend
to neighboring surfaces in the moduli space.  This can be fixed by taking
the direct image sheaf rather than the fiberwise global sections.  This can
cause problems in general, however, which are characterized by the
following result (a strong (albeit specialized) form of semicontinuity).

\begin{lem}
Let $\pi:X\to S$ be a projective morphism, and suppose that $M$ is a
sheaf on $X$, flat over $S$, such that every fiber of $M$ has $H^p=0$ for
$p\ge 2$.  Then for any sheaf $N$ on $S$, we have isomorphisms
\[
\Tor_{p+2}(R^1\pi_*M,N)\cong \Tor_p(\pi_*M,N)
\]
for $p>0$, along with a short exact sequence
\[
0\to \Tor_2(R^1\pi_*M,N)\to \pi_*M\otimes N\to \pi_*(M\otimes \pi^*N)\to
\Tor_1(R^1\pi_*M,N)\to 0
\]
and an isomorphism
\[
R^1\pi_*M\otimes N\cong R^1\pi_*(M\otimes \pi^*N).
\]
In particular, $\pi_*M$ is flat iff $R^1\pi_*M$ has homological dimension
$\le 2$, the fibers of $\pi_*M$ inject in the corresponding spaces of
global sections of $M$ iff $R^1\pi_*M$ has homological dimension $\le 1$,
and the injection is an isomorphism iff $R^1\pi_*M$ is flat.
\end{lem}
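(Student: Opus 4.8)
The plan is to reduce everything to bookkeeping with a two-term complex of locally free sheaves computing $R\pi_*M$, exactly as in the proof of Lemma~\ref{lem:tau_func}. First I would invoke \cite{KnudsenFF/MumfordD:1976} to represent $R\pi_*M$ by a bounded complex $P^\bullet$ of finite locally free sheaves on $S$ sitting in nonnegative degrees, with the additional feature that $R\pi_*(M\otimes\pi^*N)\simeq P^\bullet\otimes_{\sO_S}N$ for every quasi-coherent sheaf $N$ on $S$; this last point uses flatness of $M$ over $S$ together with the projection formula, so that all the derived tensor products in sight are computed naively. The hypothesis $H^p(X_s,M_s)=0$ for $p\ge2$ then lets me shorten $P^\bullet$: if its top term is $P^n$ with $n\ge2$, the top cohomology sheaf of $P^\bullet\otimes k(s)$, which is $\coker(P^{n-1}\otimes k(s)\to P^n\otimes k(s))$, computes $H^n(X_s,M_s)=0$, so $P^{n-1}\to P^n$ is surjective, hence splits locally with locally free kernel; replacing $P^{n-1}$ by that kernel yields a shorter complex of locally free sheaves, still quasi-isomorphic to $R\pi_*M$ and still commuting with $\otimes N$ since its terms are flat. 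Iterating, I may assume $P^\bullet=[E^0\xrightarrow{d}E^1]$, whence $\pi_*M=\ker d$ and $R^1\pi_*M=\coker d$ on taking $N=\sO_S$.

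With this complex in hand, factor $d$ through $I:=\im d$ as $E^0\xrightarrow{\bar d}I\xrightarrow{\iota}E^1$ with $\bar d$ surjective and $\iota$ injective, and tensor the two short exact sequences $0\to\pi_*M\to E^0\to I\to0$ and $0\to I\to E^1\to R^1\pi_*M\to0$ with $N$. Flatness of $E^0$ and $E^1$ makes the long exact $\Tor$ sequences degenerate into the dimension shifts $\Tor_p(I,N)\cong\Tor_{p-1}(\pi_*M,N)$ and $\Tor_p(R^1\pi_*M,N)\cong\Tor_{p-1}(I,N)$ for $p\ge2$, which compose to $\Tor_{p+2}(R^1\pi_*M,N)\cong\Tor_p(\pi_*M,N)$ for $p\ge1$; the same sequences give $\ker(\iota\otimes1)=\Tor_1(R^1\pi_*M,N)$, $\coker(\iota\otimes1)=R^1\pi_*M\otimes N$, and identify $\Tor_2(R^1\pi_*M,N)\cong\Tor_1(I,N)$ with the kernel of $\pi_*M\otimes N\to E^0\otimes N$. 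Since $P^\bullet\otimes N$ computes $R\pi_*(M\otimes\pi^*N)$, we have $\pi_*(M\otimes\pi^*N)=\ker(d\otimes1)$ and $R^1\pi_*(M\otimes\pi^*N)=\coker(d\otimes1)$; surjectivity of $\bar d\otimes1$ immediately gives $\coker(d\otimes1)=\coker(\iota\otimes1)=R^1\pi_*M\otimes N$, the third isomorphism, and a short chase along $E^0\otimes N\xrightarrow{\bar d\otimes1}I\otimes N\xrightarrow{\iota\otimes1}E^1\otimes N$ presents $\ker(d\otimes1)$ as an extension of $\ker(\iota\otimes1)=\Tor_1(R^1\pi_*M,N)$ by $(\pi_*M\otimes N)/\Tor_2(R^1\pi_*M,N)$, which unwinds to exactly the asserted four-term exact sequence.

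The concluding equivalences are then purely formal. Letting $N$ range over all quasi-coherent sheaves, $\pi_*M$ is flat iff $\Tor_p(\pi_*M,N)=0$ for all such $N$ and all $p\ge1$, which by the first isomorphism says $\Tor_q(R^1\pi_*M,N)=0$ for all $q\ge3$, i.e.\ $R^1\pi_*M$ has homological dimension $\le2$. Taking $N=k(s)$, the four-term sequence shows the fiber $\pi_*M\otimes k(s)$ injects into $\pi_*(M\otimes\pi^*k(s))$ --- whose stalk at $s$ is $H^0(X_s,M_s)$ --- iff $\Tor_2(R^1\pi_*M,k(s))=0$, and this holds for every $s$ iff $R^1\pi_*M$ has homological dimension $\le1$; granting that, the cokernel of the injection is $\Tor_1(R^1\pi_*M,k(s))$, which vanishes for all $s$ iff $R^1\pi_*M$ is locally free, i.e.\ flat. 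I expect the only genuine content of the argument to be the truncation step together with the remark that the shortened complex still commutes with $\otimes N$ for an arbitrary sheaf $N$; once that is in place, the rest is just the standard $\Tor$ long exact sequence applied to the two short exact sequences above.
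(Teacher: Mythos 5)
Your proposal is correct and follows essentially the same route as the paper: represent $\dR\pi_*M$ by a two-term perfect complex of locally free sheaves compatible with tensoring by $N$ (the paper delegates this to the proof of Lemma \ref{lem:tau_func}, you carry out the truncation directly from the fiberwise vanishing), and then extract the $\Tor$ isomorphisms and the four-term sequence from the resulting presentation $0\to\pi_*M\to E^0\to E^1\to R^1\pi_*M\to 0$. Your splitting through the image $I$ with two $\Tor$ long exact sequences is just an explicit version of the paper's ``extend a flat resolution of $\pi_*M$ to one of $R^1\pi_*M$ and tensor with $N$,'' so there is nothing substantively different to flag.
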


\begin{proof}
  As in the proof of Lemma \ref{lem:tau_func}, we find that $\dR\pi_*M$ is
  represented by a two-term perfect complex on $S$.  If $V^0\to V^1$ is
  this complex, then we have a four-term exact sequence
\[
0\to \pi_*M\to V^0\to V^1\to R^1\pi_*M\to 0,
\]
and thus any flat resolution of $\pi_*M$ extends to a flat resolution of
$R^1\pi_*M$.  The claim follows upon tensoring this resolution with $N$ and
observing that
\[
\dR\pi_*M\otimes^\dL N\cong \dR\pi_*(M\otimes^\dL\pi^*N)\cong
\dR\pi_*(M\otimes \pi^*N),
\]
with the last isomorphism following from the fact that $M$ is flat.
\end{proof}

\begin{rem}
  As an example, consider the divisor class $-2K_X$ on the moduli stack of
  anticanonical Hirzebruch surfaces.  This acquires cohomology when the
  surface has a $-d$-curve for any $d\ge 3$; since this locus has
  codimension $2$, $R^1\pi_*(\omega_X^{-2})$ has homological dimension $\ge
  2$.  As a result, the fibers of the direct image sheaf do not inject in
  the spaces of global sections of the fibers.  Similarly, the
  anticanonical bundle itself fails this criterion in the presence of a
  $-d$-curve for $d\ge 4$.
\end{rem}

  By the last claim of the Lemma, we can compute $R^1\pi_*M$ fiberwise, and
  the main contribution comes from hypersurfaces: those where a given
  positive root becomes effective, and those where ${\cal L}_v|_{C_\alpha}$
  has a global section.  Near a generic point of such a hypersurface, we
  find that $\pi_*{\cal L}_v$ is flat and injects fiberwise in the space of
  global sections of ${\cal L}_v$, since $R^1\pi_*{\cal L}_v$ is a flat
  sheaf on a hypersurface, so has homological dimension $\le 1$.  Although
  there could in principle be problems coming from intersections of the
  hypersurfaces, this at least suggests the following result; note that by
  the previous remark, we cannot allow worse than $-2$-curves, even if we
  did not care about the $W(E_{m+1})$ action.

\begin{thm}
  The direct image of any line bundle ${\cal L}_v$ is a flat sheaf ${\cal
    V}_v$ on ${\cal X}^{\alpha,{\ge}{-}2}_m$, and the action of $W(E_{m+1})$
  extends to these sheaves.  More precisely, for any element $w\in
  W(E_{m+1})$, we have an isomorphism
\[
w^*{\cal V}_v\cong {\cal V}_{wv},
\]
defined up to scalar multiplication, and the isomorphisms are compatible,
again up to scalar multiplication.  Moreover, the multiplication map
\[
{\cal V}_v\times {\cal V}_{v'}\to {\cal V}_v\otimes {\cal V}_{v'}\to {\cal
  V}_{v+v'}
\]
induced by
\[
{\cal L}_v\otimes {\cal L}_{v'}\cong {\cal L}_{v+v'}
\]
has no zero divisors.
\end{thm}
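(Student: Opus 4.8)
The plan is to establish in turn: flatness of $\mathcal V_v := \pi_*\mathcal L_v$ (the substantial point), where $\pi\colon\mathcal U\to\mathcal S := \mathcal X^{\alpha,{\ge}{-}2}_m$ is the universal surface; the equivariance $w^*\mathcal V_v\cong\mathcal V_{wv}$; and the absence of zero divisors. Since $\mathcal S$ is reduced (open in $\mathcal X^\alpha_m$, whose components are integral), ``flat'' means ``locally free''. For flatness I would use the strong semicontinuity Lemma just proved: $\mathcal V_v$ is flat exactly when $R^1\pi_*\mathcal L_v$ has homological dimension $\le 2$, and that Lemma also shows $R^1\pi_*\mathcal L_v$ commutes with base change, so its support is the (locally finite) union of the divisors on which a positive root of $E_{m+1}$ becomes effective and the divisor on which $\mathcal L_v|_{C_\alpha}$ gains a section --- using here that $\mathcal S$ carries no $-d$-curve with $d>2$. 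Via the $W(E_{m+1})$-action on $\mathcal S$ (which extends the action on $\mathcal X^{{\ge}{-}2}_m$ established earlier) and the algorithms established above I would reduce the computation of $H^1(X_s,\mathcal L_v|_{X_s})$ to that of a nef class in a fundamental chamber, after stripping a fixed negative part of $-2$-curves and copies of $-K$; if that class meets $C_\alpha$ positively it is acyclic and $R^1\pi_*\mathcal L_v = 0$, while if it meets $C_\alpha$ in $0$ (which by Hodge index forces $m\ge 8$) I would iterate the restriction sequence
\[
0\to\mathcal L_{v-C_\alpha}\to\mathcal L_v\to\mathcal L_v|_{C_\alpha}\to 0 ,
\]
twisting repeatedly down by $C_\alpha = -K$ until the twist becomes $\pi$-acyclic; its successive quotients are line bundles of fibrewise degree $0$ on the universal anticanonical curve, and Lemma \ref{lem:tau_func} realizes each of their $R^1$ as the cokernel of a map of vector bundles of equal rank, hence of homological dimension $\le 1$, so the induced filtration of $R^1\pi_*\mathcal L_v$ gives homological dimension $\le 2$.

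The hard part will be the interaction at intersections of the jump divisors, where the fixed-part decomposition of $v$ changes from stratum to stratum of $\mathcal S$; this is precisely where forbidding $-d$-curves for $d>2$ is indispensable, since with a $-3$-curve present one picks up codimension-$2$ contributions (as the $-2K_X$ example before the theorem shows) that would push the homological dimension above $2$. I would handle this by a careful local analysis of the filtration above, checking that the $\Tor$-sheaves entering the associated spectral sequence remain of homological dimension $\le 2$; I expect this bookkeeping to be the technical core of the argument, and it is the reason one cannot drop the $-2$-curve restriction.

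For the group action, $W(E_{m+1})$ acts on $\mathcal S$ because the choice of $C_\alpha$ is independent of the blowdown structure, so the action on $\mathcal X^{{\ge}{-}2}_m$ extends; it does not lift to $\mathcal U$, but it lifts to $\mathcal U\setminus Z$, where $Z$ is the locus of pairs $(X_s,p)$ with $p$ on a $-2$-curve of $X_s$, since $\mathcal U\setminus Z\cong\mathcal X^{\alpha,{\ge}{-}2}_{m+1}$ via $(X_s,p)\mapsto\mathrm{Bl}_p\,X_s$, the simple roots of $E_{m+1}$ are orthogonal to $e_{m+1}$, and $W(E_{m+1})\subset W(E_{m+2})$ therefore descends along the contraction of $e_{m+1}$ and covers the given action. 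Because a $-2$-curve occurs only over a divisor of $\mathcal S$, $Z$ has codimension $\ge 2$ in $\mathcal U$; as $\mathcal U$ is Cohen--Macaulay (being a smooth family over a base that is smooth, resp.\ a local complete intersection) and $\mathcal L_v$ is locally free, Hartogs gives $\mathcal L_v = j_*j^*\mathcal L_v$ for $j\colon\mathcal U\setminus Z\hookrightarrow\mathcal U$, so $\pi_*\mathcal L_v = (\pi\circ j)_*(j^*\mathcal L_v)$. A simple reflection $r$ then gives an isomorphism of $\mathcal U\setminus Z$ over $r$ taking $\mathcal L_v$ to $\mathcal L_{rv}$ (line bundles being determined by their classes, $r$ acting linearly on $\Pic$), whence $r^*\mathcal V_v\cong\mathcal V_{rv}$; composing over a reduced word yields $w^*\mathcal V_v\cong\mathcal V_{wv}$, the scalar ambiguity being that of the chosen $\mathcal L_v$ and of the isomorphisms $\mathcal L_v\otimes\mathcal L_{v'}\cong\mathcal L_{v+v'}$, and compatibility following from functoriality of pullback, restriction, and the Hartogs extension.

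For the last assertion, the $\mathcal L_v$ assemble into a sheaf of $\Pic(X)$-graded $\mathcal O_{\mathcal U}$-algebras $\mathcal R = \bigoplus_v\mathcal L_v$ in which each multiplication is an isomorphism of line bundles; since $\mathcal U$ is integral, a product of nonzero local sections of line bundles is nonzero, so $\mathcal R$ has no zero divisors. As $\bigoplus_v\mathcal V_v = \pi_*\mathcal R$ and any product of nonzero sections of $\pi_*\mathcal R$ over a connected open $U\subset\mathcal S$ restricts to a product of nonzero sections of $\mathcal R$ over the connected preimage $\pi^{-1}(U)$, the multiplication on $\bigoplus_v\mathcal V_v$ has no zero divisors, which is the final claim.
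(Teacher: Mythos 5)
There is a genuine gap, and it sits exactly at the core of the theorem. The paper does not merely prove flatness: it proves that $R^1\pi_*{\cal L}_v$ has homological dimension $\le 1$ for every generically effective $v$, which by the preceding lemma makes the fibers of ${\cal V}_v$ inject into $\Gamma({\cal L}_v)$ on each surface of the family; the ``no zero divisors'' assertion is then the \emph{fiberwise} statement (the multiplication on the fibers of the ${\cal V}$'s embeds into the section pairing on the integral surface $X_s$), which is what the flat family of categories requires. Your argument only aims at homological dimension $\le 2$, and your zero-divisor argument (integrality of the total space) only shows that products of nonzero \emph{local sections over the base} are nonzero --- a strictly weaker statement that says nothing at points where ${\cal V}_v\otimes k(s)\to H^0(X_s,{\cal L}_v|_{X_s})$ fails to inject. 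Moreover the flatness route itself is flawed: stripping fixed $-2$-curves and copies of $-K$ to reach a nef class in the fundamental chamber preserves $H^0$ but not $H^1$ (already $h^1({\cal L}(N))=1$ for a $-2$-curve $N$, while $h^1(\sO_X)=0$), and flatness is in any case not a fiberwise $H^1$ computation but a statement about the module structure of $R^1\pi_*{\cal L}_v$; the ``bookkeeping'' at intersections of jump divisors that you defer is precisely the content, and the filtration you invoke only exhibits $R^1\pi_*{\cal L}_v$ as an extension involving subsheaves and quotients of the graded pieces, which do not inherit homological-dimension bounds. The paper's proof avoids all of this: it first removes negative coefficients via $0\to{\cal L}_{v-e_i}\to{\cal L}_v\to{\cal L}_v|_{e_i}\to 0$ (whose quotient has vanishing direct image), then embeds ${\cal V}_v\hookrightarrow{\cal V}_{nh}$ with ${\cal L}_{nh}$ acyclic to obtain $S_{m+1}$-covariance (and similarly $S_m$ for even blowdown structures), and only then uses the resulting $W(E_{m+1})$-covariance of the homological-dimension condition to reduce to the fundamental chamber, where $0\to{\cal L}_{v+K}\to{\cal L}_v\to{\cal L}_v|_{C_\alpha}\to 0$ together with $\pi_*({\cal L}_v|_{C_\alpha})=0$ closes the induction.

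Your mechanism for the group action is also different from the paper's and needs repair. The complement of the $-2$-locus in the universal surface is not ${\cal X}^{\alpha,{\ge}{-}2}_{m+1}$: that stack is the universal anticanonical \emph{curve} over ${\cal X}^{\alpha,{\ge}{-}2}_m$ (the new center must lie on $C_\alpha$), so the modular identification has to go through the non-anticanonical stack ${\cal X}^{{\ge}{-}2}_{m+1}$. For $m\ge 8$ there are infinitely many positive roots, so your $Z$ is a countable union of closed codimension-$2$ loci and is not closed; the Hartogs step must be run one simple reflection at a time across the single closed locus attached to that root. Even then, the lifted reflection only identifies $r^*{\cal L}_{rv}$ with ${\cal L}_v$ up to a twist by a line bundle pulled back from the base --- the paper explicitly cautions that the identifications of these bundles under the Coxeter action are not canonical --- so ``line bundles are determined by their classes'' does not yet yield isomorphisms well defined up to scalar, nor their compatibilities; the paper pins these down by realizing ${\cal V}_v$ and ${\cal V}_{wv}$ as subsheaves of the canonical ambient ${\cal V}_{nh}$, with the inclusions determined on generic fibers. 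As written, then, the proposal establishes neither the homological-dimension input needed for the fiberwise statements nor a complete equivariance argument.
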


\begin{proof}
  First note that if $v$ is not generically effective, then ${\cal V}_v=0$,
  since then no global section of ${\cal L}_v$ on a fiber can extend to an
  open substack of the moduli space.  The generically effective divisors
  form a cone invariant under the action of $W(E_{m+1})$, so the various
  claims are immediate outside this cone.  A generically effective divisor
  will have $v\cdot f\ge 0$, so $(-C_\alpha-v)\cdot f\le -2$, and thus
  $-C_\alpha-v$ cannot be effective.  We thus conclude that $H^2({\cal
    L}_v)=0$ for such a divisor, which is all we need to apply the Lemma.

  It will suffice to show that whenever $v$ is generically effective, the
  group acts and $R^1\pi_*{\cal L}_v$ has homological dimension $\le 1$.
  Indeed, this implies flatness of ${\cal V}_v$, as well as the fact that
  multiplication has no zero-divisors, the latter since the map
\[
\Gamma({\cal L}_v)\times \Gamma({\cal L}_{v'})
\to
\Gamma({\cal L}_{v+v'})
\]
is injective on every fiber.

Now, using an elementary transformation as necessary, we may suppose our
blowdown structure is odd and consider $X$ as an $m+1$-fold blowup of
$\P^2$.  In the corresponding basis of $\Pic(X)$, we have
\[
v = nh - \sum_{0\le i\le m} r_i e_i.
\]
If $r_i=v\cdot e_i<0$ for any $i$, then we have a short exact sequence
\[
0\to {\cal L}_{v-e_i}\to {\cal L}_v\to {\cal L}_v|_{e_i}\to 0.
\]
On the generic fiber, the quotient is a sheaf of negative degree on the
smooth rational curve $e_i$, and thus has no global sections; on the
general fiber, the quotient has $1$-dimensional support, so that the Lemma
applies.  We thus find that $\pi_*({\cal L}_v|_{e_i})=0$ and (since that
certainly injects!) that $R^1\pi_*({\cal L}_v|_{e_i})$ has homological
dimension $\le 1$.  It follows that
\[
{\cal V}_{v-e_i}\cong {\cal V}_v,
\]
and $R^1\pi_*(\sO_X(v))$ has homological dimension $\le 1$ iff
$R^1\pi_*(\sO_X(v-e_i))$ has homological dimension $\le 1$.  By
induction, if we set
\[
v' = nh - \sum_{0\le i\le m} \max(r_i,0)e_i,
\]
then (since this operation respects the action of $S_{m+1}$) it suffices to
prove the claim for $v'$.

Thus suppose $r_i\ge 0$ for $0\le i\le m$, and consider the short exact
sequence
\[
0\to {\cal L}_v\to {\cal L}_{nh}\to Q\to 0.
\]
Since ${\cal L}_{nh}$ is acyclic, ${\cal V}_{nh}$ is flat, and we have
exhibited ${\cal V}_v$ as a subsheaf of this flat sheaf.  Moreover,
the fibers ${\cal V}_v$ inject in $\Gamma({\cal L}_v)$ iff they inject in
the corresponding fibers of ${\cal V}_{nh}$.

Now, $\pi_*Q$ is the kernel of a two-term perfect complex (since $Q$ has
$1$-dimensional support), and is thus a subsheaf of a locally free sheaf.
In particular, $\pi_*Q$ is torsion-free, and thus the map ${\cal V}_v\to
{\cal V}_{nh}$ is determined by its action on the generic fiber.  This
action is clearly $S_{m+1}$-covariant, and thus so is ${\cal V}_v$.
Since the morphism ${\cal V}_v\to {\cal V}_{nh}$ determines the injectivity
condition, we also conclude that if $R^1\pi_*{\cal L}_v$ has
homological dimension $1$, then so does $R^1\pi_*{\cal L}_{w v}$ for any
$w\in S_{m+1}$.

A similar calculation with an even blowdown structure shows that the
corresponding $S_m$ acts on the bundles, and preserves the homological
dimension condition.  The one technicality is that the ambient bundle
${\cal L}_{ns+df}$ need not be acyclic, but we can use $S_{m+1}$-invariance
(conjugated by an elementary transformation) to assume $n\ge d$.

We thus now have full $W(E_{m+1})$-covariance, so that it suffices to prove
the homological dimension claim for $v$ in the fundamental chamber.  Of
course, if $v=0$, then ${\cal L}_0=\sO_X$, and the claim is obvious, so
suppose $v\ne 0$.  If $v\cdot C_\alpha>0$, then ${\cal L}_v$ is acyclic, and
we are done.  Otherwise, consider the short exact sequence
\[
0\to {\cal L}_{v+K}\to {\cal L}_v\to {\cal L}_v|_{C_\alpha} \to 0
\]
Generically, ${\cal L}_v|_{C_\alpha}$ is a nontrivial degree 0 sheaf on a
smooth genus 1 curve, and thus we again find
\[
\pi_*({\cal L}_v|_{C_\alpha})=0
\]
and thus ${\cal L}_v$ satisfies the homological dimension condition iff
${\cal L}_{v-C_\alpha}$ satisfies the homological dimension condition.
\end{proof}

We should note a couple of things here.  First, the argument shows that on
a blowup of $\P^2$, ${\cal V}_v\subset {\cal V}_{(v\cdot h)h}$ whenever
$v\cdot h>0$, with locally free quotient, and similarly ${\cal V}_v\subset
{\cal V}_{(v\cdot f)s+(v\cdot s)f}$ relative to an even blowdown structure.
This fact will guide the noncommutative construction in
\cite{noncomm1}; we will first construct noncommutative analogues of
the ambient bundles, then impose suitable conditions on the generic fiber
and use an analogue of the above argument to prove flatness.

Next, the result allows us to construct a flat family of categories with a
nice action of $W_{E_{m+1}}$.  The objects of the categories are the
vectors $v\in \Z s+\Z f+\sum_i \Z e_i$, while the morphisms from $v$ to
$v'$ are given by ${\cal V}_{v'-v}$, with the natural multiplication maps.
The dimensions of the $\Hom$ spaces in this category are constant as we
vary the choice of anticanonical surface, and the group acts in the obvious
way.  The construction of \cite{noncomm1} will give a noncommutative
deformation of this category, in the case $C_\alpha$ smooth; this will depend
on one additional parameter (a point of $\Pic^0(C_\alpha)$), but will have
the same flatness properties.  The $\Hom$ spaces of the deformation will be
constructed as spaces of elliptic difference operators, and thus there is a
close connection between modules over the deformed category and (symmetric
elliptic) difference equations.  (In particular, every symmetric elliptic
difference equation will have a corresponding module over the deformation
for $m=0$.)  This will be extended in \cite{elldaha} to a two-parameter
deformation of the category with $\Hom$ spaces $S^n({\cal V}_{v'-v})$.

\medskip

In the case $C_\alpha$ integral, we can give a direct construction of the
substack of surfaces with anticanonical curve isomorphic to $C_\alpha$.
(Presumably this can be extended to general curves, but it is unclear what
the precise conditions will be.)

For any connected projective curve $C$ (integral or not) of arithmetic
genus 1, there is a natural moduli problem mapping flatly to the locally
closed substack of ${\cal X}^{\alpha}_m$ where $C_\alpha\cong C$, namely the
problem of classifying triples $(X,\phi,\Gamma)$ where $\phi:C\to X$ embeds
$C$ as an anticanonical curve.  Given such a triple, the restriction
morphism $\phi^*:\Pic(X)\to \Pic(C)$ gives us a sequence of (isomorphism
classes of) bundles $\phi^*(s)$, $\phi^*(f)$ and $\phi^*(e_i)$ for $1\le
i\le m$.  The classes $\phi^*(e_i)$ have degree 1, $\phi^*(f)$ has degree
2, and $\phi^*(s)$ has degree $1$ or $2$ depending on whether $\Gamma$ is
odd or even.

\begin{lem}
  The triple $(X,\phi,\Gamma)$ is determined up to isomorphism of $X$ by
  the classes $\phi^*(s)$, $\phi^*(f)$, and $\phi^*(e_i)$, $1\le i\le m$.
\end{lem}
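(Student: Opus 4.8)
The plan is to induct on the number $m$ of blowups in $\Gamma$, stripping off the last one; the base case is the classical reconstruction of a Hirzebruch surface equipped with a ruling and an embedded anticanonical curve.

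The key structural remark is that a blowdown structure together with an embedded anticanonical curve amounts to iterating blowups at \emph{smooth} points of the (strict transform of the) anticanonical curve. Writing $\phi_i\colon C\to X_i$ for $\phi$ followed by the blowdowns $X=X_m\to\cdots\to X_i$, one has $K_{X_0}^2=8$ and $K_{X_m}^2=8-m$, and $\phi_m(C)$ is the strict transform of the image curve $\phi_0(C)$, whose class is $\beta_{m,0*}(-K_{X_m})=-K_{X_0}$. Comparing self-intersections under the iterated blowup gives $\sum_i\mu_i^2=8-(8-m)=m$, where $\mu_i\ge 0$ is the multiplicity of $\phi_{i-1}(C)$ at the center $p_i$ of $X_i\to X_{i-1}$. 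A center with $\mu_i=0$ would leave a positive $e_i$-coefficient in $[\phi_j(C)]$ for all $j\ge i$, contradicting $-K_{X_m}=\beta_{m,0}^*(-K_{X_0})-e_1-\cdots-e_m$, so every $\mu_i\ge 1$; hence $m=\sum\mu_i^2\ge\sum\mu_i\ge m$ forces all $\mu_i=1$. Thus each $\phi_{i-1}$ is itself an anticanonical embedding, $p_i$ is a smooth point of $\phi_{i-1}(C)$, and $\phi_i(C)$ is the (clean) strict transform of $\phi_{i-1}(C)$. Note also that $C$, being an anticanonical curve on the smooth surface $X$, is Gorenstein with $\omega_C\cong\sO_C$ by adjunction.

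For the inductive step, let $\beta\colon X\to X_{m-1}$ contract $e_m$ and put $\phi_{m-1}=\beta\circ\phi$. Since $s,f,e_1,\dots,e_{m-1}$ on $X$ are the total transforms of the same-named classes on $X_{m-1}$, we have $\phi^*s=\phi_{m-1}^*s$, $\phi^*f=\phi_{m-1}^*f$ and $\phi^*e_i=\phi_{m-1}^*e_i$ ($i<m$) in $\Pic(C)$, so the first $m+1$ invariants of $(X,\phi,\Gamma)$ are exactly those of $(X_{m-1},\phi_{m-1},\Gamma_{m-1})$, which by induction determine that triple up to isomorphism. It remains to recover the center $p_m$. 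By the structural remark $e_m$ is a $(-1)$-curve meeting $\phi(C)$ transversally in the single reduced point $\phi(q_m)$, where $q_m=\phi^{-1}(\phi(C)\cap e_m)$ is a smooth point of $C$, and $e_m$ is not a component of $\phi(C)$; hence restriction gives $\phi^*\sO_X(e_m)\cong\sO_C(q_m)$. By Riemann--Roch and Serre duality on $C$ (using $\omega_C\cong\sO_C$ and $h^0(\sO_C(-q_m))=0$) we get $h^0(\sO_C(q_m))=1$, so $q_m$ is the \emph{unique} effective divisor in the class $\phi^*e_m$; therefore $q_m$, and hence $p_m=\phi_{m-1}(q_m)$, is determined once $(X_{m-1},\phi_{m-1})$ is known. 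Consequently $X=\mathrm{Bl}_{p_m}X_{m-1}$, the embedding $\phi$ (the strict-transform lift of $\phi_{m-1}$), and the chain $\Gamma$ are all determined. Concretely, if $(X',\phi',\Gamma')$ has the same invariant tuple over $C$, the inductive isomorphism $\psi\colon X_{m-1}\xrightarrow{\ \sim\ }X'_{m-1}$ with $\psi\phi_{m-1}=\phi'_{m-1}$ sends $p_m=\phi_{m-1}(q_m)$ to $\phi'_{m-1}(q_m)=p'_m$ (the effective divisor in $\phi^*e_m=\phi'^*e_m$ is the same), so it lifts to an isomorphism $X\to X'$ compatible with $\phi,\phi'$ and with the blowdown structures.

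Finally, the base case $m=0$: here $\pi:=\rho\circ\phi\colon C\to\P^1$ is finite flat of degree $2$ and $\phi^*f=\pi^*\sO_{\P^1}(1)$ is base-point-free of degree $2$ with $h^0=2$, hence recovers $\pi$ up to $\PGL_2$. With the parity-normalized basis vector $s$, the line bundle $\sO_X(s)$ is the relative $\sO(1)$ of $\rho$, so $\sO_X(s)|_C=\phi^*s$; both $\sO_X(s)$ and $\sO_X(s+K_X)$ are $\rho_*$-acyclic (fibre-degrees $1$ and $-1$), so $\rho_*\sO_X(s)\cong\rho_*\bigl(\sO_X(s)|_C\bigr)\cong\pi_*(\phi^*s)$, whence $X_0\cong\P\bigl(\pi_*(\phi^*s)\bigr)$ with $\phi$ the tautological section attached to the relative surjection $\pi^*\pi_*(\phi^*s)\twoheadrightarrow\phi^*s$. (The two rulings of $F_0$ are interchanged by swapping $s$ and $f$, both of which lie in the data, and the $F_1$ normalization is absorbed into the parity convention.) I expect this base case to be the main obstacle, since it is a genuine reconstruction rather than a pinning-down; the other point needing care is the structural remark that every center in the chain is a smooth point of the current anticanonical curve, which is exactly what makes $\phi^*e_m$ cut out one reduced point of $C$.
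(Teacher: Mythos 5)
Your strategy is recognizably the same mechanism as the paper's proof (recover the ruling and $X_0$ from $\phi^*(f)$ and $\phi^*(s)$, then use the classes $\phi^*(e_i)$ to pin down the centers), run backward rather than forward. But there is a genuine gap: your ``structural remark'' --- that every center in the chain is a smooth point of the current anticanonical curve, that all multiplicities $\mu_i$ equal $1$, and that each $\phi_i$ is again an anticanonical \emph{embedding} --- is false in the generality of the lemma. The lemma is stated for an arbitrary connected projective curve $C$ of arithmetic genus $1$ (``integral or not''), and in that generality a center may well be a singular point of the intermediate anticanonical curve. For instance, blow up the node of an integral nodal anticanonical curve on $F_2$: the anticanonical curve of the blowup is the strict transform \emph{together with} the exceptional curve, so $C$ acquires a component that $\phi_0$ contracts ($\phi_0$ is not an embedding, so your inductive hypothesis cannot even be invoked), the center has multiplicity $2$, and $\phi(C)$ is not the strict transform of $\phi_0(C)$ --- which is the unstated assumption underlying your multiplicity count. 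In the same situation $\phi^*(e_m)$ is not $\sO_C(q_m)$ for a smooth point $q_m$ of $C$ (it has degree $-1$ on the exceptional component), so the step ``the unique effective divisor in the class $\phi^*(e_m)$ recovers the center'' breaks down as well, and these degenerate configurations are exactly the ones the paper needs the lemma for.

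What you have proved is essentially the integral case, where the argument also simplifies: an integral $\phi(C)$ contains no exceptional component, hence is the strict transform of its image, and comparing $e_i$-coefficients with $-K_X$ gives $\mu_i=1$ at once, with no need for the self-intersection count. To handle the general case one has to argue sheaf-theoretically on the image curve, as the paper does: it shows that $\Gamma(X_k,\phi_{k*}\phi^*(e_{k+1}))\cong\Gamma({\cal L}(e_{k+1})|_{C_\alpha})$ is one-dimensional (acyclicity and unique effectivity of ${\cal L}(e_{k+1})$, plus vanishing of $H^*({\cal L}(e_{k+1}-C_\alpha))$), and uses the exact sequence $0\to \sO_{C_k}\to \phi_{k*}\phi^*(e_{k+1})\to\sO_{p_{k+1}}\to 0$ from the Poisson paper to read off the center $p_{k+1}$ even when it is a singular point of $C_k$ and no single point of $C$ represents the class. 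Your base case matches the paper's direct-image argument (though the phrase ``finite flat of degree $2$'' also fails when the anticanonical curve contains fiber components; the $\rho_*$-acyclicity argument you give does not actually need finiteness).
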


\begin{proof}
  We may assume $X$ and $C$ are defined over an algebraically closed field,
  so that the given classes actually correspond to line bundles.  Since
  neither of $K_X+f$ or $-f$ is effective, we conclude that
  $H^0(\omega_X(f))=H^2(\omega_X(f))=0$, and Euler characteristic
  considerations imply $H^1(\omega_X(f))=0$.  It follows that we have a
  natural isomorphism $\Gamma(X,\sO_X(f))\to \Gamma(C,\phi^*(f))$, so
  that we can recover the induced map $\rho:C\to \P^1$ from $\phi^*(f)$ (up
  to $\PGL_2$).  Similarly, $\dR\rho_*(\omega_X\otimes {\cal L}_s)=0$, so
  that we have an isomorphism $\rho_*{\cal L}_s\cong \rho_*\phi^*(s)$
  allowing us to recover $X_0$ as the projective bundle of
  $\rho_*(\phi^*(s))$, as expected.

  Now, suppose we have reconstructed the surface $X_k$, and consider the
  direct image of $\phi^*(e_{k+1})$ on the corresponding anticanonical
  curve $C_k$.  This can be identified with the direct image of the sheaf
  $\sO_X(e_{k+1})|_{C_\alpha}$ on $X$, and thus by Corollary 6.7 of
  \cite{poisson} fits into an exact sequence
\[
0\to \sO_{C_k}\to \phi_{k*}\phi^*(e_{k+1})\to \sO_{p_{k+1}}\to 0
\]
where $p_{k+1}$ is the point of $C_k$ that gets blown up on $X_{k+1}$.
We have
\[
\Gamma(X_k,\phi_{k*}\phi^*(e_{k+1}))
=
\Gamma(X_{k+1},\sO_{X_{k+1}}(e_{k+1})|_{C_\alpha}).
\]
Since $e_{k+1}$ is a $-1$-curve on $X_{k+1}$, we find that
$\sO_{X_{k+1}}(e_{k+1})$ is acyclic and uniquely effective, while
\[
H^p(\sO_{x_{k+1}}(e_{k+1}-C_\alpha)) \cong
H^{2-p}(\sO_{x_{k+1}}(-e_{k+1}))^* = 0.
\]
It follows that $\phi_{k*}\phi^*(e_{k+1})$ is uniquely effective, so
determines $p_{k+1}$ and thus $X_{k+1}$.
\end{proof}

For $C$ integral, we readily see that any sequence of invertible sheaves of
the correct degrees will give rise to a valid triple, and thus we can
identify the moduli space of triples with the product
\[
\Pic^2(C)\times \Pic^2(C)\times \Pic^1(C)^m
\qquad
\text{or}
\qquad
\Pic^1(C)\times \Pic^2(C)\times \Pic^1(C)^m,
\]
depending on parity.  This is a principal $\Aut(C)$-bundle over the
corresponding substack of ${\cal X}^{\alpha}_m$, and since the choice of
embedding of $C$ is independent of the choice of blowdown structure, the
action of $W(E_{m+1})$ extends.  Of course, this extension is just the
obvious linear action!  Note also that if we allow $C$ to vary over the
moduli space of smooth genus 1 curves, then this construction gives a
dense open substack of ${\cal X}^\alpha_m$.

For nonintegral curves, there will certainly be an additional constraint on
the degree vectors of the invertible sheaves, since those degrees can be
read off from the combinatorial type of $(X,\Gamma)$ (i.e., the
representations of the components of $C_\alpha$ in the standard basis).  And,
of course, there is the additional difficulty that the moduli stack of
curves has many more pathologies once one allows nonintegral curves,
especially since we do not want to impose any stability conditions.  (For
instance, there are reduced but reducible curves of arithmetic genus 1 that
are not even Gorenstein.)

Along these lines, we note the following constraint on
the anticanonical curve of a rational surface.

\begin{lem}\label{lem:Ca_is_num_conn}
  Let $X$ be a rational surface, and suppose we can write $-K_X=A+B$ with
  $A$, $B$ nonzero effective divisors.  Then $H^1(\sO_A)=H^1(\sO_B)=0$ and
  $A\cdot B=2h^0(\sO_A)=2h^0(\sO_B)>0$.
\end{lem}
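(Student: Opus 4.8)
The plan is to extract everything from the structure sequence
$0\to\sO_X(-A)\to\sO_X\to\sO_A\to 0$ (and its analogue for $B$), using that a rational surface has $h^0(\sO_X)=1$ and $h^1(\sO_X)=h^2(\sO_X)=0$, so $\chi(\sO_X)=1$, together with Serre duality and Riemann--Roch on the smooth projective surface $X$.

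\textbf{Two vanishings.} First I would record that $-A$ is not effective (an effective divisor linearly equivalent to $-A$ would give, with $A$, an effective divisor linearly equivalent to $0$, hence the zero divisor, contradicting $A\neq 0$), so $H^0(\sO_X(-A))=0$. Likewise $K_X+A=-B$ with $B$ nonzero effective, so $-B$ is not effective and, by Serre duality, $H^2(\sO_X(-A))\cong H^0(\sO_X(K_X+A))^*=H^0(\sO_X(-B))^*=0$. Feeding these into the long exact cohomology sequence of the structure sequence gives, on one hand, $H^1(\sO_A)\hookrightarrow H^2(\sO_X(-A))=0$, so $H^1(\sO_A)=0$; and on the other hand a short exact sequence $0\to k\to H^0(\sO_A)\to H^1(\sO_X(-A))\to 0$, so that $h^0(\sO_A)=1+h^1(\sO_X(-A))$ and in particular $h^0(\sO_A)\ge 1$.

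\textbf{Riemann--Roch and conclusion.} Next I would compute
$\chi(\sO_X(-A))=\chi(\sO_X)+\tfrac12(-A)\cdot(-A-K_X)=1+\tfrac12 A\cdot(A+K_X)=1-\tfrac12 A\cdot B$, using $A+K_X=-B$. Since $H^0$ and $H^2$ of $\sO_X(-A)$ vanish, $h^1(\sO_X(-A))=\tfrac12 A\cdot B-1$, and combining with the previous paragraph $h^0(\sO_A)=\tfrac12 A\cdot B$, i.e. $A\cdot B=2h^0(\sO_A)$. Running the same argument with $A$ and $B$ interchanged gives $H^1(\sO_B)=0$ and $A\cdot B=2h^0(\sO_B)$. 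Finally $h^0(\sO_A)\ge 1$ forces $A\cdot B=2h^0(\sO_A)\ge 2>0$.

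\textbf{Main obstacle.} There is no substantive obstacle here; the argument is mechanical once one has the right four-term piece of the long exact sequence. The only point needing a word of care is the non-effectivity of $-A$ and $-B$, which rests on the elementary fact that an effective divisor linearly equivalent to $0$ is the zero divisor (its section of $\sO_X$ is nowhere vanishing).
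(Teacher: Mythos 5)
Your proof is correct and follows essentially the same route as the paper: vanishing of $H^0(\sO_X(-A))$ and (via Serre duality and $K_X+A=-B$) of $H^2(\sO_X(-A))$, the long exact sequence of $0\to\sO_X(-A)\to\sO_X\to\sO_A\to 0$ using $h^1(\sO_X)=0$, and Riemann--Roch. The only cosmetic difference is that you extract $h^0(\sO_A)=1+h^1(\sO_X(-A))$ and apply Riemann--Roch to $\sO_X(-A)$, whereas the paper computes $h^0(\sO_A)=\chi(\sO_A)=\tfrac12 A\cdot(-K_X-A)$ directly once $H^1(\sO_A)=0$ is known; these are the same calculation.
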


\begin{proof}
  Since $A$ and $B$ are nonzero effective divisors, we find that
  $H^0(\sO_X(-A))=H^0(\sO_X(-B))=0$, and thus by duality
  $H^2(\sO_X(-A))=H^2(\sO_X(K_X+B))$=0.  The long exact sequence associated to
  the natural presentation of $\sO_A$ has a piece
\[
H^1(\sO_X)\to H^1(\sO_A)\to H^2(\sO_X(-A)),
\]
and thus $H^1(\sO_A)=0$, with $H^1(\sO_B)=0$ following similarly.  We thus
have $h^0(\sO_A)=\chi(\sO_A)=\frac{1}{2}A\cdot (-K_X-A)=\frac{1}{2}A\cdot
B$, so the remaining claim follows.
\end{proof}

\begin{rem}
We also find $h^1(\sO_X(-A))=-\chi(\sO_X(-A))=\frac{1}{2}A\cdot B-1$.
\end{rem}

This has the following interesting consequence; this was established for
anticanonical curves in $\P^2$ and $\P^1\times \P^1$ in
\cite[Cor.~5.7]{ArtinM/TateJ/vdBerghM:1991}, but given the above Lemma, the
proof carries over directly.

\begin{prop}\label{prop:Pic0_acts}
  Let $(X,C_\alpha)$ be an anticanonical rational surface.  Then there is a
  natural action of the group scheme $\Pic^0(C_\alpha)$ on $C_\alpha$,
  which for an invertible sheaf ${\cal Q}\in \Pic^0(C_\alpha)$ takes a
  point $p\in C_\alpha$ to the unique point $p'$ such that ${\cal
    I}_{p'}\cong {\cal Q}\otimes {\cal I}_p$.
\end{prop}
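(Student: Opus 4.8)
The plan is to follow the proof of \cite[Cor. 5.7]{ArtinM/TateJ/vdBerghM:1991}, with Lemma \ref{lem:Ca_is_num_conn} supplying the numerical input that was there special to $\P^2$ and $\P^1\times\P^1$. Write $C=C_\alpha$. Adjunction gives $\omega_C\cong\sO_C$ (as $K_X+C_\alpha\sim 0$), and since $C_\alpha\sim -K_X$ the sequence $0\to\sO_X(K_X)\to\sO_X\to\sO_C\to 0$ gives $h^0(\sO_C)=h^0(\sO_X)=1$ (because $h^1(\sO_X(K_X))=h^1(\sO_X)=0$), whence $\chi(\sO_C)=1-p_a(C)=0$; also $C$ is connected and Cohen--Macaulay, being an effective Cartier divisor in a smooth surface. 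When $C$ is reduced the proposition reduces to the following rigidity statement: if $\mathcal N$ is an invertible sheaf on $C$ with $\deg(\mathcal N|_{C_j})=\delta_{ij}$ for all components $C_j$ and some fixed $i$, then $h^0(\mathcal N)=1$, and the unique section (up to scalar) is a nonzerodivisor whose zero scheme is a single reduced point $p'$ of the smooth locus $C^{\mathrm{sm}}$, so $\mathcal N\cong\sO_C(p')$ with $p'$ unique. Granting this, for $p\in C^{\mathrm{sm}}$ on the component $C_i$ and $\mathcal Q\in\Pic^0(C)$ the sheaf $\mathcal Q^{-1}\otimes\sO_C(p)=\mathcal Q^{-1}\otimes\mathcal I_p^{-1}$ has exactly that multidegree, so it equals $\sO_C(p')$ for a unique $p'$; dualizing gives $\mathcal I_{p'}\cong\mathcal Q\otimes\mathcal I_p$, and the uniqueness clause promotes the evident identities $\mathcal Q_1\otimes\mathcal Q_2\otimes\mathcal I_p=\mathcal Q_1\otimes\mathcal I_{\mathcal Q_2\cdot p}$ to the axioms of a group action.

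For the rigidity statement, $\chi(\mathcal N)=1$ (total degree $1$ plus $\chi(\sO_C)=0$), so $h^0(\mathcal N)\ge 1$. By Serre duality and $\omega_C\cong\sO_C$, $h^1(\mathcal N)=h^0(\mathcal N^{-1})$; a nonzero section $t$ of $\mathcal N^{-1}$ restricts to $0$ on $C_i$, its restriction there being a section of a negative-degree line bundle on the integral curve $C_i$, so there is a largest effective subdivisor $\Gamma$ with $C_i\le\Gamma<C$ on which $t$ vanishes. Putting $\Delta=C-\Gamma\ne 0$, the induced section of $\mathcal N^{-1}\otimes\sO_X(-\Gamma)$ is generically nonzero on each component of $\Delta$, forcing $\deg\bigl((\mathcal N^{-1}\otimes\sO_X(-\Gamma))|_{C_j}\bigr)\ge 0$ there, and summing over $\Delta$ gives $\Gamma\cdot\Delta\le 0$, contradicting $\Gamma\cdot\Delta=2h^0(\sO_\Gamma)\ge 2$ from Lemma \ref{lem:Ca_is_num_conn}. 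Hence $h^0(\mathcal N)=1$. The same count applied to a nonzero section $s$ of $\mathcal N$ shows $s$ vanishes on no component: otherwise the maximal such subdivisor $\Gamma$ (proper, since $s\ne 0$) would give $\Gamma\cdot\Delta\le 1$ --- this is where reducedness enters, bounding the multiplicity of $C_i$ in $\Delta$ by $1$ --- again contradicting Lemma \ref{lem:Ca_is_num_conn}. Being generically nonzero on each component of the Cohen--Macaulay curve $C$, $s$ is a nonzerodivisor, so its zero scheme is an effective Cartier divisor of total degree $\deg\mathcal N=1$; such a divisor is reduced and supported at a smooth point, since a principal ideal generated by a nonzerodivisor in the local ring of a curve singularity has colength $\ge 2$. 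Uniqueness of $p'$ is immediate from $h^0(\mathcal N)=1$.

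To promote the bijection on geometric points to the action of the group scheme $\Pic^0(C)$, carry the construction out relatively over $S:=\Pic^0(C)\times C^{\mathrm{sm}}$, with projections $q\colon S\times C\to S$, $r\colon S\times C\to C$. On $S\times C$ there is a line bundle $\mathcal M$ with fiber $\mathcal Q^{-1}\otimes\sO_C(p)$ over $(\mathcal Q,p)$ (built from a Poincar\'e bundle on $\Pic^0(C)\times C$ and $\sO(\Theta)$ for the graph $\Theta\subset C^{\mathrm{sm}}\times C$). Every fiber of $\mathcal M$ has $h^0=1$, $h^1=0$, so $q_*\mathcal M$ is a line bundle whose formation commutes with base change, and the tautological section of $\mathcal M\otimes q^*(q_*\mathcal M)^{-1}$ is fiberwise a nonzerodivisor; its zero scheme $\mathcal Z\subset S\times C$ is thus a relative effective Cartier divisor, finite flat of degree $1$ over $S$, whence $q\colon\mathcal Z\xrightarrow{\ \sim\ }S$ and $\mathcal Z\subset S\times C^{\mathrm{sm}}$ (its fibers are, and $\mathcal Z$ is finite over $S$). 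The composite $S\xrightarrow{\ \sim\ }\mathcal Z\hookrightarrow S\times C^{\mathrm{sm}}\xrightarrow{\,r\,}C^{\mathrm{sm}}$ is the action map, and the group axioms hold because they may be checked fiberwise. One finally checks, as in \cite{ArtinM/TateJ/vdBerghM:1991}, that the singular points of $C$ are fixed, so the action extends to all of $C_\alpha$.

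The step I expect to be the real obstacle is not the argument above --- it goes through essentially verbatim for $C$ smooth, nodal, or cuspidal --- but the remaining degenerate anticanonical curves, namely the reducible ones (chains and cycles of rational curves) and, most of all, the non-reduced ones among the cases listed above, such as $y^2=0$: there $C^{\mathrm{sm}}=\emptyset$, no invertible sheaf on $C$ has Euler characteristic $1$, and the $\mathcal I_p$ fail to be locally free, so the clean statement must be recast as a rigidity statement for rank-$1$ torsion-free sheaves. That recasting is exactly the case-by-case analysis of \cite{ArtinM/TateJ/vdBerghM:1991}, and the content of Lemma \ref{lem:Ca_is_num_conn} is precisely what lets it proceed uniformly across all the degenerations.
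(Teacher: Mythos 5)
Your proposal takes essentially the same route as the paper: the paper's entire proof consists of the remark that, with Lemma \ref{lem:Ca_is_num_conn} supplying the numerical connectedness that was special to $\P^2$ and $\P^1\times\P^1$, the argument of \cite[Cor.~5.7]{ArtinM/TateJ/vdBerghM:1991} carries over directly, which is exactly your plan (and you additionally write out the reduced, smooth-point case in full, correctly using $\omega_{C_\alpha}\cong\sO_{C_\alpha}$ and the inequality $\Gamma\cdot\Delta=2h^0(\sO_\Gamma)\ge 2$). Your deferral of the singular points and the nonreduced anticanonical curves to the Artin--Tate--Van den Bergh analysis matches how the paper itself handles them, so there is no gap relative to the paper's own proof.
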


\begin{rem}
  Similarly, the proof of Proposition 5.9 and Lemma 5.10 op. cit. tells us
  that for any ${\cal Q}\in \Pic^0(C_\alpha)$, the corresponding action
  $\tau_{\cal Q}$ fixes every singular point of $C_\alpha$, stabilizes any
  irreducible component of $C_\alpha$, and for any invertible sheaf ${\cal
    L}$ one has
\[
\tau_{\cal Q}^* {\cal L}\cong {\cal L}\otimes {\cal Q}^{-\chi({\cal L})}.
\]
The last part of Proposition 5.9 op. cit. suggests that if the component
$C$ occurs with multiplicity $m$, then $\tau_{\cal Q}$ restricts to the
identity on $(m-1)C$.
\end{rem}

\subsection{Partitioning the moduli stack}

Since the type and singularities of a difference or differential equation
depends on how the anticanonical curve decomposes and interacts with the
map to $\P^1$, we would like to understand the corresponding decomposition
of ${\cal X}^\alpha_m$.

The simplest part of the decomposition is by whether the curve is smooth or
of multiplicative or additive degeneration.  More precisely, the ``Picard
type'' of an anticanonical rational surface is defined to be one of the
symbols $e$, $*$, or $+$, depending on whether $\Pic^0(C_\alpha)$ is an
elliptic curve, multiplicative group, or additive group.

\begin{lem}
  The anticanonical rational surfaces of Picard type $*$ or $+$ (with
  blowdown structure) form an irreducible codimension $1$ closed substack
  ${\cal X}^{\alpha,\bar{*}}_m$ of ${\cal X}^\alpha_m$.
\end{lem}

\begin{proof}
  If $m>0$, then we may use the line bundle $\sO_{C_\alpha}(e_m)$ to induce
  a map from $C_\alpha$ to a weighted projective space with generators of
  degrees $1$, $2$, $3$, and the image of $C_\alpha$ will be the
  Weierstrass model of $\Pic^0(C_\alpha)$; then ${\cal
    X}^{\alpha,\bar{*}}_m$ is cut out by an invariant equation of degree
  12.

  For $m\le 0$, we need merely observe that if we blow up a point of
  $C_\alpha$, this will not change the Picard type, and thus the condition
  remains closed of codimension 1.
\end{proof}

\begin{rem}
  We could also use the nef divisor $f$ on $X_0$ or $h$ on $\P^2$ and
  reduce to known results on hyperelliptic curves of genus 1 or cubic
  plane curves.  The Weierstrass case is particularly nice over $\Z[1/6]$,
  however, since then we obtain well-defined functions $a_4$, $a_6$ on the
  $G_m$-bundle over ${\cal X}^\alpha_m$ in which we have chosen a Poisson
  structure on $X$ (since that corresponds to a choice of nonzero
  holomorphic differential).  Then ${\cal X}^{\alpha,\bar{*}}_m$ is cut out in
  those coordinates by the equation $-64 a_4^3-432 a_6^2=0$.
\end{rem}

The above works over $\Z$, but the additive case behaves differently in
characteristic $2$ and $3$, and thus we have a slightly weaker result.

\begin{lem}
  Over an algebraically closed field or a $\Z[1/6]$-algebra, the
  anticanonical rational surfaces of Picard type $+$ (with blowdown
  structure) form an irreducible codimension $1$ closed substack
  ${\cal X}^{\alpha,+}_m$ of ${\cal X}^{\alpha,\bar{*}}_m$.
\end{lem}

\begin{rem}
  Over $\Z[1/6]$, the additive Weierstrass curves are cut out from the full
  stack of Weierstrass curves by the equations $a_4=a_6=0$, but over $\F_3$
  the equations have degrees $2$, $12$ and over $\F_2$ they have degrees
  $1$, $12$; the same holds for hyperelliptic and cubic models.
\end{rem}

One issue that arises with the stack ${\cal X}^{\alpha,\bar{*}}_m$ is that
it does not distinguish $0$ and $\infty$.  For instance, in the irreducible
case, there are two branches of $C_\alpha$ at the node, and the limit as we
approach the two points is different.  We can resolve this by taking a
suitable double cover of the moduli space.  Indeed, the Weierstrass model
has a unique singular point, which we may take to be $(0,0)$, giving a
curve of the form
\[
y^2 + a_1 xy = x^3+a_2 x^2.
\]
The two branches at $\infty$ are given by the roots of $b_1^2+a_1 b_1-a_2$,
and thus the double cover has a model of the form
\[
y^2 + c_1 xy = x^3,
\]
where $c_1=a_1+2b_1$ is uniquely determined if we have chosen a nonzero
holomorphic differential on $C_\alpha$.  The double cover is ramified along
${\cal X}^{\alpha,+}_m$, and the equation for the corresponding substack is
$c_1=0$, now valid over $\Z$.  If $C_\alpha$ has multiplicative type with
multiple components, then the double cover distinguishes the two branches
at any given one of the nodes, and that identification can then be carried
along the polygon.

We similarly define the ``combinatorial type'' of an anticanonical rational
surface with blowdown structure to be the (multi)set of pairs $(C_i,m_i)\in
\Pic(X_m)\times \Z$, where $C_i$ ranges over the components of the
anticanonical curve and $m_i$ is the corresponding multiplicity.  Again, if
we have two components with the same divisor class and multiplicity, we
will usually want to distinguish them.  We thus choose an ordering of the
components, which gives us a somewhat different way to describe the type:
if we have $c$ components, then we have a morphism $\phi:\Z^c\to \Pic(X_m)$
taking $e_i$ to $C_i$, and a vector $\mu\in \Z^c$ giving the
multiplicities, with the property that $\phi\mu = C_\alpha$.

The ``type'' of a rational surface is then defined to be the symbol
$(c,\phi,\mu)_\sigma$ where $(c,\phi,\mu)$ gives the combinatorial type and
$\sigma$ gives the Picard type.  Note that most of the type the Picard type
is determined by the combinatorial type: if $\max_i\mu_i>1$, then the
Picard type is $+$, while if $c>3$ and $\max_i\mu_i=1$, then the Picard
type is $*$.  So only the cases with $c\le 3$ and no multiplicities have
any ambiguity.  (And, of course, the only case with Picard type $e$ is
$(1,(-K),(1))_e$.)

It is, in principle, straightforward to determine the set of possible types
for any given $m$, as it is easy to write down a complete (though countably
infinite) list for $m=0$, and each point being blown up either lies on a
single component or on a set of mutually intersecting components.  One also
has the following.  Define the ``dimension'' of a type to be
\[
\dim((c,\phi,\mu)_\sigma)
=
\begin{cases}
  m+1 & \text{$c=1$, $\sigma=*$}\\
  m & \text{$c=1$, $\sigma=+$}\\
  m-1 & \text{$c=2$, $\mu=(11)$, $\sigma=+$}\\
  m-2 & \text{$c=3$, $\mu=(111)$, $\sigma=+$}\\
  2m-6 + \sum_{1\le i\le c} \phi_i\cdot (\phi_i-K)/2, & \text{otherwise.}
\end{cases}
\]

\begin{lem}
  For any type $(c,\phi,\mu)_\sigma$, the anticanonical rational surfaces
  of that type form an irreducible Artin stack of dimension
  $\dim((c,\phi,\mu)_\sigma)$, which maps to a locally closed substack of
  the double cover of ${\cal X}^{\alpha,\bar{*},m}$.
\end{lem}

\begin{proof}
  We proceed by induction on $m$.  For $m=-1$, the result is
  straightforward: since we have labelled the components, we may construct
  the stack inside the product of the linear systems on $\P^2$, and the
  various conditions we want to impose or exclude are closed, with known
  codimension; subtracting $8=\dim\Aut(\P^2)$ then gives the desired result
  for the dimension.  For most cases with $m=0$, we can proceed similarly;
  when the combinatorial type includes a section of nonpositive
  self-intersection $-d$, the surface must be $F_d$, while if
  it otherwise contains a component having negative intersection with
  $s-f$, it must be $F_0$ or $F_1$.  The only remaining cases are
  \[
  (1,(2s+2f),(1))_{e/*/+},\quad\text{and}\quad
  (2,(s+f,s+f),(1,1))_{*/+}.
 \notag
  \]
  The anticanonical curve on such a surface has a natural hyperelliptic
  model, and the surface itself is determined by this model together with a
  class in $\Pic^0$ (which is trivial iff the surface is $F_2$), and thus
  the stack has dimension 1 more than the corresponding stack of
  hyperelliptic curves.

  For $m>0$, the type of $X_m$ determines the type of $X_{m-1}$ as well as
  the set of components the point being blown up lies on.  Irreducibility
  is then almost immediate, as the fibers of the stack parametrizing $X_m$
  over the stack parametrizing $X_{m-1}$ are either
  single points or open subsets of curves.  (Here we use the fact that we
  have chosen branches at the singular points, and thus when blowing up a
  point on a surface of type $(2,\phi,(1,1))_*$, we can distinguish the two
  points.)

  It remains only to show that change in the dimension of the type is the
  same as the change in dimension of the classifying stack.  If neither
  type falls into one of the exceptions in the dimension formula, then the
  change in the dimension of the type is indeed $1$ or $0$ depending on
  whether the point being blown up is on $1$ or $2$ components.  It is then
  straightforward to deal with the exceptions case-by-case.
\end{proof}

In particular, this tells us how many parameters a given type has, modulo
the effect of automorphisms.

Of course, simply knowing the possible types of degenerations of surfaces
(or of difference equations) is only part of the story: in general, we
would like to understand the limiting relations between the different
types.  That is, we would like to know which types appear in the closure in
${\cal X}^\alpha_m$ of the locally closed substack corresponding to a
particular type.  Note that this is somewhat more than just a poset, as a
given type may appear in multiple ways in the closure whenever there are
ambiguities in the labelling.  In particular, we expect the answer to be an
ordered category rather than simply a poset.  (There is a further issue, in
that there are cases in small characteristic in which the intersection of
two closures of types is not a union of types, see below.)

We have not been able to answer this question completely, but do have a
couple of necessary conditions, the simpler of which is as follows.  Define
a (na\"{i}ve) category structure on the set of types as follows.  Let
$(c_1,\phi_1,\mu_1)_{\sigma_1}$, $(c_2,\phi_2,\mu_2)_{\sigma_2}$ be a pair
of types.  If $\sigma_1>\sigma_2$ relative to the order $e>*>+$, then there
are no morphisms between the types, while otherwise a morphism is given by
a linear transformation $\psi:\Z^{c_2}\to \Z^{c_1}$ with nonnegative
coefficients such that $\phi_2=\phi_1\circ \psi$, $\psi(\mu_2)=\mu_1$.
Note that any endomorphism in this category is an automorphism, so this is
indeed an ordered category.

\begin{lem}
  A flat morphism from a dvr to ${\cal X}^\alpha_m$ induces a morphism from
  the type of the (surface corresponding to the) special fiber to the type
  of the generic fiber.
\end{lem}

\begin{proof}
  This is certainly true for the Picard type, so it suffices to consider
  the combinatorial type.  Making a quasi-finite flat base change as
  necessary, we may assume that the irreducible components of the
  anticanonical curve on the generic fiber are geometrically irreducible,
  and then choose an ordering on the components to induce a well-defined
  combinatorial type.  The corresponding family of anticanonical curves
  defines a divisor on the family of surfaces, not containing any fiber.
  As a result, the same applies to each irreducible component of that
  divisor.  Those are in one-to-one correspondence with the components of
  the generic anticanonical curve, and thus (by restriction) induces a
  divisor on the special fiber corresponding to each component of the
  generic anticanonical curve.  Each such divisor is contained in the
  anticanonical curve on the special fiber, and is thus a sum of geometric
  components of that curve.  The linear transformation such that $\psi_i$ is
  the linear combination of components corresponding to the restriction of
  $C_i$ gives a morphism of combinatorial types as required.
\end{proof}

We call a morphism in the category of types ``effective'' if it arises from
a family over a dvr, and ``strongly effective'' if there is a family over a
dvr such that the special fiber is the {\em generic} surface of the given
type.  It is easy to see that if $\psi_1$, $\psi_2$ are morphisms and
$\psi_1$ is strongly effective, then $\psi_2\circ \psi_1$ is (strongly)
effective iff $\psi_2$ is (strongly) effective.

The one technique we have (other than producing an explicit deformation)
for proving strong effectiveness is the following.

\begin{prop}
  Let $\psi:T_1\to T_2$ be a morphism such that for any factorization
  \[
  \begin{CD}
  T_1 @>\psi_1>> T' @>\psi_2>> T_2
  \end{CD}
  \notag
  \]
  of $\psi$ with $T'\not\cong T_2$, $\dim(T')<\dim(T_2)$.  Then $\psi$ is
  strongly effective.
\end{prop}

\begin{proof}
  Let $T_2=(c,\psi,\mu)$, and let $Y$ be the fiber product $\prod_i
  |\psi_i|$ of linear systems on $X_m$.  There is a natural map from $Y$ to
  ${\cal X}^\alpha_m$ given by taking the anticanonical curve to be $\sum_i
  \mu_i D_i$ where $D_i$ is the fiber of $|\psi_i|$.  It follows from
  Proposition \ref{prop:linsys} that $Y$ has dimension at least $2m-6 +
  \sum_{1\le i\le c} \phi_i\cdot (\phi_i-K)/2$.  Moreover, if the
  combinatorial type does not force the Picard type, then imposing the
  corresponding condition introduces one or two more hypersurfaces as
  appropriate.  We thus obtain a stack $Y_{T_2}$ of dimension at least
  $\dim(T_2)$ everywhere locally.

  Now, it is easy to see that this construction defines a functor on the
  category of types, and thus we have a morphism $Y_{T_1}\to Y_{T'}\to
  Y_{T_2}$ for any factorization as described.  Moreover, the moduli stack
  of surfaces of type precisely $T$ embeds as an open substack of
  $Y_T$ for each $T$.

  Now, if $\psi$ were not strongly effective, then the substack
  corresponding to $T_1$ would necessarily meet some component of $Y_{T_2}$
  not containing the substack corresponding to $T_2$.  Any such component
  has dimension at least $\dim(T_2)$, and thus the type $T'$ of its generic
  fiber has at least that dimension.  But the existence of surfaces of type
  $T'$ in $Y_{T_2}$ implies that there is a morphism $T'\to T_2$, and we
  thus obtain a factorization of precisely the form we excluded.
\end{proof}

The following special cases are fairly straightforward, if occasionally
tedious; we omit the details.

\begin{cor}
  Any morphism between multiplicative types is strongly effective.
\end{cor}

\begin{cor}
  Any morphism to a type $(c,\phi,\mu)$ with $c\le 3$, $\max_i \mu_i=1$ is
  strongly effective.
\end{cor}

\begin{cor}
  For $m\le 1$, any morphism is strongly effective.
\end{cor}

Unfortunately, this is not the full story in general.  To see this,
consider the moduli stack of singular del Pezzo surfaces of degree 1.  In
our terms, this is the quotient stack ${\cal X}^{\alpha,\ge -2}_7/W(E_8)$,
and thus in particular inherits a decomposition from the one we have
constructed on ${\cal X}^{\alpha,\ge 2}_7$.  We can thus gain insight into
the geometry of this decomposition by considering the quotient.  In
characteristic 0, a singular del Pezzo surface has an equation of the form
\[
y^2 = x^3 + a_4(x,w)x + a_6(x,w)
\]
where $a_4$, $a_6$ are homogeneous of the given degree.  If we blow up the
base point of the anticanonical linear system, we obtain an elliptic
surface, and the type (mod $W(E_8)$) of the anticanonical curve $w=0$ can
be read off from the Kodaira symbol of the corresponding fiber of the
elliptic surface.  In particular, we can read off a parametrization of the
surfaces of any given type from Tate's algorithm (see specifically the
discussion in \cite[\S IV.9]{SilvermanJH:1994}), and then perform an
elimination to determine the equations satisfied by the coefficients of
$a_4$, $a_6$ in general.  (There is a minor issue, in that the scheme
corresponding to type $I_8$ is reducible, but it is easy enough to factor.)
We thus obtain a collection of $22$ ideals corresponding to the different
possible special fibers, and it is easy enough to both determine the
containment relations between the different ideals and to verify that the
intersection of any two of the closures is a union of closures of types.
We thus obtain a stratification of the moduli stack of del Pezzo surfaces
{\em in characteristic 0}, which we may denote pictorially as in Figure
1.\footnote{These Hasse diagrams were adapted from the \TeX\ code for the
  analogous diagram in \cite{JoshiN/NakazonoN/ShiY:2016}.}  (Here we
specify the root system of the $-2$-curves rather than the Kodaira symbol.)
The open stratum $A_0^e$ is $m+2=9$-dimensional, and each successive column
decreases the dimension by $1$.  For surfaces with $K^2=0$, one obtains the
same diagram, with dimensions increased by $1$ (and affine rather than
finite root systems), since ${\cal X}^{\alpha,\ge -2}_8$ is the universal
$\Pic^0(C_\alpha)$ over ${\cal X}^{\alpha,\ge -2}_7$, and the map on types
coming from blowing down is clearly bijective.

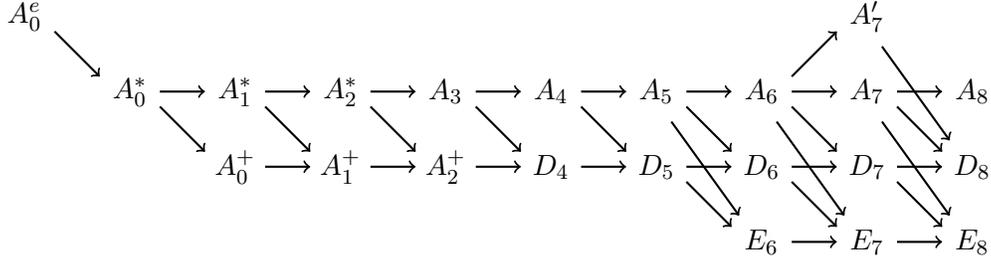
\begin{figure}[t]
\centering
\begin{tikzpicture}[scale = 1]
\begin{scope}
\coordinate (P11s) at (0,0);
\coordinate (P10e) at ($(P11s)-(0.8,0)$);
\coordinate (P10s) at ($(P10e)-(0.6,0)$);
\coordinate (P11e) at ($(P11s)+(0.6,0)$);
\coordinate (P12s) at ($(P11e)+(0.8,0)$);
\coordinate (P12e) at ($(P12s)+(0.6,0)$);
\coordinate (P13s) at ($(P12e)+(0.8,0)$);
\coordinate (P13e) at ($(P13s)+(0.6,0)$);
\coordinate (P14s) at ($(P13e)+(0.8,0)$);
\coordinate (P14e) at ($(P14s)+(0.6,0)$);
\coordinate (P15s) at ($(P14e)+(0.8,0)$);
\coordinate (P15e) at ($(P15s)+(0.6,0)$);
\coordinate (P16s) at ($(P15e)+(0.8,0)$);
\coordinate (P16e) at ($(P16s)+(0.6,0)$);
\coordinate (P17s) at ($(P16e)+(0.8,0)$);
\coordinate (P17e) at ($(P17s)+(0.6,0)$);
\coordinate (P18s) at ($(P17e)+(0.8,0)$);
\coordinate (P18e) at ($(P18s)+(0.6,0)$);
\coordinate (P19s) at ($(P18e)+(0.8,0)$);
\coordinate (P19e) at ($(P19s)+(0.6,0)$);
\coordinate (P21s) at (0,-1);
\coordinate (P20e) at ($(P21s)-(0.8,0)$);
\coordinate (P20s) at ($(P20e)-(0.6,0)$);
\coordinate (P21e) at ($(P21s)+(0.6,0)$);
\coordinate (P22s) at ($(P21e)+(0.8,0)$);
\coordinate (P22e) at ($(P22s)+(0.6,0)$);
\coordinate (P23s) at ($(P22e)+(0.8,0)$);
\coordinate (P23e) at ($(P23s)+(0.6,0)$);
\coordinate (P24s) at ($(P23e)+(0.8,0)$);
\coordinate (P24e) at ($(P24s)+(0.6,0)$);
\coordinate (P25s) at ($(P24e)+(0.8,0)$);
\coordinate (P25e) at ($(P25s)+(0.6,0)$);
\coordinate (P26s) at ($(P25e)+(0.8,0)$);
\coordinate (P26e) at ($(P26s)+(0.6,0)$);
\coordinate (P27s) at ($(P26e)+(0.8,0)$);
\coordinate (P27e) at ($(P27s)+(0.6,0)$);
\coordinate (P28s) at ($(P27e)+(0.8,0)$);
\coordinate (P28e) at ($(P28s)+(0.6,0)$);
\coordinate (P29s) at ($(P28e)+(0.8,0)$);
\coordinate (P29e) at ($(P29s)+(0.6,0)$);
\coordinate (P31s) at (0,-2);
\coordinate (P31e) at ($(P31s)+(0.6,0)$);
\coordinate (P32s) at ($(P31e)+(0.8,0)$);
\coordinate (P32e) at ($(P32s)+(0.6,0)$);
\coordinate (P33s) at ($(P32e)+(0.8,0)$);
\coordinate (P33e) at ($(P33s)+(0.6,0)$);
\coordinate (P34s) at ($(P33e)+(0.8,0)$);
\coordinate (P34e) at ($(P34s)+(0.6,0)$);
\coordinate (P35s) at ($(P34e)+(0.8,0)$);
\coordinate (P35e) at ($(P35s)+(0.6,0)$);
\coordinate (P36s) at ($(P35e)+(0.8,0)$);
\coordinate (P36e) at ($(P36s)+(0.6,0)$);
\coordinate (P37s) at ($(P36e)+(0.8,0)$);
\coordinate (P37e) at ($(P37s)+(0.6,0)$);
\coordinate (P38s) at ($(P37e)+(0.8,0)$);
\coordinate (P38e) at ($(P38s)+(0.6,0)$);
\coordinate (P39s) at ($(P38e)+(0.8,0)$);
\coordinate (P39e) at ($(P39s)+(0.6,0)$);
\coordinate (P41s) at (0,-3);
\coordinate (P41e) at ($(P41s)+(0.6,0)$);
\coordinate (P42s) at ($(P41e)+(0.8,0)$);
\coordinate (P42e) at ($(P42s)+(0.6,0)$);
\coordinate (P43s) at ($(P42e)+(0.8,0)$);
\coordinate (P43e) at ($(P43s)+(0.6,0)$);
\coordinate (P44s) at ($(P43e)+(0.8,0)$);
\coordinate (P44e) at ($(P44s)+(0.6,0)$);
\coordinate (P45s) at ($(P44e)+(0.8,0)$);
\coordinate (P45e) at ($(P45s)+(0.6,0)$);
\coordinate (P46s) at ($(P45e)+(0.8,0)$);
\coordinate (P46e) at ($(P46s)+(0.6,0)$);
\coordinate (P47s) at ($(P46e)+(0.8,0)$);
\coordinate (P47e) at ($(P47s)+(0.6,0)$);
\coordinate (P48s) at ($(P47e)+(0.8,0)$);
\coordinate (P48e) at ($(P48s)+(0.6,0)$);
\coordinate (P49s) at ($(P48e)+(0.8,0)$);
\coordinate (P49e) at ($(P49s)+(0.6,0)$);
% nodes with correct labels
\node at ($(P10s)-(0.4,0)$){$A_0^e$};
\node at ($(P21s)-(0.4,0)$){$A_0^*$};
\node at ($(P22s)-(0.4,0)$){$A_1^*$};
\node at ($(P23s)-(0.4,0)$){$A_2^*$};
\node at ($(P24s)-(0.4,0)$){$A_3$};
\node at ($(P25s)-(0.4,0)$){$A_4$};
\node at ($(P26s)-(0.4,0)$){$A_5$};
\node at ($(P27s)-(0.4,0)$){$A_6$};
\node at ($(P28s)-(0.4,0)$){$A_7$};
\node at ($(P29s)-(0.4,0)$){$A_8$};
\node at ($(P18s)-(0.4,0)$){$A_7'$};
\node at ($(P32s)-(0.4,0)$){$A_0^+$};
\node at ($(P33s)-(0.4,0)$){$A_1^+$};
\node at ($(P34s)-(0.4,0)$){$A_2^+$};
\node at ($(P35s)-(0.4,0)$){$D_4$};
\node at ($(P36s)-(0.4,0)$){$D_5$};
\node at ($(P37s)-(0.4,0)$){$D_6$};
\node at ($(P38s)-(0.4,0)$){$D_7$};
\node at ($(P39s)-(0.4,0)$){$D_8$};
\node at ($(P47s)-(0.4,0)$){$E_6$};
\node at ($(P48s)-(0.4,0)$){$E_7$};
\node at ($(P49s)-(0.4,0)$){$E_8$};
% nodes with debugging labels
%\node at ($(P10s)-(0.4,0)$){$P10$};
%
%\node at ($(P21s)-(0.4,0)$){$P21$};
%\node at ($(P22s)-(0.4,0)$){$P22$};
%\node at ($(P23s)-(0.4,0)$){$P23$};
%\node at ($(P24s)-(0.4,0)$){$P24$};
%\node at ($(P25s)-(0.4,0)$){$P25$};
%\node at ($(P26s)-(0.4,0)$){$P26$};
%\node at ($(P27s)-(0.4,0)$){$P27$};
%\node at ($(P28s)-(0.4,0)$){$P28$};
%\node at ($(P29s)-(0.4,0)$){$P29$};
%
%\node at ($(P18s)-(0.4,0)$){$P18$};
%
%\node at ($(P32s)-(0.4,0)$){$P32$};
%\node at ($(P33s)-(0.4,0)$){$P33$};
%\node at ($(P34s)-(0.4,0)$){$P34$};
%\node at ($(P35s)-(0.4,0)$){$P35$};
%\node at ($(P36s)-(0.4,0)$){$P36$};
%\node at ($(P37s)-(0.4,0)$){$P37$};
%\node at ($(P38s)-(0.4,0)$){$P38$};
%\node at ($(P39s)-(0.4,0)$){$P39$};
%
%\node at ($(P47s)-(0.4,0)$){$P47$};
%\node at ($(P48s)-(0.4,0)$){$P48$};
%\node at ($(P49s)-(0.4,0)$){$P49$};
% arrows
\draw [->, thick] (P21s)--(P21e);% A_0^*->A_1^*
\draw [->, thick] (P22s)--(P22e);% A_1^*->A_2^*
\draw [->, thick] (P23s)--(P23e);% A_2^*->A_3
\draw [->, thick] (P24s)--(P24e);% A_3  ->A_4
\draw [->, thick] (P25s)--(P25e);% A_4  ->A_5
\draw [->, thick] (P26s)--(P26e);% A_5  ->A_6
\draw [->, thick] (P27s)--(P27e);% A_6  ->A_7
\draw [->, thick] (P28s)--(P28e);% A_7  ->A_8
\draw [->, thick] (P32s)--(P32e);% A_0^+->A_1^+
\draw [->, thick] (P33s)--(P33e);% A_1^+->A_2^+
\draw [->, thick] (P34s)--(P34e);% A_2^+->D_4
\draw [->, thick] (P35s)--(P35e);% D_4  ->D_5
\draw [->, thick] (P36s)--(P36e);% D_5  ->D_6
\draw [->, thick] (P37s)--(P37e);% D_6  ->D_7
\draw [->, thick] (P38s)--(P38e);% D_7  ->D_8
\draw [->, thick] (P47s)--(P47e);% E_6  ->E_7
\draw [->, thick] (P48s)--(P48e);% E_7  ->E_8
\draw [->, thick] ($(P27s)+(0,0.2)$)--($(P17e)-(0,0.2)$); %A_6->A_7'
\draw [->, thick] ($(P10s)-(0,0.2)$)--($(P20e)+(0,0.2)$); %A_0^e->A_0^*
\draw [->, thick] ($(P21s)-(0,0.2)$)--($(P31e)+(0,0.2)$); %A_0^*->A_0^+
\draw [->, thick] ($(P22s)-(0,0.2)$)--($(P32e)+(0,0.2)$); %A_1^*->A_1^+
\draw [->, thick] ($(P23s)-(0,0.2)$)--($(P33e)+(0,0.2)$); %A_2^*->A_2^+
\draw [->, thick] ($(P24s)-(0,0.2)$)--($(P34e)+(0,0.2)$); %A_3->D_4
\draw [->, thick] ($(P25s)-(0,0.2)$)--($(P35e)+(0,0.2)$); %A_4->D_5
\draw [->, thick] ($(P26s)-(0,0.2)$)--($(P36e)+(0,0.2)$); %A_5->D_6
\draw [->, thick] ($(P27s)-(0,0.2)$)--($(P37e)+(0,0.2)$); %A_6->D_7
\draw [->, thick] ($(P28s)-(0,0.2)$)--($(P38e)+(0,0.2)$); %A_7->D_8
\draw [->, thick] ($(P38s)-(0,0.2)$)--($(P48e)+(0,0.2)$); %D_7->E_8
\draw [->, thick] ($(P36s)-(0,0.2)$)--($(P46e)+(0,0.2)$); %D_6->E_6
\draw [->, thick] ($(P37s)-(0,0.2)$)--($(P47e)+(0,0.2)$); %D_7->E_7
\draw [->, thick] ($(P26s)-(0.2,0.4)$)--($(P46e)+(0.1,0.35)$);% A_5->E_6
\draw [->, thick] ($(P27s)-(0.2,0.4)$)--($(P47e)+(0.1,0.35)$);% A_6->E_7
\draw [->, thick] ($(P18s)-(0.2,0.4)$)--($(P38e)+(0.1,0.35)$);% A_7'->D_8
\draw [->, thick] ($(P28s)-(0.2,0.4)$)--($(P48e)+(0.1,0.35)$);% A_7->E_8
\end{scope}
\end{tikzpicture}
\caption{The natural stratification of ${\cal X}_7^{\alpha,\ge -2}/W(E_8)$
  over $\Z[1/6]$.}
\end{figure}
\begin{figure}[t]
\centering
\begin{tikzpicture}[scale = 1]
\begin{scope}
\coordinate (P11s) at (0,0);
\coordinate (P10e) at ($(P11s)-(0.8,0)$);
\coordinate (P10s) at ($(P10e)-(0.6,0)$);
\coordinate (P11e) at ($(P11s)+(0.6,0)$);
\coordinate (P12s) at ($(P11e)+(0.8,0)$);
\coordinate (P12e) at ($(P12s)+(0.6,0)$);
\coordinate (P13s) at ($(P12e)+(0.8,0)$);
\coordinate (P13e) at ($(P13s)+(0.6,0)$);
\coordinate (P14s) at ($(P13e)+(0.8,0)$);
\coordinate (P14e) at ($(P14s)+(0.6,0)$);
\coordinate (P15s) at ($(P14e)+(0.8,0)$);
\coordinate (P15e) at ($(P15s)+(0.6,0)$);
\coordinate (P16s) at ($(P15e)+(0.8,0)$);
\coordinate (P16e) at ($(P16s)+(0.6,0)$);
\coordinate (P17s) at ($(P16e)+(0.8,0)$);
\coordinate (P17e) at ($(P17s)+(0.6,0)$);
\coordinate (P18s) at ($(P17e)+(0.8,0)$);
\coordinate (P18e) at ($(P18s)+(0.6,0)$);
\coordinate (P19s) at ($(P18e)+(0.8,0)$);
\coordinate (P19e) at ($(P19s)+(0.6,0)$);

\coordinate (P21s) at (0,-1);
\coordinate (P20e) at ($(P21s)-(0.8,0)$);
\coordinate (P20s) at ($(P20e)-(0.6,0)$);
\coordinate (P21e) at ($(P21s)+(0.6,0)$);
\coordinate (P22s) at ($(P21e)+(0.8,0)$);
\coordinate (P22e) at ($(P22s)+(0.6,0)$);
\coordinate (P23s) at ($(P22e)+(0.8,0)$);
\coordinate (P23e) at ($(P23s)+(0.6,0)$);
\coordinate (P24s) at ($(P23e)+(0.8,0)$);
\coordinate (P24e) at ($(P24s)+(0.6,0)$);
\coordinate (P25s) at ($(P24e)+(0.8,0)$);
\coordinate (P25e) at ($(P25s)+(0.6,0)$);
\coordinate (P26s) at ($(P25e)+(0.8,0)$);
\coordinate (P26e) at ($(P26s)+(0.6,0)$);
\coordinate (P27s) at ($(P26e)+(0.8,0)$);
\coordinate (P27e) at ($(P27s)+(0.6,0)$);
\coordinate (P28s) at ($(P27e)+(0.8,0)$);
\coordinate (P28e) at ($(P28s)+(0.6,0)$);
\coordinate (P29s) at ($(P28e)+(0.8,0)$);
\coordinate (P29e) at ($(P29s)+(0.6,0)$);

\coordinate (P31s) at (0,-2);
\coordinate (P31e) at ($(P31s)+(0.6,0)$);
\coordinate (P32s) at ($(P31e)+(0.8,0)$);
\coordinate (P32e) at ($(P32s)+(0.6,0)$);
\coordinate (P33s) at ($(P32e)+(0.8,0)$);
\coordinate (P33e) at ($(P33s)+(0.6,0)$);
\coordinate (P34s) at ($(P33e)+(0.8,0)$);
\coordinate (P34e) at ($(P34s)+(0.6,0)$);
\coordinate (P35s) at ($(P34e)+(0.8,0)$);
\coordinate (P35e) at ($(P35s)+(0.6,0)$);
\coordinate (P36s) at ($(P35e)+(0.8,0)$);
\coordinate (P36e) at ($(P36s)+(0.6,0)$);
\coordinate (P37s) at ($(P36e)+(0.8,0)$);
\coordinate (P37e) at ($(P37s)+(0.6,0)$);
\coordinate (P38s) at ($(P37e)+(0.8,0)$);
\coordinate (P38e) at ($(P38s)+(0.6,0)$);
\coordinate (P39s) at ($(P38e)+(0.8,0)$);
\coordinate (P39e) at ($(P39s)+(0.6,0)$);

\coordinate (P41s) at (0,-3);
\coordinate (P41e) at ($(P41s)+(0.6,0)$);
\coordinate (P42s) at ($(P41e)+(0.8,0)$);
\coordinate (P42e) at ($(P42s)+(0.6,0)$);
\coordinate (P43s) at ($(P42e)+(0.8,0)$);
\coordinate (P43e) at ($(P43s)+(0.6,0)$);
\coordinate (P44s) at ($(P43e)+(0.8,0)$);
\coordinate (P44e) at ($(P44s)+(0.6,0)$);
\coordinate (P45s) at ($(P44e)+(0.8,0)$);
\coordinate (P45e) at ($(P45s)+(0.6,0)$);
\coordinate (P46s) at ($(P45e)+(0.8,0)$);
\coordinate (P46e) at ($(P46s)+(0.6,0)$);
\coordinate (P47s) at ($(P46e)+(0.8,0)$);
\coordinate (P47e) at ($(P47s)+(0.6,0)$);
\coordinate (P48s) at ($(P47e)+(0.8,0)$);
\coordinate (P48e) at ($(P48s)+(0.6,0)$);
\coordinate (P49s) at ($(P48e)+(0.8,0)$);
\coordinate (P49e) at ($(P49s)+(0.6,0)$);
% nodes
\node at ($(P10s)-(0.4,0)$){$A_0^e$};

\node at ($(P21s)-(0.4,0)$){$A_0^*$};
\node at ($(P22s)-(0.4,0)$){$A_1^*$};
\node at ($(P23s)-(0.4,0)$){$A_2^*$};
\node at ($(P24s)-(0.4,0)$){$A_3$};
\node at ($(P25s)-(0.4,0)$){$A_4$};
\node at ($(P26s)-(0.4,0)$){$A_5$};
\node at ($(P27s)-(0.4,0)$){$A_6$};
\node at ($(P18s)-(0.4,0)$){$A_7$};
\node at ($(P28s)-(0.4,0)$){$A_7'$};
\node at ($(P29s)-(0.4,0)$){$A_8$};

\node at ($(P32s)-(0.4,0)$){$A_0^+$};
\node at ($(P33s)-(0.4,0)$){$A_1^+$};
\node at ($(P34s)-(0.4,0)$){$A_2^+$};
\node at ($(P35s)-(0.4,0)$){$D_4$};
\node at ($(P36s)-(0.4,0)$){$D_5$};
\node at ($(P37s)-(0.4,0)$){$D_6$};
\node at ($(P38s)-(0.4,0)$){$D_7$};
\node at ($(P39s)-(0.4,0)$){$D_8$};
\node at ($(P47s)-(0.4,0)$){$E_6$};
\node at ($(P48s)-(0.4,0)$){$E_7$};
\node at ($(P49s)-(0.4,0)$){$E_8$};
% nodes with debugging labels
%\node at ($(P18s)-(0.4,0)$){$P18$};
%\node at ($(P21s)-(0.4,0)$){$P21$};
%\node at ($(P22s)-(0.4,0)$){$P22$};
%\node at ($(P23s)-(0.4,0)$){$P23$};
%\node at ($(P24s)-(0.4,0)$){$P24$};
%\node at ($(P25s)-(0.4,0)$){$P25$};
%\node at ($(P26s)-(0.4,0)$){$P26$};
%\node at ($(P27s)-(0.4,0)$){$P27$};
%\node at ($(P28s)-(0.4,0)$){$P28$};
%\node at ($(P29s)-(0.4,0)$){$P29$};
%\node at ($(P35s)-(0.4,0)$){$P35$};
%\node at ($(P36s)-(0.4,0)$){$P36$};
%\node at ($(P37s)-(0.4,0)$){$P37$};
%\node at ($(P38s)-(0.4,0)$){$P38$};
%\node at ($(P39s)-(0.4,0)$){$P39$};
%\node at ($(P47s)-(0.4,0)$){$P47$};
%\node at ($(P48s)-(0.4,0)$){$P48$};
%\node at ($(P49s)-(0.4,0)$){$P49$};

% edges
\draw [->, thick] (P21s)--(P21e);
\draw [->, thick] (P22s)--(P22e);
\draw [->, thick] (P23s)--(P23e);
\draw [->, thick] (P24s)--(P24e);
\draw [->, thick] (P25s)--(P25e);
\draw [->, thick] (P26s)--(P26e);
\draw [->, thick] (P27s)--(P27e);

\draw [->, thick] (P35s)--(P35e);
\draw [->, thick] (P36s)--(P36e);
\draw [->, thick] (P37s)--(P37e);
\draw [->, thick] (P38s)--(P38e);

\draw [->, thick] (P47s)--(P47e);
\draw [->, thick] (P48s)--(P48e);

\draw [->, thick] ($(P27s)+(0,0.2)$)--($(P17e)-(0,0.2)$);
\draw [->, thick] ($(P18s)-(0,0.2)$)--($(P28e)+(0,0.2)$);

\draw [->, thick] ($(P24s)-(0,0.2)$)--($(P34e)+(0,0.2)$);
\draw [->, thick] ($(P25s)-(0,0.2)$)--($(P35e)+(0,0.2)$);
\draw [->, thick] ($(P26s)-(0,0.2)$)--($(P36e)+(0,0.2)$);
\draw [->, thick] ($(P27s)-(0,0.2)$)--($(P37e)+(0,0.2)$);
\draw [->, thick] ($(P28s)-(0,0.2)$)--($(P38e)+(0,0.2)$);

\draw [->, thick] ($(P36s)-(0,0.2)$)--($(P46e)+(0,0.2)$);
\draw [->, thick] ($(P37s)-(0,0.2)$)--($(P47e)+(0,0.2)$);
\draw [->, thick] ($(P38e)+(0.3,-0.3)$)--($(P48e)+(0.3,0.3)$);

\draw [->, thick] ($(P26s)-(0.2,0.4)$)--($(P46e)+(0.1,0.35)$);
\draw[->, thick] ($(P28s)-(0.2,0.2)$) .. controls ($(P38s)+(0.3,0)$) .. ($(P48s)+(-0.1,0.2)$);
\draw[->, thick] ($(P29s)-(0.2,0.2)$) .. controls ($(P39s)+(0.1,0)$) .. ($(P49s)+(-0.1,0.2)$);

\draw [->, thick] ($(P10s)-(0,0.2)$)--($(P20e)+(0,0.2)$); %A_0^e->A_0^*
\draw [->, thick] ($(P21s)-(0,0.2)$)--($(P31e)+(0,0.2)$); %A_0^*->A_0^+
\draw [->, thick] ($(P22s)-(0,0.2)$)--($(P32e)+(0,0.2)$); %A_1^*->A_1^+
\draw [->, thick] ($(P23s)-(0,0.2)$)--($(P33e)+(0,0.2)$); %A_2^*->A_2^+
\draw [->, thick] (P32s)--(P32e);% A_0^+->A_1^+
\draw [->, thick] (P33s)--(P33e);% A_1^+->A_2^+
\draw [->, thick] (P34s)--(P34e);% A_2^+->D_4

% this edge is missing from Sakai!
\draw [->, thick] ($(P18s)-(0.2,0.4)$)--($(P38e)+(0.1,0.35)$);

\end{scope}
\end{tikzpicture}
\caption{The na\"{i}ve poset of types in ${\cal X}_7^{\alpha,\ge -2}/W(E_8)$.}
\end{figure}
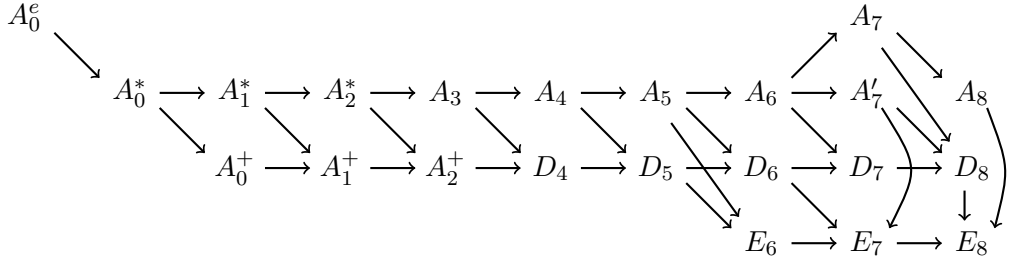

Now, the components of the anticanonical curve of a del Pezzo surface of
degree 1 all have negative self-intersection, so are rigid; as a result, if
there is a morphism at all between two such types, it is unique.  We find
in particular that the corresponding subcategory of the category of types
is simply the natural poset of root subsystems of $E_8$ (as extended by the
Picard type), ordered by inclusion.  However, the resulting diagram (Figure
2) is slightly different (essentially from \cite{SakaiH:2001}, except that
we have added the missing arrow from $A_7$ to $D_8$; note that $A_7$ and
$A_7'$ have swapped positions in the diagram to avoid arrows from the top
row to the bottom row in each case).
In particular, we find that although the combinatorics suggests that there
should be degenerations
\[
A'_7\to E_7,\quad
A'_7\to E_8,\quad
 D_8\to E_8,\quad
 A_8\to E_8,
\]
the corresponding morphisms are not effective.  Note that the three cases
corresponding to vertical arrows cannot possibly be {\em strongly}
effective, since the dimensions of the corresponding substacks are the same.

In fact, the situation is even worse than this suggests: any surface of
type $E_8$ in characteristic 3 can be obtained as the reduction mod 3 of a
surface of type $A_8$ over a suitable $3$-adic field.  (Similar statements
hold for the other three cases in characteristic 2.)  Even if one restricts
ones attention to equicharacteristic deformations, there are still issues:
in characteristic 3, one can obtain {\em some} surfaces of type $E_8$ as
limits from type $A_8$.  As a result, our decomposition of the moduli stack
of del Pezzo surfaces is not a stratification in characteristic 3 or over
$\Z$, as the closure of type $A_8$ meets type $E_8$ in a proper substack.

  Indeed, over a field, a surface with an $A_8$ singularity has the form
  \[
  y^2 +(a_{11}t+a_{10}u) xy + a_{30}u^3 y = x^3,
  \]
  up to changes of basis in $x$ and $y$ (but with no such changes of basis
  in $t$ and $u$ required).  The discriminant of this surface has the form
  \[
  c_1^2 t^3 u^9 + 3 c_1c_2 t^2 u^{10} + 3 c_2^2 t u^{11} + c_3 u^{12}
  \]
  for suitable functions $c_1$, $c_2$, $c_3$ of the parameters.  The
  discriminant of an $E_8$ surface in characteristic $\notin \{2,3\}$ has
  degree precisely 2 in $t$, but such a discriminant is not in the Zariski
  closure of the above set of discriminants, and thus there can be no
  degeneration from $A_8$ to $E_8$ in such cases.  On the other hand,
  consider the del Pezzo surface
\[
y^2 = x^3 - \frac{243t^2-54tu-u^2}{4}x^2 - \frac{3u^2(27t-5u)}{2}x
-\frac{tu^4(27t-4u)}{4}
\]
over the rationals.  This has an $A_8$ singularity at $y=x=u=0$, but
modulo $3$ becomes the surface
\[
y^2 = x^3 + u^2 x^2 + t u^5,
\]
which now has an $E_8$ singularity at $y=x=u=0$.  A characteristic 3
surface with an $A_8$ singularity at $u=0$ has discriminant of
the form $u^9(at^3+bu^3)$, and thus cannot degenerate to the above surface
of discriminant $-tu^{11}$.  However, the family of surfaces
\[
y^2 = x^3 + v^3(tv-u)^2 x^2 + u^2(u-vt)(u-v^3t) x + u^4 t (u+v^3t)
\]
over $\Spec(\F_3[v])$ generically has an $A_8$ singularity at $u=0$, but at
$v=0$ becomes the surface
\[
y^2 = x^3 + u^4 x + u^5 t
\]
with an $E_8$ singularity.  Every smooth fiber of the latter surface has
$j$-invariant $0$, so is a supersingular curve.  (There is one more
isomorphism class of surfaces with an $E_8$ singularity, namely the
quasielliptic surface $y^2=x^3+u^5 t$, but this is a degeneration of the
$j=0$ case.)

Let us consider what a surface of type $A_8$ would look like if we
considered it relative to a smooth anticanonical curve $C'$.  To obtain
type $A_8$, two things must happen: all roots of $A_8$ must vanish in
$\Pic(C')$, but also every root {\em not} in the subsystem must {\em not}
vanish.  Indeed, of some root not in $A_8$ were to vanish, then every root
of $E_8$ would vanish, and the surface would have type $E_8$ instead.
Since the lattice $\Lambda_{A_8}$ has index 3 in $\Lambda_{E_8}$, we see
that the image of $\Lambda_{E_8}$ in $\Pic^0(C')$ is a $3$-torsion
subgroup.  But in characteristic not $3$, it is impossible to degenerate a
nontrivial $3$-torsion point to the identity.  (In the $3$-adic case, we
can take the $3$-torsion point to be in the kernel of reduction, while in
the equicharacteristic case, we may take the special fiber of $C'$ to be
supersingular.)

Of course, the above argument is quite ad hoc, but it turns out that the
obstruction generalizes.  First, we note that the claim about degeneration
of torsion points holds for curves of arbitrary genus.

\begin{lem}\label{lem:order_r_closed}
  Let $R$ be a dvr with field of fractions $K$ and residue field $k$.
  Let $C_R$ be a smooth curve over $R$, and let ${\cal L}$ be a line bundle on
  $C_R$ such that ${\cal L}_K$ has exact order $r$ in $\Pic(C_K)$.  If $r\in R^*$,
  then ${\cal L}_k$ also has exact order $r$.
\end{lem}

\begin{proof}
  We first note that ${\cal L}_k^r\cong \sO_{C_k}$, since this is a closed
  condition.  But the assumption on $r$ implies that $\Pic^0(C_R)[r]$ is
  \'etale, and thus the closed subscheme $\Pic^0(C_R)[d]$ is also open for
  any divisor $d|r$.  It follows that ${\cal L}_k^d\not\cong \sO_{C_k}$ for every
  proper divisor $d|r$.
\end{proof}

\begin{rem}
  This also holds if $R$ is equicharacteristic and $C_k$ is ordinary, since
  then the complement of $\Pic^0(C_R)[p^l]$ in $\Pic^0(C_R)[p^{l+1}]$ is
  again both closed and open.  The claim fails in the remaining cases with
  $\ch(k)|r$, however.
\end{rem}
  
Now, a key fact about the curve $C'$ we used above was that it was
orthogonal to all of the roots in the relevant root systems.  In
particular, we can view $C'$ as a curve on the surface obtained by
contracting the $-2$-curves.  Moreover, $C'$ is an {\em ample} divisor on
that singular surface, and the claim boiled down to showing that the image
of the Picard group of the minimal desingularization in $C'$ contained a
copy of $\Lambda_{A_8}^\perp/\Lambda_{A_8}$.

Note that in Lemma \ref{lem:order_r_closed}, we only consider the Picard
groups of the two fibers; as a result, to apply the result, we need only
understand surfaces over fields.  Moreover, base changing to a finite
extension of $R$ has no effect on the order of $L_K$ or $L_k$, and thus we
can take a limit to a valuation ring in which both the residue field and
the field of fractions are algebraically closed.

With that in mind, let $Y$ be a normal surface over an algebraically closed
field $k$, with minimal desingularization $\tilde{Y}$.  To fix ideas,
suppose for the moment that we have a rational map $\phi:Y\to \P^1$ such
that the locus of indeterminacy is a single smooth point of $Y$ and
$\phi^*\sO_{\P^1}(1)$ is ample.  Then blowing up the corresponding point of
$\tilde{Y}$ gives a surface $\tilde{X}$ with a morphism to $\P^1$.  Now,
let $D$ be a divisor class on $\tilde{X}$ such that the restriction of $D$
to the generic fiber of $\tilde{X}$ is principal.  If we choose a function
$f$ with that divisor (which is uniquely determined modulo $k(\P^1)^*$),
then $D-\div(f)$ is certainly linearly equivalent to $D$, but now has
trivial restriction to the generic fiber.  This implies that $D-\div(f)$ is
a supported on a finite set of fibers, and is thus a linear combination of
components of fibers.

Now, suppose $C$ is a component of a fiber.  If $C$ does not meet the
exceptional curve of $\tilde{X}\to \tilde{Y}$, then its image in $Y$ is
disjoint from the generic fiber of $\phi$.  Since $\phi^*\sO_{\P^1}(1)$ is
ample, this implies that the image of $C$ must be a single point, and is
thus one of the singular points of $Y$.  We thus see that the components of
fibers split into two classes: those which are contracted in $Y$, and those
which meet the exceptional curve.  Since the exceptional curve meets each
fiber precisely once, we see that every fiber contains exactly one
component meeting the exceptional curve.

Suppose $C_1$,\dots,$C_n$ are the contracted components, and let $C$ be any
other component.  If $F$ is the fiber containing $C$, then $F-C$ is a sum
of components not meeting the exceptional locus, and thus we have
\[
F-C\in \Z\langle C_1,\dots,C_n\rangle.
\]
Now, suppose we are given a divisor class $D\in \Pic(\tilde{Y})$ such that
$rD\in \Z\langle C_1,\dots,C_n\rangle$ for some integer $r\ge 1$, and
suppose that $D$ has trivial restriction in the Picard group of the generic
fiber of the pencil.  Then the pullback of $D$ to $\tilde{X}$ is a linear
combination of components of fibers, and therefore has an expression of the
form
\[
D \sim \sum_i a_i C_i + \phi^*Z
\]
where $a_i\in \Z$ and $Z$ is a divisor on $\P^1$.  But this implies that
\[
(D-\sum_i a_i C_i)\cdot C_j = 0
\]
for all $j$.  By Mumford's criterion for contractibility
\cite{MumfordD:1961}, the intersection form of $C_1$,\dots,$C_n$ is
negative definite, and we thus find that
\[
D=\sum_i a_i C_i\in \Z\langle C_1,\dots,C_n\rangle.
\]
In other words, the map from $\Q\langle C_1,\dots,C_n\rangle\cap
\Pic(\tilde{Y})$ to the Picard group of the generic fiber has kernel
precisely $\Z\langle C_1,\dots,C_n\rangle$.

The main difficulty in applying the above argument in general is the
requirement that the pencil have a single base point, as this requires in
particular that $\langle C_1,\dots,C_n\rangle^\perp\subset \Pic(\tilde{Y})$
contains smooth (and non-rigid) curves of self-intersection $1$.  We would
thus like to extend the argument to deal with larger base loci.  The
difficulty, of course, is that fibers of the pencil can then have multiple
components meeting the base locus.

Let $Y$,$\tilde{Y}$ be as above, but now let ${\cal L}_0$ be an arbitrary
very ample line bundle on $Y$.  By Bertini's theorem, the corresponding
linear system contains smooth curves, so let us fix such a curve $C_0$.  A
further application of Bertini to $C_0$ lets us choose a curve $C_1$ in the
linear system meeting $C_0$ in $C_0^2$ distinct points, and we can then
choose $C_2$ meeting $C_0$ in the complement of $C_0\cap C_1$, so that the
linear system spanned by $C_0$, $C_1$, $C_2$ is base-point-free.  Let
$P_{12}$ denote the pencil through $C_1$ and $C_2$, and let $P$ denote the
pencil through $C_0$ and the generic point of $P_{12}$.  Now, although $P$
has base points, they are in general defined over an extension field of the
field $k(\P^1)$ over which $P$ is defined.  In fact, we have the following.

\begin{lem}
  The base points of $P$ are defined over the separable closure of
  $k(\P^1)$, and form a single orbit under the action of the absolute
  Galois group $\Gal(k(\P^1))$.
\end{lem}

\begin{proof}
  That the splitting field of the base scheme of $P$ is separable follows
  from the fact that $C_0\cap C_1$ is reduced, and thus the same holds if
  we replace $C_1$ by the generic fiber of $P_{12}$.  The linear
  system $P_{12}|_{C_0}$ is base-point-free and thus determines a morphism from
  $C_0$ to $\P^1$.  The base points of $P$ are then just the preimage of
  the generic point under this morphism, and thus the base scheme is
  $\Spec(k(C_0))$.  Since $k(C_0)$ is a field, transitivity is immediate.
\end{proof}

Now let $\tilde{X}$ be obtained from $\tilde{Y}$ by blowing up the base
locus of $P$.  If $D$ is a divisor on $\tilde{Y}$ {\em defined over $k$}
which is principal on the generic fiber of $P$, then, just as before, its
preimage in $\tilde{X}$ is linearly equivalent to an integer linear
combination of components of fibers of $P$.  Let $D'$ be such a
representative, and let $\sigma$ be in the absolute Galois group of
$k(\P^1)$.  Then $\sigma D'-D'\sim \sigma D-D$ is principal, but disjoint
from the generic fiber, and is thus the divisor of a function pulled back
from the base of the pencil.  But then since $k(\P^1)$ has trivial Brauer
group, we can add such a divisor to $D'$ to make it Galois-stable.
In particular, $D'$ is a sum of Galois-orbits of components of fibers.

The key insight now is that the components which are contracted on $Y$ are
contained in $k$-rational fibers.  If $C$ is a component of such a fiber
which is {\em not} contracted on $Y$, then its image in $\tilde{Y}$ meets
the base locus of $P$.  But then by transitivity of the Galois group on the
base points, we conclude that the image on $\tilde{Y}$ of the sum of the
Galois orbit of $C$ actually {\em contains} the base locus.  Then the
resulting Galois stable curve is simply the strict transform of the
corresponding fiber of $P$ on $Y$, and may thus be expressed as a linear
combination of the preimage of the fiber and the components of the preimage
of the singular points.

Thus if $C_i$ are the contracted components, we obtain an expression
\[
D' = \sum_i a_i C_i + \sum_j F_j + \sum_l D_l
\]
where each $F_j$ is a fiber and each $D_l$ is an effective divisor
supported on a non-rational fiber.  But then
\[
D'\cdot C_j = \sum_i a_i C_i\cdot C_j,
\]
and we can argue as before.  We thus obtain the following result.

\begin{prop}\label{prop:Pic_injectivity}
  Let $Y$ be a normal surface over an algebraically closed field $k$, with
  minimal desingularization $\tilde{Y}$ and exceptional curves
  $C_1$,\dots,$C_n$.  Let $\Lambda=\Z\langle C_1,\dots,C_n\rangle\subset
  \Pic(\tilde{Y})$, and let $\Lambda^+\subset \Pic(\tilde{Y})$ be the
  subgroup consisting of line bundles such that ${\cal L}^r\in \Lambda$ for
  some $r\in \Z$ which is invertible in $k$.  If $C_0$ is a smooth very
  ample curve on $Y$, then the restriction map $\Pic(\tilde{Y})\to
  \Pic(C_0)$ induces an exact sequence
  \[
  0\to \Lambda\to \Lambda^+\to \Pic(C_0)
  \]
\end{prop}

\begin{proof}
  The above argument shows that this holds for the generic curve in some
  pencil containing $C_0$, and then the result follows by Lemma
  \ref{lem:order_r_closed}.
\end{proof}

In the following result, note that by \cite{ArtinM:1974} any flat family of
surfaces with sufficiently nice rational singularities admits a uniform
minimal desingularization over an \'etale cover of the base.  In our
specific application, we start with a family of smooth surfaces, so there
is no issue.

\begin{prop}
  Let $R$ be a dvr and $Y/\Spec(R)$ be a projective scheme such that the
  fibers are normal rational surfaces, and suppose that
  $\tilde{Y}/\Spec(R)$ is a fiberwise minimal desingularization of $Y$.  Let
  $C_1$,\dots,$C_n$ be the components of the exceptional locus of
  $\tilde{Y}_k$, and $C'_1$,\dots,$C'_m$ the components of the exceptional
  locus of $\tilde{Y}_K$.  Let $a_{ij}$ be the multiplicity of $C_j$ in the
  special fiber of the closure of $C'_i$, and let $\vec{a}_i\subset \Z^n$
  be the corresponding collection of vectors.  Then the quotient
  \[
  (\Q\langle \vec{a}_1,\dots,\vec{a}_m\rangle \cap \Z^n)
  /
  (\Z\langle \vec{a}_1,\dots,\vec{a}_m\rangle)
  \]
  is trivial if $\ch(k)=0$ and a $p$-group if $\ch(k)=p$.
\end{prop}

\begin{proof}
  Both $Y_k$ and $Y_K$ have the property that their generic hyperplane
  section is smooth, and thus there is a hyperplane defined over $R$ such
  that both fibers of the corresponding section $C_0$ are smooth.  For any
  vector in $\Q\langle \vec{a}_1,\dots,\vec{a}_m\rangle\cap \Z^n$, let $D$
  be the corresponding linear combination of $C_1,\dots,C_n$.  Since $Y$
  has rational fibers, the invertible sheaf $\sO_{Y_k}(D)$ extends uniquely
  from $Y_k$ to $Y$, and some power of $\sO_{Y_K}(D)$ will have
  divisor class in $\Z\langle C'_1,\dots,C'_m\rangle$.  The restriction
  $\sO_{C_0}(D)$ is trivial on the special fiber of $C_0$, and thus
  by Lemma \ref{lem:order_r_closed} is trivial (or has $p$-power order) on
  the generic fiber.  The result follows from Proposition
  \ref{prop:Pic_injectivity}.
\end{proof}

To apply this to anticanonical surfaces, we need to understand when
configurations of anticanonical components can be contracted.

\begin{lem}
  Let $X$ be an anticanonical rational surface, and let $C_1$,\dots,$C_l$
  be a sequence of anticanonical components such that the intersection
  matrix of $C_1,\dots,C_l$ is negative definite and not every component
  appears.  Then the curve $\cup_i C_i$ is contractible.
\end{lem}

\begin{proof}
  First note that if any of $C_1$,\dots,$C_l$ are $-1$-curves, then we can
  simply blow down that curve and apply the Lemma to $X_{m-1}$.  We may
  thus assume that $C_i^2\le -2$ for $1\le i\le l$.  Now, let
  $m_1,\dots,m_s$ be the multiplicities of the anticanonical components,
  and consider the divisor class $Z^+=\sum_{1\le i\le l} m_i C_i$.  Then
  for $1\le i\le l$,
  \[
  Z^+\cdot C_i = (C_\alpha-\sum_{l<j\le s} m_j C_j)\cdot C_i
  = (2+C_i^2) - \sum_{l<j\le s} m_j (C_j\cdot C_i)
  \le 0,
  \]
  while
  \[
  (Z^+)^2 = \sum_{1\le i\le l}
    m_i(2+C_i^2) - \sum_{\begin{substack} 1\le i\le l\\ l<j\le
        s\end{substack}} m_im_j (C_j\cdot C_i)
    <0,
  \]
  since $C_\alpha$ is connected.  It follows that the {\em fundamental
    cycle} \cite{ArtinM:1966} of $\cup_i C_i$ satisfies $Z=\sum_i r_i C_i$
  with $1\le r_i\le m_i$, and thus in particular both $Z$ and $C_\alpha-Z$
  are effective.  It follows from Lemma \ref{lem:Ca_is_num_conn} that every
  connected component of $Z$ has arithmetic genus 0, and then
  \cite{ArtinM:1962,ArtinM:1966} tells us that every connected component of
  $\cup_i C_i$ contracts to a rational singularity.  The result essentially
  follows immediately.  (Artin assumes the configuration is connected, but
  the proof carries over.)
\end{proof}

\begin{cor}
  Let $\psi:T_1\to T_2$ be a morphism of types, suppose $C_1,\dots,C_l$ is
  a contractible configuration of anticanonical components of $T_1$, let
  $\Lambda_1\subset \Pic(X_m)$ be the corresponding sublattice, and let
  $\Lambda_2\subset \Lambda_1$ be the sublattice coming from those
  components of $T_2$ which are supported on $C_1,\dots,C_l$ (relative to
  $\psi$).  If $\Lambda_1/\Lambda_2$ has torsion of degree prime to the
  characteristic, then $\psi$ is ineffective.
\end{cor}

This in particular explains the obstructions for del Pezzo surfaces of
degree 1 (and corresponding obstructions for rational surfaces with $K^2=0$
and no $-d$-curves with $d>2$, the subject of \cite{SakaiH:2001}), as well
as giving obstructions in other cases.  For instance, let $T_2$ be the type
of (quartic del Pezzo) surfaces with anticanonical curve decomposition:
\[
C_\alpha
=
(s-e_1-e_4)+
(f-e_1-e_2)+
(s-e_2-e_3)+
(f-e_3-e_4)+
(e_1)+
(e_2)+
(e_3)+
(e_4),
\]
and let $T_1$ correspond to surfaces with
\[
C_\alpha
=
2(s-f)+4(f-e_1-e_2)+3(e_1-e_2)+6(e_2-e_3)+5(e_3-e_4)+4(e_4).
\]
Since the components of $T_1$ are linearly independent, it is easy to
verify that there is a morphism $\psi:T_1\to T_2$.  On the other hand, the
only components of $T_2$ supported on the complement of $e_4$ are the first
four.  The corresponding sublattice of $\Z^5$ is not saturated (it contains
$2e_2-2e_4$ but not $e_2-e_4$), and thus such a morphism can only
correspond to a degeneration when the special fiber has characteristic 2.
Here we should note that if we blow up a generic point on each component of
the generic fiber (and then blowup a corresponding point downstairs), then
there are no subsets of the anticanonical components to which the
obstruction applies.  Thus to apply this obstruction fully, one must in
principle consider not only the types themselves but also all possible
blowdowns.  It is unclear if the resulting obstruction is effective\dots

\medskip

For surfaces with $K^2=0$, although the computer calculation only showed
that Figure 1 was valid in characteristic 0, it is not too hard to extend
it to $\Z[1/6]$.  If $E$ is a fixed elliptic curve, then del Pezzo surfaces
(with blowdown structure) with anticanonical curve $E$ are classified by
maps $\phi\in \Hom(\Lambda_{E_8},E)$.  If $R\subset E_8$ is an
indecomposable subsystem, then there is an anticanonical curve of type $R$
if every root in $\ker\phi$ is contained in $R$.  In particular, if
$\Lambda_R$ is saturated in $\Lambda_{E_8}$, then the generic point in
$\Hom(\Lambda_{E_8}/\Lambda_R,E)$ will give rise to such a surface.
Moreover, we may verify by direct computation that the generic surface with
such an anticanonical fiber has non-constant $j$-invariant, at which point
dimension considerations tell us that the generic surface arises from a map
$\Hom(\Lambda_{E_8}/\Lambda_R,E)$.  It follows easily that if $R\subset R'$
with $\Lambda_R$ saturated, then the corresponding degeneration is strongly
effective in any characteristic.  In particular, every arrow in Figure 1
remains strongly effective over any field.

One is thus left to consider degenerations from $A'_7$, $A_8$, $D_8$.
Since $\Lambda_{E_8}/\Lambda_{D_8}\cong \Z/2\Z$, and
$\Lambda_{E_8}/\Lambda_{A_8}\cong \Z/3\Z$, such surfaces are classified by
points of order precisely 2 or 3 on $E$ as appropriate, and thus over
$\Z[1/6]$ cannot be degenerated.  Similarly, $A'_7$ surfaces with a fiber
isomorphic to $E$ are labelled by a $2$-torsion point and a point in $E$,
and thus degenerate to the surface of type $D_8$ labelled
by the same $2$-torsion point, but not to surfaces of type $E_7$ or $E_8$
over $\Z[1/6]$.

This also makes it relatively straightforward to construct the exotic
degenerations in small characteristic, as one need simply construct an
appropriate family of curves $E$ with a $2$- or $3$-torsion point
degenerating to the identity.  (This is how the above examples of $A_8\to
E_8$ degenerations were produced.)

\subsection{Surfaces and singularities of equations}

Now that we have introduced the decomposition of the moduli stack into
types, it is natural to ask what a given type implies about the
corresponding difference/differential equations.  The type of a blown up
surface translates into information about how the direct image of a sheaf
with given Chern class meets the anticanonical curve on $F_2$, and we
argued above that this controls the singularities of the equation.  Though
it is clear that the sheaf must meet the anticanonical curve on $F_2$ at
any singular point of the equation, and that the equation is singular at
any point in the support of the intersection, we have not so far been clear
about how the local behavior of the equation at the singularity is recorded
in the sheaf structure of the intersection.  For any given equation, of
course, one can simply turn it into a sheaf and keep track of the different
blowups; however, when classifying degenerations of a given elliptic
scenario, one generally obtains simply a list of possible types of surface,
and would then like to translate that into information about the
singularities of the equation.

We restrict our attention to the case that the sheaf on $F_2$ can be
separated from the anticanonical curve by a sequence of blowups and minimal
lifts.  This is a relatively mild restriction, since by \cite{poisson} any
sheaf which is transverse to the anticanonical curve can be put into that
form by a sequence of ``pseudo-twists'', which on difference equations
correspond to canonical gauge transformations.  In particular, the cases
which cannot be so separated necessarily involve ``apparent''
singularities, in that there is some gauge transformation making its
singularities milder.  We similarly assume that the support of the sheaf
does not contain a fiber, as again this leads to apparent singularities.

More precisely, since singularities are local in nature, we are interested
in singularities at some fixed point $x\in \P^1$, and only need to be able
to separate the sheaf from the anticanonical curve in a neighborhood of
that fiber.  We are thus given a blowup of $F_2$ in which every point blown
up is on the same fiber, and a sheaf $M$ on that blowup such that
$x\notin \rho(\pi(\supp(M)\cap C_\alpha))$.
If $M$ has first Chern class $ds+d'f-r_1e_1-\cdots-r_me_m$, then the
intersection of $\pi_*M$ with the anticanonical curve near $\rho^{-1}x$ is
independent of $d'$, and thus the question is to translate the remaining
data into local information about the equation.  Equivalently, we are given
the intersection numbers of $c_1(M)$ with the various components of the
preimage of $x$.

Again, since singularities are local, we should actually replace the
surface by a formal neighborhood of the fiber.  Helpfully, the intersection
nubmers are still well-defined even when $M$ is only a sheaf on the formal
completion of the fiber, since the various components are still proper
curves.  Now, on the formal neighborhood, $M$ has a natural direct sum
decomposition, with one summand for each point of intersection.  Moreover,
at each point of intersection, $M$ has finitely many branches.  It follows
that $M$ is an extension of invertible sheaves on unibranched curves, which
since we are working in a formal neighborhood, are isomorphic to the
structure sheaves of their supports.  Moreover, the cases in which the
branch is tangent to the relevant exceptional component are limits of cases
in which there are multiple branches, and thus we reduce (up to issues of
limits not changing the restriction to $C_\alpha$) to the case that the
branch meets the exceptional component simply.  Thus, up to the extension
and limit problem, we reduce to the case that $M$ is the structure sheaf of
the image of an appropriate map from $\Spec(k[[t]])$ to the formal
completion of the fiber.

When applying this to the problem of classifying degenerate equations,
there is one complication, however: at the elliptic level, we allowed {\em
  twisted} equations, and thus the sheaf we are given may not be on $F_2$.
Although in principle one could deal with this by working with connections
twisted by suitable line bundles, this will not give equations in the form
one usually wants to consider.  To obtain a difference or differential
equation with rational function coefficients, we need to choose a divisor
representing the twisting line bundle.  This is equivalent to choosing a
section of the Hirzebruch surface which is transverse to the anticanonical
curve.  Given such a choice, we can translate sheaves into equations by
first performing a sequence of elementary transformations to make the
chosen section disjoint from the anticanonical curve, at which point we
will be in the $F_2$ scenario where the translation is as described above.
It is straightforward to see that changing the chosen section has the
effect of gauging by a {\em scalar} function, the solution of an
appropriate first-order equation.  That is, if $M$ is a sheaf on $F_2$ and
we apply the above translation using a section $s'$ other than the
$-2$-curve, then the result is to gauge by the solution of the equation
corresponding to $\sO_{s'}$.  In particular, once we have understood
singularities in the $F_2$ case, we will be able to understand the general
case, up to an overall scalar gauge transformation freedom (which we can
also understand).

Note also that once we have restricted $M$ to a single branch (or even just
a single point of intersection), we may feel free to blow down any
component of the fiber which is a $-1$-curve not meeting $M$.  After doing
so, we obtain a surface such that every point we blow up after the first is
a point on the previous exceptional curve, as otherwise the final
exceptional locus would contain disjoint $-1$-curves.  Moreover, any time
we blow up a point which is on just one anticanonical component, the type
of surface that results is not affected by the choice of point.  In
particular, once the most recent $-1$-curve is not an anticanonical
component, we should stop, as future blowups of points on that curve will
give the same type as more general blowups on the anticanonical component
it meets.  We thus restrict our attention to types of surfaces in which
every blowup but the first blows up a point of an exceptional component and
every $-1$-curve but the last is an anticanonical component.  Call such a
type ``minimal''.  Note that we may also insist that our curve be disjoint
from the strict transform of the fiber, as again otherwise we can deform it
to one meeting a more general point of the relevant $-1$-curve.

In this way, we reduce to the following problems:
\begin{itemize}
  \item[(1)] Given a minimal type, what do the resulting maps from
    $\Spec(k[[t]])$ to $F_2$ look like?  (And can this be inverted?)
  \item[(2)] Given a map from $\Spec(k[[t]])$ to $F_2$ coming from a
    minimal type, what is the local structure of the corresponding
    equation?
  \item[(3)] What is the effect of taking limits from the typical case of a
    minimal type to more special cases?
\end{itemize}
We will not consider (3) here (except in Section \ref{sec:elldiff} for the
elliptic case), as for most applications it is the typical behavior that
matters.  Presumably, the only effect of taking such limits is to make
changes on the order of the $o()$ terms below.

Of the remaining cases, the simpler to deal with is (2), subject to the
question of which maps come from minimal types.  Supposing for the moment
that the fiber of interest is $x=\infty$ (which will be the main
interesting case in any event), then the map to $F_2$ takes the form
$(y,x,w)=(Y(t),1,W(t))$ for suitable power series $Y$ and $W$.  Moreover,
$W(t)$ cannot be identically 0, since then the image would be contained in
the fiber.  The key point now is that we may view the formal neighborhood
of the fiber as a $\P^1$-bundle over $\Spec(k[[w]])$, and perform our usual
calculations to convert between sheaves on the $\P^1$-bundle and morphisms
on the anticanonical curve.  In our case, the sheaf is the structure sheaf
of an affine curve, and is thus simply $k[[t]]$ viewed as a $k[[w]]$-module
via the map $w\mapsto W(t)$.  Since this is a totally ramified extension,
$k[[t]]$ is a free $k[[w]]$-module with basis $(1,t,\dots,t^{\ord(W)-1})$,
and the action $M_t$ of multiplication by $t$ in this basis is the
companion matrix of the minimal polynomial of $t$ over $k[[w]]$.  The
associated morphism of vector bundles on $F_2$ is then simply $B:=y -
Y(M_t)\in \Mat_n(k[[w]][y])$.  Note that in characteristic 0, we may change
variables so that $W(t)=t^{\ord(W)}$.

If the point $(0,1,0)$ we are blowing up is a typical (i.e., not ramified
for the involution) smooth point of the fiber, then the condition that the
spectral curve be disjoint from the strict transform of the fiber forces
$\ord(W)\le \ord(Y)$.  Indeed, the spectral curve contains $(0,1,0)$ with
multiplicity $\min(\ord(W),\ord(Y))$ and meets the fiber with multiplicity
$\ord(W)$, so after blowing up meets $f-e_1$ with multiplicity
$\ord(W)-\min(\ord(W),\ord(Y))$, which must vanish.  We then find that the
spectral curve meets the original anticanonical curve with multiplicity at
least $\ord(W)$, which must therefore be $1$.  This, of course, translates
directly to the case of a simple singularity: $A$ vanishes to order $1$ at
$(0,1,0)$ and has a simple pole at the image under the involution.

The next simplest case to consider is actually the differential case, where
we assume characteristic 0.  Here the anticanonical curve is $y^2=0$, and
the reduced curve has a natural parametrization $(0,1,1/z)$, so that in
terms of the parameter we have $z=W(t)^{-1}=t^{-\ord(W)}$.  We again find
that to avoid the strict transform of the fiber, we must have $\ord(W)\le
\ord(Y)$.  The resulting Higgs bundle then has the form $A(z)^{-t} =
-Y(M_t)|_{w=1/z}$.  Note that we may view the resulting equation as one
over the field of Puiseux series; if $W(t)=t^a$, $\ord(Y(t))=b$, then the
equation has the form
\[
\frac{v'(z)}{v(z)} = z^{b/a} \sum_{0\le l} c_l z^{-l/a-2},
\]
where $t^b/Y(t) = \sum_{0\le l} c_l t^l$.  (Here the factor of $z^{-2}$
comes from the fact that we need to work relative to a differential which
is holomorphic at $z=\infty$.)  This should be compared with the
classification of the local behavior of differential equations
\cite{vandenEssenA/LeveltAHM:1992}, though we will see below that the
combinatorics of the surface actually encodes more subtle information about
which coefficients are nonzero.

The nonsymmetric $q$-difference case is the next simplest, as the
anticanonical curve is $y^2-xwy=0$, and thus we can easily parametrize both
branches:
\[
(y,x,w) = (0,1,1/z)\quad\text{and}\quad (y,x,w) = (z,1,z);
\]
here we have parametrized so that the singular point is $\infty$ on one
branch and $0$ on the other branch.  We again have $\ord(W)\le \ord(Y)$,
and the equation has
\[
A(t) = \frac{Y(t)}{Y(t)-W(t)}\bigg|_{t\mapsto M_t}.
\]
(Here we use the fact that we only care about the conjugacy class of $M_t$
over $k[[w]]=k[[1/z]]$, and $M_t$ and its transpose are conjugate.)

The nonsymmetric difference case is analogous, with the parametrizations
now being
\[
(y,x,w) = (0,1,1/z)\quad \text{and}\quad (y,x,w)=(z^2,1,z),
\]
of the anticanonical curve $y^2-w^2y=0$.  Again, $\ord(W)\le \ord(Y)$, and
the equation is
\[
A(t) = \frac{Y(t)}{Y(t)-W(t)^2}\bigg|_{t\mapsto M_t}.
\]

The symmetric $q$-difference case is only slightly more complicated.  The
two branches on the anticanonical curve of the corresponding double cover
correspond to the parametrizations
\[
(y,x,w) = (\eta z/(z^2+\eta)^2,1,z/(z^2+\eta))\quad\text{and}\quad
(y,x,w) = (z^3/(z^2+\eta)^2,1,z/(z^2+\eta))
\]
of the irreducible anticanonical curve $y^2-xwy+\eta w^4 = 0$.
The key point is that although $w$ is a quadratic function of $z$, the
corresponding extension of $k[[w]]$ is unramified.  We may thus let $Z(t)$
be the non-holomorphic solution of
\[
Z(t)+\frac{\eta}{Z(t)} = \frac{1}{W(t)}
\]
in $k((t))$ and view $M_t$ as a matrix over $k[[1/z]]$ via $w\mapsto
z/(z^2+\eta)$, to obtain an equation of the form
\[
A(t)
=
\frac{\eta Z(t)/(Z(t)^2+\eta)^2-Y(t)}
     {Z(t)^3/(Z(t)^2+\eta)^2-Y(t)}\bigg|_{t=M_t}.
\]
Note again that $\ord(W(t))=-\ord(Z(t))\le \ord(Y(t))$.

The symmetric difference case is more complicated, as now the
anticanonical curve is ramified over the base of the
ruling, with natural parametrizations (assuming characteristic 0)
\[
(y,x,w) = (1/z^3,1,1/z^2)\quad\text{and}\quad (y,x,w) = (-1/z^3,1,1/z^2)
\]
of the anticanonical curve $y^2 = w^3 x$.  Note that as before, we have
$\ord(W(t))\le \ord(Y(t))$, and since the characteristic is 0 we may assume
that $W(t)=t^a$.  Let $Y(t) = \sum_{b\le l} c_l t^l$, with $c_b\ne 0$.
Then the eigenvalues of $Y(M_t)$ are the Puiseux series of the form
\[
\sum_{b\le l} c_l z^{-2l/a},
\]
one for each of the $a$ roots $z^{1/a}$, and thus the eigenvalues of $A(z)$
are the different values of
\[
\frac{1/z^3-\sum_{b\le l} c_l z^{-2l/a}}
     {-1/z^3-\sum_{b\le l} c_l z^{-2l/a}}.
\]
Note that if $a$ is odd, then the eigenvalues are all $\pm 1$ at
$z=\infty$, depending on the sign of $3-2b/a$.

\medskip

For (1), the answer is in one sense trivial: given a (spectral) map from
$\Spec(k[[t]])$ to $F_2$, we simply repeatedly blow up the image of the
closed point until the closed point is no longer on the anticanonical
curve.  This is reasonably straightforward, but as stated tells us little
about the inverse problem.  After each blowup, we obtain a patch of $X_m$
containing the image of the special fiber, and a morphism from that patch
to $X_{m-1}$.  Apart from the parametrized spectral curve itself, the other
main piece of information on each patch is the equation of the
anticanonical curve.  A key observation is that after the first few
blowups, there are at most two (local) components of the anticanonical
curve, and those components (which may have multiplicity) meet
transversely.  In other words, after a few steps, we can coordinatize the
resulting patch so that the anticanonical curve has the equation
$u^{m_1}v^{m_2}=0$, where the most recent exceptional curve is $u=0$.  (If
$m_2=0$, there is some ambiguity in the coordinate $v$, but we always at
least choose it so that the point being blown up is the origin.)  Thus the
question factors into understanding the first few steps and understanding
this general scenario.  Note that since we are assuming that we never blow
up a point of an exceptional curve which is not an anticanonical component,
we may assume that $m_1>0$.

Let $u(t)$, $v(t)\in t k[[t]]$ be the parametrization of the spectral curve
in this patch.  Note that we may assume that $u(t)^{m_1}v(t)^{m_2}$ is not
identically 0, since otherwise the spectral curve would not be transverse
to the anticanonical curve.  After blowing up the origin, there are three
possibilities for the new affine patch.  The new exceptional curve is
$\hat{u}=0$, and the other relevant component (if any) of the anticanonical
curve is $\hat{v}=0$.
\begin{itemize}
  \item[(a)] If $\ord(u)>\ord(v)$, then the new patch has coordinates
    $\hat{u}(t) = v(t)$, $\hat{v}(t)=u(t)/v(t)$, with
    $\hat{m}_1=m_1+m_2-1$, $\hat{m}_2=m_1$.
  \item[(b)] If $\ord(u)<\ord(v)$, then the new patch has coordinates
    $\hat{u}(t) = u(t)$, $\hat{v}(t)=v(t)/u(t)$, with
    $\hat{m}_1=m_1+m_2-1$, $\hat{m}_2=m_2$.
  \item[(c)] If $\ord(u)=\ord(v)$, then the new patch has coordinates
    $\hat{u}(t)=u(t)$, $\hat{v}(t)=v(t)/u(t)-\alpha$, where $\alpha=\lim_{t\to
    0} v(t)/u(t)$, $\hat{m}_1=m_1+m_2-1$, $\hat{m}_2=0$.
\end{itemize}
Note that if $m_2=0$, then (b) and (c) give the same combinatorial type, so
we merge them into a case (bc).  Geometrically, these cases correspond to
the case that the spectral curve is tangent to $u=0$, tangent to $v=0$, or
tangent to neither, respectively.  Every time we hit case (c) or (bc) we
pick up a parameter $\alpha$, which is in $k^*$ when $m_2\ne 0$ and $k$
when $m_2=0$.

If $m_1=1$, $m_2=0$, then the intersection with the anticanonical curve
depends only on the order of vanishing $\ord(u)$ (which by our genericity
assumptions must be $1$).  Note here that the choice of point to blow up on
the exceptional curve was made at the {\em previous} step, and thus there
are no more parameters (since this blowup separates the spectral and
anticanonical curves).  We find the following by an easy induction.
\begin{itemize}
  \item[(1)] If $m_1,m_2>0$, then the intersection of the image of the
    structure sheaf with $C_\alpha$ depends only on
    $
    u(t)+o(t^{m_1\ord(u)+(m_2-1)\ord(v)})$ and
    $v(t)+o(t^{(m_1-1)\ord(u)+m_2\ord(v)})$.
  \item[(2)] If $m_1>0$, $m_2=0$, then it depends only on
    $
    u(t)+o(t^{m_1\ord(u)})$ and $v(t) + o(t^{(m_1-1)\ord(u)})$.
\end{itemize}
In each case, we either have $(m_1,m_2)=(1,0)$ or have enough information
to determine in which case (a--c) we are in, and in case (c) what the value
of the parameter is.  Moreover, in each case, we find that we know the new
coordinates to at least the correct precision.

Note in particular that if $(m_1,m_2)=(1,1)$, then the combinatorial type
depends only on $\ord(u)$ and $\ord(v)$.  Moreover, we see that the blowing
up process is essentially just performing the subtraction form of Euclid's
algorithm for computing $\gcd(\ord(u),\ord(v))$, and the additional data
determining the intersection is just the constant term of
\[
u(t)^{\ord(v)/\gcd(\ord(u),\ord(v))}v(t)^{-\ord(u)/\gcd(\ord(u),\ord(v))}.
\]

The nonreduced cases (i.e., $\max(m_1,m_2)>1$) are more complicated to deal
with, and we can only give a satisfying description in characteristic 0.
The advantage of working in characteristic 0 is that we can take roots of
power series.  In particular, we know $u$ to at least enough precision to
know its leading term, and thus we may choose a new coordinate $s$ such
that $u(s) = s^{\ord(u)}$, and find that we now need to specify only the
coordinate $v(s)$, to precision $o(s^{(m_1-1)\ord(u)+m_2(\ord(v))})$.  If
$m_2>0$, we could instead reparametrize so that $v(s)=s^{\ord(v)}$, so that
the only data is $u(s)+o(s^{m_1\ord(u)+(m_2-1)\ord(v)})$.  Note that
knowing
\[
v(s) + o(s^{(m_1-1)\ord(u)+m_2(\ord(v))})
\]
tells us
\[
v(s)^{1/\ord(v)} + o(s^{1+(m_1-1)\ord(u)+(m_2-1)(\ord(v))}),
\]
and thus tells us the inverse function to the same precision.  In other
words, when both parametrizations are defined, the information we obtain is
the same in either case.

To proceed further, we need to understand better how the two
parametrizations are related.  To begin with, let $H\subset \N$ be a
submonoid (i.e., $0\in H$ and $H$ is closed under addition).  A power
series supported on $H$ is then an element of $k[[t]]$ in which the
coefficient of $t^l$ is 0 unless $l\in H$.

\begin{lem}
  For any submonoid $H\subset \N$, the power series supported on $H$ form a
  ring, in which the series with nonzero constant terms are units.  If
  $f\in k[[t]]$ is supported on $H$ with $f(0)=0$, then for any other element
  $g\in k[[t]]$, $g\circ f$ is supported on $H$.
\end{lem}

\begin{proof}
  That the power series supported on $H$ form a ring is trivial, and thus
  any polynomial in such a power series is also supported on $H$.  Since
  the set of power series supported on $H$ is closed under formal limits,
  the claim regarding composition follows.  That the reciprocal of a series
  with nonzero constant term is still supported on $H$ then follows by
  plugging in $f-f(0)$ into the appropriate geometric series.
\end{proof}

More surprisingly, we have the following.

\begin{lem}
  For any submonoid $H\subset \N$, the power series with $f(0)=0$,
  $f'(0)\ne 0$ and $f(t)/t$ supported on $H$ form a group under composition.
\end{lem}

\begin{proof}
  Suppose $g$ is such a series, with
  \[
  g(t) = \sum_{n\in H} c_n t^{n+1},
  \]
  $c_0\ne 0$.  Then
  \[
  g(f(t)) = \sum_{n\in H} c_n (f(t))^{n+1}
  = \sum_{n\in H} c_n t t^n (f(t)/t)^{n+1}.
  \]
  Since $(f(t)/t)^{n+1}$ is supported on $H$ and $t^n$ is supported on $H$,
  we find that these series are indeed closed under composition.

  It remains only to show that the compositional inverse of such a series
  is again of this form.  Let $f(t)$ be such a series, and let $g(t)$ be
  its compositional inverse.  Suppose $g(t)/t$ is not supported on $H$, and
  let $n$ be the minimal nonnegative integer not in $H$ such that the
  coefficient of $t^{n+1}$ in $g(t)$ is nonzero.  We may thus write
  \[
  g(t) = g_0(t) + c_n t^{n+1} + o(t^{n+1}),
  \]
  where $g_0(t)/t$ is supported on $H$.  We then have
  \[
  t = g(f(t)) = g_0(f(t)) + c_n t^{n+1} (f(t)/t)^{n+1} + o(t^{n+1}),
  \]
  so that
  \[
  1-\frac{g_0(f(t))}{t} = c_n t^n (f(t)/t)^{n+1} + o(t^n).
  \]
  The left-hand side is supported on $H$, and thus the coefficient of $t^n$
  on the left vanishes, while on the right it equals
  \[
  c_n f'(0)^{n+1}\ne 0,
  \]
  giving the desired contradiction.
\end{proof}

In particular, the monoid generated by the exponents of the nonzero terms
of $s^{-\ord(v)}v(s)$ in the $u(s)=s^{\ord(u)}$ parametrization will be the
same as that coming from the other parametrization, when both
parametrizations are valid, and this monoid will be unchanged under blowup
as long as we remain in scenarios (a) and (b).  Unfortunately, scenario (c)
can change the monoid in somewhat unpredictable ways.  Luckily, we can
enlarge the monoid without losing information about the blowup.

Call a subset $H\subset \N$ ``arithmetic'' if it contains $0$ and for all
$n\in H$, also contains $n+\gcd(\{l:l\in H|l\le n\})$.  Such a set is
certainly a monoid, and we can still consider the arithmetic set generated
by the exponents of nonzero terms of a power series.  Note that an
arithmetic set has a minimal set of generators: $d_1$ is the minimal
nonzero element, $d_2$ the minimal element not a multiple of $d_1$, $d_3$
the minimal element not a multiple of $\gcd(d_1,d_2)$, etc.

We associate an arithmetic set to our configuration $u(t),v(t)$ as follows.
If $m_1,m_2>0$, then we change coordinates so that $u(s)=s^{\ord(u)}$, and
take the arithmetic set generated by $\gcd(\ord(u),\ord(v))$ and the
exponents of the nonzero terms of $s^{-\ord(v)}v(s)$.  (Here, by
convention, we suppose that all terms past the required precision are
nonzero.)  If $m_1>0$, $m_2=0$, then we take the same coordinate change,
but now the arithmetic set is generated by $\ord(u)$ and the exponents of
nonzero terms of $v(s)$.

For $m_1,m_2>0$, we find that blowing up has no effect on the arithmetic
set, while for $m_2=0$, scenario (bc) replaces the arithmetic set by
$(H-\ord(u))\cap \N$,
while scenario (a) replaces it by
$
H\cup \gcd(\ord(u),\ord(v))\N$.
Moreover, for $m_2=0$, we can detect which of (a) or (bc) we are in knowing
only $\ord(u)$ and $H$; scenario (a) is the case that $H$ contains a
nonzero element less than $\ord(u)$.

\medskip

Let us now see how this information gets used for the different types of
equations.  The $q$-difference cases are now the simplest, as the
anticanonical curve can now already be written as $uv=0$ in suitable
(formal) coordinates.  For the nonsymmetric $q$-difference case, the two
coordinates are $y$ and $w-y$, so the surface determines $a:=\ord(Y(t))$,
$b:=\ord(W(t)-Y(t))$, and
\[
\gamma:=\lim_{t\to 0}\frac{Y(t)^b}{(W(t)-Y(t))^{a}}.
\]
Moreover, the fact that we end up with multiplicity $1$ implies that
$\gcd(a,b)=1$.  If we write $Y(t) = \alpha t^a (1+O(t))$, $W(t)-Y(t)=\beta
t^b (1+O(t))$, then we have three cases.  If $a<b$, then $W(t) = \alpha
t^a (1+O(t))$, so we may as well reparametrize to make $\alpha=1$,
$W(t)=t^a$, and thus the equation has the symbolic form
\[
\frac{v(qz)}{v(z)} = \gamma^{1/a} z^{-1+b/a} (1+O(z^{-1/a})),
\]
where we translate Puiseux series to matrices as above.
Similarly, in the case $a>b$, we have $W(t) = \beta t^b (1+O(t))$,
and again may take $\beta=1$ to give
\[
\frac{v(qz)}{v(z)} = \gamma^{1/b} z^{1-a/b} (1+O(z^{-1/b})).
\]
Finally, if $a=b=1$, then $W(t) = (\alpha+\beta) (1+O(t))$, and the equation is
\[
\frac{v(qz)}{v(z)} = \gamma (1+O(z^{-1})).
\]
This is essentially the same as the formal classification of $q$-difference
equations given in \cite{RamisJ-P/SauloyJ/ZhangC:2013}.  We obtain the same
three cases in the symmetric $q$-difference case, as the needed
reparametrizations do not affect the leading terms of the numerator and
denominator of $A$.

For the differential case, we start with $(m_1,m_2)=(2,0)$ with $u=y$,
$v=w$.  The above description of the spectral curve thus has the form
\begin{align}
  Y(t) &= u(t) = t^a\\
  W(t) &= v(t) = \sum_{0\le l\le a-b} c_l t^{l+b} + o(t^a),
\end{align}
with $c_b\ne 0$, where the corresponding arithmetic set is determined by
the degrees where the $\gcd$s of the exponents up to that point (with $a$)
drop.  Note that the assumption that we meet the last $-1$-curve with
multiplicity 1 implies that the $\gcd$ actually reaches $1$ before we hit
$o(t^a)$.  Moreover, our assumption of disjointness from $f-e_1$ implies
$b\le a$, and thus we could instead reparametrize to have
\[
Y(t) = t^a(\sum_{0\le l\le a-b} e_l t^l+o(t^{a-b})),\quad
W(t) = t^b,
\]
where the $e_l$ are determined from $c_l$ and generate the same arithmetic
set.  This gives an equation of the form
\[
\frac{zv'(z)}{v(z)} = \sum_{0\le l\le a-b} f_l z^{(a-l)/b-1} +o(1),
\]
where again the arithmetic set is the same.  Note that we can record the
combinatorial information in the equivalent form of specifying the leading
exponent and each later exponent where the common denominator increases.
Also, although the translation from the parameters of the surface to the
coefficients $f_l$ is fairly complicated, there is a one-to-one
correspondence between those parameters and the potentially nonzero
coefficients of the Puiseux series, such that each $f_l$ depends only the
parameters up to that point, and is degree 1 in the parameter corresponding
to $l$, unless the parameter lies in $k^*$ in which case it is proportional
to an appropriate power of the parameter.  In particular, if we are
considering two irreducible singularities that share the first few steps of
the blowup, then the corresponding Puiseux series must agree in precisely
the first $N$ terms (among those allowed to be nonzero), where $N$ is the
number of shared parameters.

For instance, suppose we are given an irreducible singularity of the form
\[
\frac{zv'(z)}{v(z)}
=
f_0 z^3 + f_6 z^2
+f_{12}z+f_{14}z^{2/3}+f_{15}z^{1/2}+f_{16}z^{1/3}
+f_{17}z^{1/6}+f_{18} + o(1).
\]
Assuming $f_0$, $f_{14}$, $f_{15}$ are nonzero, then the combinatorial data
is determined by the sequence $(3,2/3,1/2)$, and the corresponding moduli
space can be identified with $(k^*)^3\times k^5$.  The corresponding
sequence of $(m_1,m_2)_{a/b/c}$ is:
\begin{align}
&(2,0)_a(1,2)_b(2,2)_b{\bf (3,2)_c}{\bf (4,0)_{bc}}{\bf (3,0)_{bc}}\notag\\
&(2,0)_a(1,2)_b{\bf (2,2)_c}\notag\\
&(3,0)_a{\bf (2,3)_c}{\bf (4,0)_{bc}(3,0)_{bc}(2,0)_{bc}(1,0)_{bc}},\notag
\end{align}
where we have indicated the steps introducing parameters in bold.  We
should recall here that the subscript on a tuple $(m_1,m_2)_{a/b/c}$
determines the {\em next} point to be blown up.  Note that the three
parameters from $k^*$ are
\[
-\frac{1}{f_0}, \frac{f_0^6}{f_{14}}, \frac{f_{14}^9}{9 f_0^{14}f_{15}^2},
\]
and the remaining parameters in $k$ are even more complicated functions of
the coefficients.  The anticanonical curve on the resulting surface has the
decomposition
\begin{align}
&2(s+f-e_1-e_2-e_3-e_4)+(e_1-e_2)+2(e_2-e_3)+3(e_3-e_4)+4(e_4-e_5)+3(e_5-e_6)\notag\\
{}+{}&2(e_6-e_7-e_8-e_9)+(e_7-e_8)+2(e_8-e_9)\notag\\
{}+{}&3(e_9-e_{10}-e_{11})+2(e_{10}-e_{11})+4(e_{11}-e_{12})+3(e_{12}-e_{13})+2(e_{13}-e_{14})+(e_{14}-e_{15}).\notag
\end{align}
Note that although setting
a coefficient other than $f_0$, $f_{14}$, $f_{15}$ to 0 has no effect on
the combinatorics, setting one of the critical coefficients to 0 can make
significant changes.  For instance, the generic subcase with $f_{15}=0$ has
one fewer blowup, so this does not correspond to a degeneration of surfaces
as considered above.

The one complication in the nonsymmetric difference case is that we do not
start in a configuration $u^{m_1}v^{m_2}=0$, since the two branches are
tangent.  We must blow up twice in order to achieve this, and this leads to
several cases.  Note first that blowing up a general point of $e_1$ yields
an equation
\[
\frac{v(z+1)}{v(z)} = 1+\alpha/z + o(1/z),
\]
$\alpha\ne 0$, which leaves the case that we blow up the triple
intersection.  This yields an anticanonical decomposition
$(s+f-e_1-e_2)+(s+f-e_1-e_2)+(e_1)+2(e_2)$, and there are four
possibilities for the next blowup: a general point $\lambda$ of $e_2$, or
the intersection with one of the other three components.  The resulting
patches, with the equation of the anticanonical curve and the map to the
original coordinates are:
\begin{itemize}
\item[(1)] $u^2=0$, where $y=u^2 (v+\lambda)$, $w=u$.
\item[(2)] $u^2 v = 0$, where $y=u^2 v$, $w=u$.
\item[(3)] $u^2 v = 0$, where $y=u^2 (v+1)$, $w=u$.
\item[(4)] $u^2 v =0$, where $y=u^2 v$, $w=uv$.
\end{itemize}
In case (1), if we take $u(t) = t^a$, $v(t) = \sum_{b\le l\le a} c_l t^l+o(t^a)$,
then the resulting equation has the symbolic form
\[
\frac{f(z+1)}{f(z)}
=
\frac{\lambda+v(z^{-1/a})}{\lambda-1+v(z^{-1/a})} + o(z^{-1})
=
\frac{\lambda}{\lambda-1} + \sum_{b\le l\le a} d_l z^{-l/a} + o(z^{-1}),
\]
where $d_b\ne 0$ and the common denominator increases at the same places as
$v(z^{-1/a})$.  In case (2), we now know $v(t)$ to relative precision
$o(t^a)$, and now the equation is
\[
\frac{f(z+1)}{f(z)}
=
\frac{v(z^{-1/a})}{v(z^{-1/a})-1}(1+o(z^{-1}))
=
\sum_{b\le l\le a+b} d_l z^{-l/a} + o(z^{-b/a-1}),
\]
where again the common denominator increases at the same points as
$v(z^{-1/a})$.  Case (3) is analogous, and simply gives the dual equation.
Finally, for case (4), we want to parametrize so that $u(t)v(t)=t^a$,
and have $v(t) = \sum_{b\le l\le a} c_l t^l + o(t^a)$ with $c_b\ne 0$,
giving the equation
\[
\frac{f(z+1)}{f(z)}
=
\frac{1}{1-v(z^{-1/a})}
=
1 + \sum_{b\le l\le a} d_l z^{-l/a} + o(z^{-1}),
\]
now with $a>1$, since $e_3$ is a multiple component of the fiber in this
case.  Again, these agree with the formal classification of difference
equations \cite{TurrittinHL:1960,PraagmanC:1983}, apart from the
description of the combinatorial data.

The symmetric difference case is, naturally, even more complicated, as now
we may need to blow up three times to have just a pair of transverse
branches to consider.  The simplest case is that we blow up a general point
of $e_1$, which gives an equation of symbolic form
\[
\frac{f(z+1)}{f(z)} = \exp(\alpha/z+O(1/z^3)).
\]
(Here we ignore the effect that the shift has on the symmetry condition.)
Similarly, blowing up the point of tangency then a general point of $e_2$
gives an equation of symbolic form
\[
\frac{f(z+1)}{f(z)} = -\exp(\alpha/z+O(1/z^3)).
\]
Otherwise, we are blowing up the triple intersection, and again have four cases.
\begin{itemize}
\item[(1)] $u^2=0$, where $y=u^3 (v+\lambda)^2$, $w=u^2(v+\lambda)$.
\item[(2)] $u^2 v = 0$, where $y=u^3 v^2$, $w=u^2 v$.
\item[(3)] $u^2 v = 0$, where $y=u^3 (v+1)^2$, $w=u^2(v+1)$.
\item[(4)] $u^2 v =0$, where $y=u^3 v$, $w=u^2v$.
\end{itemize}

In case (1), we reparametrize so that $u(t)^2(\lambda+v(t))=t^{2a}$,
corresponding to $t=z^{-1/a}$, giving symbolic equations of the form
\[
\frac{f(z+1)}{f(z)}
=
\frac{1-\sqrt\lambda}{1+\sqrt\lambda}
\exp(\sum_{1\le l\le a} c_l z^{-l/a}+o(1/z)).
\]
Of course, the symmetry means that these equations come in pairs,
corresponding to the fact that the full equation has even order.
Case (3) is the next simplest, as we may still reparametrize so that
$u(t)^2(1+v(t))=t^{2a}$, giving again pairs of symbolic forms typified by
\[
\frac{f(z+1)}{f(z)}
=
\frac{\sqrt{1+v(z^{-1/a})}-1}{\sqrt{1+v(z^{-1/a})}+1}
=
\sum_{b\le l\le a+b} c_l z^{-l/a} + o(z^{-1-b/a})
\]
For (2) and (4), the natural parametrization is $z^{-2}=u(t)^2v(t) =
t^{2a+b}$, where $u$ and $v$ have order $a$ and $b$ respectively, and are
known to relative precision $o(t^a)=o(z^{-2a/(2a+b)})$.  Case (4) then
gives an equation of the form
\[
\frac{f(z+1)}{f(z)}
=
\exp(\sum_{0\le l\le a} c_l z^{-(b+2l)/(2a+b)} + o(1/z)).
\]
If $b$ is odd, this equation satisfies the symmetry condition, while if $b$
is even, then the overall order is even and the symbolic equations come in
pairs.

\medskip

It is worth noting in each case how the corresponding canonical
isomonodromy transformations behave.  Note that when performing the
isomonodromy transformation corresponding to twisting by $e_l$, the effect
is to perform the $d$-th iterate of the twist by $e_m$, where $d$ is the
multiplicity of $e_m$ in the pullback of $e_l$.  We thus need only consider
the twist by $e_m$.  The result is then to replace the sheaf by the sheaf
of functions vanishing at that point, and is thus easily seen to correspond
to gauging by $M_t$ (which we may view symbolically as the appropriate
power of $z$).  In the $q$-difference cases, gauging by
$z^{-1/a}+o(z^{-1/a})$ multiplies the leading coefficient by $q^{-1/a}$;
since the true parameter is the $a$-th power of the leading coefficient,
this shifts the parameter by $q$ as expected.  In the nonsymmetric
ordinary difference case, gauging by $z^{-1/a}+o(z^{-1/a})$ multiplies $A$
by $1-1/az+o(1/z)$, and similarly for the symmetric case.  And of course in
the differential case gauging by $z^{-1/a}$ adds $-1/a+o(1)$ to $zf'/f$.

In the differential case, we of course also have continuous isomonodromy
transformations, which in the Puiseux form correspond to equations
\[
\frac{f_u}{f} = \frac{a}{l+a} z^{l/a+1}
\]
giving
\[
\frac{d}{du} A = z^{l/a}.
\]
In particular, for a minimal type with $l$ parameters, we obtain $l-1$ such
deformations.  (There is also a deformation changing the location
of the singularity.)

In the nonsymmetric difference case, if we write the symbolic equation as
\[
\frac{f(z+1)}{f(z)} = z^{b/a} \exp(g(z^{-1/a}))
\]
then the equation
\[
f_u(z) = \frac{a}{a+l} z^{1+l/a} f(z)
\]
gives
\[
\frac{d}{du} g(z^{-1/a}) = z^{l/a}(1+o(1/z))
\]
again giving continuous deformations for every parameter but the last.
The symmetric difference case is analogous; the only change is that the
continuous equation needs to preserve the symmetry, but this is easy to
arrange.

If we take into account the symmetries (e.g., the $\PGL_2$ symmetry in the
differential case), we find in general that the net number of {\em local}
continuous isomonodromy deformations is 0 if the anticanonical curve is
reduced, and otherwise can be expressed as 
$1-C_\alpha\cdot (C_\alpha-C_\alpha^{\text{red}})$.
This follows by an easy induction: blowing up a point of the smooth locus
of $C_\alpha$ subtracts $e_m$ from $C_\alpha^{\text{red}}$, and similarly for
blowing up a point on two components, while blowing up a point on a single
component with nontrivial multiplicity leaves $C_\alpha^{\text{red}}$ alone.  One
thus reduces to the cases in which $C_\alpha$ first becomes nonreduced:
$C_\alpha=2(s+f)$, which has $-3$ deformations due to the symmetries,
$C_\alpha=(s+f-e_1-e_2)+(s+f-e_1-e_2)+(e_1-e_2)+2(e_2)$, with no continuous
deformations, and
$C_\alpha=(2s+2f-2e_1-e_2-e_3)+(e_1-e_2-e_3)+(e_2-e_3)+2(e_3)$, likewise.

The fact that the result depends only on the geometry suggests that there
should be a geometric explanation of these continuous isomonodromy
deformations, just as the discrete isomonodromy deformations come from
twists by line bundles.  Of course, this assumes that the local
deformations can in fact be glued together to form global deformations,
though the results of \cite{symmPVI} suggest that this is indeed the case.

\section{Moduli of sheaves on surfaces}

Let $(X,C_\alpha)$ be an anticanonical rational surface.  Say a coherent
sheaf on $X$ has {\em integral support} if its $0$-th Fitting scheme is an
integral curve on $X$, and it contains no $0$-dimensional subsheaf.

\begin{thm}\label{thm:mod_irr}
  Let $(X,C_\alpha)$ be an anticanonical rational surface over an
  algebraically closed field of characteristic $p$, let $D$ be a divisor
  class with generic representative an integral curve disjoint from
  $C_\alpha$, and let $r$ be the largest integer such that $D\in r\Pic(X)$.
  Then the moduli problem of classifying sheaves $M$ on $X$ with integral
  support, $c_1(M)=D$, $\chi(M)=x$, and $M|_{C_\alpha}=0$ is represented by
  a quasiprojective variety $\Irr_X(D,x)$ of dimension $D^2+2$, with a
  symplectic structure induced by any choice of nonzero holomorphic
  differential on $C_\alpha$.  Moreover, $\Irr_X(D,x)$ is unirational if
  the generic representative of $D$ has no cusp, separably unirational if
  $p=0$ or $\gcd(x,r,p)=1$, and rational if $x\bmod r\in \{1,r-1\}$.
  Finally, if $\gcd(x,r)=1$, then there exists a universal sheaf over
  $\Irr_X(D,x)$.
\end{thm}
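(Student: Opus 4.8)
The plan is to identify $\Irr_X(D,x)$ with a moduli space of line bundles (more precisely, rank-1 torsion-free sheaves) on spectral curves, and then extract the geometric properties from that description together with the Fourier–Mukai-type picture already set up in the excerpt. First I would fix the linear system $|D|$; since $D$ is nef with generic integral representative disjoint from $C_\alpha$, the results of the previous section give $h^1(\mathcal L(D))=0$ and $\dim|D| = \tfrac12 D\cdot(D+C_\alpha) - \text{(correction)}$, so that the curves in the system sweep out a family of dimension $\tfrac12(D^2-D\cdot K_X)=\tfrac12 D^2 + \text{const}$ whose generic member $C$ is integral. A sheaf $M$ with integral support, $c_1(M)=D$ and no $0$-dimensional subsheaf is precisely the pushforward of a rank-1 torsion-free sheaf on its Fitting curve $C\in|D|$, and the condition $M|_{C_\alpha}=0$ just records that $C$ meets $C_\alpha$ trivially (which holds automatically for the generic such $C$). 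Thus $\Irr_X(D,x)$ fibers over an open subset of $|D|$ with fiber over $C$ the compactified Jacobian $\overline{\operatorname{Pic}}^{\,e}(C)$ for the appropriate degree $e$ determined by $x$ (via $\chi(M) = \chi(\sO_C\text{-part}) + \deg = e + 1 - p_a(C)$). Counting dimensions: $\dim|D| + p_a(C) = \tfrac12(D^2 - D\cdot K_X) + \tfrac12(D^2 + D\cdot K_X) + 1 = D^2 + 1$; but $C_\alpha$ disjoint from $D$ forces $D\cdot K_X = 0$, and one picks up one further dimension from the choice that makes the total $D^2 + 2$, matching the claimed symplectic (hence even) dimension. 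The symplectic form is then the one coming from \cite{HurtubiseJC/MarkmanE:2002b,poisson} cited above, with the scaling fixed by the holomorphic differential on $C_\alpha$.

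For representability and quasiprojectivity I would invoke the standard construction of moduli of sheaves with fixed Hilbert polynomial on a projective surface (Simpson/Le Potier), noting that the locus of sheaves with integral support and trivial restriction to $C_\alpha$ is open, and that the integral-support condition forces stability for any polarization (a sheaf supported on an integral curve with no $0$-dimensional subsheaf is automatically stable, since it has no proper subsheaf of the same slope). This makes the moduli space a genuine quasiprojective variety rather than merely a stack, and the fibration over $|D|$ just described identifies its dimension.

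The rationality/unirationality statements are where the real work lies, and I expect this to be the main obstacle. The strategy is: the total space of the family $\{\overline{\operatorname{Pic}}^{\,e}(C)\}_{C\in|D|}$ maps to $\Irr_X(D,x)$, and one wants to produce rational parametrizations by choosing a section of the universal curve — e.g. a base point of a suitable subsystem, or the intersection with a fixed auxiliary curve — to rigidify the Picard torsor. If the generic $C$ is smooth (no cusp), then $\overline{\operatorname{Pic}}^{\,e}(C) = \operatorname{Pic}^e(C)$ is a torsor under an abelian variety that degenerates to $\mathbb G_m$'s and $\mathbb G_a$'s over the discriminant, and slicing by the anticanonical curve $C_\alpha$ (which meets every $C\in|D|$ in $D\cdot C_\alpha$ points, and on which we have the $\operatorname{Pic}^0(C_\alpha)$-action of Proposition~\ref{prop:Pic0_acts}) should trivialize enough of the torsor structure to get unirationality; separability of this argument requires $\gcd(x,r,p)=1$ so that the relevant multiplication-by-$r$ map on Jacobians is separable, or $p=0$. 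For genuine rationality when $x\bmod r\in\{1,r-1\}$, the point is that $D = rD'$ for a primitive $D'$, and $x\equiv\pm1\ (r)$ lets one realize the generic $M$ as a twist of a line bundle of degree coprime to $r$ on the spectral curve; one then uses that a $\operatorname{Pic}$-torsor of coprime degree over a rational base, equipped with a rational section coming from a canonically-defined effective divisor of the right degree, is rational over that base. The delicate part is controlling the cuspidal locus — a cuspidal integral curve has a non-reduced (hence non-abelian-variety) compactified Jacobian with a $\mathbb G_a$ factor, which is the source of the wild/inseparable behavior and the reason the clean unirationality statement is hedged with "no cusp"; one must check that the generic $C\in|D|$ is cuspidal only in the excluded cases, using the earlier classification of how $|D|$ decomposes in the fundamental chamber together with Bertini over the (possibly imperfect residue fields of the) discriminant.
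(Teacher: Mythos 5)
Your overall framework does match the paper's: stability for any polarization via integral support, the symplectic structure from \cite{poisson}, and the Lagrangian fibration over an open subset of $|D|$ with compactified Jacobians as fibers.  But two steps as written would not go through.  First, the dimension count: your computation yields $D^2+1$ and then appeals to ``one further dimension from the choice,'' which is not an argument.  The missing point is that ${\cal L}(D)|_{C_\alpha}\cong\sO_{C_\alpha}$ forces $h^1({\cal L}(D))=1$, so $h^0({\cal L}(D))=g+1$ with $g=D^2/2+1$; thus $\dim|D|=g$, each fiber (a compactified $\Pic^{x+g-1}$ of an arithmetic genus $g$ curve) is $g$-dimensional, and the total is $2g=D^2+2$.

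Second, and more seriously, the rationality argument rests on a false principle and omits the key idea.  A $\Pic$-torsor over a rational base that admits a rational section is merely isomorphic to the relative Jacobian, and relative Jacobians over rational bases are not rational in general (the fibers are $g$-dimensional abelian varieties); rationality here is special to curves moving in a full linear system on a rational surface.  The actual mechanism is the case $x=1$: then $\deg(M)=g$, the generic $M$ has a unique global section, and the cokernel of $\sO_C\to M$ is a length-$g$ subscheme of $X$, giving a birational identification of $\Irr_X(D,1)$ with the punctual Hilbert scheme $X^{[g]}$, which is rational because symmetric powers of rational surfaces are.  The reduction of general $x\equiv\pm1\pmod r$ to $x=1$ then needs \emph{both} twisting by a divisor $D'$ with $D\cdot D'=r$ (available since $D/r$ is primitive in the unimodular lattice $\Pic(X)$), which shifts $x$ by multiples of $r$, and the duality $M\mapsto\sExt^1(M,\omega_X)$, which sends $x$ to $2g-2-x\equiv -x\pmod{r}$; you never invoke duality, so the case $x\equiv r-1$ is not covered by your scheme.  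Relatedly, your unirationality step (``slicing by $C_\alpha$ should trivialize enough of the torsor'') is not an argument: the concrete mechanism is that $\Irr_X(D,2-g)$ (degree-one sheaves) is rational by the above, and the $d$-th tensor power map on line bundles gives a dominant rational map to $\Irr_X(D,d+1-g)$ except when the generic curve is cuspidal and $d=0$ in $k$; choosing $d\equiv x\pmod r$ with $\gcd(d,p)=1$, possible exactly when $\gcd(x,r,p)=1$, gives separability.  Finally, the no-cusp condition is a hypothesis of the theorem, not something to be verified by Bertini: generically cuspidal classes genuinely occur in finite characteristic (e.g., fibers of rational quasi-elliptic surfaces), which is precisely why the statement is hedged.
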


\begin{proof}
  Quasiprojectivity follows from the standard GIT construction: for any
  choice of stability condition, a sheaf with integral support is stable.
  The symplectic structure follows from the fact that sheaves with integral
  support are simple (have no nonscalar endomorphisms) together with the
  results of \cite{poisson} (see also
  \cite{HurtubiseJC/MarkmanE:2002b,BottacinF:1998}).  This requires a
    choice of Poisson structure on $X$, or equivalently a choice of nonzero
    holomorphic differential on $C_\alpha$; the symplectic structure on the
    moduli space scales linearly with the choice of differential.

  Now, the typical sheaf in the moduli space corresponds to a pair $(C,M)$
  where $C$ is an integral curve of class $D$ (and disjoint from $C_\alpha$)
  and $M$ is a torsion-free sheaf on $C$.  If $g=D^2/2+1$, then
  $\Gamma(\sO_X(D))$ has dimension $g+1$, and thus the integral curves
  in the linear system form an open subset of a $\P^g$.  We also compute
  that $C$ has arithmetic genus $g$; the fiber over the point corresponding
  to $C$ is a compactification of $\Pic^{x+g-1}(C)$, so has dimension $g$.  (As
  we might expect from a natural fibration of a symplectic scheme by
  half-dimensional subschemes, this is a Lagrangian fibration.)

  If $x=1$, then $\deg(M)=g$, and thus the generic such sheaf has a unique
  global section.  The quotient by the corresponding trivial subsheaf is
  supported on $g$ points, thus giving a birational correspondence with the
  punctual Hilbert scheme $X^{[g]}$.  Since symmetric powers of rational
  surfaces are rational varieties, it follows that $\Irr_X(D,1)$ is
  rational.

  More generally, since $D/r$ is a primitive element of the Picard lattice
  of $X$, there exists a divisor $D'$ such that $D\cdot D'=r$.  In
  particular, we can twist by powers of this divisor to obtain isomorphism
  $\Irr_X(D,x)\cong \Irr_X(D,r+x)$ for any $x$.  Similarly, the duality
  morphism $M\mapsto \sHom(M,\omega_C)$ on invertible sheaves can be
  defined globally (and extended to torsion-free sheaves) by $M\mapsto
  \sExt^1(M,\omega_X)$, and gives an isomorphism $\Irr_X(D,x)\cong
  \Irr_X(D,2g-2-x)$.  Since $2g-2=D^2$ is a multiple of $r^2$, the
  rationality claim follows.

  For unirationality, note that since $2g-2$ is a multiple of $r^2$, $g-1$
  is a multiple of $r$, and thus $\Irr_X(D,2-g)$ (classifying sheaves of
  degree 1) is rational.  Since the generic sheaf is an {\em invertible}
  sheaf on the generic curve, we can take its $d$-th power and thus obtain
  a rational map $\Irr_X(D,2-g)\to \Irr_X(D,d+1-g)$.  Since the generic
  curve $C_{\text{gen}}$ is integral, this map is dominant unless
  $C_{\text{gen}}$ is cuspidal and $d$ is 0 in $k$.   This implies
  unirationality in the noncuspidal case for any $d\ne 0$; the case $d=0$
  reduces to the case $d=g-1$ by twisting.  When $\gcd(d,p)=1$, the
  multiplication by $d$ map is separable, and again we may feel free to add
  multiples of $r$ to make this happen.

  Finally, we note that the obstruction to the existence of a universal on
  $\Irr_X(D)$ is given by a class in $H^2(\Irr_X(D),\G_m)$, which can be
  constructed as follows.  There certainly exists a universal sheaf
  \'etale-locally, and we may twist by a line bundle to ensure that this
  universal sheaf is acyclic; i.e., that $x\ge g$.  Then, since the fibers
  of the universal sheaf are simple, the endomorphism ring of the direct
  image descends to an Azumaya algebra on $\Irr_X(D)$, the class of which
  is the desired obstruction.  This Azumaya algebra has degree $x$, and
  thus the obstruction has order dividing $x$.  Since this is true for any
  twist of sufficiently large degree, we also find that the order of the
  obstruction divides $x+r$, and thus that it divides $\gcd(x,r)=1$.  In
  particular, if $x$ and $r$ are relatively prime, there is no obstruction.
\end{proof}

\begin{rems}
  The generically cuspidal case can of course only occur in finite
  characteristic, but can certainly occur there, say if $D$ is the class of
  a fiber in a rational quasi-elliptic surface.
\end{rems}

\begin{rems}
  Often in the literature, one restricts ones attention to the subscheme
  where $C$ is not just integral but smooth, making the fibers of the
  Lagrangian fibration abelian varieties.  Of course, this is problematical
  in finite characteristic, where there may not be any smooth curves in the
  linear system.  In addition, since singularity is a codimension 1
  condition, this removes an entire hypersurface from the moduli space,
  based on a condition which is rather unnatural from the difference
  equation perspective.  (Indeed, as we mentioned, difference equations
  correspond most naturally to sheaves on noncommutative surfaces, and
  there the notion of support fails altogether.  In contrast, the failure
  of integrality corresponds to reducibility of the equation in a suitable
  sense.)  For instance, in the generic $2$-dimensional case, both the
  surface and the moduli space are elliptic surfaces, and there are 12
  fibers where the support is singular.  Similarly, there are 12 points of
  the moduli space where the sheaf is not invertible on its support, again
  an odd condition in terms of difference equations.
\end{rems}

\begin{rems}
  It seems likely that the condition $\gcd(x,r)=1$ for the existence of a
  universal sheaf is necessary.  Some condition is needed, see the remark
  following Proposition \ref{prop:ell_surf_rat} below.
\end{rems}

The rational case $x=1$ is particularly nice for another reason: although
the definition of stability generally requires the choice of an ample
bundle, it turns out that when $\chi=1$, this choice is irrelevant.  One
finds in this case $M$ is stable iff any nonzero quotient of $M$ has
positive Euler characteristic (and there are no strictly semistable
sheaves).  We thus find that $\Irr_X(D,1)$ extends naturally to a
projective moduli space.  This space is no longer symplectic, but since
every sheaf in the space is stable, thus simple, it still inherits a
Poisson structure.  This Poisson variety has smooth symplectic leaves
determined by the quasi-isomorphism class of the complex $M\otimes^{\dL}
\sO_{C_\alpha}$, see \cite{poisson}.  In particular, the open subvariety
where $M|_{C_\alpha}=0$ is still smooth and symplectic.

For our purposes, the most natural case is $x=D\cdot f$.  Indeed, the sheaf
corresponding to a difference equation comes from a sheaf on a Hirzebruch
surface with presentation
\[
0\to \rho^*V\otimes \sO_\rho(-1)\to \sO_X^n\to M_0\to 0.
\]
If we twist by $-f$, then both sheaves in the resolution have vanishing
cohomology, and thus $H^*(M_0(-f))=0$; conversely, by Lemma
\ref{lem:cohom_vanish}, any sheaf with $H^*(M_0(-f))=0$ at the least has a
canonical subsheaf with a presentation of the above form.  (In Section $2$,
we imposed the additional open conditions
$\Hom(M,\sO_f(-1))=\Hom(\sO_f(-1),M)=0$ for all $f$; ignoring those
conditions gives us a natural partial compactification.)

Of course, we do not have a sheaf on a Hirzebruch surface, but rather a
sheaf on some blowup of the Hirzebruch surface.  However, we have the
following fact, by the same spectral sequence argument as Lemma
\ref{lem:cohom_vanish}.

\begin{lem}
Let $\pi:X\to X_0$ be a birational morphism of smooth projective surfaces,
and let $M$ be a $1$-dimensional sheaf on $X$.  Then $H^0(M)=H^1(M)=0$
iff $M$ is $\pi_*$-acyclic and $H^0(\pi_*M)=H^1(\pi_*M)=0$.
\end{lem}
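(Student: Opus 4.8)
The plan is to run exactly the argument of Lemma~\ref{lem:cohom_vanish}, with the ruling $\rho\colon X\to\P^1$ replaced by the birational morphism $\pi\colon X\to X_0$, and to analyze the Leray spectral sequence
\[
H^p(X_0,R^q\pi_*M)\implies H^{p+q}(X,M).
\]
Three structural facts make this work, and I would record them first. First, a birational morphism of smooth projective surfaces is a composition of point blowups, so every fibre of $\pi$ has dimension $\le 1$; hence $R^q\pi_*M=0$ for $q\ge 2$. Second, $\pi$ is an isomorphism over the complement of a finite set of points of $X_0$, so $R^1\pi_*M$ is supported on that finite set, i.e.\ has $0$-dimensional support. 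Third, since $M$ has support of dimension $\le 1$, so does $\pi_*M$, and Grothendieck vanishing gives $H^2(X_0,\pi_*M)=0$ (and likewise $H^2(X,M)=0$).

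Given these inputs the spectral sequence is as degenerate as in Lemma~\ref{lem:cohom_vanish}: on the $E_2$ page the only possibly nonzero entries in total degree $\le 1$ are $H^0(\pi_*M)$, $H^1(\pi_*M)$ and $H^0(R^1\pi_*M)$, and the only relevant differential, $d_2\colon H^0(R^1\pi_*M)\to H^2(\pi_*M)$, vanishes because its target is $0$. Reading off the abutment therefore produces a natural isomorphism $H^0(M)\cong H^0(\pi_*M)$ together with a short exact sequence
\[
0\to H^1(\pi_*M)\to H^1(M)\to H^0(R^1\pi_*M)\to 0.
\]

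From here both implications are immediate. If $M$ is $\pi_*$-acyclic and $H^0(\pi_*M)=H^1(\pi_*M)=0$, the isomorphism gives $H^0(M)=0$, and the short exact sequence, whose outer terms now both vanish, gives $H^1(M)=0$. Conversely, if $H^0(M)=H^1(M)=0$, then $H^0(\pi_*M)=0$ at once, and exactness forces $H^1(\pi_*M)=0$ and $H^0(R^1\pi_*M)=0$; since $R^1\pi_*M$ is coherent with $0$-dimensional support, the vanishing of its space of global sections forces $R^1\pi_*M=0$, which is precisely $\pi_*$-acyclicity.

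I expect the whole thing to be routine once the three structural observations are in place; the only point deserving a moment's thought — and the exact analogue of what made Lemma~\ref{lem:cohom_vanish} go through — is that $\pi_*M$ has at most $1$-dimensional support, so that $H^2(\pi_*M)=0$ and no stray connecting homomorphism intrudes into the short exact sequence. (If one preferred to avoid the spectral sequence entirely, one could instead factor $\pi$ into single blowups and induct, using $\pi_*\sO_X=\sO_{X_0}$, $R^1\pi_*\sO_X=0$, and the projection formula at each stage; but the spectral sequence form is cleaner and parallels the earlier lemma.)
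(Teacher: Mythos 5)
Your proof is correct and is essentially the paper's argument: the paper proves this lemma precisely by invoking "the same spectral sequence argument as Lemma \ref{lem:cohom_vanish}," which is the Leray low-degree analysis you carry out, with the key points being that $R^1\pi_*M$ has $0$-dimensional support (so vanishing of its $H^0$ forces it to vanish) and that $H^2(\pi_*M)=0$ keeps the five-term sequence short exact.
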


In other words, a sheaf $M$ on $X$ induces a difference equation (up to
twisting by $-f$) iff $H^0(M)=H^1(M)=0$.  (Again, it could fail to be the
natural sheaf associated to a difference equation, but this can be avoided
by imposing the additional conditions
$\Hom(M,\sO_g(-1))=\Hom(\sO_g(-1),M)=0$ for any smooth rational curve $g$
contained in a fiber.)  We are thus led to consider the space
$\Irr_X(D,0)$.  Once again, the stability condition turns out to be
independent of the choice of ample bundle: a $1$-dimensional sheaf $M$ on
$X$ with $\chi(M)=0$ is stable iff any proper nontrivial subsheaf has
negative Euler characteristic, and similarly for semistability.  Since we
need semistable sheaves, we do not immediately inherit a Poisson structure,
although this will certainly exist on the complement of the semistable
locus.

It remains only to consider the condition $H^0(M)=H^1(M)=0$.  By Lemma
\ref{lem:tau_func}, this is the complement of a codimension $1$ condition
on any family of $1$-dimensional sheaves, cutting out a Cartier divisor.
Of course, this is only well-defined outside the semistable locus, but the
generic sheaf is integral, and thus stable, so we still obtain a
well-defined divisor on the projective moduli space.  On the integral
locus, this divisor is just the canonical theta divisor in the relative
$\Pic^{g-1}$, while in general, it is the zero locus of a canonical
global section of a canonical line bundle $\det \dR\Gamma(M)^{-1}$.

We can obtain a whole family of such divisors by noting that for any vector
$v\in D^\perp$, we can twist $M$ by $\sO_X(v)$ without affecting the Euler
characteristic, thus obtaining rational automorphisms of the projective
moduli space (these are only rational, since twisting can affect stability;
but this is an automorphism on the integral locus).  In particular, we
obtain in this way a canonical global section of $\det
\dR\Gamma(M(v))^{-1}$, which we call a ``tau function'' by analogy with
\cite{ArinkinD/BorodinA:2009}.  It is of course a misnomer to call it a
function (just like a theta function is not an algebraic function), but it
is at the very least a convenient way of describing divisors on the moduli
space.  (Similarly, the divisor on the moduli stack of surfaces where a
given divisor class is a $-2$-curve can also be viewed as a tau function,
as can theta functions themselves.)

The moduli space is $0$-dimensional when $D$ is a $-2$-curve, and in that
case, $M$ is uniquely determined by its Chern classes and the constraint
that it be disjoint from $C_\alpha$; moreover, its image on $X_0$ is
similarly determined by its bidegree and its intersection with $C_\alpha$
(\cite[Prop.~8.9]{poisson}).  Since the moduli spaces are symplectic, the
next interesting case is the $2$-dimensional case $D^2=D\cdot K_X=0$.  The
only such divisors in the fundamental chamber are the classes $-rK_{X_8}$
for integer $r$, such that $\omega_{X_8}|_{C_\alpha}$ has exact order $r$ in
$\Pic(C_\alpha)$.  Now, in that case, $X_8$ is itself an quasi-elliptic
surface, and $D$ is the class of a fiber.  The corresponding moduli spaces
are just the relative Picard varieties of this quasi-elliptic surface.

\begin{prop}\label{prop:ell_surf_rat}
  Let $\psi:X\to \P^1$ be a relatively minimal rational quasi-elliptic
  surface.  Then for any integer $x$, the relative $\Pic^x$ of $X$ over
  $\P^1$ is a rational surface.
\end{prop}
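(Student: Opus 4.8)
The plan is to reduce rationality of the whole surface $\Pic^x(X/\P^1)$ to the fact that, up to passing to $\Pic^0$, the generic fibre of $\psi$ is a form of the affine line. First I would pass to the generic fibre $C_\eta$ of $\psi$ over $K:=k(\P^1)$. Since $\psi$ is quasi-elliptic, $C_\eta$ is a regular, geometrically integral curve of arithmetic genus $1$ which is not smooth over $K$: its base change to $\overline K$ is a cuspidal rational cubic. Consequently $\Pic^0(C_\eta)$ is a smooth connected one-dimensional unipotent $K$-group, i.e.\ a form of $\G_a$, and $T:=\Pic^x(C_\eta)$ is a torsor under it; in particular $T$ is a smooth affine curve over $K$ which becomes isomorphic to $\A^1$ over $\overline K$. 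Meanwhile, by the discussion preceding the proposition, the relative $\Pic^x$ is the quasiprojective surface $\Irr_X(D,x)$ (with $D$ the fibre class), which carries a dominant morphism to $\P^1$ whose generic fibre is (birational to) $T$.

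Next I would complete $T$: its regular projective completion $\overline T$ is a smooth projective curve over $K$ that is geometrically isomorphic to $\P^1$, hence of genus $0$. Since $k$ is algebraically closed, $K=k(\P^1)$ is a $C_1$ field, so by Tsen's theorem every smooth genus-$0$ curve over $K$ has a $K$-rational point; therefore $\overline T\cong\P^1_K$, so $T$ is an open subvariety of $\P^1_K$ and is rational over $K$. A variety with a dominant morphism to $\P^1$ whose generic fibre is rational over $k(\P^1)$ is birational to $\P^1\times\P^1$, so $\Pic^x(X/\P^1)$ is a rational surface.

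When $\psi$ admits a section — equivalently, when it has no multiple fibre, so $-K_X$ is the fibre class — one can avoid Tsen's theorem: in any blowdown structure $e_m$ is a $-1$-curve with $e_m\cdot(-K_X)=1$, hence an honest section $S$; twisting by $\mathcal O_X(xS)$ gives $\Pic^x(X/\P^1)\cong\Pic^0(X/\P^1)$, and the relative Abel--Jacobi map $p\mapsto\mathcal O(p-S)|_{F_{\psi(p)}}$ is a birational morphism from the smooth locus of $\psi$ onto $\Pic^0(X/\P^1)$ (it restricts, over the dense open locus of integral fibres $F$, to the isomorphism $F^{\mathrm{sm}}\xrightarrow{\ \sim\ }\Pic^0(F)$), so $\Pic^0(X/\P^1)$ is birational to $X$, which is rational. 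I expect the only real obstacle to be foundational rather than geometric: in the presence of multiple or reducible fibres one must be careful about what ``the relative $\Pic^x$'' is as a variety, which is why I would work throughout with the quasiprojective variety $\Irr_X(D,x)$ already constructed and with its generic fibre over $\P^1$; the geometric input (that this generic fibre is a form of $\A^1$, hence rational over $K$) is then immediate.
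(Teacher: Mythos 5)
The decisive step of your first paragraph is false, and it is exactly where the content of the proposition lies. Over the imperfect field $K=k(\P^1)$ (quasi-elliptic fibrations only exist in characteristic $2$ or $3$), a smooth affine curve that becomes $\A^1$ over $\overline K$ need not have a smooth completion: its regular projective completion is regular but in general not smooth over $K$, completion does not commute with the inseparable base change $K\to\overline K$, and the completion can have positive arithmetic genus. Nontrivial forms of $\G_a$ and their torsors are precisely the geometric source of quasi-elliptic fibrations, so the pathology is not avoidable here. Concretely, if $\psi$ has a section then $\Pic^x(C_\eta)\cong\Pic^0(C_\eta)\cong C_\eta^{\mathrm{sm}}$, and the regular completion of this "form of $\A^1$" is $C_\eta$ itself, a regular, non-smooth curve of arithmetic genus $1$ --- not $\P^1_K$. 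Your argument would then show that the generic fibre of $\psi$ is $K$-birational to $\P^1_K$, i.e.\ that $k(X)\cong K(t)$ over $K$; but the unique regular projective model of $k(X)$ over $K$ is $C_\eta\not\cong\P^1_K$, a contradiction. Tsen's theorem cannot rescue this, since $\overline T$ is not a smooth genus-$0$ curve; and no variant can work, because a smooth $K$-rational curve has a $K$-point, a $K$-point trivializes the torsor, and the group $\Pic^0(C_\eta)$ is itself not $K$-rational. A tell-tale symptom is that your main argument never uses the hypothesis that $X$ is rational, whereas the statement fails without it: for a non-rational quasi-elliptic surface with a section, the Abel--Jacobi argument of your own second paragraph shows the relative $\Pic^x$ is birational to the surface itself.

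Your second paragraph, by contrast, is correct as far as it goes: when there is no multiple fibre (so sections exist), twisting by a multiple of a section identifies the relative $\Pic^x$ with the relative $\Pic^0$, which is birational to $X$, hence rational. But the proposition must also cover the case of a (tame) multiple fibre, where there is no section and this argument does not apply --- and that is the case your broken first paragraph was meant to handle. The paper instead quotes the rationality criterion of Cossec--Dolgachev (Prop.~5.6.1): a relatively minimal quasi-elliptic surface is rational iff it has at most one multiple fibre, that one tame, and its relative Jacobian is rational; since the relative $\Pic^x$ has the same relative Jacobian as $X$, and passing to $\Pic^x$ cannot create multiple fibres or turn tame ones wild, rationality transfers from $X$. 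To repair your approach you would need an input of this kind for the multiple-fibre case; the generic-fibre/Tsen route cannot supply it.
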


\begin{proof}
  This is essentially a result of
  \cite[Prop.~5.6.1]{CossecFR/DolgachevIV:1989}.  To be precise, that
  Proposition shows that a relatively minimal quasi-elliptic surface is
  rational iff it has at most one multiple fiber (and that ``tame'') and
  its relative Jacobian is rational.  Since the relative Jacobian of the
  relative $\Pic^x$ is isomorphic to the original relative Jacobian, and
  the relative $\Pic^x$ cannot turn non-multiple fibers into multiple
  fibers (or tame multiple fibers into wild multiple fibers), the claim
  follows.
\end{proof}

\begin{rem}
  It is worth noting that although the moduli space here is rational, there
  is in general no universal sheaf, even if we restrict to an open subset
  (or the function field) of the moduli space.  In particular, when $x=0$,
  it follows from Bhatt's appendix to \cite{KrashenD/LieblichM:2008} that
  the obstruction can be identified with the class of $X$ as a torsor over
  the relative Jacobian.  In particular, we find in that case that the
  obstruction is nontrivial whenever $r>1$.
\end{rem}

Since our surfaces are anticanonical, it makes sense to ask which rational
surface one obtains in this way.  That is, if we start with a blowdown
structure on $X$ and a section of the anticanonical linear system, is there
a natural way to choose a blowdown structure on the (minimal proper regular
model of the) relative $\Pic^x$ such that we can compute the new
anticanonical curve and the new morphism $\Z^{10}\to \Pic(C_\alpha')$?  Note
that on a (quasi-)elliptic rational surface, the line bundle
$\omega_X|_{C_\alpha}$ has finite order (say $r$), and thus determines a
subgroup of degree $\gcd(x,r)$.  Since the corresponding bundle for the
relative $\Pic^x$ has order $r/\gcd(x,r)$, the resulting surface should
depend only on the composition $\Z^{10}\to\Pic(C_\alpha)$ with the quotient
by this subgroup.  We will show this in the case $\gcd(x,r)=r$, and give an
explicit description of the surface, in the following section.

Past the $2$-dimensional cases, it no longer makes much sense to ask which
variety we obtain (since birational geometry is extremely complicated, even
for $4$-folds).  It is fairly straightforward to write down divisor classes
giving such moduli spaces, however; for instance the $4$-dimensional moduli
spaces correspond (in the fundamental chamber of an even blowdown
structure) to one of
\[
2s+3f-\sum_{1\le i\le 10}e_i.\notag
\qquad\text{or}\qquad
4s+4f-2\sum_{1\le i\le 7} e_i-e_8-e_9
\]
The former is always generically integral (assuming of course that the
corresponding line bundle is trivial on $C_\alpha$), while the latter is
generically integral unless $e_8-e_9$ is a $-2$-curve.  Both of these cases
extend to a sequence of moduli problems of dimension $2g$ for arbitrary
$g>1$.  The first case extends to the general problem of classifying
second-order problems with simple singularities (related to generalized
Garnier-type systems); for the second case, see Section \ref{sec:CMspace}
below.

In general, for any dimension bigger than $2$, there are only
finitely many possibilities for the representative of $D$ in the
fundamental chamber.  Indeed, we may write any divisor $D$ in the
fundamental chamber in the form
\[
D = D_7 +
c(2s+2f-e_1-\cdots-e_8)
-\sum_i \lambda_i e_{8+i},
\]
where $D_7\in \langle s,f,e_1,\dots,e_7\rangle$
is in the fundamental chamber and $\lambda$ is a partition with all parts
at most $c$.  The constraint $D\cdot C_\alpha=0$ becomes
\[
|\lambda| = D_7\cdot C_\alpha
\]
and we have
\[
D^2 = D_7^2 + 2 c D_7\cdot C_\alpha - \sum_{1\le i}\lambda_i^2
    \ge D_7^2 + c D_7\cdot C_\alpha.
\]
Since every generator of the fundamental chamber for $m=7$ has positive
intersection with $C_\alpha$, there are only finitely many pairs $(D_7,c)$
such that $D_7\ne 0$ and $D_7^2 + c D_7\cdot C_\alpha\le d-2$, where $d$ is
our desired value for $D^2+2$.  (The pairs with $D_7=0$ correspond to
the $D^2=0$ case.)

\section{Moduli of sheaves on rational (quasi-)elliptic surfaces}

Let $(X,\Gamma,C_\alpha)$ be an anticanonical rational surface with
blowdown structure $\Gamma$, and suppose that $X$ is (quasi-)elliptic, so
that the divisor $rC_\alpha$ is the class of a fiber of a genus 1 pencil
for some positive integer $r$.  Since $-rK$ is the class of a pencil, we
see that the line bundle $\omega_X^r|_{C_\alpha}$ is trivial.  Moreover, if
$\omega_X|_{C_\alpha}$ had order $s$ strictly dividing $r$, then
$sC_\alpha$ would already have multiple sections, contradicting the
assumption on $rC_\alpha$.  We thus find that in this scenario,
$\omega_X|_{C_\alpha}$ has exact order $r$.  Per Proposition
\ref{prop:Pic0_acts}, such a torsion bundle induces an automorphism of
$C_\alpha$ of order $r$, and we can quotient by this automorphism to obtain
a new curve $C'_\alpha$.  The pair $(X,\Gamma)$ is determined from the map
$\Lambda_{10}\to \Pic(C_\alpha)$; if we compose with the degree-preserving
map $\Pic(C_\alpha)\to \Pic(C'_\alpha)$, we obtain a new map
$\Lambda_{10}\to \Pic(C'_\alpha)$.  This new map has the same combinatorial
structure as the original map (twisting by $\omega_X|_{C_\alpha}$ preserves
degrees, so the automorphism preserves components), and thus itself arises
from a unique triple $(X',\Gamma',C'_\alpha)$.

\begin{thm}\label{thm:Picr}
  Suppose $C_\alpha$ is reduced.  Then the surface $X'$ constructed in this
  way is the minimal proper regular model of the relative $\Pic^r$ of $X$,
  in such a way that the fiber corresponding to $rC_\alpha$ is $C'_\alpha$.
\end{thm}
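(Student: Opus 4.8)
The plan is to identify the surface $X'$ with $\hat X$, the minimal proper regular model of $P:=\Pic^r_{X/\P^1}$. Since $X'$ was defined, via the reconstruction lemma above, purely through its restriction map $\Lambda_{10}\to\Pic(C'_\alpha)$ — the composite $\Lambda_{10}\xrightarrow{\phi^*}\Pic(C_\alpha)\xrightarrow{\operatorname{Nm}}\Pic(C'_\alpha)$, with $\operatorname{Nm}$ the degree-preserving (norm) map of the quotient $q\colon C_\alpha\to C'_\alpha$ — it suffices to exhibit $\hat X$, together with an anticanonical curve and a blowdown structure, as a triple realizing exactly this restriction map, and then quote the uniqueness half of that lemma. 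First I would record the local picture. The pencil $\psi\colon X\to\P^1$ attached to $|{-}rK_X|$ is relatively minimal with a single multiple fibre, of multiplicity $r$ and reduced fibre $C_\alpha$, over a point $b_0$; the class $\mathcal Q=\omega_X|_{C_\alpha}$ is the inverse of the normal bundle $\mathcal O_X(C_\alpha)|_{C_\alpha}$, of exact order $r$. By \cite[Prop.~5.6.1]{CossecFR/DolgachevIV:1989} and the Proposition above, $\hat X$ is a rational (quasi-)elliptic surface; its pencil $\hat\psi$ has generic fibre $\Pic^r(X_\eta)\cong\Pic^0(X_\eta)=\operatorname{Jac}(X_\eta)$ (the index of $X_\eta$ equals $r$ — every multisection meets the fibre in a multiple of $r$ points and $e_i$ meets it in $r$ — so its period divides $r$), whence $\hat X$ has a section and no multiple fibres, $-K_{\hat X}$ is the class of a fibre, and $\hat C:=\hat\psi^{-1}(b_0)$ is an anticanonical curve.

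The first main step is to prove $\hat C\cong C'_\alpha=C_\alpha/\langle\tau_{\mathcal Q}\rangle$, compatibly with the group actions of Proposition~\ref{prop:Pic0_acts}, i.e.\ so that the induced map $\Pic^0(C_\alpha)\to\Pic^0(\hat C)$ on deforming fibres coincides with the degree-preserving map $\Pic^0(C_\alpha)\to\Pic^0(C'_\alpha)$. I would argue locally over a disc $\Delta\ni b_0$. Because $C_\alpha$ is reduced, the multiple fibre is a logarithmic transform of a non-multiple one: $\psi^{-1}(\Delta)$ is the $\mu_r$-quotient of a family over the cover $u\mapsto u^r=t$ whose central fibre is the connected cyclic degree-$r$ cover of $C_\alpha$ classified by the torsion class $\mathcal Q$. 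Since $\tau_{\mathcal Q}^*$ acts trivially on $\Pic^r$ of any fixed reduced genus-$1$ curve — this is the formula $\tau_{\mathcal Q}^*\mathcal L\cong\mathcal L\otimes\mathcal Q^{-\chi(\mathcal L)}$ of the Remark following Proposition~\ref{prop:Pic0_acts}, applied with $\chi=r$ — the relative $\Pic^r$ of that family is essentially constant, with central fibre that cyclic cover, which is isomorphic to $C_\alpha/\langle\tau_{\mathcal Q}\rangle$ by the standard duality between a cyclic isogeny and its dual. The reducedness hypothesis is exactly what makes $\tau_{\mathcal Q}$ act on $C_\alpha$ and the quotient well-behaved (non-reduced fibres are ruled out, cf.\ the Remark ending the previous section); the cuspidal case and the additive reduced types force $r$ to be a power of the characteristic but are covered by the same computation.

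The second step compares Picard lattices. Because $X$ and $\hat X$ have identical configurations of singular fibres and identical Mordell--Weil groups (both computed from $\operatorname{Jac}(X_\eta)$), Shioda--Tate furnishes a natural isomorphism $c\colon\Pic(X)\xrightarrow{\sim}\Pic(\hat X)$ of lattices carrying $-K_X$ to $-K_{\hat X}$; realized concretely, $c$ is induced by the relative Poincar\'e sheaf on $X\times_{\P^1}\hat X$ (equivalently, $c(v)$ is read off from the tau divisor of $\mathcal L(v)$ on the moduli space $\hat X$, in the sense of the tau functions discussed above). Being an isometry fixing the canonical class, $c$ takes $\Gamma$ to a numerical blowdown structure $\hat\Gamma$ on $\hat X$; this is an actual blowdown structure — $\hat X$ has no $-d$-curves for $d\ge3$, being relatively minimal elliptic, so the recognition algorithm of Section~3 (or the geometric origin of $c$) applies. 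The crux is that $c$ is compatible with restriction to the distinguished fibres: with $\phi\colon C_\alpha\hookrightarrow X$ and $\phi'\colon\hat C=C'_\alpha\hookrightarrow\hat X$, the square with top $c$, verticals $\phi^*$ and $\phi'^*$, and bottom $\operatorname{Nm}$ commutes. This is base change for the Poincar\'e sheaf along the fibre $C_\alpha$; on that scheme-theoretically non-reduced fibre the relative-$\Pic^r$ transform restricts to the norm map, again by the explicit formula for $\tau_{\mathcal Q}$. Granting all this, the triple $(\hat X,\phi'\colon C'_\alpha\hookrightarrow\hat X,\hat\Gamma)$ has restriction map $\phi'^*=\operatorname{Nm}\circ\phi^*$ on the standard basis — exactly the data defining $(X',\Gamma',C'_\alpha)$ — so the reconstruction lemma gives $\hat X\cong X'$ identifying the fibre $\hat C$ over $b_0$ with the anticanonical curve $C'_\alpha$, which is the assertion. (When $C'_\alpha$ is not integral one needs only uniqueness in that lemma, which is unconditional; one checks separately that $\operatorname{Nm}$ preserves the combinatorial type of the anticanonical decomposition, so $\hat\Gamma$ is admissible for it.)

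The step I expect to be the main obstacle is the first one: pinning down the new special fibre $\hat C$ scheme-theoretically as $C'_\alpha$, uniformly across all reduced Kodaira types and the degenerate shapes of $C_\alpha$, which rests on the local theory of multiple fibres and logarithmic transforms. The commutativity of the compatibility square in the second step is also delicate — base change over a non-reduced fibre — but it reduces cleanly to the behaviour of $\tau_{\mathcal Q}$ already isolated in the paper.
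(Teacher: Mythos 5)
Your overall architecture — (i) identify the special fibre of the minimal model of $\Pic^r$ with $C'_\alpha$, (ii) produce a lattice identification $\Pic(X)\cong\Pic(\hat X)$ compatible with restriction to the two anticanonical fibres, (iii) invoke the reconstruction lemma — is the right shape, but the two steps you yourself flag as "delicate" are where essentially all of the content of the theorem lives, and as written they are assertions rather than arguments. The decisive gap is in step (ii): there is no relative Poincar\'e sheaf on $X\times_{\P^1}\hat X$ available to "base change along the fibre $C_\alpha$", because over $b_0$ the relative $\Pic^r$ is neither proper nor separated nor fine — as the paper's proof shows, each $S$-equivalence class over the multiple fibre is represented by $r$ distinct invertible sheaves on $rC_\alpha$, and the minimal proper regular model only appears after one has compactified by semistable sheaves with $c_1=-rK_X$, $\chi=r$ (with a carefully chosen polarization), classified the $S$-equivalence classes supported on $rC_\alpha$ (sums of the stable sheaves $\sO_{C_\alpha}(p)$, which already requires Lemma \ref{lem:Ca_is_num_conn}), used triviality of $\otimes\,\omega_X$ on the main component to cut the special fibre down to the $\langle\tau_{\cal Q}\rangle$-orbits, proved smoothness of the main component along the smooth locus via the $\Ext^1(M,M)$ computation on the auxiliary space of invertible sheaves, and checked that no component of the special fibre is contracted in passing to the minimal model. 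That is the bulk of the paper's proof, and your "base change for the Poincar\'e sheaf restricts to the norm map" is a restatement of the conclusion, not a route to it. Even granting the special fibre, the compatibility of restriction maps has to be verified on a generating set of $\Pic(X)$: the paper can only do this on $-1$-classes transverse to $C_\alpha$ (via the divisors $\tau(e)$, together with a nontrivial check that $e\mapsto\tau(e)$ is additive and preserves intersections, done through $\det \dR\Gamma$ and Riemann--Roch), and these classes generate only under a saturation hypothesis — which is why the $I_8$ and $I_9$ configurations need a separate explicit Weierstrass argument. Your sketch does not see this case distinction, and nothing in it supplies the compatibility on all of $s,f,e_1,\dots,e_m$ that would make the distinction unnecessary.

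Step (i) also has a genuine hole in positive characteristic, which the hypothesis "$C_\alpha$ reduced" deliberately allows. Your identification of the special fibre rests on the logarithmic-transform picture: an \'etale cyclic degree-$r$ cover of $C_\alpha$ classified by ${\cal Q}=\omega_X|_{C_\alpha}$ and the duality between that cover and the quotient $C_\alpha/\langle\tau_{\cal Q}\rangle$. When $C_\alpha$ is reduced of additive type (cusp, III, IV), or in other tame char-$p$ situations, one has $\Pic^0(C_\alpha)\cong\G_a$ and necessarily $r=p$, so the "cyclic cover" attached to ${\cal Q}$ is a $\mu_p$-torsor, i.e.\ purely inseparable, and neither the \'etale-quotient description of the multiple fibre nor the dual-isogeny duality applies as stated; "covered by the same computation" is not justified. (The paper's closing remark about the nonreduced case, where experiments give $\lambda(C'_\alpha)=\lambda(C_\alpha)^p$ rather than the naive quotient, is a warning that such formal arguments cannot be trusted in characteristic $p$ without using reducedness in an essential, sheaf-theoretic way — which is what the paper's moduli argument, and its citation of \cite[Thm. 5.3.1]{CossecFR/DolgachevIV:1989} only for the Kodaira type, is doing.) So the proposal identifies a plausible alternative strategy but does not close either of the two gaps on which the theorem actually turns.
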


\begin{proof}
  Assume for the moment that the sublattice of $\Lambda_{E_8}$
  corresponding to $C_\alpha$ is saturated; this excludes only two cases,
  namely $\Lambda_{A_7}\subset \Lambda_{E_7}\subset \Lambda_{E_8}$ and
  $\Lambda_{A_8}\subset \Lambda_{E_8}$, which we will discuss below.  (The
  remaining unsaturated sublattice $\Lambda_{D_8}$ corresponds to a
  nonreduced $C_\alpha$.)  Together with the hypothesis that $C_\alpha$ is
  reduced, this is equivalent to assuming that $\Pic(X)$ is generated by
  the $-1$-classes meeting each component of $C_\alpha$ positively.  In
  particular, this ensures that the homomorphism $\Pic(X)\to \Pic(C_\alpha)$
  is determined by its restriction to such classes.  A $-1$-class is always
  uniquely effective, the transversality condition implies that the
  corresponding curve meets $C_\alpha$ in a single smooth point, and that
  point in turn determines the image in $\Pic^1(C_\alpha)$.  In particular,
  we can reconstruct $X$ from $C_\alpha$ and the configuration of points in
  which $-1$-classes meet $C_\alpha$, and similarly for $X'$.  As a result,
  to prove the theorem, we will simply need to show that the minimal proper
  regular model of the relative $\Pic^r$ has the correct special fiber, and
  has the relevant $-1$-classes, meeting $C'_\alpha$ in the correct points.

Rather than study the minimal proper regular model directly, we instead
consider the corresponding moduli space of semistable sheaves with first
Chern class $c_1(M)=-rK$ and Euler characteristic $\chi(M)=r$.  Unlike the
cases $\chi\in \{-1,0,1\}$ discussed earlier, in this case the stability
condition depends nontrivially on the choice of ample divisor $\sO_X(1)$.
There are only finitely many divisors with $D$, $-rK-D$ effective (i.e.,
subdivisors of fibers); we may thus choose the ample divisor in such a way
that for any such divisor,  either $D\in\Z K$ or
\[
\frac{D\cdot \sO_X(1)}{K\cdot \sO_X(1)}
\notin
\Z.
\]
This ensures that any semistable sheaf not supported on the special fiber
will be stable; the various inequalities are forced to be strict by
integrality.  This will also force any semistable sheaf supported on the
special fiber to be $S$-equivalent to a sum of stable sheaves supported on
$C_\alpha$, see below. (We should also note that for any fixed $r\ge 1$,
that there are only finitely many divisor classes with both $D$ and $-rK-D$
effective on {\em some} triple $(X,\Gamma,C_\alpha)$, and could thus in
principle choose the ample divisor in a uniform way over any family of
elliptic surfaces.)

Since stable sheaves are simple (and stability is an open condition), we
find that the moduli space has an open subset which agrees with an open
subset of the moduli space of simple sheaves.  Any sheaf in the open subset
has support disjoint from $C_\alpha$, and thus that open subset has a natural
symplectic structure.  In particular, we find that this open subset is
smooth.  It is also minimal, in that it cannot contain any $-1$-curve of a
smooth compactification (e.g., the desired minimal proper regular model).
(More precisely, one can define intersections of line bundles with
projective curves in quasiprojective surfaces; that any quasiprojective
symplectic surface is minimal follows by noting that $0=K\cdot e=-1$ for
any curve $e$ which can be blown down.)  And, of course, it has a natural
fibration over an affine line (induced by that of the complement of
$C_\alpha$ in $X$) such that the smooth locus of the generic fiber is
$\Pic^r$ of the corresponding fiber of $X$.  In other words, this open
subset of the moduli space is precisely the minimal proper regular model of
the relative $\Pic^r$ of $X\setminus C_\alpha$.  It is thus natural to
conjecture that the Zariski closure of this open subset is the full minimal
proper regular model.  We call this Zariski closure the {\em main
  component} of the semistable moduli space (in fact, it is typically the
smallest, but most interesting, component).

It will thus be necessary to understand the remainder of the moduli space.
A key observation is that if $M$ is semistable and supported on $rC_\alpha$,
then $M$ is $S$-equivalent to a sheaf scheme-theoretically supported on
$C_\alpha$.  Indeed, $\omega_X|_{C_\alpha}$ is in the identity component of
$\Pic(C_\alpha)$, and thus twisting by $\omega_X$ preserves semistability.
If for some $l>0$, $M$ is supported on $(l+1)C_\alpha$ but not on $lC_\alpha$,
then we have a nonzero morphism $M\otimes \omega_X\to M$ between semistable
sheaves of the same slope, and thus $M$ is $S$-equivalent to the sum of the
image and the cokernel of this morphism.  This makes $M$ $S$-equivalent to
a sheaf supported on $lC_\alpha$, and we may proceed by induction in $l$.

\begin{lem}\label{lem:antican_stable}
Let $C_\alpha$ be an anticanonical curve on a rational surface $X$.  If $M$
is a semistable sheaf supported on $C_\alpha$ with $c_1(M)=rC_\alpha$,
$\chi(M)=r$, then $M$ is $S$-equivalent to a sum of stable sheaves with
$c_1(M)=C_\alpha$, $\chi(M)=1$.
\end{lem}

\begin{proof}
  We first claim that the slope 0 sheaf ${\cal O}_{C_\alpha}$ is stable.
  We need to show that any quotient sheaf has positive Euler
  characteristic, and can easily reduce to the case of a torsion-free
  quotient, i.e., ${\cal O}_D$ for some curve $D\subset C_\alpha$.  But by
  Lemma \ref{lem:Ca_is_num_conn}, we have $\chi({\cal O}_D)=h^0({\cal
    O}_D)>0$ as required.  It follows immediately that the ideal sheaf of a
  point is stable, since all of its subsheaves are subsheaves of ${\cal
    O}_{C_\alpha}$, so have negative Euler characteristic.  Then, by
  duality on $X$, we obtain stability of the sheaf
\[
{\cal O}_{C_\alpha}(p)
:=
\sExt^1_X({\cal I}_p,\omega_X),
\]
the unique nontrivial extension of ${\cal O}_p$ by ${\cal O}_{C_\alpha}$.

It will thus suffice to show that $M$ is $S$-equivalent to a sum of sheaves
of the form ${\cal O}_{C_\alpha}(p)$.  In fact, it will suffice to construct
a nonzero homomorphism from $M$ to some ${\cal O}_{C_\alpha}(p)$: since
${\cal O}_{C_\alpha}(p)$ is stable of the same slope as $M$ (true regardless
of the choice of ample divisor!), such a morphism is necessarily
surjective.  The kernel of the surjection will then remain semistable of
the same slope, and we may proceed by induction.  The same argument shows
that $\Hom(M,{\cal O}_{C_\alpha})=0$ (since ${\cal O}_{C_\alpha}$ is stable of
smaller slope than $M$), and thus by duality (note that $C_\alpha$ is
Gorenstein with trivial dualizing sheaf), $H^1(M)=0$.

For any point $p\in C_\alpha$, consider the short exact sequence
\[
0\to M'_p\to M\to M\otimes {\cal O}_p\to 0.
\]
If the map $H^0(M)\to H^0(M\otimes {\cal O}_p)$ fails to be surjective,
or, equivalently,
\[
\Ext^1(M\otimes {\cal O}_p,{\cal O}_{C_\alpha})
\to
\Ext^1(M,{\cal O}_{C_\alpha})
\]
fails to be injective, then $\Hom(M'_p,{\cal O}_{C_\alpha})\ne 0$, and any
such morphism induces a nontrivial extension of $M\otimes {\cal O}_p$ by
${\cal O}_{C_\alpha}$ together with a nonzero morphism from $M$ to this
extension.  (Indeed, considerations of Hilbert polynomials show that the
image of this morphism has first Chern class $C_\alpha$.)  Since
$\dim\Ext^1({\cal O}_p,{\cal O}_{C_\alpha})=1$, this extension has the form
${\cal O}_p^n\oplus {\cal O}_{C_\alpha}(p)$, and thus $M$ has a nonzero
morphism to ${\cal O}_{C_\alpha}(p)$.

If the map $H^0(M)\to H^0(M\otimes {\cal O}_p)$ is always surjective, then
the map $H^0(M)\otimes {\cal O}_{C_\alpha}\to M$ is surjective on fibers, and
thus surjective.  But since both sheaves have the same first Chern class and
$M$ has the larger Euler characteristic, this is impossible!
\end{proof}

\begin{rems}
  Note that one can reconstruct $p$ from the sheaf ${\cal O}_{C_\alpha}(p)$,
  since the latter has a unique global section.  It follows that there are
  at most $r$ distinct points $p_i$ admitting morphisms $M\to {\cal
    O}_{C_\alpha}(p_i)$.  Since the cokernel of the natural morphism
  $H^0(M)\otimes {\cal O}_{C_\alpha}\to M$ is supported (set-theoretically)
  on those points, we conclude that the natural morphism is injective, and
  the cokernel is a $0$-dimensional sheaf of degree $r$, from which we can
  read off the $S$-equivalence class of $M$.
\end{rems}

\begin{rems}
  If we replace semistability by the weaker condition that any nonzero
  quotient of $M$ has positive Euler characteristic, we may still conclude
  by the same argument that $M$ has a surjective morphism to some sheaf of
  the form $\sO_{C_\alpha}(p)$.  If we further replace $\chi(M)=r$ by
  $\chi(M)<r$, then $H^0(M)\to H^0(M\otimes \sO_p)$ can never be
  surjective, so that $M$ has a surjective morphism to {\em every} sheaf of
  the form $\sO_{C_\alpha}(p)$.
\end{rems}

\smallskip

We thus conclude that the portion of the moduli space classifying sheaves
supported on the special fiber consists (up to $S$-equivalence) of sums of
sheaves ${\cal O}_{C_\alpha}(p)$, and need to know which of these sheaves
lie on the main component.  The key additional constraint comes from the
observation that if $M$ is supported on $X\setminus C_\alpha$, then
$M\otimes \omega_X\cong M$.  Twisting by $\omega_X$ induces an automorphism
of the full semistable moduli space, and it follows that this automorphism
must act trivially on the main component.  In other words, if $M$ is
$S$-equivalent to
\[
\bigoplus_{1\le i\le r} {\cal O}_{C_\alpha}(p_i),
\]
the multiset of points $p_i$ must be permuted by the action of $\omega_X$.
Since this action is free of order $r$ on the smooth locus, we find that
the $S$-equivalence classes fixed by the automorphism consist of sheaves
\[
\bigoplus_{1\le k\le r} {\cal O}_{C_\alpha}(p)\otimes \omega_X^k
\]
with $p$ in the smooth locus, together with sums
\[
\bigoplus_{1\le i\le r} {\cal O}_{C_\alpha}(p_i)
\]
in which each $p_i$ is a singular point of $C_\alpha$.  In our case, since
$C_\alpha$ is reduced, the latter gives only finitely many points.  The first
family of sheaves is manifestly classified by the smooth locus $C'_\alpha$,
with the closure of $C'_\alpha$ containing in addition only the sheaves
${\cal O}_{C_\alpha}(p)^r$ with $p$ singular.  (There could in principle be
isolated additional points in the main component, but we will see below
that this cannot happen.)

The main difficulty at this point is that it is very difficult to determine
tangent spaces to GIT quotients at semistable points (especially so in our
case, since we only want the tangent vectors coming from a particular
component).  To get around this, we will consider one more moduli space.

The condition that a simple sheaf is invertible on its support is open (we
can express it as the condition that the first Fitting scheme is empty), as
is the condition that it be semistable.  If $C$ is any fiber of the genus 1
fibration on $X$, then $\sO_C$ has a unique global section, and thus any
invertible sheaf on $C$ is simple.  We thus obtain an algebraic space
parametrizing semistable invertible sheaves on fibers of $X$.  As before,
any semistable invertible sheaf not supported on the special fiber
$rC_\alpha$ is stable, and thus away from the special fiber, we recover the
N\'eron model of the relative $\Pic^r$.  This fails on the special fiber
for the simple reason that a given $S$-equivalence class can occur more
than once.  By the above classification of $S$-equivalence classes, we find
that the $S$-equivalence class of $M$ is determined by the orbit under
twisting by $\omega_X$ of the invertible sheaf $M|_{C_\alpha}$; in
particular, $M$ is stable iff $M|_{C_\alpha}\cong {\cal O}_{C_\alpha}(p)$ for
some point $p$ of the smooth locus.  Thus each $S$-equivalence class is
represented by $r$ distinct points of the algebraic space parameterizing
semistable invertible sheaves.

The point is that we can compute tangent spaces in this algebraic space:
\begin{align}
\dim\Ext^1_X(M,M)
&=
\dim\Hom_X(M,M)
+
\dim\Hom_X(M,M\otimes \omega_X)\notag\\
&=
\dim\Hom_{rC_\alpha}(M,M)
+
\dim\Hom_{rC_\alpha}(M,M\otimes \omega_X)\notag\\
&=
\dim\Gamma(\sO_{rC_\alpha})
+
\dim\Gamma(\omega_X|_{rC_\alpha})\notag\\
&=
2,
\end{align}
and thus the ($2$-dimensional) algebraic space is smooth at these points.
Since twisting by $\omega_X$ acts without fixed points on the special
fiber, it preserves tangent spaces, and thus the corresponding subset of
the semistable moduli space is smooth.  In particular, we conclude that the
main component of the semistable moduli space is smooth on the locus
represented by invertible sheaves, i.e., on the smooth locus of $C'_\alpha$.

The minimal desingularization of the main component is thus a proper
regular model of the relative $\Pic^r$, so blows down to the minimal proper
regular model.  It follows that if we simply remove the singular points
from the main component, the result maps to the minimal proper regular
model.  Now, the special fiber of the main component has the same number of
components as the special fiber of the minimal proper regular model (which
must have the same Kodaira type as $C_\alpha$,
\cite[Thm.~5.3.1]{CossecFR/DolgachevIV:1989}).  Thus the only way the surfaces
can fail to be isomorphic is if the map from the minimal desingularization
of the main component blows down a component of the original special fiber.
(Indeed, we must blow down as many components as the minimal
desingularization introduces.)  Now, any section of $\Pic^r(X\setminus
C_\alpha)$ extends to a $-1$-curve on the minimal proper regular model, which
must in particular meet the special fiber in a point of the smooth locus.
It follows that if the corresponding curve in the main component meets the
special fiber in a point of the smooth locus, the corresponding component
cannot be contracted.  We will see that (under the additional saturation
hypothesis) any component is met by some $-1$-curve, giving the desired
isomorphism.

Now, let $e$ be any $-1$-class on $X$ which is transverse to $C_\alpha$, and
consider the corresponding $\tau$-divisor $\tau(e)$.  This certainly
determines a well-defined curve in the complement of the special fiber (and
any non-integral fibers), and we claim that its closure in the main
component meets the special fiber in a single point, which lies in the
smooth locus.  Indeed, if $M$ is supported on the special fiber and
$\Gamma(M(-e))\ne 0$, then we find that $M$ is $S$-equivalent to
$\sO_{rC_\alpha}(e)$.  Indeed, we may state the condition as
$\Hom(\sO_{rC_\alpha}(e),M) = \Hom(\sO_X(e),M) \ne 0$.  Since the image is
both a quotient of the semistable sheaf $\sO_{rC_\alpha}(e)$ and a subsheaf
of the semistable sheaf $M$, the image is also semistable, and can be
extended to Jordan-H\"older filtrations of both $M$ and $\sO_{rC_\alpha}$.
Since $M$ and $M\otimes \omega_X$ are $S$-equivalent, this is enough to
completely determine the $S$-equivalence class of $M$ as required.

In particular, we find that $\tau(e)$ meets the special fiber in the image
of $e\cap C_\alpha$ in $C'_\alpha$. (In particular, we may choose $e$ so that
this point lies in any desired component of $C'_\alpha$.)  It remains only to
show that $\tau(e)$, or rather the corresponding invertible sheaf $\det
R\Gamma(M(-e))$, is a $-1$-class on the minimal proper regular model, and
that this correspondence between $-1$-classes extends to a homomorphism
preserving the intersection pairing.

If $\sO_C(e)$ is stable for every fiber $C$, then $\tau(e)$ consists
precisely of sheaves of that form, and is thus a rational curve as
required.  Since it meets the generic fiber (and thus the anticanonical
curve) in a single point, we conclude that it is a $-1$-curve.  More
generally, adding a component of a nonspecial fiber to $e$ does not change
how $\tau(e)$ meets the special fiber or any integral fiber, and in this
way we can arrange for $\sO_C(e)$ to be stable for all $C$.  In particular,
we find that any class $\det R\Gamma(M(-e))$ obtained in this way is the
sum of the class of a $-1$-curve and a linear combination of components of
nonspecial fibers.

It remains to see that this correspondence extends to a homomorphism and
preserves the intersection pairing.  Both of these are closed conditions on
the (irreducible) moduli stack, so we may impose any dense conditions we
desire.  In particular, we may assume that $C_\alpha$ is smooth and every
nonspecial fiber of $X$ is integral, so that $\tau(e)$ is a $-1$-curve for
every $-1$-class $e$.  Let $e'$ be another $-1$-curve on $X$.  Then
$\tau(e')\cdot\tau(e)$ may be computed as the degree of $\det
R\Gamma(M(-e'))|_{\tau(e)}$, or equivalently as the degree of $\det
R\Gamma(\sO_C(e-e'))$ as $C$ varies over fibers of the genus 1 fibration on
$X$.  Now, consider the natural presentation
\[
0\to \sO_X(e-e'+rK)\to \sO_X(e-e')\to \sO_C(e-e')\to 0
\]
Since $(e-e')\cdot K=0$ and $X$ has no $-2$-curves, $e-e'$ is
ineffective, and similarly for $e-e'+rK$, $e'-e+K$, $e'-e+(1-r)K$.
Thus $R\Gamma(\sO_C(e-e'))$ is represented by the complex
\[
H^1(\sO_X(e-e'+rK))\to H^1(\sO_X(e-e')),
\]
since the other cohomology groups vanish.  This map depends linearly on the
original map $\sO_X(rK)\to \sO_X$, and thus the desired degree may be
computed as the common dimension of the two cohomology groups, which by
Hirzebruch-Riemann-Roch is equal to $-1-(e-e')^2/2=e\cdot e'$ as required.

Finally, to see that this extends to a homomorphism, we note that the
intersection form on the ten $-1$-curves $s-e_1$, $f-e_1$,
$e_1$,\dots,$e_8$ has determinant $-1$, so the corresponding $\tau$
divisors span $\Pic(X')$, just as the original divisors span $\Pic(X)$.
Since we may use the intersection form to expand any element of $\Pic(X)$
in that basis, we conclude that the $\tau$-divisor map is linear.

We excluded two cases above, in which $C_\alpha$ has Kodaira symbol $I_8$ or
$I_9$.  In the latter case, we can easily see that any Jacobian fibration
with an $I_9$ fiber has Weierstrass form $y^2+txy+a_3y=x^3$ over the
algebraic closure, with $a_3\ne 0$.  Any two such surfaces are
geometrically isomorphic (in a nonunique way), and thus the claim follows
immediately.  Similarly, in the bad $I_8$ case, the corresponding Jacobian
fibration must be the desingularization of the blow-up in the identity of a
surface
\[
y^2+txy = x^3+a_2 x^2+a_4 x
\]
with $a_4\ne 0$; two such surfaces are geometrically isomorphic iff they
have the same value of $a_2^2/a_4$.  There are two $-1$-curves on this
surface that do not meet the singular point, which meet the corresponding
$G_m$ in the points $\lambda$, $1/\lambda$ where
\[
\frac{(\lambda+1)^2}{\lambda} = \frac{a_2^2}{a_4}.
\]
In particular, the surface is determined up to (again nonunique)
isomorphism by the two points of intersection, and again the claim follows.
\end{proof}

\begin{rem}
Note that in the good cases, we prove a slightly stronger fact: not only is
the special fiber of $\Pic^r(X)$ isomorphic to $C'_\alpha$, but the the
isomorphism we construct is compatible with the isomorphism $X'\cong
\Pic^r(X)$.  This presumably still holds in the $I_8$ and $I_9$ cases, but
the above calculation does not suffice.
\end{rem}

\begin{rem}
  We should note that by the remark following Proposition
  \ref{prop:ell_surf_rat}, there is no universal family over the stable
  locus of this moduli space.
\end{rem}

The case that $C_\alpha$ is nonreduced appears to be more subtle.  Since
$\Pic^0(C_\alpha)\cong G_a$, this case (for $r>1$) only arises in
characteristic $p$, with $r=p$.  We still have an action of
$\Pic^0(C_\alpha)$ on $C_\alpha$, which restricts to an action of the
\'etale subscheme generated by $K|_{C_\alpha}$.  Since this action fixes
the singular points of $C_\alpha$, it must act trivially on any component
appearing with multiplicity.  In the simplest case, $I_0^*$, this would
indicate that $C'_\alpha\cong C_\alpha$.  Indeed, a curve of type $I_0^*$
is determined by the cross-ratio $\lambda$ of the four points where the
double component meets the reduced components, or, more precisely, by the
invariant function
\[
\frac{j}{256} = \frac{(\lambda^2-\lambda+1)^3}{\lambda^2(\lambda-1)^2}.
\]
Since the group acts trivially on the special fiber, the quotient should
preserve the cross-ratio, but experiments (for $p=2$, $p=3$) instead
suggest that the true special fiber of the moduli space satisfies
$\lambda(C'_\alpha)=\lambda(C_\alpha)^p$.

\medskip

The one disadvantage of considering $\Pic^r$ is that the corresponding
morphisms $B$ are never maps of trivial bundles; as a result, we cannot
directly apply statements about $\Pic^r$ in the noncommutative setting.
With this in mind, we consider the corresponding moduli space of semistable
sheaves of Euler characteristic 0.  The individual fibers are not too
difficult to understand, but to understand the full moduli space, we will
need to work \'etale locally, and thus must to some extent consider more
general genus 1 fibrations (e.g., with total space which is not rational).
For this, we note first that the case $r=1$ of the above result is easy to
extend to arbitrary elliptic surfaces.

\begin{thm}
  Let $\psi:X\to C$ be a smooth, relatively minimal, genus 1 fibration with
  no multiple fibers, and let ${\cal M}_X(1)$ be the moduli space
  classifying stable sheaves $M$ of Euler characteristic 1 and with
  $c_1(M)$ a fiber of $\psi$. Then there is a natural isomorphism ${\cal
    M}_X(1)\cong X$.
\end{thm}

\begin{proof}
  We first note that Lemma \ref{lem:antican_stable}, though stated for
  anticanonical curves on rational surfaces, applies equally well (with
  essentially the same argument) to curves of canonical type (i.e., curves
  with the same intersection matrix as a fiber of a minimal proper regular
  model of a genus 1 fibration) on surfaces, so in particular to fibers of
  $\psi$.  In particular, the construction there of sheaves $\sO_C(p)$, and
  the proof that those sheaves are stable, carries over directly.  In
  particular, any stable sheaf in ${\cal M}_X(1)$ has this form, and we can
  easily construct a corresponding universal sheaf on $X\times X$.
\end{proof}

For Jacobian fibrations, the Euler characteristic 0 case is also relatively
straightforward to deal with.

\begin{cor}
  Let $\psi:X\to C$ be a smooth, relatively minimal, genus 1 fibration with
  a section $s:C\to X$, and let ${\cal M}_X(0)$ be the moduli space
  classifying stable sheaves $M$ of Euler characteristic 0 and with
  $c_1(M)$ a fiber of $\psi$.  Then there is a natural birational
  morphism $X\to {\cal M}_X(0)$ for which the exceptional locus is the
  union of all vertical curves not meeting the section.
\end{cor}

\begin{proof}
  To construct a morphism from $X\cong {\cal M}_X(1)$ to ${\cal M}_X(0)$,
  it will suffice to give a construction taking stable sheaves of Euler
  characteristic 1 to semistable sheaves of Euler characteristic 0.  In
  particular, let $M$ be the sheaf corresponding to a point of ${\cal
    M}_X(1)$, and consider the twist $M(-s)$ of Euler characteristic 0.
  Any subsheaf of this twist has the form $M'(-s)$ for some $M'\subset M$;
  since $s$ meets fibers transversely, we have
\[
\chi(M'(-s))\le \chi(M')\le 0,
\]
where the second inequality follows from stability of $M$.  This, of
course, is precisely the inequality we needed to show to demonstrate
semistability of $M$.

Now, if $M(-s)$ is stable, then it is determined by its
corresponding point in ${\cal M}_X(0)$, so that we can recover $M$ from its
image; thus if $M(-s)$ is stable, then it falls outside the
exceptional locus.

Write $M=\sO_f(p)$ with $f$ a fiber of $\psi$ and $p\in F$ a closed point.
If $f$ is integral, then $M(-s)$ is automatically stable (as a
torsion-free sheaf with integral support).  In addition, if $p$ is a smooth
point of the identity component (the component meeting $s$) of $f$, then
$M(-s)$ is an invertible sheaf of degree $0$ on every component.
It follows that twisting by $M(-s)$ preserves stability; since
$\sO_f$ is stable, so is $M(-s)$ in this case.

If $p$ is not a point of the identity component, then $M(-s)$ is
invertible near the identity component, and has degree $-1$ on that
component.  We thus have a morphism
\[
M(-s)\to \sO_{f_0}(-1),
\]
where $f_0$ is the identity component.  This makes $M$ strictly semistable.
By semicontinuity, such a morphism continues to exist on the closure of the
complement of the identity component.

In other words, $M(-s)$ is strictly semistable iff $p$ lies
on some nonidentity component of $f$.  It remains to show that these
components are contracted, and that the morphism is dominant.  By
dimensionality, it will be enough to show that any invertible sheaf in
${\cal M}_X(0)$ has degree $0$ on every component (so remains stable upon
twisting by the stable sheaf $\sO_f(s)$), and that each
fiber of ${\cal M}_X(0)$ has at most one strictly semistable sheaf.  This
follows from the next Lemma.
\end{proof}

\begin{rem}
In particular, ${\cal M}_X(0)$ is the Weierstrass model of $X$.
\end{rem}

\begin{lem}\label{lem:canon_stable}
  Let $C$ be a curve of canonical type on a surface $X$, and let $M$ be a
  stable sheaf of Euler characteristic $0$ set-theoretically supported on
  $C$.  Then either $M$ is an invertible sheaf on $C$, of degree $0$ on
  every component of $C$, or $M$ is the direct image of $\sO_{\P^1}(-1)$
  under some morphism $\P^1\to C$.
\end{lem}

\begin{proof}
Note that since $C$ is orthogonal to every component of $C$, $M(-C)$ is
also stable, and thus the natural map $M(-C)\to M$ must be 0; it follows
that $M$ is scheme-theoretically supported on $C$.

If $c_1(M)$ is not a multiple of $C$, then signature considerations show
$c_1(M)^2<0$ and thus, since $c_1(M)$ is a sum of components of $C$, that
there exists a component $C_1$ of $C$ such that $C_1\cdot c_1(M)<0$.  In
that case, we find
\[
\dim\Hom(\sO_{C_1}(-1),M)
-
\dim\Ext^1(\sO_{C_1}(-1),M)
+
\dim\Ext^2(\sO_{C_1}(-1),M)
=
-C_1\cdot c_1(M)
>
0
\]
Since $M$ has canonical type, $C_1$ is a $-2$-curve, so orthogonal to the
canonical class of $X$, and thus
$\dim\Ext^2(\sO_{C_1}(-1),M)=\dim\Hom(M,\sO_{C_1}(-1))$.  We thus conclude
that there is a morphism between $M$ and $\sO_{C_1}(-1)$; since $M$ is
stable, this can only occur if $M\cong \sO_{C_1}(-1)$.

Thus, suppose $c_1(M)=rC$, and consider the sheaf $M\otimes {\cal O}_C(p)$
for some smooth point $p\in C$, of Euler characteristic $r$.  If this has a
nonzero quotient of nonpositive Euler characteristic, then twisting by the
ideal sheaf of $p$ gives a nonzero proper quotient of $M$ of nonpositive Euler
characteristic, contradicting stability.  But then by the proof of Lemma
\ref{lem:antican_stable}, there is a surjective morphism
\[
M\otimes {\cal O}_C(p)\to {\cal O}_C(p')
\]
for some point $p'\in C$, giving a surjection
\[
M\to {\cal O}_C(p')\otimes {\cal I}_p.
\]
Since $M$ is stable and both sheaves have Euler characteristic 0, this must
in fact be an isomorphism, and we have already seen that such sheaves are
strictly semistable unless $p'$ does not lie on any nonidentity component
of $C$.  Thus either $M$ is invertible on $C$, of degree $0$ on every
component, or $C$ is integral and $p'$ is singular on $C$; in the latter
case, the normalization of $C$ is $\P^1$, and the sheaf must be the direct
image of $\sO_{\P^1}(-1)$ on the normalization.
\end{proof}

\begin{cor}\label{cor:weier1}
  Let $\psi:X\to C$ be a smooth, relatively minimal, genus 1 fibration with
  no multiple fibers, and let ${\cal M}_X(0)$ be the moduli space
  classifying stable sheaves $M$ of Euler characteristic 0 and with
  $c_1(M)$ a fiber of $\psi$.  Then ${\cal M}_X(0)$ is naturally isomorphic
  to the Weierstrass model of the relative Jacobian of $X$.
\end{cor}

\begin{proof}
  If $X$ has a section, we are done; in general, such a section exists
  \'etale locally, allowing us to identify ${\cal M}_X(0)$ with the
  Weierstrass model \'etale locally on $C$.  Since these local
  identifications are natural, they descend to give a natural isomorphism
  as required.
\end{proof}

\begin{thm}
Under the hypotheses of Theorem \ref{thm:Picr}, let ${\cal M}_X(0)$ denote
the moduli space of semistable 1-dimensional sheaves with $c_1(M)=-rK_X$,
$\chi(M)=0$.  Then ${\cal M}_X(0)$ is isomorphic to the
Weierstrass model of the relative $\Pic^r$ as computed above.
\end{thm}

\begin{proof}
  Choose a $-1$-curve $e$ on $X$, and consider the map $M\to M(-e)$ from
  the moduli space of semistable sheaves with $\chi(M)=r$ to ${\cal
    M}_X(0)$.  Note that this is in general only a rational map, since for
  sheaves on reducible fibers, the image could easily be unstable.
  However, this map is certainly well-defined for sheaves on smooth fibers,
  and more importantly, is well-defined on the special fiber.  Since
  extensions of semistable sheaves of Euler characteristic 0 are
  semistable, it is sufficient to prove that $M'(-e)$ is semistable for
  $M'$ stable of the same slope as $M$.  But this implies $\chi(M')=1$,
  which we have already seen suffices for stability.

  We thus see that ${\cal M}_X(0)$ is naturally isomorphic to the
  Weierstrass model in a neighborhood of the special fiber.  Since we also
  have such an identification on the complement of the special fiber (as
  the complement of the unique multiple fiber has no multiple fibers), the
  claim follows.
\end{proof}

\begin{rem} In particular, given a choice of $-1$-curve $e$ on $X$ such that
  the corresponding $\tau$ divisor is irreducible, ${\cal M}_X(0)$ is
  obtained from $X'$ by contracting all $-2$-curves disjoint from $e$.
\end{rem}

\section{Elliptic difference equations}\label{sec:elldiff}

We now wish to translate the above theory back to the realm of difference
equations.  From a geometric perspective, the simplest case is that of
symmetric elliptic difference equations, since then not only is the surface
smooth, but so is the anticanonical curve.  If $C$ is a smooth genus 1
curve, then the above considerations tell us that symmetric difference
equations on $C$ twisted by a line bundle are in natural correspondence
with triples $(X_0,\phi,M_0)$ where $X_0$ is a Hirzebruch surface (in
particular with specified map to $\P^1$), $M_0$ is a sheaf on $X_0$ with
$H^0(M_0)=H^1(M_0)=0$, and $\phi:C\to X_0$ embeds $C$ as an anticanonical
curve.  (As we mentioned above, this is not quite correct, as these sheaves
also include degenerate cases where some of the singularities cancel each
other.)  Moreover, the pairs $(X_0,\phi)$ are classified by elements of
$\Pic^2(C)\times \Pic^2(C)$ or $\Pic^1(C)\times \Pic^2(C)$, depending on
the parity of the Hirzebruch surface, or equivalently depending on the
parity of the degree of the twisting line bundle.

Since $C\cong C_\alpha$ is smooth, $M_0|_{C_\alpha}$ is a direct sum of
structure sheaves of jets.  If $X$ is the minimal desingularization of the
blowup of $X_0$ in those jets, then there is a natural way to lift $M_0$ to
a sheaf $M$ on $X$ which is disjoint from the anticanonical curve.  (This
is the minimal lift of \cite{poisson}, see in particular Proposition 6.12
there.)  In this way, we encode the singularities of the equation in the
surface $X$ and the Chern class of $M$.

As above, the surface is determined by the classes $\phi^*(s)$, $\phi^*(f)$
and $\phi^*(e_i)$ for $1\le i\le m$; since $C$ is smooth, the bundles
$\phi^*(e_i)$ can be identified with line bundles $\sO_C(p_i)$ for
points $p_i$.  Now, consider the extension
\[
B:\rho^*V\to \rho^*W(s)
\]
of our original $B$ to $X_0$.  The Chern class of $M_0$ is given by that of
$\det(B)$, so has the form $ns+df$ where $n=\rank(W)$ is the order of the
difference equation.  At any point $p\in C_\alpha$, we can view $B$ as a
matrix over the local ring $\sO_{C_\alpha,p}$.  Up to left- and
right-multiplication by invertible matrices, we can diagonalize $B$, and
then define a partition $\lambda(B;p)$ by letting $\lambda_j(B;p)$ for
$j\ge 1$ be the number of diagonal elements contained in $\mathfrak{m}^j$.
Local computations then tell us that $\lambda_1(B;p)$ is the rank of $M_0$
at $p$, and $\lambda_j(B;p)$ is the rank after blowing up $p$ $j-1$ times.
If $e_{p,j}$ denotes the $j$-th class in the sequence $e_1$,\dots,$e_m$
such that $\phi^*(e_i)\cong \sO_C(p)$, then we find
\[
c_1(M) = c_1(M_0) - \sum_{p,i} \lambda_i(B;p) e_{p,i}.
\]
(Note that we must blow up $p$ at least as many times as there are parts of
$\lambda$ in order to make the resulting sheaf disjoint from the
anticanonical curve.)

We can also describe the invariants $\lambda(B;p)$ in terms of the original
shift matrix $A$.  If $p$ is not fixed by $\eta$, then we can again
diagonalize $A$ over the local ring at $p$ (by left- and right-
multiplication).  The resulting equivalence classes are given by weights of
$\GL_n$, i.e., nonincreasing sequences of integers.  We then find that
$\lambda(B;p)$ is determined by the positive coefficients of this weight;
the negative coefficients of the weight appear in $\lambda(B;\eta(p))$.
When $p$ is fixed by $\eta$, the situation is more complicated; up to the
relevant equivalence relation (left multiplication by invertible matrices
over the local ring, right multiplication by symmetric invertible matrices
over the local field), $B$ is a direct sum of matrices
\[
\begin{pmatrix}1\end{pmatrix},\qquad
\begin{pmatrix}u\end{pmatrix},\qquad
\begin{pmatrix}1 & u\\0 & u^e\end{pmatrix}, e>1.
\]
(This needs to be adjusted slightly when the equation is twisted by a line
bundle.)  The second case is a singularity of order 1, but corresponds to
an eigenvalue $-1$ of $A$ (assuming the characteristic is not 2); the third
cases have order $e$, but appear to have order $e-1$.  (In characteristic
2, this phenomenon is worse: singularities of order $e$ appear to have
order $\max(e-2,0)$ or $\max(e-4,0)$ depending on whether $C$ is ordinary
or supersingular.)

Of course, a sheaf $M$ on $X$ disjoint from $C_\alpha$ need not come from a
maximal morphism $B$.  The condition
$\Hom(M,\sO_g(-1))=\Hom(\sO_g(-1),M)=0$ for every component $g$ of a fiber
is simplified by disjointness, since we need only consider those $g$ which
are disjoint from $C_\alpha$.  In particular, $g$ must be a $-2$-curve, and
since it is contained in a fiber, must be a root of the $D_m$ subsystem.
The subquotient corresponding to the standard representation of a
difference equation will differ from $M$ by a number of copies of sheaves
$\sO_g(-1)$, and thus in particular correspond to strictly semistable
points of the moduli space.  As a result, the specific extension classes
will be irrelevant, and we thus obtain precisely one point of the moduli
space for each difference equation that arises.

The constraint on the difference equations underlying such semistable
points is that their Chern class must differ from the specified Chern class
by a nonnegative sum of $-2$-curves disjoint from $C_\alpha$.  This can be
translated in terms of the local data $\lambda(B;p)$ as follows.
Subtracting roots of the form $e_i-e_j$ simply replaces the partitions
$\lambda(B;p)$ by partitions of the same size which cover it in the
dominance ordering.  Roots of the form $f-e_i-e_j$ either subtract 1 from
the first parts of both $\lambda(B;p)$ and $\lambda(B;\eta(p))$ or (when
$p=\eta(p)$) subtract $1$ from the first two parts of $\lambda(B;p)$.  For
$p\ne \eta(p)$, the conditions combine to say that the relevant weight of
$\GL_n$ (related to the conjugate partitions) becomes smaller in dominance
order, with something similar in the case of ramification points.

One special case we should note is that when $D=s-f$ (assuming this is
effective), then as usual, the moduli space is a point (the sheaf
$\sO_{s-f}(-1)$), and the corresponding difference equation is just the
trivial equation $v(z+q)=v(z)$.  More generally, if $s-f$ is effective and
$D\cdot (s-f)<0$, then the corresponding difference equation will have a
block-triangular structure such that the first or last block is trivial.

\medskip

Of course, a sheaf on an anticanonical surface $X$ with $C_\alpha\cong C$
does not determine a difference equation unless we also choose a blowdown
structure (more precisely, a choice of blowdown structure modulo the action
of ineffective roots of the $A_m$ subsystem).  In other words, a given
moduli space of sheaves corresponds to many different moduli spaces of
difference equations, one for each blowdown structure on the surface.  In
addition, we can also twist by line bundles and apply the duality
$\sExt^1(-,\omega_X)$.  The latter is canonical, but to make sense of the
former requires a choice of blowdown structure.  Thus in the generic
situation, we have an action of $\Aut(C)\times (W(E_{m+1})\times
Z_2)\ltimes \Z^{m+2}$, where the cyclic group $Z_2$ acts by duality; in the
nongeneric situation, there is a partial action taking into account the
usual issues with effective reflections.  Since the group $W(E_{m+1})$
simply acts on the set of ways of interpreting sheaves, it certainly
respects the Poisson structure on the moduli space, and the construction of
the Poisson structure implies that it is preserved by twisting.  Duality is
anti-Poisson (\cite[Prop.~7.12]{poisson}), and $\Aut(C)$ acts on the
Poisson structure in the same way it acts on holomorphic differentials.  In
particular, $\Pic^0(C)\subset \Aut(C)$ preserves the Poisson structure, and
hyperelliptic involutions are anti-Poisson; in the $j=0$ and $j=1728$
cases, we also have automorphisms multiplying the Poisson structure by
other roots of unity.  In terms of moduli spaces of difference equations,
each of these operations will change the parameters (the twisting line
bundle, the points with allowed singularities, $q$), but should give
birational maps between the corresponding moduli spaces.  The action on $q$
is essentially forced: Poisson maps should preserve $q$, while anti-Poisson
maps should negate $q$ (and for $j=0$, $j=1728$, $\Aut(C)$ acts as one
would expect on $q\in \Pic^0(C)$).

Note that the subgroup $D^\perp\subset \Z^{m+2}$ acts as a (large) abelian
group of rational Poisson automorphisms of the moduli spaces $\Irr(D,x)$.
(The element $K_X$ acts trivially, of course, as does any effective class in
$D^\perp$.) In particular, they give rise to a discrete integrable system
acting on a rational variety, which relative to the fibration by $\supp(M)$
acts by translation within each fiber (a torsor over the Jacobian of the
support).  Similarly, we will describe a $q$-twisted version of this
action, which appears to be an analogue of (higher-order) discrete
Painlev\'e equations, a non-autonomous version of translation on an abelian
variety.

Of course, in our setting, we only have a relaxation of the true moduli
spaces of difference equations, but will still gain insight by looking at
how simple reflections, twists by basis elements, and duality act; in each
case, there will be an obvious way to take into account the shifting by
$q$.  Since the simplest operations do not preserve the triviality
condition $H^0(M)=H^1(M)=0$ even generically (since they do not preserve
the condition $\chi(M)=0$), we need to allow $W$ to be nontrivial.  We thus
note (per \cite{ABR} and \cite{noncomm1}) that in the (analytic)
difference equation case, $B$ corresponds to a morphism
\[
B:\pi_{\eta'}^*V\to \pi_{\eta}^*W\otimes {\cal L}
\]
of bundles on $\C/\Lambda$, where $\eta$ is the involution $z\mapsto -q-z$,
$\eta'$ is the involution $z\mapsto -z$, and ${\cal L}$ is the twisting
line bundle.  We do, however, assume $V$ maximal where convenient, since
this is in any event the main case of interest (and it is easy enough to
figure out what goes wrong when maximality fails).

The simplest operation is twisting by $\sO_X(f)$, which simply twists
the bundles $V$ and $W$ by $\sO_{\P^1}(1)$.  This is also easy to extend to
an action on difference equations: the only change is that since $V$ and
$W$ are pulled back through different degree $2$ maps, we must absorb the
difference into ${\cal L}$, thus changing the twisting bundle by the
element of $\Pic^0(C)$ corresponding to $q$.  (In other words, twisting by
$f$ changes $\phi^*(s)$ by $q$.)

\medskip

Although the operation $M\mapsto M(s)$, or equivalently
$M_0\mapsto M_0(s)$, is just as natural in terms of
sheaves, the translation to morphisms of vector bundles on $C$ is quite a
bit more subtle, for the simple reason that twisting does not respect the
resolution we are using.  Now, we can write the original morphism
$B:\pi_\eta^*\to \pi_\eta^*W\otimes \phi^*\sO_X(s)$ in the form
\[
B = B_0(x,w) y_0 + B_1(x,w) y_1
\]
where $x,w$ are homogeneous coordinates on
$\P^1\cong\pi_\eta(C)\cong\rho(X)$, and $y_0$, $y_1$ generate
$\pi_{\eta*}\phi^*\sO_X(s)\cong \rho_*\sO_X(s)$, viewed as a graded module over
$k[x,w]$.  (In the untwisted case, we used $\phi^*\sO_X(s_{\min})$, but this
just differs by twisting by $f$; i.e., we assume that the twisting bundle
has degree 2 when its degree is even.) Now, since $B$ comes from the
standard resolution of $M_0$, we can twist by $\sO_X(s)$ and take direct
images to obtain a short exact sequence
\[
0\to V\xrightarrow{(B_0,B_1)} W\otimes
\rho_*\sO_X(s)\xrightarrow{(B'_1,-B'_0)} \rho_*(M_0(s))\to 0
\]
where $B'_1$, $B'_0$ are suitable morphisms of sheaves on $\P^1$.
The sheaf $\rho_*(M_0(s))$ is torsion-free, since 
otherwise $\Hom(\sO_f(-1),M_0)$ would be nonzero for some fiber $f$,
contradicting maximality of $V$.  In particular, we have
\[
W' \cong \rho_*(M_0(s)).
\]
We also have a commutative diagram
\[
\begin{CD}
0@>>> \rho^*V @>>> \rho^*W\otimes \rho^*\rho_*\sO_X(s)@>>>
\rho^*\rho_*(M_0(s)) @>>>0\\
@. @| @VVV @VVV @.\\
0@>>> \rho^*V @>>> \rho^*W(s)@>>>
M_0(s) @>>>0
\end{CD}
\]
with exact rows; since each sheaf in the bottom sequence is $\rho$-globally
generated, the vertical morphisms are surjective, and have isomorphic
kernels.  We thus find that $V'$ fits into an exact sequence
\[
0\to \rho^*(V')(-s)\to \rho^*W\otimes \rho^*\rho_*\sO_X(s)\to
\rho^* W(s)\to 0.
\]
It follows that
\[
V'\cong \begin{cases} W& s^2=0\\
                      W(-f) & s^2=-1
\end{cases}
\]
We moreover find that (apart from this twisting), the map from $V'$ to $W'$
is simply given by $B'_1y_1+B'_0y_0$.  To avoid the issue with twisting, we
will compute $M(s+f)$ in the odd case.

We now observe that, viewing this as a morphism $B'$ on $C$, we have
\[
B' \eta^*B - \eta^*(B'\eta^*B)
= 
(B'_0B_1-B'_1B_0)(y_0\eta^*y_1-y_1\eta^*y_0)
=
0,
\]
since $B'_0B_1=B'_1B_0$ by construction.  Since $B'\eta^*B$ is
$\eta^*$-invariant, we can use it to modify the factorization of $A$ to
obtain
\[
A = (\eta^*B)^{-t}B^t = (B')^t (\eta^*B')^{-t},
\]
and find that the new $A$ has the form
\[
A' = (\eta^*B')^{-t} (B')^t = (B')^{-t} A (B')^t.
\]
It is somewhat more natural to express the inverse of this operation.

\begin{prop}
  Let $A(z)$ be a twisted elliptic matrix with $\eta^*A = A^{-1}$, twisted
  by a line bundle of degree $\delta$, and let $M$ be the corresponding
  sheaf.  Then the twisted sheaf $M(-s-(2-\delta) f)$
  corresponds to the matrix $B^t A B^{-t}$, where $B$ comes from the
  minimal factorization of $A$.
\end{prop}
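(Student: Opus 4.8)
The plan is to obtain this Proposition by \emph{inverting} the computation carried out in the paragraphs immediately preceding it. That computation showed that, starting from a twisted matrix $\tilde A$ with $\eta^*\tilde A=\tilde A^{-1}$, minimal factorization $\tilde A=(\eta^*\tilde B)^{-t}\tilde B^{t}$, and corresponding sheaf $\tilde M$, the twist $\tilde M\otimes{\cal L}(s)$ (corrected by an extra $f$ in the odd case) corresponds to the matrix $\tilde A'=(\tilde B')^{-t}\tilde A(\tilde B')^{t}$, where $\tilde B'=\tilde B'_1y_1+\tilde B'_0y_0$ is the morphism extracted from the three-term exact sequence obtained by twisting the standard presentation of $\tilde M_0$ by ${\cal L}(s)$ and pushing forward along $\rho$; and, crucially, that $\tilde A=(\tilde B')^{t}(\eta^*\tilde B')^{-t}$ is the re-factorization of the \emph{input} matrix corresponding to $\tilde B'$. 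So the first step is simply to package that discussion as a clean assertion: ``twist by ${\cal L}(s)$ (with the degree correction)'' is a bijection on twisted equations which, in terms of minimal factorizations, sends $\tilde B\mapsto\tilde B'$ and sends matrices by $\tilde A\mapsto(\tilde B')^{-t}\tilde A(\tilde B')^{t}$.

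The one genuine ingredient needed to make this a bijection is that $\tilde B'$ is itself the minimal factorization of $\tilde A'$, i.e.\ that the bundle $V'$ appearing in the exact sequence is maximal. Half of this is already in the text: $\rho_*(\tilde M_0\otimes{\cal L}(s))$ is torsion-free (else $\Hom(\sO_f(-1),\tilde M_0)\neq0$ for some fiber $f$, contradicting maximality of the original $V$), so it may be identified with $W'$ and the three-term sequence is genuinely a standard presentation. For the remaining maximality condition I would invoke the earlier Propositions characterizing extendability of the morphism and relative global generation in terms of sub- and quotient-sheaves of the form $\sO_f(-1)$, and check that $\tilde M_0\otimes{\cal L}(s)$ inherits the vanishing $\Hom(\sO_f(-1),-)=\Hom(-,\sO_f(-1))=0$ from $\tilde M_0$, using ${\cal L}(s)|_f\cong\sO_f(1)$.

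With this in hand the Proposition is immediate. Given $A$ with $\eta^*A=A^{-1}$, minimal factorization $A=(\eta^*B)^{-t}B^{t}$, and sheaf $M$, set $\tilde M:=M\otimes{\cal L}(-s-(2-\delta)f)$, the $f$-coefficient being chosen precisely so that $\tilde M\otimes{\cal L}(s)$, corrected as above, equals $M$ and again has twisting degree $\delta$ (this is the degree bookkeeping, and it matches the two normalized cases $\delta=2$ and $\delta=1$ treated just above). Then $M$ is the output of the forward operation applied to $\tilde M$, so the minimal factorization of $M$, namely $B$, plays the role of the $\tilde B'$ above; the re-factorization identity, applied to the \emph{input} $\tilde M$, reads $\tilde A=B^{t}(\eta^*B)^{-t}$, and since $A=(\eta^*B)^{-t}B^{t}$ we get $B^{t}(\eta^*B)^{-t}=B^{t}\bigl((\eta^*B)^{-t}B^{t}\bigr)B^{-t}=B^{t}AB^{-t}$. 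Hence $M\otimes{\cal L}(-s-(2-\delta)f)=\tilde M$ corresponds to $B^{t}AB^{-t}$, as claimed.

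The step I expect to be the main obstacle is the bookkeeping in the third paragraph: verifying, for a general twisting degree $\delta$ rather than for the normalized values, that the correct inverse twist is exactly $-s-(2-\delta)f$, and — more subtly — that $M\otimes{\cal L}(-s-(2-\delta)f)$ genuinely lies in the class of sheaves to which the forward construction applies, so that it really does arise from an (a priori possibly $W$-nontrivial) twisted equation with a well-defined minimal factorization. Everything else is either formal manipulation of the factorization identities already established, or a direct appeal to the Propositions of Section 2 on presentations of $1$-dimensional sheaves on ruled surfaces.
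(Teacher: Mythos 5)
Your overall route is the paper's own: the Proposition is introduced there precisely as the inverse reading of the computation immediately preceding it, and your final manipulation $\tilde A=B^t(\eta^*B)^{-t}=B^t\bigl((\eta^*B)^{-t}B^t\bigr)B^{-t}=B^tAB^{-t}$, together with the bookkeeping identifying $-s-(2-\delta)f$ as the inverse of the twist computed in the text ($+s$ for $\delta=2$, $+s+f$ for $\delta=1$), is exactly what is intended. However, one step of your bridge would fail as stated. You propose to show that the forward operation carries minimal factorizations to minimal factorizations by checking that the vanishing $\Hom(\sO_f(-1),-)=\Hom(-,\sO_f(-1))=0$ is ``inherited'' by the twist, using ${\cal L}(s)|_f\cong \sO_f(1)$. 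The degree shift goes the wrong way for the first of these: $\Hom(\sO_f(-1),N\otimes{\cal L}(s))\cong \Hom(\sO_f(-2),N)$, which is not among the hypotheses on $N$ and can be nonzero; for instance $N=\sO_{f\cup C}$ with $C\cdot f=2$ satisfies both assumed vanishings but contains $\sO_f(-2)$ (the ideal of $C$), so the presentation of $N\otimes{\cal L}(s)$ produced by the forward construction is \emph{not} maximal. Thus the general ``bijection on minimal factorizations'' claim is false once $\supp(M_0)$ is allowed to contain fiber components.

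Fortunately you do not need that general claim. In your application the forward construction, applied to $\tilde M_0=M_0\otimes{\cal L}(-s-(2-\delta)f)$, produces a presentation of $M_0$ itself with $V'$, $W'$ pulled back from $\P^1$; by the uniqueness clause of the Section 2 proposition (such a presentation is determined up to isomorphism by the sheaf), this is the given maximal presentation, so the morphism playing the role of $\tilde B'$ is $B$ with no further argument. On the other hand, the point you flag as the ``main obstacle'' — that $\tilde M_0$ admits a standard presentation at all, so that the forward computation applies to it — is a genuine hypothesis rather than something provable in general: for the same $N=\sO_{f\cup C}$ one has $\Hom(N,\sO_f)\ne 0$, hence $N\otimes{\cal L}(-s)\otimes\sO_\rho(-1)$ is not $\rho_*$-acyclic and no such presentation exists. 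This is exactly the degenerate situation (fiber components in the support, i.e.\ apparent singularities) that the paper's informal discussion implicitly sets aside, so the honest fix is to state it as a hypothesis (e.g.\ $\Hom(M_0,\sO_f)=0$ for all fibers $f$, which holds whenever the support of $M_0$ contains no fiber) rather than to expect to verify it from the standing assumptions.
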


This operation need only be modified very slightly (since conjugation
should become a gauge transformation) to make sense for difference
equations: if we start with the system
\[
v(z+q) = B(-q-z)^{-t} B(z)^t v(z)\qquad v(-z)=v(z),
\]
we simply want the equations satisfied by $w(z)=B(z)^t v(z)$, namely
\[
w(-q-z)=w(z)\qquad B(-z)^{-t}w(-z)=B(z)^{-t} w(z).
\]
The only nonobvious point is that this new equation is symmetric with
respect to a slightly different involution; we will see this phenomenon
naturally arising in the noncommutative setting.  (Here twisting by $s$
changes $\phi^*(f)$ by $q$; it also changes $\phi^*(s)$ when $s^2=-1$.  In
general, the rule is that twisting by $D$ changes $\phi^*$ by $(D\cdot
{-})q$; this is the only $W(E_{m+1})$-invariant rule compatible with what
we have so far seen.)  This can also be sidestepped by choosing a point
``$q/2$'' such that $2(q/2)=q$, and replacing the above $w$ by $w(z) =
B(z-q/2)^t v(z-q/2)$.

\medskip

The next operation we consider is duality, as this can also be computed on
the Hirzebruch surface.  Indeed, by \cite[Prop.~7.11]{poisson}, the minimal
lift operation commutes with the canonical duality, so we just need to
understand the dual of $M_0$.  Applying $\dR\sHom(-,\omega_X)$ to the
standard presentation
\[
0\to \rho^*V(-s)\xrightarrow{B} \rho^*W\to M_0\to 0
\]
gives
\[
0\to \sHom(\rho^*W,\omega_X)\xrightarrow{B^t}
\sHom(\rho^*V,\omega_X)(s)\to \sExt^1(M_0,\omega_X)\to 0.
\]
Now, we have
\[
\sHom(\rho^*W,\omega_X) \cong \rho^*\sHom(W,\sO_{\P^1})\otimes \omega_X
\]
and $\omega_X\cong \sO_X(-2s-(4-\delta)f)$, where $\delta\in \{1,2\}$ is
the degree of the twisting bundle.  We thus see that this is a presentation
of the alternate kind we just considered, and can thus determine the
corresponding relaxed difference equation.  We thus find that
$\sExt^1(M,\omega_X)(2f)$ corresponds to the matrix
\[
A' = B (\eta^*B)^{-1} = A^t = \eta^* A^{-t}.
\]
In other words, dual sheaves correspond (up to the involution) to adjoint
difference equations.  More precisely, if we both dualize and act by
$\eta$, we obtain the adjoint equation
\[
w(z+q) = A(z)^{-t} w(z);
\]
the dual sheaf itself corresponds to
\[
w(z-q) = A(z-q)^t w(z),
\]
which is of course precisely the same equation viewed as a $-q$-difference
equation.  Note that the additional $2f$ twist is precisely what we need
in order for the $\chi(M(-f))=0$ condition to be preserved
by duality.

We next turn to twists by $e_i$.  From \cite[Cor.~6.7]{poisson}, we find
that the action of such twists on $M_0$ has the following form.  If
$M'\cong M(-e_i)$, then $M'$ is acyclic for $\pi:X\to X_0$,
and its direct image $M'_0$ fits into a short exact sequence
\[
0\to M'_0\to M_0\to \sO_p^r\to 0
\]
where $p$ is the point of $X_0$ lying under $e_i$, and the morphism $M_0\to
\sO_p^r$ is suitably canonical (with $r = c_1(M)\cdot e_i$).  Local
computations let us describe this morphism in the elliptic case.  First, if
$e_i$ arises from the first time we blow up $p$, then it is just the
canonical morphism
\[
M_0\to \Hom_k(\Hom(M_0,\sO_p),\sO_p).
\]
More generally, the structure of $B$ over the local ring induces a natural
increasing filtration $F_l$ of $\rho^*W\otimes\sO_p^n$, induced by tensor
product from the filtration
\[
F^+_l = \im(u^{1-l}B)\cap \rho^*W
\]
where $u$ is a uniformizer.  If $e_i$ arises from the $l$-th time we blow up
$p$, then the corresponding morphism $M_0\to \sO_p^r$ is induced by the
morphism
\[
\rho^*W\to \rho^*W\otimes\sO_p\cong F_{\infty}\to F_{\infty}/F_l.
\]

Once we have identified the map $M_0\to \sO_p^r$, we then need to
understand how the new $A'$ is related to the original $A$.  We first note
that $M'_0$ is indeed $\rho_*$-acyclic.  Otherwise, there is a nonzero
morphism from $M'_0$ to $\sO_f(-2)$ for some fiber $f$, so that the short
exact sequence defining $M'_0$ pushes forward to an extension
\[
0\to \sO_f(-2)\to F\to \sO_p^r\to 0.
\]
Since $\Hom(M_0,\sO_f(-2))=0$, this is a non-split extension, so $p\in f$;
but then $F\cong \sO_f(-1)\oplus \sO_p^{r-1}$, contradicting the fact that
$\Hom(M_0,\sO_f(-1))=0$ (since $M_0(-s)$ is
$\rho_*$-acyclic).  We also find that $W'=\rho_*M'_0$ is torsion-free, since
$\Hom(\sO_f,M'_0)\subset \Hom(\sO_f,M_0)=0$ for any fiber $f$.

Since $M'_0$, is $\rho_*$-acyclic, we can take the direct image of the
defining extension to obtain a short exact sequence
\[
0\to W'\xrightarrow{D_1} W\to \sO_{\rho(p)}^r\to 0,
\]
for a suitable morphism $D_1$, and since $\rho$ is flat the inverse image is
also exact:
\[
0\to \rho^*W'\to \rho^*W\to \sO_f^r\to 0,
\]
where $f$ is the fiber containing $p$.  We thus obtain a map of short exact
sequences
\[
\begin{CD}
0@>>> \rho^*V(-s) @>B>> \rho^*W @>>> M_0@>>> 0\\
@. @VVV @VVV @VVV @.\\
0@>>> \sO_f(-1)^r @>>> \sO_f^r @>>> \sO_p^r @>>> 0,
\end{CD}
\]
in which the second and third vertical maps are surjective.  The snake
lemma tells us that the natural four-term exact sequence
\[
0\to \rho^*V'(-s)\to \rho^*W'\to M'_0\to \sO_f(-1)^{r'}\to 0
\]
(the cokernel has the form $\sO_f(-1)^{r'}$ since this is the only extent
to which $M'_0$ can fail to be $\rho$-globally generated) is related to a
four-term exact sequence
\[
0\to \rho^*V'(-s)\to \rho^*V(-s)\to \sO_f(-1)^r\to \sO_f(-1)^{r'}\to 0.
\]
This is
the twist of the inverse image of an exact sequence
\[
0\to V'\xrightarrow{D_2} V\to \sO_p^r\to \sO_p^{r'}\to 0.
\]
Of course, if $r'\ne 0$, then $M'_0$ is not globally generated, so we
should really replace $M'_0$ by the generated subsheaf; in this case, the
twisting operation will not be invertible, but a finite amount of such
twisting will suffice to remove any components of $M'_0$ supported on
fibers.  In any event, the new $B$ can be written as
\[
B' = D_1^{-1} B D_2,
\]
and thus, since $D_1$ and $D_2$ are $\eta$-invariant,
\[
A' = D_1^t A D_1^{-t}.
\]
Again, this conjugation should become a gauge transformation: if
$w(z) = D_1(z)^t v(z)$, then $w$ satisfies the equation
\[
w(z+q) = D_1(z+q)^t A(z) D_1(z)^{-t} w(z).
\]
This gauge transformation has the effect of shifting the singularity at $p$
by $q$ (as we expect from how twisting should affect $\phi^*$); in terms of
the invariants $\lambda(B;p)$, it moves the appropriate part to the
partition corresponding to $p-q$.  The case $r'\ne 0$ corresponds to a
situation in which the shifted singularity ends up cancelling an existing
singularity at $p-q$.  (In particular, we see that the sheaves with
components supported on fibers correspond to equations with apparent
singularities, that is to say singularities which can be removed by a
suitable gauge transformation.)

\begin{rem}
  A similar construction (unfortunately called ``elementary
  transformations'') for sheaves on $\P^2$ was given in
  \cite{VinnikovV:1990}.
\end{rem}

We note that since $M\otimes \sO_{C_\alpha}=0$, twisting by
\[
\sO_X(C_\alpha)\cong \sO_X(2s+2f-\sum_i e_i)
\]
has no effect on the sheaf.  Since for difference equations, the various
twisting operations all change various parameters by multiples of $q$, this
cannot quite be true for difference equations; instead, twisting by the
canonical class simply shifts $z$ by $q$.

\begin{rem}
  With the above constructions in mind, we can also easily identify the
  various gauge transformations of \cite{isomonodromy} (called isomonodromy
  transformations there) with twists; in particular, the gauge
  transformations with matrices described by \cite[Thm.~4.6]{isomonodromy}
  are twists by $s+f-\sum_{1\le i\le m+3} e_i$.
\end{rem}

\medskip

It remains to understand how the group $W(E_{m+1})$ acts.  The $S_m$
subgroup is of course easy to understand, as it simply changes the order in
which we blow up the distinct singular points.  Indeed, since it does not
change the final Hirzebruch surface, we should not expect it to have any
effect on the interpretation of the sheaf.

To understand the subgroup $W(D_m)$, it will be enough to understand how
the elementary transformation acts, as it conjugates the two $S_m$
subgroups in $W(D_m)$.  We suppose now that $X_0$ is even, since of course
the odd to even elementary transformation is just the inverse.  And of
course, since the elementary transformation does not change the blowdown
structure past $X_1$, it suffices to consider the direct image $M_1$ of $M$ on
$X_1$.  Let $e$ be the exceptional curve on $X_1$ over $X_0$, and let
$X'_0$ be the transformed Hirzebruch surface.  The nature of the
minimal lift operation implies that we have a short exact sequence of the form
\[
0\to \sO_e(-1)^r\to \pi^*M_0\to M_1\to 0.
\]
If we take the direct image under $\pi'$ (the map that blows down the
complement $e'$ of $e$ in its fiber), then we see that $\sO_e(-1)$ is
acyclic with direct image of the form $\sO_f(-1)$, and thus
\[
0\to \sO_f(-1)^r\to \pi'_*\pi^*M_0\to \pi'_*M_1\to 0
\]
is exact.  This, of course, is precisely the situation we encounter with
non-maximal splittings; in particular, we can compute the new matrix $A'$
equally well from either $\pi'_*M_1$ or $\pi'_*\pi^*M_0$.  Now,
\[
\rho'_*\pi'_*\pi^*M_0 \cong \rho_*\pi_*\pi^*M_0\cong \rho_*M_0,
\]
and thus $W'\cong W$.  Similarly, the nonmaximal bundle $V''$ can be
computed (up to a scalar) by
\begin{align}
\rho'_*(\pi'_*\pi^*M_0(-s'-f))
&=
\rho'_*\pi'_*(\pi^*M_0(e_1-s-f))\notag\\
&=
\rho_*\pi_*(\pi^*M_0(e_1-s-f))
\cong
\rho_*(M_0(-s-f)),\notag
\end{align}
and thus $V''\cong V\otimes \sO_{\P^1}(-1)$.  We furthermore find that
the corresponding map $\rho'_*(B''(s'+f))$ on $\P^1$
factors as
\[
V\otimes \sO_{\P^1}(-1)\xrightarrow{\rho_*(B(s))}
W\otimes \rho_*\sO_{X_0}(s-f)
\xrightarrow{1\otimes \psi}
W\otimes \rho'_*\sO_{X_0}(s')
\]
where $\psi:\rho_*\sO_{X_0}(s-f)\to \rho'_*\sO_{X_0}(s')$ is the image of the
natural map $\sO_{X_1}(s-f)\to \sO_{X_1}(s-e_1)$ on $X_1$.  But then, as a
morphism on $C$, $B''$ is just the composition with the corresponding map
of vector bundles on $C$.  (If we want, we can then compute the true $B'$
by restoring maximality.)

That is, if ${\cal L}_0$ is the original (degree 2) twisting bundle, then
the new twisting bundle is ${\cal L}_1:={\cal L}_0(-p)$
where $p\in C$ is the point corresponding to $e_1$; and $B''=\psi B$ where
$\psi$ is the unique (up to scalars) global section of
\[
\sO_C(-p)\otimes\pi_\eta^*\sO_{\P^1}(1),
\]
essentially a degree 1 theta function vanishing at $\eta(p)$.  We then find
that $A' = \eta^*\psi^{-1}\psi A$.  In other words, the elementary
transformation simply multiplies the shift matrix by a ratio of two degree
1 theta functions, preserving the symmetry.

\medskip

The remaining simple reflection is much more subtle, as can be seen in
particular by the fact that it does not preserve the rank of the equation.
Indeed, if $D=ns+df-\sum_i r_i e_i$ is the original class, then after
reflecting in $s-f$, we obtain an equation of class $D=ds+nf-\sum_i r_i
e_i$.  Since this swaps the order of the equation and a measure of its
degree (relative to ordinary multiplication), this suggests that this
operation should correspond to some sort of generalized Fourier
transformation.  This is in fact the case, and the transform is essentially
that of Spiridonov and Warnaar \cite{SpiridonovVP/WarnaarSO:2006}, but we
postpone the discussion to \cite{noncomm1}, where the transform will
play a crucial role.

\bigskip

As we mentioned above, we were led to consider symmetric elliptic
difference equations by their appearance in two contexts: as the equations
satisfied by elliptic hypergeometric integrals, and as equations related to
elliptic biorthogonal functions (and Painlev\'e theory).  We should
therefore explain how these equations fit into the current framework.

In \cite{dets}, Spiridonov and the author computed the explicit matrix $A$
for the difference equation satisfied by the ``order $m$ elliptic beta
integral''.  For generic parameters, these equations are nonsingular at the
ramification points, and it is thus straightforward to compute their
singularity structure.  The order $m$ elliptic beta integral satisfies an
elliptic equation of order $m+1$ with $2m+4$ ``simple zeros'', i.e., points
where $A$ is holomorphic and $\det(A)$ vanishes once, as well as
two points where $A$ vanishes identically.  This gives a sheaf of Chern class
\[
(m+1)s+(m+2)f-(m+1)e_1-(m+1)e_2-\sum_{3\le i\le 2m+6}e_i
\]
on a blowup of $F_2$ (we can recover the coefficient of $f$ by degree
considerations once we have found all the singularities).  If we perform an
elementary transformation, swap $e_1$ and $e_2$, then again perform an
elementary transformation (i.e., if we reflect in $f-e_1-e_2$), this gives
us a sheaf on a blowup of $F_0$ or $F_2$ with Chern class
\[
(m+1)s+f-\sum_{3\le i\le 2m+6}e_i,
\]
reflecting the fact that the equation given in \cite{dets} had two
singularities introduced precisely in order to make it elliptic rather than
twisted.  Since the first two blowups are independent of the remaining
blowups, we can move those to the end, then ignore them.  Thus the most
natural sheaf-relaxation of this equation has Chern class
\[
(m+1)s+f-\sum_{1\le i\le 2m+4}e_i
\]
on a surface with $K_X^2=4-2m$, relative to an even blowdown structure.
Now, this vector is actually a positive (real) root for $E_{2m+5}$, and
thus (since by construction the sheaf is supported on the complement of the
anticanonical curve) is generically the class of a $-2$-curve.  In
particular, the sheaf, and thus the difference equation, is rigid; this of
course explains why it was even possible to write down the equation
explicitly.

To verify that the vector is a positive root, we can of course apply the
usual algorithm.  The only simple root that has negative intersection with
the class is $s-f$, and thus the first step is the generalized Fourier
transformation mentioned above.  This gives a first-order equation of class
\[
s+(m+1)f-\sum_{1\le i\le 2m+4}e_i,
\]
at which point the action of $D_{2m+4}$ suffices to transform it to the
trivial equation $s-f$ (assuming sufficiently general parameters).  Since
first-order equations have explicit meromorphic solutions given by elliptic
Gamma functions \cite{RuijsenaarsSNM:1999}, we see that the above $m+1$-st
order equation should have a solution expressed as an integral involving
elliptic Gamma functions.  This is, of course, hardly surprising
considering that the equation arose as the equation of an integral, and
indeed, we recover the elliptic beta integral in this way.

Now, suppose one starts with the trivial equation and performs some
sequence of elementary transformations in various points and
Spiridonov-Warnaar transformations.  This will have the effect of replacing
the original $-2$-curve $s-f$ by some image under $W(E_{m+1})$, which will
still be the class of a $-2$-curve, and thus corresponds to a rigid
equation.  (Indeed, for the relaxation, rigidity is a property of the sheaf
on $X$, so is independent of the blowdown structure.)  In terms of
solutions, this starts with $1$, and performs some sequence of the
operations ``multiply by a symmetric product of elliptic Gamma functions''
and ``apply the Spiridonov-Warnaar transform''.  One thus expects that the
result of such a sequence of operations will always satisfy a {\em rigid}
difference equation.  (Despite our identification of the reflection in
$s-f$ with the Spiridonov-Warnaar transform, this is not quite a theorem;
the action on difference equations is purely formal, and relies on an
assumption that there are no extra residue terms coming from certain
required contour shifts.)  Conversely, since every rigid equation comes
from a $-2$-curve, the standard algorithm suggests a way of building up an
integral representation for the solution to any rigid equation.  (This
appears related to the notion of Bailey chains/trees, see
\cite{SpiridonovVP:2002,SpiridonovVP:2004} for the elliptic case.)

\medskip

The other main motivating family of equations are those of
\cite{isomonodromy}, the equations satisfied by certain functions which are
biorthogonal with respect to the order $m$ elliptic beta integral.  These
equations are no longer explicit (though the residues can be expressed as
multivariate integrals of products of elliptic Gamma functions), but it is
still quite feasible to determine their singularities.  These start out
twisted, so we need to make one of the 16 compatible choices of twisting
data, but there is a natural choice making the equation nonsingular at the
ramification points, at least for generic parameters.  (Alternately, as in
the elliptic beta integral case, \cite{isomonodromy} gives a
well-controlled elliptic version of the difference equation, corresponding
to the non-elliptic version by a pair of elementary transformations.)  We
find that we are in the even case, and have a second-order equation with
$2m+6$ simple singularities.  The corresponding Chern class is thus
\[
2s+(m+1)f-\sum_{1\le i\le 2m+6} e_i.
\]
When $m=0$, this is rigid (and indeed all of the multivariate integrals
arising as coefficients can be explicitly evaluated); this is of course one
of the rigid cases we just saw, corresponding to the fact that the order 0
elliptic beta integral admits hypergeometric biorthogonal functions.  (This
generalizes the fact that the Jacobi polynomials, orthogonal with respect
to the usual beta integral, are hypergeometric.)  Otherwise, the class is
in the fundamental chamber, so nef, and is easily checked to be generically
integral.  Since the coefficients of the Chern class are relatively prime,
we find that the corresponding moduli space of sheaves is rational, and
thus the same is true for the moduli space of difference equations (at
least for the components where the moduli spaces are birational; of course,
then $q$-deformed twisting should make this true for the difference
equations in any component).

The case $m=1$ is of particular interest, as in that case the
$2$-dimensional moduli space is an open subvariety of an elliptic surface,
the relative Jacobian of the original $X_8$ (which is isomorphic to $X_8$
given the choice of a section).  More precisely, it is obtained from the
relative Jacobian by removing both the Jacobian of $C_\alpha$ and the
divisor where the bundle fails to be trivial.  As we mentioned, this is
just the theta divisor, so corresponds to the identity section of the
relative Jacobian.  Since a section of a rational elliptic surface is a
$-1$-curve (it is smooth, rational, and meets $C_\alpha$ in a single
point), we could just as well blow down that section before removing it.
This gives us an alternative interpretation of the moduli space as the
complement of $C_\alpha$ in a del Pezzo surface of degree 1: the section
blows down to a point of $C_\alpha$, so there is nothing else to remove.
The $q$-deformed twisting operations discussed above must still act as
rational maps on this del Pezzo surface, and one can check directly that
those actions all factor through blowing up the point of $C_\alpha$
corresponding to $q$ \cite{ABR}.  In other words, the moduli space of
difference equations of this type is precisely the sort of rational surface
studied by \cite{SakaiH:2001}, and the twist operations correspond directly
to the elliptic Painlev\'e equation.  Since the divisor class is
anticanonical, its expansion in the standard basis is invariant under the
action of $W(E_9)$ on blowdown structures, and thus we obtain an action of
$W(E_9)$ on the family of moduli spaces of difference equations.  The
translations (which is how Sakai defined the elliptic Painlev\'e equation
in \cite{SakaiH:2001}) act the same way on the parameters as the twist
operations, and thus must actually act in the same way on the moduli
spaces.  In other words, there is a large abelian subgroup of
$\Z^{10}\rtimes W(E_9)$ that acts trivially on the moduli space; this
almost certainly is special to the $m=1$ case.  In particular, from a
difference equation perspective, the correct way to generalize the elliptic
Painlev\'e equation is clearly the twisting action rather than the Coxeter
group action, as the former is what acts by gauge (i.e., isomonodromy)
transformations in general.  (In fact, the relation between these
interpretations is mediated by a derived equivalence, see
\cite{noncomm1}.)

For general $m$, the fact that $B$ is a morphism between rank 2 bundles and
becomes rank 1 at the singular points (at least generically) allows us to
define a number of rational functions on the moduli space.  A typical
example is the following: when $W$ is trivial, the $1$-dimensional
subspaces $\ell_i:=\im(B(p_i))$ determine eight points in a common
projective line, and thus we can take the cross-ratio of any four of them
to obtain a rational function on the moduli space.  This function has a
particularly nice divisor, and can formally be written as a ratio of tau
functions.  More precisely, if for any $v\in \Z^{2m+8}$, we define
$\tau(v)$ to be the tau function which vanishes when $H^0(M(v-f))\ne 0$, then
\[
\chi(\ell_1,\ell_2,\ell_3,\ell_4)
\propto
\frac{\tau(f-e_1-e_3)\tau(f-e_2-e_4)}
     {\tau(f-e_1-e_4)\tau(f-e_2-e_3)}.
\]
(This is really just a statement about the divisor of the left-hand side.)
This follows from the observation that (for generic parameters)
$\ell_i=\ell_j$ precisely when twisting by $-e_i-e_j$ changes $W$ from
$\sO_{\P^1}^2$ to $\sO_{\P^1}(-2)\oplus \sO_{\P^1}$, an easy consequence of
our description of how twisting affects $W$.  (It is not quite clear how
this should extend to the locus $\tau(0)$ where $W$ itself is not trivial,
but it is clear that whatever order it has along $\tau(0)$ will not depend
on $i$ or $j$.)  If we take into account the behavior when
$\im(B(p_i))=\im(B(p_j))$, this suggests a more precise statement
\[
\chi(\ell_1,\ell_2,\ell_3,\ell_4)
\propto^?
\frac{\theta(e_1-e_3)\tau(f-e_1-e_3)\theta(e_2-e_4)\tau(f-e_2-e_4)}
     {\theta(e_1-e_4)\tau(f-e_1-e_4)\theta(e_2-e_3)\tau(f-e_2-e_3)},
\]
where for any positive root $r$, $\theta(r)$ is the divisor on the moduli
stack of surfaces that vanishes where the root is a $-2$-curve.  This is
very suggestive of the formula for the cross-ratio given in
\cite{isomonodromy}, in which there are multivariate integrals taking the
places of factors
\[
\frac{\tau(f-e_i-e_j)}{\theta(f-e_i-e_j)};
\]
and further suggests that we should have an equation of the form
\begin{align}
&\theta(e_1-e_2)\tau(f-e_1-e_2)\theta(e_3-e_4)\tau(f-e_3-e_4)\notag\\
{}-{}&
\theta(e_1-e_3)\tau(f-e_1-e_3)\theta(e_2-e_4)\tau(f-e_2-e_4)\notag\\
{}+{}&
\theta(e_1-e_4)\tau(f-e_1-e_4)\theta(e_2-e_3)\tau(f-e_2-e_3)=^?0
,\notag
\end{align}
though it is as yet unclear how to make precise sense of such a statement.
(The main issue is producing suitable canonical isomorphisms between tensor
products of bundles of the form $\det \dR\Gamma$, since $\theta$ and $\tau$
are canonical global sections of such bundles.)  Note in particular that
when $m=1$, if we could make sense of the above relation, then it would
induce an entire $W(E_9)$-orbit of relations, which are precisely the
relations satisfied by a tau function for the elliptic Painlev\'e equation,
as given in \cite[Thm.~5.2]{KajiwaraK/MasudaT/NoumiM/OhtaY/YamadaY:2006}.

\begin{rem}
  In \cite{recur}, a four-term variant (related to Pl\"ucker relations
  for pfaffians) of this $W(E_9)$-invariant system of recurrences
  was introduced, corresponding to a slightly different family of
  multivariate hypergeometric integrals.  Is there a corresponding
  model (presumably involving relative Pryms rather than relative
  Jacobians) from a geometric perspective?  This would presumably
  involve a component of the fixed locus of an involution of the
  form $M\mapsto \iota^*\sExt^1(M,\omega_X)$, where $\iota$ is an
  involution on an anticanonical surface of the form $X=X_8$ that
  acts as a hyperelliptic involution on the relevant anticanonical
  curve.  (The latter condition ensures that the combined involution is
  Poisson.)
\end{rem}

\medskip

The case $D=2rs+2rf-\sum_{1\le i\le 8} re_i$ is also likely to have
interesting behavior (assuming it is generically integral, i.e., that
$\sO_{C_\alpha}(C_\alpha)$ has exact order $r$ on $X_8$).  In this
case, $X_8$ is an elliptic surface on which $C_\alpha$ appears as the
underlying curve of an $r$-fold section, but the moduli space is still an
open subset of the relative Jacobian.  As we saw, the moduli space of
matrices $B$ is always rational in the $2$-dimensional case, and the same
reasoning as before tells us that it is an affine del Pezzo surface of
degree 1.  Again, for any $q$, we have an induced action of $\Z^{10}\rtimes
W(E_9)$ as birational maps on this family of del Pezzo surfaces.  Since
Theorem \ref{thm:Picr} tells us the parameters of these del Pezzo surfaces,
we can control the action of the birational maps, and find that the action
factors through a suitable one-point blowup, just as in the case $r=1$.
Note, however, that the lack of a universal family on this moduli space
means that the most obvious way of producing a corresponding Lax pair will
not work.

\section{Degenerations}
\label{sec:degen}

As one might expect, the story becomes more complicated once the
anticanonical curve becomes singular.  The simplest case is when the
anticanonical curve on $X$ is still integral; in that case, the
considerations of the previous section carry over with little change.  The
main constraint is that the symmetric (ordinary and $q$-) difference
equations must have only finite singularities, since blowing up the node or
cusp will introduce a new component to the anticanonical curve.  This can
actually be violated in a mild way: if the difference equation is twisted
by a line bundle, we can use the singularity to single out a global section
of the bundle (modulo scalars), and in this way obtain an untwisted
equation with only a mild singularity at $\infty$.  (E.g., in the
$q$-difference case, the matrix $A$ will no longer be $1$ at $\infty$, but
will still be a multiple of the identity.)  (Equations with more
complicated singularities at $\infty$ might correspond to sheaves that
cannot be separated from the anticanonical curve by a suitable blowup,
though this can always be fixed by a finite number of ``pseudo-twists''
\cite[Lem.~6.8]{poisson}, and we have seen that these correspond to gauge
transformations.)

For more degenerate cases, we can still be guided by what happens in the
elliptic case.  For the $W(E_{m+1})$ action, the $A_{m-1}$ subsystem merely
permutes the singularities, while elementary transformations change the
twisting bundle and multiply the shift matrix by a corresponding
meromorphic section.  The reflection in $s-f$, in contrast, can have a more
significant effect on the nature of the equation.  

We have already considered how the different equations look on $F_2$, and
something similar applies to $F_0$ or $F_1$.  Indeed, since elementary
transformations should not affect the type of equation, but can introduce
or contract fibers which are components of $C_\alpha$, the rule is quite
simple: contract any components of class $f$, and then recognize the curve from
the same list of possibilities as for $F_2$.  (More precisely, choose any
section of the ruling which is transverse to $C_\alpha$, and perform a
sequence of elementary transformations moving that section to the
$-2$-curve of an $F_2$; the resulting anticanonical curve will be disjoint
from $C_\alpha$, and differs from the original only in self-intersections of
components and the contraction of fibers.)  Since this rule treats sections
and fibers differently, it is clear that the result can depend on the
choice of ruling on $F_0$.

Moreover, it is in general not possible to avoid this issue.  One might be
tempted to adapt the algorithms for checking integrality to use only
elements of the group that stabilizes the decomposition of $C_\alpha$, but
this encounters two significant problems: the group need not be a
reflection group, and the reflection subgroup need not have finite rank.
Either possibility denies us any kind of ``fundamental chamber''; there
need not be any computable fundamental domain for the action.  One must
thus use the full algorithm, and this can most certainly change the kind of
equation.

\medskip

As an example of how birational maps can change the type of an equation,
consider the case of a nonsymmetric $q$-difference equation with three
polar singularities.  We recall that such equations (with even twisting; as
mentioned, this allows $A(\infty)$ to be a general multiple of the
identity) correspond to sheaves on $\P^1\times \P^1$ with specified
intersection with a union of two bilinear curves meeting in two distinct
points.  The constraint on singularities means that the sheaf meets the
component of $C_\alpha$ corresponding to poles in the specified three
points (and the restriction is a direct sum of structure sheaves of
subschemes of the corresponding degree $3$ scheme).  Now, a bilinear curve
in $\P^1\times \P^1$ has self-intersection $2$, so after blowing up three
points, the strict transform has self-intersection $-1$.  The assumption on
singularities means that this $-1$-curve $e$ is disjoint from the lift of
$M$, and thus there are blowdown structures for which $c_1(M)$ is in the
fundamental chamber and such that $e_m=e$.  Such a blowdown structure blows
down one of the two components of the anticanonical curve, so produces an
integral anticanonical curve on the eventual Hirzebruch surface.

In other words, there is a birational map taking nonsymmetric
$q$-difference equations with three polar singularities (and regular at $0$
and $\infty$, modulo twisting) to symmetric $q$-difference equations (again
regular at $0$ and $\infty$).  By looking at what the algorithm does to
move $e$ to $e_m$, we see that this involves reflecting in $s-f$ precisely
once.  Before reflecting, the anticanonical curve on $\P^1\times \P^1$ has
two components with classes $s+2f$ and $s$, while after reflecting it has
components of classes $2s+f$ and $f$.

\begin{rem}
Of course, something similar applies if we have more than three polar
singularities or the nonsymmetric equation is singular at $0$ or $\infty$;
the only difference is that the resulting symmetric equation will be
singular at $0$ and $\infty$, possibly in a complicated way.
\end{rem}

On $F_0$, we have a total of 16 possible ways the anticanonical curve can
decompose, each of which corresponds to a different kind of generalized
Fourier transform.  There are 10 such transforms that preserve the type of
equation, and three pairs that change the type.  Of those, one changes
between symmetric and nonsymmetric $q$-difference equations, one is the
ordinary difference analogue, and a final one changes between differential
and ordinary difference equations (a Mellin/z transform).  For the
transforms that preserve type, there are one each for the three integral
types, as well as three transforms on nonsymmetric $q$-difference
equations, two for ordinary difference equations, and two for differential
equations.  Of the latter, the most degenerate is just the Fourier/Laplace
transform, while the other is essentially the transform used in
\cite{KatzNM:1996} (usually called ``middle convolution'' in the later
literature, though this is something of a misnomer).

\medskip

As we mentioned above, we can also model certain Deligne-Simpson problems
via moduli spaces of sheaves on rational surfaces, and this gives rise to
additional interesting maps of moduli spaces.  One, of course, is the
(essentially trivial) observation that moduli spaces of Fuchsian
differential equations correspond to moduli spaces of solutions to additive
Deligne-Simpson problems; in our terms, we can see this by noting that the
anticanonical curves in the latter case become a double $\P^1$ once we
contract all fiber components.  There is another relation, though, which we
consider in the multiplicative case.  Recall that we modeled four-matrix
Deligne-Simpson problems via sheaves with a quadrangular anticanonical
curve in $\P^1\times \P^1$ (with components of class $f$, $f$, $s$, and
$s$).  This is a somewhat cumbersome model for three-matrix problems, as we
need to take one of the four matrices to be the identity.  The
corresponding component of the anticanonical curve contains a single
singular point; if we blow up this point and blow down both the fiber and
the section containing it, we obtain a sheaf on $\P^2$.  If $g_1$, $g_2$,
$g_3$ are the three matrices with product $1$, the sheaf on $\P^2$ is
modeled by the cokernel of the matrix $x+g_1y+g_1g_2z$, and the conjugacy
classes are determined by the restriction to the anticanonical curve
$xyz=0$.  (The additive variant involves sheaves with specified restriction
to $xy(x+y)=0$.)

Much as in the case of a nonsymmetric difference equation with three poles,
a three-matrix Deligne-Simpson problem in which one matrix has a quadratic
minimal polynomial gives rise to a surface in which the anticanonical curve
contains a $-1$-curve disjoint from the relevant sheaf.  In particular, we
obtain a sheaf on an even Hirzebruch surface by blowing up the two roots of
the minimal polynomial, then blowing down the anticanonical component.  On
that Hirzebruch surface, the anticanonical curve has two components, both
of class $s+f$, and we thus obtain a nonsymmetric $q$-difference equation.
(More precisely, we obtain such an equation {\em after} choosing one of the
two rulings; this is tantamount to choosing one of the two roots of the
minimal polynomial.)

This map can be made precise as follows.  Let $g_0$, $g_1$, $g_\infty$ be
a solution to the Deligne-Simpson problem, with
$(g_1-1)(g_1-\beta^{-1})=0$.  (We have rescaled the chosen root of the
minimal polynomial to $1$.)  Define a matrix $A^+(z)$ with rational
coefficients by
\[
A^+(z) = (1-g_\infty^{-1}z)^{-1}(1-g_0z)
       = (g_\infty-z)^{-1}(g_\infty-g_1^{-1}z).
\]
The matrix $A^+(z)$ fixes any vector fixed by $g_1$, and thus has a
well-defined action on the quotient $\im(g_1-1)$.  If $A(z)$ is the matrix
of this action in some basis, then we find $A(0)=1$, $A(\infty)=\beta$, so
that $A$ represents a twisted $q$-difference equation which is regular at
$0$ and $\infty$.  Moreover, we see that the zeros of $A$ occur at the
eigenvalues of $g_0^{-1}$, and the poles of $A$ occur at the eigenvalues of
$g_\infty$, and thus we obtain a rational map between the two moduli
spaces.  (It is unclear how to make the inverse map explicit, though it
certainly exists, due to the description in terms of sheaves.)  Note that
if $g_\infty$ has a cubic minimal polynomial, then we can proceed further,
turning this $q$-difference equation with three polar singularities into a
symmetric $q$-difference equation.  Similarly, a solution to a three-matrix
additive Deligne-Simpson problem with a quadratic minimal polynomial
produces an ordinary difference equation, which can be further transformed
to a symmetric equation if another minimal polynomial is cubic.

\medskip

In \cite{EOR}, several natural multiplicative Deligne-Simpson problems were
considered in which the moduli spaces were shown to be complements of
anticanonical curves in del Pezzo surfaces.  The present approach not only
recovers these results, but gives an alternate intrinsic description of the
del Pezzo surface, making it possible to identify the result explicitly.
There were four problems considered there, one for each of the root systems
of type $D_4$, $E_6$, $E_7$, $E_8$; we consider only the $E_8$ case in
detail.  In that case, the Deligne-Simpson problem is to classify $6l\times
6l$ matrices $g_1$, $g_2$, $g_3$ with $g_1g_2g_3=1$, such that $g_1$,
$g_2$, and $g_3$ have (specified) minimal polynomials of degrees $2$, $3$,
and $6$ respectively.  The corresponding surface blows up $\P^2$ in the
$2+3+6=11$ points corresponding to the roots of the minimal polynomial, and
its anticanonical curve has decomposition
\[
(h-e_1-e_2)+(h-e_3-e_4-e_5)+(h-e_6-e_7-e_8-e_9-e_{10}-e_{11}).
\]
The Chern class of the sheaf has the form
\[
6lh - \sum_i r_i e_i,
\]
where
\[
r_1+r_2=r_3+r_4+r_5=r_6+r_7+r_8+r_9+r_{10}+r_{11}=6l.
\]
The dimension of the corresponding moduli space is determined by the
self-intersection of the divisor, which is maximized when $r_1=r_2$,
$r_3=r_4=r_5$, etc.  This has self-intersection $0$, so apart from some
isolated $-2$-curve cases, is the only interesting case.  Now, $h-e_1-e_2$
is a $-1$-curve, so can be blown down, after which $h-e_3-e_4-e_5$ becomes
a $-1$-curve; after that, we end up on a $9$-point blow up of $\P^2$ with
an integral anticanonical curve, and our divisor class becomes a multiple
of the anticanonical curve.  Thus the solution will be generically
irreducible, i.e., the divisor will be generically integral, precisely when
\[
6h-3e_1-3e_2-2e_3-2e_4-2e_5-e_6-e_7-e_8-e_9-e_{10}-e_{11}
\]
determines a line bundle on $xyz=0$ of exact order $l$.  (In other words,
the corresponding product of zeros of the minimal polynomials must be an
$l$-th root of unity.)  In that case, we find that the relevant surface is
elliptic, with an $l$-tuple fiber (of type $I_1$ in this case), and the
moduli space is an open subset of the relative Jacobian.

Now, just as in the difference equation case, we have a simple numerical
criterion for the sheaves in this open subset to have presentations
involving trivial bundles: again, we want $H^0(M)=H^1(M)=0$, and thus the
moduli space is the complement of a fiber and section on the relative
Jacobian.  The fiber corresponds to the original $l$-tuple fiber, and has
the same Kodaira type (since relative Jacobians preserve Kodaira types of
tame multiple fibers, \cite[Thm.~5.3.1]{CossecFR/DolgachevIV:1989}); since
the anticanonical divisor on that surface was integral (nodal), we see that
the fiber being removed is an integral nodal curve.  Moreover, as in the
elliptic case, we could blow down the section before removing it, and in
this way obtain a del Pezzo surface of degree 1 with a nodal integral fiber
removed.  The problem of identifying this del Pezzo surface reduces to the
problem of identifying the corresponding elliptic surface, and thus to a
special case of Theorem \ref{thm:Picr}.  For $l=1$, this agrees with the
del Pezzo surface for which explicit equations were given in \cite{EOR};
for $l>1$, the conclusion of  Theorem \ref{thm:Picr} settles the conjecture
made there (to wit that the formula for the equation of the moduli space
need only be modified by taking $l$-th powers of the input).

The $E_7$ and $E_6$ cases are similar: $E_7$ has $4l\times 4l$ matrices
with minimal polynomials of degrees $2$, $4$, and $4$, while $E_6$ has
$3l\times 3l$ matrices with cubic minimal polynomials.  In the $E_6$ case,
the surface is already a relatively minimal elliptic surface (with an
$l$-tuple fiber of type $I_3$), while in the $E_7$ case, we must blow down
a component of the anticanonical curve, so end up with an $l$-tuple fiber
of type $I_2$.  In the $E_7$ case, blowing down the tau divisor produces a
$-1$-curve in the anticanonical curve of the del Pezzo surface, so we can
continue by blowing it down, and obtain a degree 2 del Pezzo surface with a
nodal integral anticanonical curve removed.  Similarly, in the $E_6$ case,
we do this twice, and end up with an affine cubic surface with nodal curve
at infinity.  The $D_4$ case is somewhat different, in that it is a
four-matrix Deligne-Simpson problem with quadratic minimal polynomials.  We
end up on a relatively minimal elliptic surface with a multiple $I_4$
fiber, but now have {\em two} tau divisors that need to be removed.  After
blowing down those $-1$-curves, we have a degree 2 del Pezzo surface with a
quadrangle at infinity, with two components of self-intersection $-1$ and
two of self-intersection $-2$.  If we blow down one of the $-1$-curve
components, the result is a triangle of lines on a cubic surface; we could
stop there (the description given for $l=1$ in \cite{OblomkovA:2004}), or
continue to a degree 4 del Pezzo surface with a curve of type $I_2$
removed.  (In this context, we also note that \cite{ABR} used explicit
invariant theory to compute the moduli space of $2\times 2$ matrices with
specified determinant, where the entries are global sections of a degree 4
line bundle on a genus 1 curve; that the result is a del Pezzo surface of
degree 2 follows in the same way, as again one must remove two
tau divisors.)

\medskip

In \cite{Crawley-BoeveyW/ShawP:2006}, multiplicative Deligne-Simpson
problems were related to Coxeter groups, in this case to groups with
arbitrary star-shaped Dynkin diagrams.  (In particular, $E_{m+1}$ has a
star-shaped diagram, and the corresponding Deligne-Simpson problems have a
quadratic and a cubic minimal polynomial.)  At least in the three- and
four-matrix cases, we can see these Coxeter groups from the rational
surface perspective.  In the three-matrix case, these are precisely the
reflection subgroups of the stabilizer of the decomposition of $C_\alpha$,
see \cite{LooijengaE:1981}; the simple roots of the subsystem are
\[
h-e_{11}-e_{21}-e_{31},\qquad\text{and}\qquad e_{ij}-e_{i(j+1)},
\]
where the first subscript on the $e$'s indicates which component of
$C_\alpha$ they intersect.  In the four-matrix case, the description is
slightly more subtle: a four-matrix Deligne-Simpson problem corresponds to
a sheaf which on the Hirzebruch surface has class $n(s+f)$; thus in
addition to stabilizing the decomposition of $C_\alpha$, we also want to
stabilize the root $s-f$.  In addition to the obvious $A_l$-subsystems, we
have a simple root $s+f-e_{11}-e_{21}-e_{31}-e_{41}$.  In any event, even
if we assume that these reflections generate the full stabilizer (rather
than just the subgroup generated by reflections), the algorithms still
become more complicated.  Indeed, we have already seen that there are
several kinds of elliptic pencil on such a surface (even if we exclude
multiple fibers), distinguished by the components that get blown down on
the relatively minimal model; as a result, it is more difficult to identify
whether a class is integral based merely on its image in the fundamental
chamber of this smaller Coxeter group.

\medskip

Of course, as we remarked above, the stabilizer of the anticanonical
decomposition can fail to be a reflection group (and that reflection group
can apparently fail to be of finite rank).  There are a few cases
\cite{LooijengaE:1981} in which the stabilizer is a reflection group and
has been explicitly identified, specifically those with nodal (and thus
polygonal) anticanonical curve, having at most $5$ components.  (Looijenga
also remarks that the $6$ component case is probably feasible; and we have
already seen that the stabilizer can fail to be a reflection group when
there are $7$ or more components.) The $2$ component case, unsurprisingly, has a
star-shaped diagram with one very short leg, corresponding to the relation
between nonsymmetric $q$-difference equations and three-matrix
multiplicative Deligne-Simpson problems in which one of the minimal
polynomials is quadratic.

\medskip

Just as the elliptic hypergeometric equation corresponds to a $-2$-curve,
so is rigid, most other hypergeometric difference/differential equations
can be seen to be rigid in the same way.  As an example, we consider a
maximally degenerate case: the Airy function, which satisfies the
non-Fuchsian differential equation
\[
\Ai''(z) = z\Ai(z),
\]
or in matrix form
\[
v'(z) = \begin{pmatrix} 0 & z \\ 1 & 0\end{pmatrix} v(z).
\]
(Note that there is an ambiguity when passing between straight-line and
matrix forms of an equation: the matrix form is only determined up to a
gauge transformation, so (as long as we can avoid apparent singularities)
the sheaf will only be determined up to ``pseudo-twist''.)  As we have
seen, differential equations correspond to sheaves on $F_2$ with
anticanonical curve of the form $y^2=0$; we find that the above matrix
translates to
\[
\begin{pmatrix}
w^2 & yx\\
y & w^3
\end{pmatrix}
:
\sO_{F_2}(-s_{\min}-2f)\oplus \sO_{F_2}(-s_{\min}-3f) \to \sO_{F_2}^2.
\]
In the $q$-difference case, we noted that we can often absorb particularly
well-behaved singularities into a twist; something similar applies here,
and we should perform an elementary transformation centered at the
subscheme with ideal $(y,w^2)$.  That is, blow up this subscheme, minimally
desingularize, then blow down the original fiber and the $-2$-curve.  The
resulting morphism on $\P^1\times \P^1$ is
\[
\begin{pmatrix}
y_0 & y_1 x_1\\
y_1 & y_0 x_0
\end{pmatrix},
\]
where the new coordinates relate to the original coordinates by
\[
x_1/x_0 = x/w,\qquad y_1/y_0 = y/w^2,
\]
and the new anticanonical curve has equation $y_1^2x_0^2$.  The cokernel
has smooth support, so there is no difficulty in resolving its intersection
with the anticanonical curve.  The support $y_0^2x_0=y_1^2x_1$ meets the
anticanonical curve in a $6$-jet, and each of the first five blowups
introduces a new component to the anticanonical curve, with multiplicities
$3$, $4$, $3$, $2$, and $1$ respectively.  The result is a curve of Kodaira
type $\text{III}^*$, except with one reduced fiber removed.  As for the sheaf
itself, we started with support of class $2s+f$ and blew up 6 points, so
the result is a $-2$-curve as expected.  (Had we started from the sheaf on
$F_2$, we would have obtained a sheaf with first Chern class class
$2s+3f-2e_1-2e_2-e_3-e_4-e_5-e_6-e_7-e_8$, which is naturally also a
positive (real) root.)

In general, a rigid second-order equation (of whatever kind) can always be
transformed by a sequence of elementary transformations into one of class
$2s+f-e_1-e_2-e_3-e_4-e_5-e_6$.  (That is, this is the unique class in the
fundamental chamber with respect to $D_m$ and satisfying $D\cdot f=2$.)  We
can thus describe a moduli space of rigid equations, namely the locally
closed substack of ${\cal X}^{\alpha}_6$ on which this class represents a
$-2$-curve.  Since we can readily rule out the existence of $-d$-curves for
$d>2$, it is a (reasonably small) finite problem to determine the different
types of anticanonical surfaces that arise.  There are a total of $3182$
such types, but the action of $W(D_6)$ reduces this to only $41$
equivalence classes, each of which describes a different kind of
hypergeometric equation.  These range from the elliptic hypergeometric
equation (satisfied by the order 1 elliptic beta integral) down to the two
maximally degenerate cases, the Airy equation and the $q$-difference
equation $v(q^2z)=\beta z v(z)$, passing through examples such as the
differential and difference equations satisfied by the hypergeometric
function of type ${}_2F_1$.

Similarly, in the case $2s+2f-e_1-e_2-e_3-e_4-e_5-e_6-e_7-e_8$,
corresponding in the generic case to the elliptic Painlev\'e equation,
which we saw above corresponded to $22$ orbits of $W(E_8)$ (as in Figure 1
above), there are a total of $139981$ types (we omit the figure!).  The
$W(E_8)$ orbits correspond to different types of Painlev\'e equation, but
the Lax pairs themselves are essentially classified by the $W(D_8)$ orbits
(since as we have seen, the map between equations and sheaves is only
defined up to scalar gauge equivalence in any event).  There are $61$ such
orbits, implying that a given (discrete or continuous) Painlev\'e equation
can have qualitatively different second-order Lax pairs.

For instance, the classification indicates that the Painlev\'e VI equation
should have a Lax pair in the form of a second order symmetric difference
equation.  Indeed, there is a $W(D_8)$-orbit of types of surfaces in which
the anticanonical curve decomposes as
\[
(2s+f-e_1-e_2-e_3-e_4-e_5-e_6)
+(f-e_5-e_6)
+(e_5-e_6)
+2(e_6-e_7)
+(e_7-e_8),
\]
which by our discussion of singularities above corresponds to a symmetric
difference equation with four finite singularities and an indecomposable
singularity at infinity.  The corresponding Lax pair was constructed in
\cite{symmPVI}, along with degenerations to Painlev\'e V and III.

The case $r(2s+2f-e_1-\cdots-e_8)+e_8-e_9$, which corresponds to an
elliptic version of the ``matrix Painlev\'e equation''
\cite{KawakamiH:2015}, has precisely the same classification (apart from
blowing up a point of an anticanonical component of self-intersection
$-1$); there are two apparent additional types for $r=2$, but they have a
component $e_8-e_9$, preventing the linear system from being generically
integral.

The case $2s+3f-e_1-\cdots-e_{10}$, which generically corresponds to the
simplest non-Painlev\'e case of the elliptic Garnier equation
\cite{ellGarnier}, gives rise to a total of $6374578$ types in $84$
$W(D_{10})$-orbits.  Note that since the divisor class is $-K+f$, it is not
preserved by reflection in $s-f$; as a result, each degeneration of the
simplest elliptic Garnier equation corresponds to a unique type of
second-order Lax pair up to scalar gauge equivalence.  (There are, of
course, higher order Lax pairs, e.g., the third-order Lax pair obtained by
reflecting in $s-f$.)

\section{Calogero-Moser spaces}
\label{sec:CMspace}

In \cite[Defn.~5.2.3]{EtingofP/GanWL/OblomkovA:2006}, Etingof, Gan, and
Oblomkov considered a family of four multiplicative Deligne-Simpson
problems associated to representations of a certain family of algebras,
such that the associated moduli space of solutions could be interpreted as
generalized Calogero-Moser spaces.  (To be precise, the moduli space of
solutions of the analogous additive Deligne-Simpson problems can be
identified with the spherical subalgebra of a certain generalized double
affine Hecke algebra, per
\cite{EtingofP/GinzburgV:2002,EtingofP/GanWL/OblomkovA:2006}.)  As above,
these moduli spaces can be identified with moduli spaces of sheaves on
rational surfaces of the form considered in Section \ref{sec:degen}.  We
find that with a suitable choice of blowdown structure, the corresponding
divisor class is $D=-nK_8+e_8-e_9$.  Thus more generally, we should expect
suitable open subsets of moduli spaces $\Irr_X(D,x)$ to play a similar
role.

Now, this same divisor class appeared in Theorem \ref{thm:mod_irr} as one
of the two cases in which a nef divisor disjoint from $C_\alpha$ could fail
to have a generically integral linear system.  In particular, in the
codimension 1 substack of ${\cal X}_9^{\alpha,\le 2}$ such that
$\sO_X(-nK_8+e_8-e_9)_{C_\alpha}\cong \sO_{C_\alpha}$, there is a smaller
substack (of codimension $2$ in ${\cal X}_9^{\alpha,\le 2}$) on which the
general section of the linear system $|{-}nK_8+e_8-e_9|$ is reducible.  We
will see that, at least generically, the corresponding symplectic moduli
space of sheaves can be identified with the Hilbert scheme of $n$ points on
a suitable quasiprojective surface, and thus the full family of moduli
spaces provides flat deformations of such moduli spaces.

Since $e_9\cdot D=1$, we may feel free to consider instead the divisor
class $-nK_8+e_8$ on the surface $X=X_8$, as the remaining point of
intersection is then uniquely determined.  The generically reducible case
has $|-K_8|$ a pencil, and thus $X$ is in this case an elliptic surface
with a section.

It will be convenient to first consider the other generically reducible
case corresponding to elliptic surfaces.

\begin{lem}\label{lem:weiern}
  Let $\psi:X\to C$ be a smooth, relatively minimal, genus 1 fibration with
  no multiple fibers, and let ${\cal M}^{(n)}_X(0)$ be the moduli space
  classifying stable sheaves $M$ of Euler characteristic 0 and with
  $c_1(M)$ a sum of $n$ fibers of $\psi$.  Then ${\cal M}^{(n)}_X(0)$ is naturally
  isomorphic to $\Sym^n(W)$, where $W$ is the Weierstrass model of the
  relative Jacobian of $X$.
\end{lem}

\begin{proof}
  Any sheaf $M$ classified by ${\cal M}^{(n)}_X(0)$ is S-equivalent to a
  sum of stable sheaves.  Since a stable sheaf has connected support, we
  find that each summand is set-theoretically supported on a single fiber,
  allowing us to apply Lemma \ref{lem:canon_stable}.  We thus find that $M$
  is S-equivalent to a sum of (a) invertible sheaves on fibers, (b)
  torsion-free but not integral sheaves on integral fibers, and (c) images
  of $\sO_{\P^1}(-1)$ under maps to reducible fibers.  The first two cases
  have the same first Chern class as a fiber, while the multiplicities of
  the summands of the third type are uniquely determined by their
  contribution to the first Chern class.  In particular, we find that $M$
  is S-equivalent to a sum of $n$ semistable sheaves, each of which has
  $c_1$ the class of a single fiber; the summands are further uniquely
  determined up to S-equivalence.  That ${\cal M}^{(n)}_X(0)$ is the
  Weierstrass model of the relative Jacobian then follows from Corollary
  \ref{cor:weier1}.
\end{proof}

\begin{thm}\label{thm:CMspace}
  Let $\psi:X\to \P^1$ be a smooth, relatively minimal, genus 1
  fibration with no multiple fibers and a section $s$, and let $f$ denote
  the class of a fiber of $\psi$.  Then the moduli space ${\cal
    M}^{(n)}_X(1)$ classifying stable sheaves $M$ of Euler characteristic 1
  with $c_1(M)=nf+s$ is a locally symplectic resolution of $\Sym^n(W)$,
  where $W$ is the Weierstrass model of $X$.
\end{thm}

\begin{proof}
  A divisor $D$ linearly equivalent to $nf+s$ meets the generic fiber
  transversely, and in the same point as $s$ (since $\sO_X(D)$ meets the
  generic fiber in the same line bundle as $\sO_X(s)$).  It in
  particular follows that $s$ is a component of $D$ (and the unique
  horizontal component).  In particular, if $M$ is any sheaf corresponding
  to a point of ${\cal M}^{(n)}_X(1)$, the the support of $M$ contains $s$.
  
  More generally, let $M$ be a stable sheaf of Euler characteristic 1 such
  that the 0-th Fitting scheme is a sum of $s$ and a nonnegative linear
  combination of components of fibers.  We claim that $M$ admits a
  surjective morphism to $\sO_s$.  If $M$ is supported on $s$, this is
  immediate.  Otherwise, let $C$ be a fiber containing some vertical
  component of the support of $M$, and consider the image $M'$ of the
  natural morphism $M\to M\otimes \sO_X(C)$.  Now, $M'$ is a (proper)
  nonzero quotient of $M$, so $\chi(M')>0$, while $M'\otimes \sO_X(-C)$
  is a proper subsheaf of $M$, so $\chi(M'\otimes \sO_X(-C))\le 0$.
  Since $c_1(M)-c_1(M')$ is vertical, we find that $c_1(M')\cdot
  C=c_1(M)\cdot C=1$, and thus $\chi(M')=\chi(M'\otimes \sO_X(-C))+1$,
  implying $\chi(M')=1$.  Since any quotient of $M'$ is a quotient of $M$,
  we see that $M'$ is again stable, so by induction admits a surjection to
  $\sO_s$, giving the required surjection $M\to \sO_s$.

  Now, let $M$ be the sheaf corresponding to a point of ${\cal
    M}^{(n)}_X(1)$.  Then $M$ certainly has a surjection to $\sO_s$, and
  since the kernel has no map to $\sO_s$, this surjection is unique.  Thus
  $M$ uniquely determines the kernel $M_v$ of this surjection, a sheaf of
  Euler characteristic 0 with $c_1(M_v)=nf$ such that any subsheaf (being a
  proper subsheaf of $M$) has nonpositive Euler characteristic.  In
  particular, $M_v$ is semistable, so that we obtain a morphism from ${\cal
    M}^{(n)}_X(1)$ to the corresponding semistable moduli space, which by
  Lemma \ref{lem:weiern} is isomorphic to $\Sym^n(W)$.

  It remains only to show that this map is locally (on the base) a symplectic
  resolution.  If $X$ is symplectic, then ${\cal M}^{(n)}_X(1)$ is
  symplectic, and this is immediate.  More generally, if $X$ is Poisson,
  then so is ${\cal M}^{(n)}_X(1)$, and moreover ${\cal M}^{(n)}_X(1)$ is
  generically symplectic; thus the symplectic locus is a symplectic
  resolution of the symmetric power of the Weierstrass model of the
  symplectic locus of $X$.  By varying the Poisson structure, we can cover
  $\Sym^n(W)$ by open subvarieties over which ${\cal M}^{(n)}_X(1)$ is a
  symplectic resolution.

  The remaining case is that $-K_X$ is ineffective, but then $K_X$ is a
  positive sum of fibers, making $K_X$ effective.  Any nonzero section
  $\beta$ of $\omega_X$ induces a (closed) 2-form on ${\cal M}^{(n)}_X(1)$
  (see, e.g., \cite[\S 10]{HuybrechtsD/LehnM:1997}).  Moreover, we claim
  that this 2-form is nondegenerate whenever the corresponding sheaf is
  transverse to the zero locus of $\beta$; again, by varying $\beta$, this
  will suffice to make ${\cal M}^{(n)}_X(1)\to \Sym^n(W)$ a symplectic
  resolution locally on the base.

  We need to show that the induced (and self-dual) morphism $\Ext^1(M,M)\to
  \Ext^1(M,M\otimes \omega_X)$ is an isomorphism, or equivalently that the
  connecting map $\Hom(M,M\otimes \coker(\beta))\to \Ext^1(M,M)$ is 0.
  Now, $M\otimes \coker(\beta)$ is a $0$-dimensional quotient of $\sO_s$ of
  degree $d-2$, so that
\[
\dim\Hom(M,M\otimes\omega_X)\le d-1,
\]
with equality iff the connecting map vanishes.  Since $\omega_X$ has this
many global sections, none of which vanish on the support of $M$, the claim
follows.
\end{proof}

Over a field of characteristic 0, the main result of
\cite{FuB/NamikawaY:2004} shows that for any smooth surface $X$, the
natural morphism $\Hilb^n(X)\to \Sym^n(X)$ from the Hilbert scheme of
points is the unique crepant resolution of $\Sym^n(X)$.  In particular,
when $W$ is smooth (and characteristic 0), then ${\cal M}^{(n)}_X(1)\cong
\Hilb^n(W)$.  When $W$ is not smooth, the most natural choice of crepant
resolution of $\Sym^n(W)$ is the Hilbert scheme of its minimal
desingularization $X$.  However, it turns out
\cite{perverseHilbert} that this is not the crepant resolution given by
${\cal M}^{(n)}_X(1)$, which is in fact given by the ``perverse'' Hilbert
scheme of $W$.

\medskip
In the rational case, we may more generally consider the moduli spaces
${\cal M}_X(-nK_8+e_8,1)$ classifying semistable 1-dimensional sheaves of
Euler characteristic 1 and first Chern class $-nK_8+e_8$, which we can view
as a family of deformations of (perverse) Hilbert schemes of points on
(Jacobian) rational elliptic surfaces.  Replacing $X$ by a noncommutative
deformation should give an additional parameter in the family of moduli
spaces, which should afford a deformation of the (perverse, presumably)
Hilbert scheme of points on a general rational surface with $K_X^2=0$.
There is also a notion of a Hilbert scheme of a noncommutative surface
\cite{NevinsTA/StaffordJT:2007,noncomm1}, presumably related to this case
by a derived equivalence.  Another flat deformation of the symmetric power
of a general rational surface will be constructed in \cite{elldaha} as a
special case of a noncommutative deformation of such symmetric powers.

We can also obtain deformations of (perverse) Hilbert schemes of affine del
Pezzo surfaces (i.e., the complement of an anticanonical curve on such a
surface).  Indeed, for $X$ elliptic such that $X\setminus C_\alpha$
contains no $-2$-curve, we have already seen that the symplectic leaf of
${\cal M}_X(-nK_8+e_8,1)$ is a symplectic resolution of $\Sym^n(X\setminus
C_\alpha)$, so in general we obtain a flat family of symplectic varieties
having an additional parameter.  To obtain a corresponding deformation for
del Pezzo surfaces, we need simply remove a suitable combination of tau
divisors.  (In particular, this allows us to identify the Deligne-Simpson
moduli spaces considered above with symplectic deformations of symmetric
powers of the obvious del Pezzo surfaces.)

Similar considerations apply in the case $X=E\times \P^1$, as again the
relevant divisor class remains effective and generically disjoint from the
anticanonical curve for a larger family of moduli problems.\footnote{There
  is one more case with this property: for each $g\ge 2$, the moduli stack
  of K3 surfaces of genus $g$ contains (as a codimension 1 substack) the
  moduli stack of Jacobian elliptic K3 surfaces polarized by $gs+f$.} These
moduli problems have been considered before, see \cite[\S
  10.1]{HurtubiseJC/MarkmanE:2002b}.  In particular, when $X$ is the
$\P^1$-bundle associated to an indecomposable bundle of rank 2 and trivial
determinant, the corresponding moduli space is the ambient space of the
elliptic Calogero-Moser system of type $\GL_n$.

With this in mind, we refer to the action of the relevant lattice on the
above rational moduli spaces as (degenerations of) the ``symmetric elliptic
difference Calogero-Moser system''.  (Note that the usual action of
$\Lambda_{E_8}$ is extended by an additional twist, since we have an
additional point to blow up.)  It would be particularly interesting to
understand the noncommutative deformation of this action, as it would give
a $1$-parameter deformation of the $n$-th symmetric power of the elliptic
Painlev\'e equation (and of the other equations in Sakai's hierarchy), the
differential level of which has recently been constructed
\cite{KawakamiH:2015}.

\bibliographystyle{plain}

\end{document}